\newcolumntype{Y}{>{\centering\arraybackslash}X}
\newcommand*{\myfnsymbolsingle}[1]{%
  \ensuremath{%
    \ifcase#1
    \or 
      *%
    \or 
      \dagger
    \or 
      \ddagger
    \or 
      \mathsection
    \or 
      \mathparagraph
    \else 
      \@ctrerr  
    \fi
  }%
}   
\newalphalph{\myfnsymbolmult}[mult]{\myfnsymbolsingle}{}
\newcommand{\F}{{\mathbb F}}
\newcommand{\Z}{{\mathbb Z}}
\newcommand{\Q}{{\mathbb Q}}
\renewcommand{\vec}[1]{\mathbf{#1}}
\newcommand{\ie}{i.e., }
\newcommand{\eg}{e.g., }
\newcommand{\ea}{{\em et al. }}
\begin{document}

\title{On the Enumeration of Irreducible Polynomials over $\text{GF}(q)$\\
with Prescribed Coefficients}

\author{Robert Granger}

\institute{
Secure Systems Research Group\\
Department of Computer Science\\
University of Surrey\\ 
United Kingdom\\
\email{r.granger@surrey.ac.uk}
}


\maketitle

\begin{center}
{\em Dedicated to the memory of Oonagh N\'i Ch\'eileachair.\\ 
Sonas ort.}
\end{center}

\begin{abstract}
We present an efficient deterministic algorithm which outputs exact expressions in terms of $n$ for the number of monic degree $n$ 
irreducible polynomials over $\F_{q}$ of characteristic $p$ for which the first $l < p$ coefficients are prescribed, provided that 
$n$ is coprime to $p$. Each of these counts is $\frac{1}{n}(q^{n-l} + \mathcal{O}(q^{n/2}))$. The main idea behind the algorithm 
is to associate to an equivalent problem a set of Artin-Schreier curves defined over $\F_q$ whose number of $\F_{q^n}$-rational affine 
points must be combined. This is accomplished by computing their zeta functions using a $p$-adic algorithm due to Lauder and Wan. 
Using the computational algebra system Magma one can, for example, compute the zeta functions of the arising curves for $q=5$ and $l=4$ 
very efficiently, and we detail a proof-of-concept demonstration. Due to the failure of Newton's identities in positive 
characteristic, the $l \ge p$ cases are seemingly harder. Nevertheless, we use an analogous algorithm to compute example curves for $q = 2$ and $l \le 7$, 
and for $q = 3$ and $l = 3$. Again using Magma, for $q = 2$ we computed the relevant zeta functions for $l = 4$ and $l = 5$, obtaining explicit formulae 
for these open problems for $n$ odd, as well as for subsets of these problems for all $n$, while for $q = 3$ we obtained explicit formulae for $l = 3$ and 
$n$ coprime to $3$. We also discuss some of the computational challenges and theoretical questions arising from this approach in the general case 
and propose some natural open problems.

\begin{keywords}
Irreducible polynomials, prescribed coefficients, prescribed traces, Artin-Schreier curves, zeta functions, binary Kloosterman sums.\\ 
\\
MSC: 12Y05, 11T06, 11Y16, 11G20. 
\end{keywords}

\end{abstract}


\section{Introduction}\label{sec:intro}

For $q = p^r$ a prime power let $\F_q$ denote the finite field of $q$ elements, and let $I_q(n)$ denote the number of monic 
irreducible polynomials in $\F_q[x]$ of degree $n$. A classical result due to Gauss~\cite[pp. 602-629]{gauss} states that
\[
I_q(n) = \frac{1}{n} \sum_{d \mid n} \mu(d) q^{n/d}.
\]
A natural problem is to determine the number of monic irreducible polynomials in $\F_q[x]$ of degree $n$ for which 
certain coefficients are prescribed, which we refer to in general as the 
{\em prescribed coefficients problem}. As Panario has stated~\cite[p. 115]{Panarioquote},
{\em ``The long-term goal here is to provide existence and counting results for
irreducibles with any number of prescribed coefficients to any given values. This
goal is completely out of reach at this time. Incremental steps seem doable, but
it would be most interesting if new techniques were introduced to attack these
problems.''}

Regarding existence, building upon work of Bourgain~\cite{Bourgain} and Pollack~\cite{Pollack}, the best result to date is due 
to Ha~\cite{Ha}, who in 2016 proved that there exists a monic irreducible polynomial in $\F_q[x]$ of degree $n$ with up to 
$\lfloor{(1/4 - \epsilon)n}\rfloor$ coefficients prescribed in any positions, for any $\epsilon > 0$ and $q$ sufficiently large.

In contrast to Ha's important progress towards the above long-term goal on the existence side, the corpus of counting results 
is far less well developed. Nearly all such research has focused on the subproblem of determining the number of monic irreducible polynomials 
in $\F_q[x]$ of degree $n$ for which the first $l$ coefficients have the prescribed values $t_1,\ldots,t_l$, which we denote by 
$I_q(n,t_1,\ldots,t_l)$. Although asymptotics for such subproblems have been obtained by Cohen~\cite{Cohen}, very few 
exact results are known. In 1952 Carlitz gave formulae for $I_q(n,t_1)$~\cite{Carlitz}, while
in 1990 Kuz'min gave formulae for $I_q(n,t_1,t_2)$~\cite{kuzmin1,kuzmin2}; Cattell~\ea later reproduced Kuz'min's results
for the base field $\F_2$, in 1999~\cite{cattell}. In 2001 the three coefficient case $I_2(n,t_1,t_2,t_3)$ was solved, by Yucas and Mullen 
for $n$ even~\cite{yucasmullen} and by Fitzgerald and Yucas for $n$ odd~\cite{fitzyucas}. 
Formulae for $I_{2^r}(n,t_1,t_2)$ for all $r \ge 1$ were given again in 2013 by Ri \ea~\cite{RMR}. 
Most recently, in 2016 Ahmadi \ea gave formulae for $I_{2^r}(n,0,0,0)$ for all $r \ge 1$~\cite{AGGMY}.

Rather than study the above subproblem instances directly, the papers~\cite{cattell,yucasmullen,fitzyucas,RMR,AGGMY} all study a set of equivalent 
problems, namely counting the number of elements of $\F_{q^n}$ with correspondingly prescribed traces, which we refer to in general as the 
{\em prescribed traces problem}. In particular, for $a \in \F_{q^n}$ the 
characteristic polynomial of $a$ with respect to the extension $\F_{q^n}/\F_q$ is defined to be: 
\begin{equation*}
\prod_{i = 0}^{n-1} (x - a^{q^i}) = x^n - T_1(a)x^{n-1} +  T_2(a)x^{n-2} - \cdots + (-1)^{n-1} T_{n-1}(a)x + (-1)^n T_n(a),
\end{equation*}
with $T_l: \F_{q^n} \rightarrow \F_q$, $1 \le l \le n$ the successive trace functions
\begin{eqnarray*}
T_1(a) &=& \sum_{i = 0}^{n-1} a^{q^i},\\
T_2(a) &=& \sum_{0 \le i_1 < i_2 \le n-1} a^{q^{i_1} + q^{i_2}},\\
T_3(a) &=& \sum_{0 \le i_1 < i_2 < i_3 \le n-1} a^{q^{i_1} + q^{i_2} + q^{i_3}},\\
       &\vdots&\\
T_l(a) &=& \sum_{0 \le i_1 < \cdots < i_l \le n-1} a^{q^{i_1} + \cdots + q^{i_l}},\\
       &\vdots&\\
T_n(a) &=& a^{1 + q + q^2 + \cdots + q^{n-1}}.
\end{eqnarray*}
For any $n \ge l$ and $t_1,\ldots,t_l \in \F_q$, let $F_q(n,t_1,\ldots,t_l)$ denote the number of elements $a \in \F_{q^n}$ for which $T_1(a) = t_1$, \ldots, $T_l(a) = t_l$. 
If for a given $q = p^r$ and $l \ge 1$ one determines $F_q(n,t_1,\ldots,t_l)$ for all $(t_1,\ldots,t_l) \in (\F_q)^l$, then an application of the multinomial theorem and a 
generalised M\"obius inversion-type argument gives $I_q(n,t_1,\ldots,t_l)$ for all $(t_1,\ldots,t_l) \in (\F_q)^l$, and vice versa, hence the equivalence. 
The formulae for the equivalence follow from the approach of Miers and Ruskey~\cite{miersruskey2}, 
extending the subcases already proven for binary fields~\cite{cattell,yucasmullen,fitzyucas,RMR,AGGMY}. 
For our main case of interest for which $l < p$, each $I_q(n,t_1,\ldots,t_l)$ can be expressed in terms of only one $F_q(n,t_{1}',\ldots,t_{l}')$, 
and vice versa, see~\S\ref{sec:transform0}.

As well as proving formulae for $I_{2^r}(n,0,0,0)$ for all $r \ge 1$, the work~\cite{AGGMY} also explained an intriguing phenomenon, 
which is that the formulae for $F_2(n,t_1,t_2)$ and $F_2(n,t_1,t_2,t_3)$ proven in~\cite{cattell} and~\cite{yucasmullen,fitzyucas} 
depend on $n \bmod 8$ and on $n \bmod 24$, respectively. In particular, by Fourier analysing the formulae the present author showed 
that they are related to the number of $\F_{2^n}$-rational affine points of certain genus one and two supersingular curves defined 
over $\F_2$. A simple argument gave a new derivation of the formulae for the $t_1 = 0$ cases and the $n$ odd cases, and since the 
curves featured are supersingular their normalised Weil numbers are roots of unity, which explains the observed periodicity.

In this paper we greatly extend the curve-based approach of~\cite{AGGMY} in order to prove the following theorem.
Let $\overline{\Q}$ be an algebraic closure of $\Q$ and let $\overline{\Z}$ be the integral closure of $\Z$ in $\overline{\Q}$.

\begin{theorem}\label{thm:maintheorem}
Let $q = p^r$, $l < p$ and let
\begin{equation*}\label{eq:N}
N = \begin{cases}
\begin{array}{ll}
(q^l(ql - q -l)+q)  &  \ \text{if}  \ r = 1 \\
(q^l(ql - q -l)+q)(p-1)  &  \ \text{if}  \ r > 1.\\
\end{array}
\end{cases}
\end{equation*}
Then for each $\overline{n} \in \{1,\ldots,p-1\}$ there exist $\omega_1,\ldots,\omega_N \in \overline{\Z}$ (not necessarily distinct), which are all of norm $\sqrt{q}$, such that for all $(t_1,\ldots,t_l) \in (\F_q)^l$ there exist explicitly determined $\upsilon_1,\ldots, \upsilon_N \in \{0,\pm1\}$
such that for all $n \equiv \overline{n} \pmod{p}$ with $n \ge l$ one has
\begin{equation}\label{eq:maintheorem}
F_q(n,t_1,\ldots,t_l) = \frac{1}{q^l}\Big( q^n + \frac{q-1}{p-1} \sum_{k=1}^N \upsilon_k \omega_{k}^n \Big) = q^{n-l} + \mathcal{O}(q^{n/2}).
\end{equation}
Moreover, there exists an explicit deterministic algorithm which for each $\overline{n}$ computes $\omega_1,\ldots,\omega_N$ 
in $q^{l}\cdot\tilde{\mathcal{O}}(l^5 p^4)$ bit operations when $r=1$, and $q^{l+1}\cdot\tilde{\mathcal{O}}(l^5 p^4 r^3)$ bit operations when $r>1$.
\end{theorem}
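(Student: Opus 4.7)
The plan is to (i) convert the prescribed-traces problem to a prescribed-power-sums problem via Newton's identities (enabled by $l<p$), (ii) use additive-character orthogonality to express the count as a sum of Artin-Schreier exponential sums, and (iii) identify each such sum with a constant-twisted Artin-Schreier curve and invoke Weil's theorem to produce the $\omega_k$. For Step (i), since $l<p$ the Newton identities are non-singular and give an explicit triangular transform between the prescribed tuple $(t_1,\ldots,t_l)$ and an equivalent tuple $(s_1,\ldots,s_l)\in\F_q^l$ for the power sums $P_k(a):=\Tr_{\F_{q^n}/\F_q}(a^k)$. For Step (ii), fix a nontrivial additive character $\psi$ of $\F_q$, set $\Psi=\psi\circ\Tr_{\F_{q^n}/\F_q}$, and note $\sum_k\alpha_k P_k(a)=\Tr_{\F_{q^n}/\F_q}(f_\alpha(a))$ where $f_\alpha(x):=\sum_{k=1}^l\alpha_kx^k\in\F_q[x]$; orthogonality then yields
\begin{equation*}
F_q(n,t_1,\ldots,t_l) \;=\; \frac{1}{q^l}\sum_{\alpha\in\F_q^l}\psi\!\Bigl(-\sum_k\alpha_k s_k\Bigr)\sum_{a\in\F_{q^n}}\Psi(f_\alpha(a)),
\end{equation*}
whose $\alpha=\vec 0$ term supplies the main term $q^{n-l}$.

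For Step (iii), because $\deg f_\alpha\le l<p$ is coprime to $p$, the Artin-Schreier cover $C_\alpha:y^p-y=f_\alpha(x)$ is smooth over $\F_q$ of genus $g_\alpha=(\deg f_\alpha-1)(p-1)/2$, so by Weil its zeta function contributes $2g_\alpha$ Weil numbers of norm $\sqrt q$. Grouping the $\alpha\neq\vec 0$ terms by $\F_p^*$-scaling and using the identity $\sum_{c\in\F_p^*}\zeta_p^{cm}=p\,\mathbf{1}[m=0]-1$, each orbit contribution collapses to $\#C_{\alpha,u}^{\mathrm{aff}}(\F_{q^n})-q^n$, where $C_{\alpha,u}:y^p-y=f_\alpha(x)-u$ is the constant twist by $u=\Tr_{\F_q/\F_p}(\sum_k\alpha_k s_k)/n\in\F_p$ (the hypothesis $\gcd(n,p)=1$ is used precisely to invert $n$ in $\F_p$ here). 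Weil's theorem then rewrites this contribution as $-\sum_i\omega_{\alpha,u,i}^n$. Concatenating these lists over all $\alpha\neq\vec 0$ (together with the relevant $\mathrm{Gal}(\F_q/\F_p)$-twists of $u$ when $r>1$) produces the master list $\omega_1,\ldots,\omega_N$; the count matches the claimed $N$ after summing $2g_\alpha=(d_\alpha-1)(p-1)$ against the $(q-1)q^{d-1}$ tuples of each degree $d$ and simplifying via $\sum_{k=0}^{l-1}kq^k=\bigl((l-1)q^{l+1}-lq^l+q\bigr)/(q-1)^2$, while the signs $\upsilon_k\in\{0,\pm1\}$ emerge from $\mathrm{Gal}(\Q(\zeta_p)/\Q)$-averaging the character weights $\psi(-\sum_k\alpha_k s_k)$ and noting that what survives on each orbit is an algebraic integer with small absolute value.

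Each $C_{\alpha,u}$ has degree $\le l$ and genus $\le(l-1)(p-1)/2$, so the Lauder-Wan $p$-adic cohomology algorithm computes its zeta function (hence the $\omega_{\alpha,u,i}$) in per-curve bit complexity $\tilde{\mathcal O}(l^5 p^4)$ for $r=1$ and $\tilde{\mathcal O}(l^5 p^4 r^3)$ for $r>1$; multiplying by the number of curves in the family ($q^l$ resp.\ $q^{l+1}$) yields the stated totals. The main obstacle will be the bookkeeping of Step (iii): verifying that the character-sum coefficients collapse precisely to $\{0,\pm1\}$ after Galois averaging, and matching the Weil-number count to the claimed $N$ in the $r>1$ case---where the asymmetry between the factors $(q-1)$ and $(p-1)$ demands careful tracking of the $\F_p^*$-orbit/constant-twist correspondence, and where constant shifts $u\in\F_p$ must be combined with the $\F_q$-scaling of $\sum_k\alpha_k s_k$. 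The geometric inputs (Weil's theorem and the Lauder-Wan algorithm) and the Newton reduction are then essentially routine.
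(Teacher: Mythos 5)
Your plan is morally close to the paper's, but there is a genuine gap in step (iii) that you yourself flag: the constant twist $u=\operatorname{Tr}_{\F_q/\F_p}\bigl(\sum_k\alpha_k s_k\bigr)/n$ depends on $(s_1,\ldots,s_l)$, hence on $(t_1,\ldots,t_l)$, so the curves $C_{\alpha,u}$ and therefore the Weil numbers $\omega_{\alpha,u,i}$ you collect in the master list are \emph{not} independent of the prescribed traces. The theorem asserts a single list $\omega_1,\ldots,\omega_N$ valid for every $(t_1,\ldots,t_l)$, with only the signs $\upsilon_k$ varying; your construction produces a different list for each tuple. The repair is to replace each $C_{\alpha,u(\vec s)}$ by the full family $\{C_{\alpha,u}:u\in\F_p\}$ with $\{0,1\}$-indicator coefficients selecting the active twist, but then the Weil-number count changes: with orbit reps and $p$ twists each you get $p(q-1)\sum_{d}q^{d-1}(d-1)$ roots, which for $r=1$ exceeds the claimed $N=(p-1)^2\sum_d p^{d-1}(d-1)$ by a factor $p/(p-1)$, and for $r>1$ is \emph{smaller} than the claimed $(q-1)^2(p-1)\sum_d q^{d-1}(d-1)$ by a factor $\approx p/\bigl((q-1)(p-1)\bigr)$. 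Related to this, the $\F_p^*$-averaging identity produces orbit weights in $\{p-1,-1\}$ (evaluating $\sum_{c\in\F_p^*}\zeta_p^{cm}$), not $\{0,\pm1\}$, and the sentence claiming the $\upsilon_k\in\{0,\pm1\}$ ``emerge'' from Galois averaging is not an argument; this is exactly the bookkeeping you concede is the obstacle.

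The paper avoids both problems by never taking characters of the target value at all. It first passes to the power sums via Newton's identities (as you do), but then counts \emph{evaluations to $1$}: it introduces $V_1(\vec i\cdot\vec f)=\#\{a:\vec i\cdot\vec f(a)=1\}$ and proves (Lemma~3 / Eq.~(8)) that the inverse transform $S_{q,l}^{-1}$ is an explicit matrix with entries in $\{0,\pm1\}$ off the first row and column. Each $V_1(\vec i\cdot\vec f)$ parametrises directly to the Artin--Schreier curve $a_1^q-a_1=-1/\overline n+\sum_k i_k a^{k+1}$, whose constant shift $-1/\overline n$ is fixed once $\overline n$ is fixed and does not depend on $(t_1,\ldots,t_l)$; the $\vec t$-dependence is absorbed entirely by the $\{0,\pm1\}$ entries of $S_{q,l}^{-1}$. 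For $r>1$ the curve $a_1^q-a_1=\cdots$ is replaced by the $q-1$ curves $a_1^p-a_1=\alpha(\cdots)$ via the exponential-sum reduction (Lemma~4), which is where the extra $(q-1)$ in $N$ and the factor $\frac{q-1}{p-1}$ in Eq.~(\ref{eq:maintheorem}) come from, and where the $q^{l+1}$ in the complexity bound arises. So the key difference is not the choice of Weil/Lauder--Wan machinery (those are common to both) but where the Fourier/counting transform sits: your version puts the character weights on the $\vec t$-side, entangling the curves with $\vec t$; the paper's $V_1$-transform decouples them.
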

Here we use the soft-oh notation $\tilde{\mathcal{O}}$, which ignores factors which are logarithmic in the argument. Note that
Theorem~\ref{thm:maintheorem} implies an asymptotic equidistribution result for the prescribed traces problem, as one expects.

The main idea behind the algorithm is to associate to the prescribed traces problem a fibre product of $l$ Artin-Schreier curves defined over $\F_q$,
whose number of $\F_{q^n}$-rational affine points solves the problem. In order to count this number we propose two methods.
The first -- which we refer to as the {\em direct method} -- computes a single $F_q(n,t_1,\ldots,t_l)$ by replacing the fibre product
by an intersection. However, this results in curves of relatively large genus for which we do not know of an efficient point counting algorithm.
The second -- which we refer to as the {\em indirect}, or {\em `batching' method} -- determines all of the $q^l$ counts 
$F_q(n,t_1,\ldots,t_l)$ simultaneously, by transforming these counting problems into a set of $q^l$ equivalent counting problems.
All but one of the latter problems requires computing the zeta function of an Artin-Schreier curve defined over $\F_q$,
which is accomplished using a $p$-adic algorithm due to Lauder and Wan~\cite{LauderWan}. 
While obtaining the relevant curves is immediate, determining their zeta functions is a fundamentally computational problem, so one 
should not expect to be able to simply write down general formulae for $F_q(n,t_1,\ldots,t_l)$. 
There may of course exist faster algorithms for determining the exact formulae than the approach we take, particularly when only one
$F_q(n,t_1,\ldots,t_l)$ is required, but we emphasise that the one presented here constitutes the first algorithmic approach to solving 
the prescribed traces problem exactly, which therefore represents a shift in perspective with regard to its study.

Although the indirect method is very efficient for computing all $q^l$ counts $F_q(n,t_1,\ldots,t_l)$ simultaneously,
a potential disadvantage is that the number $N$ of algebraic integers featuring in Theorem~\ref{thm:maintheorem} may be larger than for
the direct method, even accounting for multiplicities. There may, therefore, be some redundancy in the resulting formulae. 
Such redundancy can be eliminated with some postcomputation by identifying, for each $\overline{n}$, cancellations between linear 
combinations of $n$-th powers of the $\omega_k$'s which are valid for all $n \equiv \overline{n} \pmod{p}$, see~\S\ref{sec:ternaryzetas} for an example.
One should therefore attempt to use the direct method first, and if it is not computationally feasible then use the indirect method, 
noting that some redundancy elimination may be required.

It should be no surprise that the formulae for $F_q(n,t_1,\ldots,t_l)$ arise from the number of points on curves. As noted by Voloch~\cite{voloch}, 
a method used by Hayes~\cite{hayes}, Hsu~\cite{hsu}, Voloch himself and others to estimate $I_q(n,t_1,\ldots,t_l)$ relates these counts to the number 
of points over $\F_{q^n}$ of certain curves defined over $\F_q$ whose function fields are subfields of the so-called cyclotomic function fields. The equivalence to $F_q(n,t_1,\ldots,t_l)$ then implies that these counts are related to the number of points on curves as well.

The prescribed traces problem that we partially solve in this work is similar to a problem studied by Miers and Ruskey~\cite{miersruskey1}. 
In particular, using combinatorial techniques, expressions were given for the number of strings over finite rings with prescribed symmetric 
function evaluations. However, while this problem is similar the techniques used do not seem applicable to the prescribed traces problem as we 
have defined it. More relevant to our case, but still seemingly inapplicable are the expressions given by Miers and Ruskey for the number of equivalence 
classes of aperiodic strings over finite rings with prescribed symmetric function evaluations, under rotation~\cite{miersruskey2}. However, 
as already mentioned the formulae for the equivalence between the prescribed coefficients problem and the prescribed traces problem that we study 
follow easily from this work.

A fundamental limitation of our proposed algorithm is that it breaks for $l \ge p$ due to the failure of Newton's identities in positive characteristic.
Nevertheless, by employing a slightly different but likely limited technique to bypass the $l < p$ constraint, we computed curves for $q = 2$, $l \le 7$
and $n$ odd whose number of $\F_{2^n}$-rational affine points solve the $F_2(n,t_1,\ldots,t_l)$ counting problems.
In fact, the ingredients of our main algorithm were initially developed for $q = 2$, since even the $l = 4$ 
cases were open problems with the only previous result being an approximation to the counts~\cite{Koma,Komathesis} (there is however a small 
error in the transforms used, see~\S\ref{sec:transformPCPPTP}). 
We used the computational algebra system Magma~\cite{magma} to compute the corresponding zeta functions for $l = 4$ and $l = 5$, obtaining explicit formulae for these open problems for $n$ odd. Interestingly, for these cases the characteristic polynomial of the Frobenius 
endomorphism of the featured curves have factors that arise from ordinary -- rather than supersingular -- abelian varieties, and a simple argument 
implies that the set of formulae $\{ F_2(n,t_1,t_2,t_3,t_4) \}_{n \ge 4}$ and $\{ F_2(n,t_1,t_2,t_3,t_4,t_5) \}_{n \ge 5}$ for each
$(t_1,\ldots,t_4)$ and $(t_1,\ldots,t_5)$ respectively, can not be periodic in $n$, as in the $l = 2$ and $l = 3$ cases. 
This perhaps explains why there had been little progress in the four coefficients problem for the past $15$ years, since there 
is not a finite set of cases to enumerate.
We also give formulae for a subset of $F_2(n,t_1,t_2,t_3,t_4)$ and $F_2(n,t_1,t_2,t_3,t_4,t_5)$ which are valid {\em for all}
$n \ge 4$ and $n \ge 5$, respectively. Somewhat surprisingly, the formulae for $F_2(n,0,0,0,*,t_5)$ for $n \ge 5$ -- where the asterisk 
means that we do not mind what the value of the fourth trace function is -- are periodic in $n$ with period $120$. It is therefore conceivable 
that these formulae could have been discovered without using our curve-based approach, although no doubt far less efficiently. 
The method for $q = 2$ extends to one for any $q = 2^r$ and $l \le 7$, and also extends to $q = 3^r$ and $l \le 3$, which features
non-supersingular Weil numbers for just three prescribed coefficients when $r=1$.

The sequel is organised as follows. 
In~\S\ref{sec:generalalg} we present our main algorithm for arbitrary $q = p^r$, $l < p$ and $n$ coprime to $p$, and
present details of a very efficient proof-of-concept demonstration for $q=5$ and $l=4$.
In~\S\ref{sec:char2} we present direct and indirect methods for solving the prescribed traces problem for $q = 2$ and $n$ odd and discuss
its possible limitations.
Then in~\S\ref{sec:binaryzetas} we apply these methods to compute curves for $l \le 7$, some for $n$ odd and some for all relevant $n$, provide explicit formulae for $l = 4$ and $l = 5$, and for $l=4$ detail a connection to binary Kloosterman sums and provide the correct transform from
the prescribed traces problems to the prescribed coefficients problem.
In~\S\ref{sec:ternaryzetas} we present some formulae for the $q = l = 3$ case. 
Finally, in~\S\ref{sec:summary} we discuss some of the computational challenges and theoretical questions arising from our
approach in the general case and propose some natural open problems.

We assume the reader is familiar with curves and their zeta functions. Should they not be, the relevant definitions 
may be found in~\S3 of the antecedent to this work~\cite{AGGMY}, which we recommend to the reader. 
The code used for all interesting computations performed with Magma and Maple~\cite{maple} 
is openly available from \url{https://github.com/robertgranger/CountingIrreducibles}, with the relevant files indicated in footnotes in the text.


\section{The Main Algorithm}\label{sec:generalalg}

In this section we present an algorithm for solving the prescribed traces problem -- and thus the prescribed coefficients problem -- exactly for any 
prime power $q = p^r$, any $1 \le l < p$ and any $n \ge l$ coprime to $p$. The input traces whose values are prescribed are $T_{1},\ldots,T_{l}$, 
\ie the problem is to compute
\begin{equation}\label{eq:mainalg1}
F_q(n,t_1,\ldots,t_l) = \#\{a \in \F_{q^n} \mid T_1(a) = t_1,\ldots, T_l(a) = t_l\},
\end{equation}
for any $t_1,\ldots,t_l \in \F_q$. We begin with what we refer to as the direct method in~\S\ref{sec:maindirect} and present the indirect, 
or batching method in~\S\ref{sec:mainindirect}.

\subsection{Direct method}\label{sec:maindirect}

We begin with the following extremely simple lemma, whose proof follows from~\cite[Theorem 2.25]{lidl}.

\begin{lemma}\label{lem:paramq}
\begin{enumerate}[label={(\arabic*)}]
\item For $a \in \F_{q^n}$ the condition $T_1(a) = 0$ is equivalent to $a = a_{1}^q - a_{1}$, for $q$ different $a_1 \in \F_{q^n}$.
\item For $a \in \F_{q^n}$ and $n \not\equiv 0 \pmod{p}$, the condition $T_1(a) = t_1$ is equivalent to $a = a_{1}^q - a_{1} + t_1/n $, 
for $q$ different $a_1 \in \F_{q^n}$.
\end{enumerate}
\end{lemma}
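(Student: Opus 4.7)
The plan is to invoke the additive form of Hilbert's Theorem 90, which is the content of the cited Theorem 2.25 of Lidl--Niederreiter: for the cyclic extension $\F_{q^n}/\F_q$, an element $a \in \F_{q^n}$ lies in the kernel of $T_1$ if and only if $a = a_1^q - a_1$ for some $a_1 \in \F_{q^n}$. This immediately gives both directions of the equivalence in part (1): one direction is the cited theorem, and the converse follows from the $\F_q$-linearity of $T_1$ together with its invariance under the $q$-th power Frobenius, which forces $T_1(a_1^q - a_1) = 0$.

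For the multiplicity claim in part (1), I would observe that the Artin--Schreier-type map $\varphi \colon \F_{q^n} \to \F_{q^n}$, $a_1 \mapsto a_1^q - a_1$, is $\F_q$-linear, and its kernel consists precisely of those $a_1$ with $a_1^q = a_1$, that is, $\ker \varphi = \F_q$, of size $q$. Hence every nonempty fibre of $\varphi$ has cardinality exactly $q$, giving the required $q$ distinct preimages.

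Part (2) then reduces to part (1) by an affine shift. The hypothesis $n \not\equiv 0 \pmod{p}$ ensures that $n$ is invertible in $\F_q$, so $t_1/n$ is well-defined and lies in $\F_q \subset \F_{q^n}$. Since $T_1$ acts on $\F_q$ as multiplication by $n$, one has $T_1(t_1/n) = t_1$, and therefore $T_1(a) = t_1$ if and only if $T_1(a - t_1/n) = 0$. Applying part (1) to $a - t_1/n$ yields the parametrisation $a = a_1^q - a_1 + t_1/n$ with exactly $q$ values of $a_1$.

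There is no real obstacle here — the lemma is essentially a direct repackaging of additive Hilbert 90. The only subtlety worth flagging is the coprimality hypothesis in part (2), which is exactly what is needed to invert $n$ in $\F_q$; without it, the normalisation $t_1/n$ breaks down and the constant term of the parametrisation would have to be chosen differently (or not exist at all when $t_1 \neq 0$).
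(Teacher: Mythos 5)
Your argument is correct and is exactly the one the paper intends: the paper simply cites~\cite[Theorem 2.25]{lidl} (additive Hilbert 90), and you have spelled out the standard details — the kernel of $a_1 \mapsto a_1^q - a_1$ is $\F_q$, giving fibres of size $q$, and part (2) follows from part (1) by the affine shift $a \mapsto a - t_1/n$ using $T_1|_{\F_q} = n\cdot(\,\cdot\,)$. Nothing to add.
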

We now recall Newton's identities over $\Z$ (see \eg\cite[Theorem 1.75]{lidl}) with indeterminates $\alpha_1,\ldots,\alpha_n$. 
Abusing notation slightly, we refer to the elementary symmetric polynomials in $\alpha_1,\ldots,\alpha_n$ as 
$T_1(\alpha),T_2(\alpha),\ldots$, and to the power sum symmetric polynomials $\alpha_{1}^j + \cdots + \alpha_{n}^j$
as $T_1( \alpha^j)$ for $j \ge 1$, \ie we work in the ring of symmetric functions, suppressing the dependence on $n$.
We use the convention that $T_0(\alpha) = 1$.

\begin{lemma}
For all $k \ge 1$ and $n \ge k$ we have
\begin{equation}\label{eq:NI}
k\,T_k(\alpha) = \sum_{j=1}^k (-1)^{j-1} T_{k-j}(\alpha)T_1(\alpha^j).
\end{equation}
\end{lemma}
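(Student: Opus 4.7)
The identity is the classical Newton power-sum identity, so I would proceed via the standard generating-function argument, working in the ring of symmetric functions $\mathbb{Z}[[t]][\![\alpha_1,\alpha_2,\ldots]\!]$ (formal in the $\alpha_i$) and reading off coefficients of a single power of $t$. The point of phrasing it this way is that it handles all $n \ge k$ uniformly: the identity only involves $T_1(\alpha^j)$ and $T_i(\alpha)$ for $i \le k$, so once it holds in the symmetric function ring it holds after specialising the $\alpha_i$ to any concrete values.

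Concretely, the plan is to consider the two formal power series
\begin{equation*}
E(t) \;=\; \prod_{i \ge 1} (1 + \alpha_i t) \;=\; \sum_{k \ge 0} T_k(\alpha)\, t^k,
\qquad
P(t) \;=\; \sum_{j \ge 1} (-1)^{j-1} T_1(\alpha^j)\, t^{j-1},
\end{equation*}
and to verify the functional relation $E'(t) = E(t) \, P(t)$. To derive this, I would take the logarithmic derivative of the product defining $E(t)$, obtaining
\begin{equation*}
\frac{E'(t)}{E(t)} \;=\; \sum_{i \ge 1} \frac{\alpha_i}{1 + \alpha_i t} \;=\; \sum_{i \ge 1} \sum_{j \ge 1} (-1)^{j-1} \alpha_i^{j}\, t^{j-1} \;=\; P(t),
\end{equation*}
where swapping the order of summation is legitimate in the $(\alpha, t)$-adic topology on formal power series.

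Having established $E'(t) = E(t)\,P(t)$, I would then compare the coefficients of $t^{k-1}$ on both sides. The left-hand side contributes $k\,T_k(\alpha)$, while the Cauchy product on the right yields
\begin{equation*}
\sum_{m=0}^{k-1} T_m(\alpha) \cdot (-1)^{k-m-1} T_1(\alpha^{k-m}),
\end{equation*}
which, after reindexing by $j = k - m$, becomes precisely $\sum_{j=1}^{k} (-1)^{j-1} T_{k-j}(\alpha)\, T_1(\alpha^j)$, giving the claimed identity. No step is really an obstacle here; the only mild subtlety is keeping track of the sign conventions (since $E(t)$ is written with $+\alpha_i t$ rather than $-\alpha_i t$), which is exactly what produces the alternating sign $(-1)^{j-1}$ on the right. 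Finally, since the identity now holds as an equality of symmetric functions over $\mathbb{Z}$, one specialises $\alpha_1,\ldots,\alpha_n$ to obtain the stated equation for every $n \ge k$, consistent with the convention $T_0(\alpha) = 1$.
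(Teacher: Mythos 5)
Your proof is correct, and in fact the paper does not give a proof of this lemma at all: it simply cites it as Newton's identities over $\Z$, referring to \cite[Theorem~1.75]{lidl}. Your generating-function argument is the standard derivation. Taking the logarithmic derivative of $E(t)=\prod_i(1+\alpha_i t)$ gives $E'(t)/E(t)=\sum_i \alpha_i/(1+\alpha_i t)=P(t)$, and comparing coefficients of $t^{k-1}$ in $E'(t)=E(t)P(t)$ yields exactly the stated identity, with the $(-1)^{j-1}$ sign coming from expanding $1/(1+\alpha_i t)$. Since the identity is homogeneous of degree $k$ and only involves $T_0,\ldots,T_k$ and the power sums $T_1(\alpha^1),\ldots,T_1(\alpha^k)$, it specialises uniformly to any $n\ge k$, which matches the hypothesis of the lemma. (For what it is worth, the reference \cite[Theorem~1.75]{lidl} proves it by the more elementary route of direct coefficient manipulation rather than via a generating function, but the content is the same.)
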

Letting $\alpha = a$ and substituting $T_j(a) = t_j$ for $1 \le j \le k$, Eq.~(\ref{eq:NI}) becomes
\begin{eqnarray}
\nonumber k t_k &=& \sum_{j=1}^k (-1)^{j-1} t_{k-j} T_1(a^j)\\
\label{eq:recurrencespecialised} &=& (-1)^{k-1} T_1(a^k) + \sum_{j=1}^{k-1}(-1)^{j-1}t_{k-j} T_1(a^j),
\end{eqnarray}
since by convention we have $T_0(\cdot)=1$ and so $t_0 = 1$. Eq.~(\ref{eq:recurrencespecialised}) allows one to express each $T_1(a^k)$ as a polynomial
in $t_1,\ldots,t_k$ only, which we refer to as $p_k(t_1,\ldots,t_k)$. In particular, $p_1(t_1) = t_1$, $p_2(t_1,t_2) = t_{1}^2 - 2t_2$, 
$p_3(t_1,t_2,t_3) = 3t_3 + t_{1}^3 - 3t_1t_2$, and in general we have
\begin{equation}\label{eq:def:p_k}
p_k(t_1,\ldots,t_k) = T_1(a^k) = (-1)^{k-1}\Big( kt_k - \sum_{j=1}^{k-1}(-1)^{j-1}t_{k-j}p_j(t_1,\ldots,t_j) \Big).
\end{equation}
Since $1 \le k \le l < p$, each $t_k$ features in the condition on $T_1(a^k)$ in Eq.~(\ref{eq:def:p_k}) and does so linearly.
Therefore for each $(t_{1}',\ldots,t_{l}') \in (\F_q)^l$ there is a unique $(t_1,\ldots,t_l) \in (\F_q)^l$ such that for all $1 \le k \le l$ we have 
$t_{k}' = p_k(t_1,\ldots,t_k)$, and vice versa. Eq.~(\ref{eq:mainalg1}) thus becomes
\begin{equation}\label{eq:newmain}
\# \{ a \in \F_{q^n} \mid T_{1}(a) = t_{1}', T_1(a^2) = t_{2}',\ldots,T_1(a^l) = t_{l}'\}.
\end{equation}
To evaluate Eq.~(\ref{eq:newmain}) with what we call the direct method, let $\overline{n} \in \{1,\ldots,p-1\}$ and suppose 
$n \equiv \overline{n} \pmod{p}$. Then by introducing variables $a_1,\ldots,a_l$ 
and repeatedly applying Lemma~\ref{lem:paramq}, Eq.~(\ref{eq:newmain}) equals
\begin{equation}\label{eq:main_direct}
\frac{1}{q^l} \# \{ (a,a_1,\ldots,a_l) \in (\F_{q^n})^{l+1} \mid a_{1}^q - a_{1} = a -  t_{1}'/\overline{n}, \ldots, 
a_{l}^q - a_{l} = a^l - t_{l}'/\overline{n} \}.
\end{equation}
Note that if $t_{1}' = \cdots = t_{l}' = 0$ then one need not introduce $\overline{n}$ at all, and the count~(\ref{eq:main_direct}) is valid for 
all $n \ge l$. Rather than work with this intersection one can alternatively define the curves 
$C_k/\F_{q}: a_{1}^q - a_{1} = a^k - t_{k}'/\overline{n}$ for $1 \le k \le l$ and consider their fibre product, \`a la~\cite[\S2]{vandegeer}. 
In~\S\ref{sec:mainindirect} we take a similar but more elementary approach, working only with an associated set of Artin-Schreier curves 
of much smaller genus, since there exists a practical algorithm for computing their zeta functions due to Lauder and Wan~\cite{LauderWan}. 
We do so by evaluating the count~(\ref{eq:newmain}) indirectly, which allows one to solve the prescribed traces problem for all $q^l$
such $F_q(n,t_1,\ldots,t_l)$ simultaneously.


\subsection{A transform of the prescribed traces problem}\label{sec:transform1p}

In this subsection we transform the problem of counting the number of elements of $\F_{q^n}$ with prescribed traces to the 
problem of counting the number of elements for which linear combinations of the trace functions evaluate to $1$. 
The transform is more general than is required for the target problem of interest and may be applied to any number of functions from $\F_{q^n}$
to $\F_q$. Therefore let $m \ge 1$ be the number of such functions. We first fix some notation.

We require a bijection from the integers $\{0,\ldots,q^m-1\}$ to $(\F_{q})^m$, the image of an input $i$ being denoted by $\vec{i}$. 
One can for instance take the base-$q$ expansion of $i$ to give $i_{m-1}^{'}q^{m-1} + \cdots + i_{0}^{'}$ and then set 
$\vec{i} = (i_{m-1},\ldots,i_0) = (\tau(i_{m-1}^{'}),\ldots,\tau(i_{0}^{'}))$, where $\tau: \{0,\ldots,q-1\} \rightarrow \F_q$ is defined by 
fixing a degree $r$ monic irreducible $f \in \F_p[x]$ and a polynomial basis for $\F_{p^r}/\F_p$, and mapping the base-$p$ expansion of an 
integer in $\{0,\ldots,q-1\}$ to the polynomial with those coefficients. Note that according to this definition, $\vec{0} = (0,\ldots,0)$ is 
the all-zero vector in $(\F_{q})^m$.

Let $f_0,\ldots,f_{m-1}: \F_{q^n} \rightarrow \F_q$ be any functions and let $\vec{f} = (f_{m-1},\ldots,f_0)$.
For $\vec{i} = (i_{m-1},\ldots,i_0),\vec{j} = (j_{m-1},\ldots,j_0) \in (\F_{q})^m$ let $\vec{i} \cdot \vec{j}$ denote the usual inner product.
For any $\vec{i} \in (\F_q)^m$, let $\vec{i}\cdot \vec{f}$ denote the function 
\[
\sum_{k=0}^{m-1} i_k f_k: \F_{q^n} \rightarrow \F_q,
\]
and let $V_1(\vec{i}\cdot \vec{f})$ denote the number of elements of $\F_{q^n}$ for which $\vec{i} \cdot \vec{f}$ evaluates to $1$. We define 
$V_1(\vec{0} \cdot \vec{f})$ to be $q^n$. Finally, let $N(\vec{j}) = N(j_{m-1},\ldots,j_{0})$ denote the number of $a \in \F_{q^n}$ such that 
$f_k(a) = j_k$, for $k = 0,\ldots,m-1$. 

Our goal is to express any $N(\vec{j})$ in terms of the $V_1(\vec{i}\cdot \vec{f})$, but we begin by first solving the inverse problem, 
\ie expressing any $V_1(\vec{i}\cdot \vec{f})$ in terms of the $N(\vec{j})$.

\begin{lemma}
With the notation as above, for $\vec{i} \in (\F_q)^m \setminus \{\vec{0}\}$ we have
\begin{equation}\label{eq:basicrelationcharp}
V_1(\vec{i}\cdot \vec{f}) = \sum_{\vec{i} \cdot \vec{j} \, = \, 1} N(\vec{j}).
\end{equation}
\end{lemma}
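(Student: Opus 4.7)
The plan is to prove this by a straightforward partition argument on $\F_{q^n}$, keyed on the joint value vector of the functions $f_0,\ldots,f_{m-1}$. I would first observe that as $a$ ranges over $\F_{q^n}$, the tuple $(f_{m-1}(a),\ldots,f_0(a))$ takes values in $(\F_q)^m$, and by definition of $N(\vec{j})$ the fibre over $\vec{j}$ has size $N(\vec{j})$. So $\F_{q^n} = \bigsqcup_{\vec{j} \in (\F_q)^m} \{a : \vec{f}(a) = \vec{j}\}$ is a disjoint decomposition whose block sizes sum to $q^n$.

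Next I would rewrite the defining condition for $V_1(\vec{i}\cdot\vec{f})$. For $a \in \F_{q^n}$, the scalar $\vec{i}\cdot\vec{f}(a) = \sum_{k=0}^{m-1} i_k f_k(a)$ depends only on the tuple $\vec{j} = \vec{f}(a)$, and equals $\vec{i}\cdot\vec{j}$. Hence the set $\{a \in \F_{q^n} : \vec{i}\cdot\vec{f}(a) = 1\}$ is the disjoint union, over those $\vec{j} \in (\F_q)^m$ satisfying $\vec{i}\cdot\vec{j} = 1$, of the fibres $\{a : \vec{f}(a) = \vec{j}\}$. Taking cardinalities gives exactly
\[
V_1(\vec{i}\cdot\vec{f}) = \sum_{\vec{i}\cdot\vec{j}\,=\,1} N(\vec{j}),
\]
which is the claimed identity.

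There is essentially no obstacle here: the content is purely bookkeeping, and the hypothesis $\vec{i} \ne \vec{0}$ plays no role in the derivation itself (it only matters later, when one wants to invert this relation, since for $\vec{i} = \vec{0}$ the equation $\vec{i}\cdot\vec{j} = 1$ has no solutions and the natural convention $V_1(\vec{0}\cdot\vec{f}) = q^n$ has been stipulated separately). The one minor point worth flagging is that the identity would fail as stated if one allowed $\vec{i} = \vec{0}$, since then the right-hand side is an empty sum while the left-hand side is $q^n$ by convention; restricting to $\vec{i} \ne \vec{0}$ sidesteps this, and the proof otherwise requires nothing beyond the disjoint-union argument above.
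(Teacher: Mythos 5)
Your argument is correct and is essentially the same partition/bookkeeping proof the paper gives: both observe that $\vec{i}\cdot\vec{f}(a)$ depends only on the fibre $\vec{j}=\vec{f}(a)$ and then sum the fibre sizes $N(\vec{j})$ over those $\vec{j}$ with $\vec{i}\cdot\vec{j}=1$. Your closing remark about why $\vec{i}\ne\vec{0}$ is excluded is a sound observation, though not part of the paper's proof.
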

\begin{proof}
By definition, we have $V_1(\vec{i}\cdot \vec{f}) = \#\{ a \in \F_{q^n} \mid \vec{i}\cdot \vec{f} (a) = 1\} = 
\#\{ a \in \F_{q^n} \mid \sum_{k=0}^{m-1} i_k f_k(a) = 1\}$.
Since $N(\vec{j})$ counts precisely those $a \in \F_{q^n}$ such that $f_k(a) = j_k$, we must count over all those $\vec{j}$ for which
$\sum_{k=0}^{m-1} i_k f_k(a) = 1$, \ie those such that $\sum_{k=0}^{m-1} i_k j_k = 1$. \qed
\end{proof}
Writing Eq.~(\ref{eq:basicrelationcharp}) in matrix form, for $i,j \in \{0,\ldots,q^m-1\}$ we have
\[ 
\begin{bmatrix}
V_1(\vec{i}\cdot\vec{f})
\end{bmatrix}^T
=
S_{q,m}
\cdot
\begin{bmatrix}
N(\vec{j})
\end{bmatrix}^T,
\] 
where 
\begin{equation*}\label{eq:spm}
(S_{q,m})_{i,j} = \begin{cases}
\begin{array}{ll}
1 &  \ \text{if}  \ \vec{i}\cdot\vec{j} = 1 \ \text{or if} \ \vec{i} = \vec{0} \\
0 &  \ \text{otherwise}
\end{array}.
\end{cases}
\end{equation*}
We have the following lemma.

\begin{lemma}\label{lem:spm1}
For all prime powers $q = p^r$ and $m \ge 1$, the $q^m \times q^m$ matrix $S_{q,m}$ is invertible over $\Q$.
\end{lemma}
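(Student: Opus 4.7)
The plan is to show that $\ker S_{q,m} = 0$ over $\mathbb{C}$, which suffices since $S_{q,m}$ has integer entries. The natural tool is finite Fourier analysis on the additive group $(\F_q)^m$, since the relation $\vec{i}\cdot\vec{j}=1$ defining the support of the matrix is linear in $\vec{j}$.

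Fix a nontrivial additive character $\chi$ of $\F_q$. For any putative kernel vector $x=(x_{\vec{j}})$, define its Fourier transform $X(\vec{i}) := \sum_{\vec{j}} x_{\vec{j}}\,\chi(\vec{i}\cdot\vec{j})$. The row of $S_{q,m}$ indexed by $\vec{i}=\vec{0}$ gives $\sum_{\vec{j}} x_{\vec{j}} = X(\vec{0}) = 0$. For $\vec{i} \ne \vec{0}$, using the orthogonality relation $\mathbf{1}[\vec{i}\cdot\vec{j}=1] = \tfrac{1}{q}\sum_{b \in \F_q}\chi(b\vec{i}\cdot\vec{j})\chi(-b)$, the row equation $\sum_{\vec{i}\cdot\vec{j}=1}x_{\vec{j}} = 0$ unfolds to
\[
\sum_{b \in \F_q}\chi(-b)\,X(b\vec{i}) = 0,
\]
which, combined with $X(\vec{0})=0$, reduces to $\sum_{b \in \F_q^*}\chi(-b)X(b\vec{i}) = 0$ for every $\vec{i} \ne \vec{0}$.

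Next I fix any line through the origin represented by some $\vec{i}_0 \ne \vec{0}$ and set $Y_s := X(s\vec{i}_0)$ for $s \in \F_q^*$. Taking $\vec{i} = t\vec{i}_0$ and reindexing $s = bt$ in the preceding display yields
\[
\sum_{s \in \F_q^*} Y_s\,\chi(-s/t) = 0 \qquad \text{for each } t \in \F_q^*.
\]
Extending $Y$ to $\F_q$ by $Y_0 := 0$ and setting $w := 1/t$, these are precisely $\widehat{Y}(w)=0$ for every $w \ne 0$, where $\widehat{Y}$ is the Fourier transform on $\F_q$. Since $\widehat{Y}$ is supported at $\{0\}$, Fourier inversion forces $Y$ to be constant on $\F_q$, and together with $Y_0=0$ this gives $Y \equiv 0$.

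Hence $X$ vanishes on every line through the origin and at the origin, so $X \equiv 0$ on $(\F_q)^m$. Fourier inversion on $(\F_q)^m$ then gives $x \equiv 0$, so $\ker S_{q,m}=0$ and $S_{q,m}$ is invertible over $\mathbb{Q}$. The only delicate point is the dual role played by the row $\vec{i}=\vec{0}$: it simultaneously supplies the condition $X(\vec{0})=0$ used to drop the $b=0$ term in the unfolded character sum, and also kills the constant produced at the final Fourier-inversion step. I do not expect any further obstacle, since the heart of the argument is standard finite Fourier analysis on the abelian group $(\F_q)^m$.
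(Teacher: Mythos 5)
Your proof is correct, and it takes a genuinely different route from the paper's. The paper argues constructively: it first isolates the special role of the $\vec{0}$ row and column, then writes down the explicit candidate inverse $S_{1\le i,j\le q^m-1}^{\text{inv}} = q^{-(m-1)}(\mathbf{1}[\vec{i}\cdot\vec{j}=1] - \mathbf{1}[\vec{i}\cdot\vec{j}=0])$ and verifies the identity $S\cdot S^{\text{inv}} = \mathrm{Id}$ by a hands-on case analysis of the intersection counts $\#\{\vec{k} : \vec{i}\cdot\vec{k}=1,\ \vec{j}\cdot\vec{k}=\epsilon\}$. You instead prove invertibility nonconstructively by showing $\ker S_{q,m}=0$: you pass to the Fourier side over $(\F_q)^m$, unfold the indicator $\mathbf{1}[\vec{i}\cdot\vec{j}=1]$ with a nontrivial additive character, restrict to each line through the origin, and observe that the resulting one-variable transform $\widehat Y$ is supported at $\{0\}$, forcing $Y$ constant and hence zero by $Y_0=0$; since the inner-product pairing is nondegenerate, $X\equiv 0$ gives $x\equiv 0$. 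Both arguments are sound. Your argument is shorter and more conceptual, essentially one appeal to orthogonality of characters plus a line-by-line reduction; the paper's is longer but produces the explicit matrix $S_{q,m}^{-1}$, which is not incidental — that formula is precisely what is needed in the ensuing display~(\ref{eq:Sminus1}) to express $N(\vec{j})$ in terms of the $V_1(\vec{i}\cdot\vec{f})$. So the paper's computation cannot be replaced outright by your argument without separately deriving the inverse; on the other hand your method would also yield the inverse cleanly if pushed one step further (compute $\widehat{S}$ and invert on the Fourier side), and it generalizes more transparently to other linear incidence structures.
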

\begin{proof}
Indexing the rows and columns by $i$ and $j$ for $0 \le i,j \le q^m - 1$, the $0$-th row of $S_{q,m}$ consists of $1$'s only, 
while besides the initial $1$, the $0$-th column consists of $0$'s only. Therefore no $\Q$-linear combination of rows $1$ to $q^m-1$ can cancel the $1$
in position $(0,0)$. Hence if one shows that the submatrix 
\begin{equation*}
S_{1 \le i,j \le q^m - 1} = \begin{cases}
\begin{array}{ll}
1 &  \ \text{if}  \ \vec{i}\cdot\vec{j} = 1\\
0 &  \ \text{otherwise}
\end{array}
\end{cases}
\end{equation*}
of $S_{q,m}$ is invertible then we are done, since this implies that $S_{q,m}$ has full rank. We claim that the inverse of $S$ is 
\begin{equation*}\label{eq:spm}
S_{1 \le i,j \le q^m - 1}^{\text{inv}} = \frac{1}{q^{m-1}} \begin{cases}
\begin{array}{rl}
1 &   \ \text{if}  \ \vec{i}\cdot\vec{j} = 1\\
-1 &  \ \text{if}  \ \vec{i}\cdot\vec{j} = 0\\
0 &  \ \text{otherwise}
\end{array}
\end{cases}.
\end{equation*}
Let $R = S \cdot S^{\text{inv}}$. Then for $1 \le i,j \le q^m-1$ one has 
\begin{equation}\label{eq:rij}
R_{i,j} = \sum_{k = 1}^{q^m - 1} S_{i,k} \cdot S_{k,j}^{\text{inv}} = 
\sum_{\substack{k = 1,\\
\vec{i}\cdot\vec{k} = 1, \ \vec{j}\cdot\vec{k} = 1}}^{q^m - 1} 1 
\ -\sum_{\substack{k = 1,\\
\vec{i}\cdot\vec{k} = 1, \ \vec{j}\cdot\vec{k} = 0}}^{q^m - 1} 1 \ .
\end{equation}
If $i = j$ then the second term of the r.h.s. of Eq.~(\ref{eq:rij}) is zero, while the first is $q^{m-1}$, since if one chooses a non-zero 
component $i_l$ of $\vec{i}$ (of which there is at least one as $i \ne 0$), then one can freely choose the $m-1$ coefficients
of $\vec{k}$ other than $k_l$, while the condition $\vec{i}\cdot\vec{k} = 1$ entails that
\[
k_l = (1 - \sum_{\substack{w = 0,\\w \ne l}}^{m-1} i_w k_w)/i_l,
\]
which is well-defined because $i_l$ is invertible.

Now assume $i \ne j$. If possible choose $l,l'$ such that $l \ne l'$ and $i_l \ne 0$ and $j_{l'} \ne 0$.
Then considering the set of all $k$ for which $\vec{i}\cdot\vec{k} = 1$ as described above, as $k_{l'}$ varies over $\F_q$, 
so does $k_{l'} j_{l'}$. Hence both terms of the r.h.s. of Eq.~(\ref{eq:rij}) are precisely $q^{m-2}$, since there are $m-2$ free components of
$\vec{k}$. For the remaining case where $\vec{i}$ and $\vec{j}$ both have only one non-zero component, in position $l$ say, then both terms  
of the r.h.s. of Eq.~(\ref{eq:rij}) are zero, since for the first there is no $k_l$ for which $i_l k_l = 1$ and $j_l k_l = 1$ since 
$i_l \ne j_l$, while for the second the condition $\vec{j}\cdot\vec{k} = 0$ implies $k_l = 0$, in which case $i_l k_l = 1$ can not hold.
Therefore, $R$ is the $q^m-1 \times q^m-1$ identity matrix.\qed
\end{proof}

To compute $N(\vec{0})$, we claim that 
\begin{equation}\label{eq:N0}
N(\vec{0}) = V_1( \vec{0}\cdot\vec{f}) - \frac{1}{q^{m-1}}\sum_{i=1}^{q^m-1} V_1( \vec{i}\cdot\vec{f} ).
\end{equation}
Since $V_1( \vec{0}\cdot\vec{f}) = q^n = \sum_{j = 0}^{q^m-1} N(\vec{j})$ by definition, Eq.~(\ref{eq:N0}) is equivalent to
\begin{equation}\label{eq:N01}
\sum_{j = 1}^{q^m-1} N(\vec{j}) = \frac{1}{q^{m-1}}\sum_{i=1}^{q^m-1} V_1( \vec{i}\cdot\vec{f} ) = 
\frac{1}{q^{m-1}} \sum_{i=1}^{q^m-1} \sum_{\vec{i} \cdot \vec{j} \, = \, 1} N(\vec{j}),
\end{equation}
with the latter equality given by Eq.~(\ref{eq:basicrelationcharp}). Considering the r.h.s. of Eq.~(\ref{eq:N01}), the number of occurrences 
of $N(\vec{j})$ is $q^{m-1}$ since as we argued previously if one fixes an $l$ for which $j_l \ne 0$, then one can choose the components of 
$\vec{i}$ other than $i_l$ freely, while $i_l$ is then fixed by the condition $\vec{i}\cdot\vec{j} = 1$.
We have therefore proven the following:
\begin{equation}\label{eq:Sminus1}
S_{q,m}^{-1} = \frac{1}{q^{m-1}}
\renewcommand\arraystretch{1.3}
\mleft[
\begin{array}{c|ccc}
  q^{m-1} & -1 & \cdots & - 1 \\
  \hline
  0 & & & \\
  \vdots & & S_{1 \le i,j \le q^m - 1}^{inv} =\begin{cases}
\begin{array}{rl}
1 &   \ \text{if}  \ \vec{i}\cdot\vec{j} = 1\\
-1 &  \ \text{if}  \ \vec{i}\cdot\vec{j} = 0\\
0 &  \ \text{otherwise}
\end{array}
\end{cases} & \\
  0 & & & \\
\end{array}
\mright]
\end{equation}
Thus in order to compute any of the $q^m$ possible outputs $N(\vec{j})$ of any set of $m$ functions $\vec{f}$, it is sufficient to count the number 
of evaluations to $1$ of all the $q^m-1$ non-zero $\F_q$-linear combinations of the functions, and then apply $S_{q,m}^{-1}$. 
In particular, one may choose the $f_{0},\ldots,f_{m-1}$ to be any subset of the trace functions $T_1,\ldots,T_{n}$, or in our case of 
interest, the trace functions $T_1,\ldots,T_{l}$.


\subsection{Indirect method}\label{sec:mainindirect}

As per the transform of the previous subsection let $\vec{f} = (T_1(a^l),T_1(a^{l-1}),\ldots,T_1(a))$ and for 
$1 \le i \le q^{l} - 1$ let $\vec{i} = (i_{l-1},\ldots,i_0)$. Computing formulae for $V_1(\vec{i} \cdot \vec{f})$ for each 
such $i$ produces formulae for all $q^l$ possible counts in Eq.~(\ref{eq:newmain}), which then uniquely determine $F_q(n,t_1,\ldots,t_l)$.
For $\overline{n} \in \{1,\ldots,p-1\}$ and all $n \equiv \overline{n} \pmod{p}$, applying Lemma~\ref{lem:paramq} we have
\begin{eqnarray}
\nonumber V_1(\vec{i} \cdot \vec{f}) &=& \#\{ a \in \F_{q^n} \mid \sum_{k=0}^{l-1} i_k f_k(a) = 1\}\\
\label{eq:artin} &=& \frac{1}{q} \#\{ (a,a_1) \in (\F_{q^n})^2 \mid a_{1}^q - a_1 = -\frac{1}{\overline{n}} + \sum_{k=0}^{l-1} i_{k} a^{k+1} \big)\}.
\end{eqnarray}
In order to compute the zeta functions of the Artin-Schreier curves in~(\ref{eq:artin}) we use a $p$-adic point counting algorithm due to 
Lauder and Wan that is well suited to this purpose~\cite[Algorithm 25]{LauderWan}.
Let $f \in \F_q[a]$ be of degree $d$ not divisible by $p$, let $C_f/\F_q$ be the affine curve with equation $a_{1}^p - a_1 = f(a)$ and let 
$\tilde{C}_f$ denote the unique smooth projective curve that is birational to $C_f$. 
By Weil~\cite{weil}, we know that the zeta function $Z(\tilde{C}_f,T)$ of $\tilde{C}_f$ satisfies
\begin{equation}\label{zeta}
Z(\tilde{C}_f,T) = \frac{P(\tilde{C}_f,T)}{(1-T)(1-qT)},
\end{equation}
where $P$ is polynomial of degree $2g$ with $g = (p-1)(d-1)/2$ the genus of $\tilde{C}_f$, and which factorises as 
$\prod_{k=1}^{2g} (1 - \omega_k t)$ with $|\omega_k| = \sqrt{q}$. Since $\tilde{C}_f$ is birational to $C_f$ their
respective zeta functions differ only by a constant factor and thus to compute $Z(C_f,T)$ it suffices to compute $Z(\tilde{C}_f,T)$.
An exponential sum approach also allows one to deduce the form of $Z(C_f,T)$ directly~\cite[\S2.2]{LauderWan}.
We have the following.

\begin{theorem}\cite[Theorem 1]{LauderWan}
\label{thm:LW}
The zeta function of the smooth projective curve $\tilde{C}_f$ (and thus the affine curve $C_f$) may be computed deterministically in 
$\tilde{\mathcal{O}}(d^5 p^4 r^3)$ bit operations.
\end{theorem}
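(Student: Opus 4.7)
The plan is to follow the $p$-adic cohomological approach originating with Dwork and Reich. First I would reduce the computation of $Z(\tilde{C}_f,T)$ to that of the exponential sums $S_k(f,\psi) = \sum_{x \in \F_{q^k}} \psi\bigl(\Tr_{\F_{q^k}/\F_p}(f(x))\bigr)$ for each nontrivial additive character $\psi$ of $\F_p$, since the affine point count of $C_f$ over $\F_{q^k}$ equals $q^k + \sum_{\psi \ne 1} S_k(f,\psi)$ and by Eq.~(\ref{zeta}) the zeta function is determined by the first $2g$ such counts. Computing $Z(\tilde{C}_f,T)$ then reduces to determining the characteristic polynomial of a Frobenius operator on the relevant $p$-adic cohomology attached to $f$.

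Next I would invoke Dwork's trace formula, which expresses this characteristic polynomial as $\det(I - T\alpha)$ for a completely continuous operator $\alpha$ on a $p$-adic Banach space of overconvergent power series. Using Dwork's splitting function $\theta(z) = \exp\bigl(\pi(z - z^p)\bigr)$ with $\pi^{p-1} = -p$, one builds $\alpha$ by composing multiplication by $\prod_i \theta(c_i a^i)$, where the $c_i$ are Teichm\"uller lifts of the coefficients of $f$, with a standard Frobenius lift $a \mapsto a^p$. In a suitable monomial basis of weighted degree bounded linearly in $d$, $\alpha$ is represented by an explicit matrix $M$ of dimension $n_0 = \mathcal{O}(dp)$ with entries in $\Z_q$.

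The algorithm then (i) chooses a $p$-adic working precision $N$ large enough that, by the Weil bound $|\omega_k| = \sqrt{q}$, reduction mod $p^N$ uniquely determines the integer coefficients of $P(\tilde{C}_f,T)$; (ii) truncates $\theta$ and the series defining $M$ to this precision; (iii) assembles $M$; and (iv) computes the characteristic polynomial of $M$ over $\Z_q$ and descends to $\Z[T]$ by taking norms from $\Z_q$ to $\Z_p$. With fast arithmetic, each $\Z_q$-operation at precision $N$ costs $\tilde{\mathcal{O}}(Nr)$ bit operations, while the dominant linear-algebra step on an $n_0 \times n_0$ matrix requires $\tilde{\mathcal{O}}(n_0^3)$ ring operations.

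The main technical obstacle is the precision--complexity accounting: one must allow $N$ to grow somewhat beyond the naive Weil estimate in order to absorb the precision lost during Gaussian elimination and to respect the radius of overconvergence of $\theta$. The careful Dwork--Reich convergence analysis shows that $N = \tilde{\mathcal{O}}(dpr)$ digits suffices, and combined with $n_0 = \mathcal{O}(dp)$ this yields the claimed $\tilde{\mathcal{O}}(d^5 p^4 r^3)$ bit-operation bound. Verifying every link of this chain -- in particular that raising truncated overconvergent series to $p$-th powers during the Frobenius action, and propagating precision through the characteristic polynomial computation, do not inflate the cost further -- is the crux of the argument.
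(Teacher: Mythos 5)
This statement is a cited result (Theorem 1 of Lauder and Wan~\cite{LauderWan}); the paper quotes it without supplying a proof, so there is no in-paper argument to compare against. Your sketch is a fair summary of the approach that Lauder and Wan actually take: reduction to toric exponential sums via additive characters, Dwork's splitting function and trace formula, a finite matrix representing a Frobenius-type operator on a space of overconvergent power series, a $p$-adic precision estimate driven by the Weil bound, and a characteristic-polynomial computation with norms from $\Z_q$ down to $\Z_p$. Be aware that the detailed precision and dimension bookkeeping you gesture at ($n_0 = \mathcal{O}(dp)$ dimension, $N = \tilde{\mathcal{O}}(dpr)$ digits, and the propagation of precision loss through linear algebra and through composing the Frobenius lift with the splitting-function factors) is exactly where the real work in Lauder--Wan lies, and getting the final exponent $d^5 p^4 r^3$ requires their specific reduction and not just the generic Dwork--Reich framework; a reader wanting the actual proof must go to the cited source.
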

We note that one does not need to compute the unique smooth projective curve $\tilde{C}_f$ in order to apply the Lauder-Wan 
algorithm. 

Let the right hand side of the curves in~(\ref{eq:artin}) be denoted by $\overline{p}_{\vec{i},\overline{n}}(a)$ and denote each curve by
\begin{equation}\label{eq:maincurve}
C_{\vec{i},\overline{n}}/\F_{q}: a_{1}^q - a_{1} = \overline{p}_{\vec{i},\overline{n}}(a).
\end{equation}
The degree of $\overline{p}_{\vec{i},\overline{n}}(a)$ is 
$k'+1$ where $k'$ is the largest $0 \le k' \le l-1$ such that $i_{k'} \neq 0$. Since $k'+1 \le l < p$, for all $1 \le i \le q^{l}-1$ 
the degree of $\overline{p}_{\vec{i},\overline{n}}(a)$ is less than $p$ and thus not divisible by $p$, and so this precondition
of the Lauder-Wan algorithm is satisfied.
However, the Lauder-Wan algorithm can only be applied when the left hand side of~(\ref{eq:maincurve}) is $a_{1}^p - a_1$; we therefore need to 
reduce to this case. This issue was already addressed in~\cite[Corollary 1]{AGGMY} for the case $p = 2$ (albeit with a sign error) using a theorem of 
Kani-Rosen~\cite{kanirosen} and by studying quotients arising from the action of automorphisms of the left hand side only. The correct generalisation 
is as follows, where for an affine curve $C/\F_q$ we denote by $\#C(\F_{q^n})$ the number of $\F_{q^n}$-rational affine points of $C$.

\begin{lemma}\label{thm:extensionreduction}
Let $p \ge 2$ be prime, for $r \ge 1$ let $q = p^r$, let $f \in \F_q[x]$, let $C/\F_q$ be the Artin-Schreier curve $y^q - y = f(x)$ and for 
$\alpha \in \F_{q}^{\times}$ let $C_{\alpha}/\F_q$ be the Artin-Schreier curve $y^p - y = \alpha f(x)$. Then for all $n \ge 1$ we have
\begin{equation}\label{eqn:extensionreduction}
(p-1)\#C(\F_{q^n}) - \sum_{\alpha \in \F_{q}^{\times}} \#C_{\alpha}(\F_{q^n}) = (p - q)q^n.
\end{equation}
\end{lemma}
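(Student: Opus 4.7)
Proof plan. My approach is to count the affine points of each curve fibre-by-fibre over $x_0 \in \F_{q^n}$, using the standard fact that for $c \in \F_{q^n}$ the Artin–Schreier equation $y^{q^{\phantom{n}}} - y = c$ has exactly $q$ solutions in $\F_{q^n}$ when $\Tr_{\F_{q^n}/\F_q}(c) = 0$ and none otherwise, and similarly $y^p - y = \alpha f(x_0)$ has $p$ solutions or $0$ solutions according to whether $\Tr_{\F_{q^n}/\F_p}(\alpha f(x_0))$ vanishes.

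Define $t(x_0) = \Tr_{\F_{q^n}/\F_q}(f(x_0)) \in \F_q$ and let $Z = \#\{x_0 \in \F_{q^n} : t(x_0) = 0\}$. Then immediately $\#C(\F_{q^n}) = qZ$. For the other curves, I would use the transitivity of the trace,
\[
\Tr_{\F_{q^n}/\F_p}(\alpha f(x_0)) = \Tr_{\F_q/\F_p}(\alpha\, t(x_0)),
\]
so that the fibre over $x_0$ in $C_\alpha$ has $p$ points when $\Tr_{\F_q/\F_p}(\alpha t(x_0)) = 0$ and $0$ otherwise.

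The heart of the argument is then an interchange of summation: writing
\[
\sum_{\alpha \in \F_q^\times} \#C_\alpha(\F_{q^n}) = p \sum_{x_0 \in \F_{q^n}} \#\bigl\{\alpha \in \F_q^\times : \Tr_{\F_q/\F_p}(\alpha t(x_0)) = 0\bigr\},
\]
I split on whether $t(x_0) = 0$ or not. If $t(x_0)=0$ the inner count is $q-1$, contributing $Z(q-1)$. If $t(x_0) \neq 0$, then $\alpha \mapsto \alpha t(x_0)$ is a bijection on $\F_q^\times$, so the inner count equals the number of nonzero elements of $\F_q$ killed by the surjective $\F_p$-linear map $\Tr_{\F_q/\F_p}$, namely $q/p - 1$. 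This contributes $(q^n - Z)(q/p - 1)$.

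Putting these together, $\sum_\alpha \#C_\alpha(\F_{q^n}) = pZ(q-1) + (q-p)(q^n - Z) = q(p-1)Z + (q-p)q^n$. Subtracting this from $(p-1)\#C(\F_{q^n}) = q(p-1)Z$ yields exactly $(p-q)q^n$, as required. The only thing to be careful about is the trace-transitivity step and the elementary linear algebra over $\F_p$ giving $|\ker \Tr_{\F_q/\F_p}| = q/p$; there is no genuine obstacle, so I expect the proof to be a short direct verification along these lines.
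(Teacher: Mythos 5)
Your proof is correct and leads to the stated identity. It follows the same overall strategy as the paper's proof — counting affine points fibre-by-fibre over $x_0 \in \F_{q^n}$, invoking trace transitivity to reduce $\Tr_{\F_{q^n}/\F_p}(\alpha f(x_0))$ to $\Tr_{\F_q/\F_p}(\alpha\, t(x_0))$, and splitting the sum according to whether $t(x_0) = \Tr_{\F_{q^n}/\F_q}(f(x_0))$ vanishes — but the technical bookkeeping differs. The paper encodes the fibre counts via sums over the additive characters of $\F_p$, and must then explicitly evaluate the resulting character sums (obtaining $(p-1)(p^r-1)$ and $1-p$ in the two cases). You instead use the Artin--Schreier fibre count directly (the $x_0$-fibre of $y^p - y = c$ has $p$ points if $\Tr_{\F_{q^n}/\F_p}(c) = 0$ and is empty otherwise) and, for $t(x_0)\neq 0$, observe that $\alpha \mapsto \alpha\, t(x_0)$ permutes $\F_q^\times$, so the inner count is $|\ker(\Tr_{\F_q/\F_p})| - 1 = q/p - 1$. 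This entirely avoids character theory and replaces the character-sum evaluations with an elementary linear-algebra fact, making the argument shorter and more self-contained; the paper's version, on the other hand, is stated in a language that meshes naturally with the exponential-sum formulation of the zeta functions elsewhere in the section.
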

Thus computing the zeta function of $C/\F_q$ reduces to computing the zeta functions of the $q-1$ curves $C_{\alpha}/\F_q$.
Lemma~\ref{thm:extensionreduction} may be proven using an exponential sum argument and such a proof can be found (in Persian) in~\cite{omranproof}. 
For completeness we include it here and thank Omran Ahmadi for communicating to us its translation. 

\begin{proof}
For $m,r \ge 1$ positive integers let $\text{Tr}_{m}:\F_{p^m} \rightarrow \F_p: x \mapsto x + x^p + x^{p^2} + \cdots + x^{p^{m-1}}$ denote the absolute 
trace function, and let $\text{Tr}_{rm|r}:\F_{p^{rm}} \rightarrow \F_{p^r}: x \mapsto x + x^{p^r} + x^{p^{2r}} + \cdots + x^{p^{(m-1)r}}$ denote
the trace. By Lemma~\ref{lem:paramq}~(1) if $\text{Tr}_{rn}(\alpha f(a))=0$, then there are $p$ points on $C_\alpha$ with $x$-coordinates equal to $a$, 
and otherwise none since $\text{Tr}_{rn}(y^p - y) = 0$ for all $y \in \F_{p^{rn}}$. Therefore the number of points on $C_{\alpha}$ with 
$x$-coordinate equal to $a$ is $\sum \psi( \text{Tr}_{rn}(\alpha f(a)))$
where the sum is over the additive characters of $\F_p$. Denoting the set of all additive characters of $\F_p$ by $\Psi$ and its trivial character 
by $\psi_0$ we have 
\[
\#C_\alpha(\F_{p^{rn}})= \sum_{a \in \F_{p^{rn}}} \sum_{\psi \in \Psi} \psi( \text{Tr}_{rn}(\alpha f(a))) = p^{rn} + 
\sum_{a \in \F_{p^{rn}}} \sum_{\psi \neq \psi_0} \psi( \text{Tr}_{rn}(\alpha f(a))).
\]
Summing over all $\alpha \in \F_{p^r}^{\times}$ we obtain
\begin{eqnarray}
\nonumber S = \sum_{\alpha \in \F_{p^r}^{\times}} \#C_\alpha(\F_{p^{rn}}) &=& p^{rn}(p^r-1) + 
\sum_{\alpha \in \F_{p^r}^{\times}} \sum_{a \in \F_{p^{rn}}} \sum_{\psi \neq \psi_0} \psi( \text{Tr}_{rn}(\alpha f(a)))\\
\label{Total-Sum}  &=& p^{rn}(p^r-1) + 
 \sum_{a \in \F_{p^{rn}}} \sum_{\alpha \in \F_{p^r}^{\times}} \sum_{\psi \neq \psi_0} \psi( \text{Tr}_{rn}(\alpha f(a))).
\end{eqnarray}
Since $\alpha \in \F_{p^r}$, we have
\[
\text{Tr}_{rn}(\alpha f(a))=\text{Tr}_{r}(\text{Tr}_{rn|r}(\alpha f(a))) = \text{Tr}_{r}(\alpha\text{Tr}_{rn|r}(f(a))).
\]
This implies that if $\text{Tr}_{rn|r}(f(a)) = 0$ for an $a \in \F_{p^{rn}}$, then
\begin{equation}\label{eq-T}
\sum_{\alpha \in \F_{p^r}^{\times}} \sum_{\psi \neq \psi_0} \psi( \text{Tr}_{rn}(\alpha f(a))) =
\sum_{\alpha \in \F_{p^r}^{\times}} \sum_{\psi \neq \psi_0} \psi( \text{Tr}_{r}(\alpha\text{Tr}_{rn|r}(f(a)))) = (p-1)(p^r - 1),
\end{equation}
and otherwise 
\begin{equation}\label{eq-U}
\sum_{\alpha \in \F_{p^r}^{\times}} \sum_{\psi \neq \psi_0} \psi( \text{Tr}_{rn}(\alpha f(a))) =
\sum_{\alpha \in \F_{p^r}^{\times}} \sum_{\psi \neq \psi_0} \psi( \text{Tr}_{r}(\alpha\text{Tr}_{rn|r}(f(a)))) = 1 - p.
\end{equation}
If we let $$T=\#\left\{a \in |\text{Tr}_{rn|r}(f(a)) = 0 \right\}$$ and $$U=\#\left\{a \in |\text{Tr}_{rn|r}(f(a)) \neq 0\right\},$$ 
then $T+U=p^{rn}$. Using this fact and combining Equations~\eqref{Total-Sum},~\eqref{eq-T} and ~\eqref{eq-U}, we obtain
\begin{eqnarray}
\nonumber S &=& p^{rn}(p^r-1) + (p^r-1)(p-1)T + (1-p)U\\ 
\nonumber &=& p^{rn}(p^r-1) + (p^r-1)(p-1)T + (1-p)(p^{rn} - T)\\
\label{eq:S} &=& (p-1)p^r T + p^{rn}(p^r - p)
\end{eqnarray}
Applying Lemma~\ref{lem:paramq}~(1) again we have
\[
\#C(\F_{p^{rn}})=p^r T,
\] 
and hence by Eq.~(\ref{eq:S}) we have
\[
(p-1)\#C(\F_{p^{rn}}) - \sum_{\alpha \in \F_{p^r}^{\times}} \#C_\alpha(\F_{p^{rn}}) = (p - p^r)p^{rn}. 
\]
\flushright \qed
\end{proof}

\noindent {\it Proof of Theorem~\ref{thm:maintheorem}:}
Fix $\overline{n} \in \{1,\ldots,p-1\}$. By transform~(\ref{eq:Sminus1}), in order to compute all $F_q(n,t_1,\ldots,t_l)$  
one need only compute $V_1(\vec{i} \cdot \vec{f})$ for all $1 \le i \le q^{l}-1$.
For each such $i$ let $d(\vec{i})$ denote the degree of $\overline{p}_{\vec{i},\overline{n}}(a)$, 
which we recall is $(k'+1)$ where $k'$ is the largest $0 \le k' \le l-1$ such that $i_{k'} \neq 0$. 
The number of $i$'s for which $d(\vec{i})$ is $l,l-1,\ldots,1$ is therefore $(q-1)q^{l-1},(q-1)q^{l-2},\ldots,q-1$, respectively.

We first count the number of roots -- counted with multiplicities, since they may not all be distinct -- which appear in the numerator of the zeta function of the curve $C_{\vec{i},\overline{n}}/\F_q$ featured in $V_1(\vec{i} \cdot \vec{f})$.
For $q = p$ the genus of $C_{\vec{i},\overline{n}}/\F_p$ is $(p-1)(d(\vec{i})-1)/2$ and so there are $(p-1)(d(\vec{i})-1)$ roots. 
The total number of roots over all $1 \le i \le p^l-1$ is therefore
\begin{equation*}\label{eq:numberofroots}
(p-1)\sum_{i = 1}^{p^l-1} (d(\vec{i}) - 1) = (p-1) \sum_{d=1}^{l} (p-1) p^{d-1} (d-1) = (p-1)^2 \sum_{d=1}^{l-1} dp^d = p^l(pl - p - l) + p.
\end{equation*}
For $q = p^r$ with $r > 1$, as per Lemma~\ref{thm:extensionreduction} we adapt the curves in Eq.~(\ref{eq:maincurve}), so for $\alpha \in \F_{q}^{\times}$ let 
\begin{equation}\label{eq:Calpha}
C_{\alpha,\vec{i},\overline{n}}/\F_{q}: a_{1}^p - a_{1} = \alpha\, \overline{p}_{\vec{i},\overline{n}}(a).
\end{equation}
Since the number of roots of $C_{\vec{i},\overline{n}}/\F_q$ is the sum over $\alpha \in \F_{q}^{\times}$ of the number of roots of 
$C_{\alpha,\vec{i},\overline{n}}/\F_{q}$, the total number of roots is therefore
\begin{eqnarray*}\label{eq:numberofrootsq}
\nonumber (q-1)(p-1)\sum_{i = 1}^{q^l-1} (d(\vec{i}) - 1) &=& (q-1)(p-1) \sum_{d=1}^{l} (q-1) q^{d-1} (d-1) = (q-1)^2(p-1) \sum_{d=1}^{l-1} dq^d\\
&=& (p-1)(q^l(ql - q - l) + q).
\end{eqnarray*}
Thus $N$ is as stated in the theorem and the (not necessarily distinct) roots $\omega_1,\ldots,\omega_N \in \overline{\Z}$ all have norm $\sqrt{q}$.
The $\upsilon_1,\ldots,\upsilon_N$ corresponding to each $\vec{t} = (t_1,\ldots,t_l)$ follow immediately from transform~(\ref{eq:Sminus1}).
In particular, for $q=p$ we have 
\begin{equation*}
F_p(n,\vec{0}) = \frac{1}{p^{l-1}} \Big( p^{n + l - 1} - \sum_{i=1}^{p^l-1} V_1(\vec{i} \cdot \vec{f}) \Big)
= p^n - \frac{1}{p^l} \sum_{i=1}^{p^l-1} \Big( p^n - \sum_{k=1}^{(p-1)(d(\vec{i}) - 1)} \omega_{i,k}^n \Big)
= \frac{1}{p^l} \Big(p^n + \sum_{k=1}^{N} \omega_{k}^n \Big),
\end{equation*}
while for $\vec{t} \ne \vec{0}$ we have
\begin{equation*}
F_p(n,\vec{t}) = \frac{1}{p^{l-1}} \Big( \sum_{\vec{i} \cdot \vec{t} = 1} V_1(\vec{i} \cdot \vec{f}) 
- \sum_{\vec{i} \cdot \vec{t} = 0, \ \vec{i} \ne \vec{0}} V_1(\vec{i} \cdot \vec{f})\Big)
=  \frac{1}{p^{l}} \Big( p^n + \sum_{k=1}^N \upsilon_k \omega_{k}^n \Big),
\end{equation*}
where the $\upsilon_{k}$ are in $\{0,\pm 1 \}$ and the $p^n$ arises as there are $p^{l-1}$ indices $i$ for which $\vec{i} \cdot \vec{t} = 1$, 
while there are $p^{l-1}-1$ non-zero indices $i$ for which $\vec{i} \cdot \vec{t} = 0$.
For $q = p^r$ with $r > 1$, we have 
\[
\#C_{\alpha,\vec{i},\overline{n}}(\F_{q^n}) = q^n - \sum_{k=1}^{(p-1)(d(\vec{i})-1)} \omega_{\alpha,i,k}^n,
\]
and hence
\[
\sum_{\alpha \in \F_{q}^{\times}} \#C_{\alpha,\vec{i},\overline{n}}(\F_{q^n}) = (q-1)q^n - \sum_{k=1}^{(q-1)(p-1)(d(\vec{i})-1)} \omega_{i,k}^n,
\]
and by Lemma~\ref{thm:extensionreduction} we have
\begin{eqnarray*}
\#C_{\vec{i},\overline{n}}(\F_{q^n}) &=& \frac{p-q}{p-1}\,q^n + \frac{q-1}{p-1}\Big( q^n - \sum_{k=1}^{(q-1)(p-1)(d(\vec{i})-1)} \omega_{i,k}^n \Big)\\
&=& q^n - \frac{q-1}{p-1} \sum_{k=1}^{(q-1)(p-1)(d(\vec{i})-1)} \omega_{i,k}^n,
\end{eqnarray*}
and
\[
V_1(\vec{i} \cdot \vec{f}) = \frac{1}{q} \Big( q^n - \frac{q-1}{p-1} \sum_{k=1}^{(q-1)(p-1)(d(\vec{i})-1)} \omega_{i,k}^n \Big).
\]
As before, by~(\ref{eq:Sminus1}) we have
\begin{eqnarray*}
F_q(n,\vec{0}) = \frac{1}{q^{l-1}} \Big( q^{n + l - 1} - \sum_{i=1}^{q^l-1} V_1(\vec{i} \cdot \vec{f}) \Big)
&=& q^n - \frac{1}{q^l} \sum_{i=1}^{q^l-1} \Big( q^n - \frac{q-1}{p-1} \sum_{k=1}^{(q-1)(p-1)(d(\vec{i}) - 1)} \omega_{i,k}^n \Big)\\
&=& \frac{1}{q^l} \Big(q^n + \frac{q-1}{p-1}\sum_{k=1}^{N} \omega_{k}^n \Big),
\end{eqnarray*}
while for $\vec{t} \ne \vec{0}$ we have
\begin{equation*}
F_q(n,\vec{t}) = \frac{1}{q^{l-1}} \Big( \sum_{\vec{i} \cdot \vec{t} = 1} V_1(\vec{i} \cdot \vec{f}) 
- \sum_{\vec{i} \cdot \vec{t} = 0, \ \vec{i} \ne \vec{0}} V_1(\vec{i} \cdot \vec{f})\Big)
=  \frac{1}{q^{l}} \Big( q^n + \frac{q-1}{p-1}\sum_{k=1}^N \upsilon_k \omega_{k}^n \Big).
\end{equation*}
In all cases, for fixed $p$ and $q$, as $n \rightarrow \infty$ with $n \equiv \overline{n} \pmod{p}$ we have
\begin{equation*}
F_q(n,t_1,\ldots,t_l) = q^{n-l} + \mathcal{O}(q^{n/2}).
\end{equation*}

Regarding the complexity claim, for $q = p$, by Theorem~\ref{thm:LW} the cost of computing all the relevant zeta functions in terms of the
number of bit operations is
\begin{equation*}
\sum_{i = 1}^{p^l - 1} \tilde{\mathcal{O}}(d(\vec{i})^5 p^4) =
\sum_{j=1}^{l} (p-1)p^{j-1} \cdot \tilde{\mathcal{O}}(j^5 p^4 ) = p^l \cdot \tilde{\mathcal{O}}(l^5 p^4).
\end{equation*}
For $q = p^r$ with $r > 1$ we employ Lemma~\ref{thm:extensionreduction} and similarly obtain a cost in terms of the number of bit operations of
\begin{equation*}
(q-1) \sum_{i = 1}^{q^l - 1} \tilde{\mathcal{O}}(d(\vec{i})^5 p^4 r^3) =
(q-1) \sum_{j=1}^{l} (q-1)q^{j-1} \cdot \tilde{\mathcal{O}}(j^5 p^4 r^3 ) = q^{l+1} \cdot \tilde{\mathcal{O}}( l^5 p^4 r^3 ).
\end{equation*} \qed

\subsubsection{Some remarks on the algorithm.}
Firstly, note that the `cost per $F_q(n,t_1,\ldots,t_l)$' is just $\tilde{\mathcal{O}}( l^5 p^4 )$ and $q \cdot \tilde{\mathcal{O}}( l^5 p^4 r^3)$ bit operations for $r = 1$ and $r > 1$, respectively. The algorithm is thus very efficient at computing all such $F_q(n,t_1,\ldots,t_l)$.
Regarding practical efficiency, it should be possible to re-use much of the computation in the $q^l-1$ runs of the Lauder-Wan algorithm. 
It may also be possible for $r > 1$ to work with the curve~(\ref{eq:maincurve}) rather than the curves~(\ref{eq:Calpha}), 
thus bypassing Lemma~\ref{thm:extensionreduction} and possibly reducing the complexity by a factor of $q-1$ in this case. 
Furthermore, if only one $F_q(n,t_1,\ldots,t_l)$ is required then there may be a way to exploit the direct method which is more 
efficient than computing either $2q^{l-1} - 1$ or $q^l - 1$ zeta functions, as is required by the indirect method.

Secondly, as already noted there may be many repeats amongst the $\omega_k$'s and so the number of summands in~(\ref{eq:maintheorem}) may be far 
less than $N$, even for $F_q(n,\vec{0})$ to which they all contribute. Indeed, using the direct approach leads to formulae with less redundancy 
than the indirect method.
For example, for $q = 5$ and $l = 4$, for the direct method the curve~(\ref{eq:main_direct}) is absolutely irreducible and has genus $860$, producing 
$1720$ roots (not necessarily distinct), whereas the indirect method produces $6880$ roots for $F_q(n,\vec{0})$ (not necessarily distinct)
which is a factor of $p-1$ more.
The advantage of the indirect method is that the genus of the arising curves is far smaller ($0,2,4$ and $6$ in this example), 
making the computation of their zeta functions extremely efficient. One may be able to eliminate the redundancy: we provide an example of this 
in~\S\ref{sec:ternaryzetas}. 

Finally, note that in contrast to the direct method, for the indirect method even if $\vec{t} = \vec{0}$, 
one needs to use $1/\overline{n}$ as it features in~(\ref{eq:artin}), so the obtained formulae are only valid for $n$ coprime to $p$.

\subsection{Reducing the prescribed coefficients problem to the prescribed traces problem}\label{sec:transform0}

We now show how to express our titular problem in terms of the prescribed traces problem. For the present case of interest for which $l < p$, 
the transform follows almost immediately from the work of Miers and Ruskey~\cite{miersruskey2}; only a small change in the relevant 
definitions is needed.

For $a \in \F_{q^n}$ denote by $\overline{a}$ the string $(a,a^q,a^{q^2},\ldots,a^{q^{n-1}})$. Such a string is called periodic if there is a substring whose repeated concatenation 
gives $\overline{a}$; otherwise the string is called aperiodic. Observe that $\overline{a}$ is aperiodic if and only if $a$ does not belong to any proper
subfield of $\F_{q^n}$.
Let $A_q(n,t_1,\ldots,t_l)$ denote the number of aperiodic strings corresponding to elements of $\F_{q^n}$ in the above manner for which 
$T_1(a) = t_1,\ldots,T_l(a) = t_l$. If $\overline{a}$ is counted by $A_q(n,t_1,\ldots,t_l)$ then all rotations of $\overline{a}$ are distinct and 
produce the same trace values. By this and the above observation we thus have
\[
I_q(n,t_1,\ldots,t_l) = \frac{1}{n}A_q(n,t_1,\ldots,t_l).
\]

Now let $f$ denote the minimum polynomial of $a$ over $\F_q$, which has degree $n/d$ for some $d \mid n$.
Note that $T_k(a)$ is the coefficient of $x^{n-k}$ in $f^d$~\cite[Lemma 2]{cattell}, so abusing notation slightly we also write $T_k(a)$ as 
$T_k(f^d)$. The multinomial theorem (cf.~\cite[Lemma 2.1]{miersruskey2}) gives the following.

\begin{lemma}\label{lemma:multinomial}
For all $k \ge 1$ and $d \ge 1$ we have
\begin{equation*}\label{eq:multinomial}
T_k(f^d) = \sum_{\nu_1 + 2\nu_2 + \cdots + k\nu_k = k} \binom{d}{\nu_1,\ldots,\nu_k,d - (\nu_1 + \cdots + \nu_k)} T_1(f)^{\nu_1} T_2(f)^{\nu_2}\cdots T_k(f)^{\nu_k}.
\end{equation*}
\end{lemma}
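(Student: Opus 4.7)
The plan is to expand $f^d$ directly using the multinomial theorem and extract the coefficient of $x^{n-k}$, using the identification $T_k(a) = (-1)^k \cdot [x^{n-k}]f^d$ (up to sign) that follows from the characteristic polynomial definition $\prod_{i=0}^{n-1}(x - a^{q^i}) = f(x)^d$.

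First I would write $f$ as
\[
f(x) \;=\; \sum_{j=0}^{n/d} (-1)^j T_j(f)\, x^{n/d - j},
\]
with the convention $T_0(f) = 1$ and $T_j(f) = 0$ for $j > n/d$, so that the signs match the definition of the elementary symmetric functions appearing in the characteristic polynomial. The key move is then the multinomial theorem applied to the $(n/d+1)$-term sum:
\[
f(x)^d \;=\; \sum_{\nu_0 + \nu_1 + \cdots = d} \binom{d}{\nu_0,\nu_1,\ldots} \prod_{j \ge 0} \bigl((-1)^j T_j(f)\bigr)^{\nu_j}\, x^{\nu_j(n/d - j)}.
\]
The exponent of $x$ in each summand collapses to $n - \sum_{j \ge 1} j\nu_j$, using $\sum_j \nu_j = d$. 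Selecting the coefficient of $x^{n-k}$ therefore imposes the single constraint $\nu_1 + 2\nu_2 + \cdots + k\nu_k = k$ and forces $\nu_j = 0$ for $j > k$; the remaining slot $\nu_0 = d - (\nu_1 + \cdots + \nu_k)$ absorbs the constant factor coming from $T_0(f)^{\nu_0} = 1$.

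Next I would collect the signs. The factor $\prod_{j \ge 1} (-1)^{j \nu_j}$ equals $(-1)^{\nu_1 + 2\nu_2 + \cdots + k\nu_k} = (-1)^k$, which is independent of the multi-index and hence pulls out of the sum. Combining with the sign $(-1)^k$ from $T_k(a) = (-1)^k \cdot [x^{n-k}]f^d$ (which is the convention used in~\cite[Lemma 2]{cattell} and recalled in the paragraph preceding the lemma), the overall sign cancels and one obtains exactly the claimed expression for $T_k(f^d)$.

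There isn't really a deep obstacle here; the statement is a direct application of the multinomial theorem once $f$ is written with explicit signs and the identification $T_k(a) = T_k(f^d)$ is made. The only thing that requires care is the sign bookkeeping, and in particular verifying that the identification $T_k(a) = T_k(f^d)$ used by Miers--Ruskey~\cite{miersruskey2} is compatible with the sign conventions adopted in the present paper for the coefficients of the characteristic polynomial. Once that is checked the lemma follows on the nose.
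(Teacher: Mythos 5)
Your proof is correct and is exactly the argument the paper gestures at: the paper proves the lemma only by the one-line remark ``The multinomial theorem (cf.~Miers--Ruskey Lemma 2.1) gives the following,'' and your write-up is the fully spelled-out version of precisely that application, with the sign bookkeeping done carefully and the cancellation $(-1)^{\sum j\nu_j} = (-1)^k$ against the sign in $[x^{n-k}]f^d = (-1)^k T_k(f^d)$ observed correctly. No gap; the two approaches are the same.

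One small editorial remark: the caution you raise at the end about whether the paper's convention ``$T_k(a)$ is the coefficient of $x^{n-k}$ in $f^d$'' agrees with Miers--Ruskey is slightly overstated, because that literal statement in the paper is borrowed from Cattell \emph{et al.}, who work over $\F_2$ where signs are invisible. Your own computation already settles the question for general $q$: the multinomial expansion supplies a global factor $(-1)^k$ that cancels the $(-1)^k$ from the characteristic-polynomial convention, so the lemma as stated is sign-free regardless of base field. You can therefore delete the final caveat without loss.
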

This motivates the following definition. For a positive integer $d$ and $\vec{t} = (t_1,\ldots,t_l) \in (\F_q)^l$ define the map 
$\theta_d: (\F_q)^l \rightarrow (\F_q)^l: \vec{t} \rightarrow \vec{u} = (u_1,\ldots,u_l)$ by
\begin{eqnarray*}\label{eq:theta}
u_k &=& \sum_{\nu_1 + 2\nu_2 + \cdots + k\nu_k = k} \binom{d}{\nu_1,\ldots,\nu_k,d - (\nu_1 + \cdots + \nu_k)} t_{1}^{\nu_1} t_{2}^{\nu_2}\cdots t_{k}^{\nu_k}\\
&=&  \sum_{\nu_1 + 2\nu_2 + \cdots + k\nu_k = k} d^{(\nu_1 + \nu_2 + \cdots + \nu_k)} \frac{t_{1}^{\nu_1}}{\nu_1 !} \frac{t_{2}^{\nu_2}}{\nu_2 !}\cdots
 \frac{t_{k}^{\nu_k}}{\nu_k !},
\end{eqnarray*}
where $d^{(m)} = d(d-1)\cdots(d-m+1)$ is the falling factorial.

For a propostion $P$ let $[P]$ denote its truth value. Since every periodic string is the repeated concatenation of an aperiodic string, we have
(cf.~\cite[Eq. (2.5)]{miersruskey2}):
\[
F_q(n,\vec{u}) = \sum_{d \mid n} \sum_{\vec{t} \in (\F_{q})^l} [\theta_d(\vec{t}) = \vec{u}] A_q\big(\frac{n}{d},\vec{t}\big).
\]
For the present case of interest there is precisely one $\vec{t}$ for each $\vec{u}$. Indeed, we have the following theorem -- in which
$\mu$ denotes the usual M\"obius function -- whose proof is identical to that given by Miers and Ruskey~\cite[Theorem 6.1]{miersruskey2}.

\begin{theorem}
Let $q = p^r$ with $p$ an odd prime and let $\vec{t} = (t_1,\ldots,t_l)$ with $l < p$. Then
\begin{equation}\label{eq:transform0}
I_q(n,\vec{t}) = \begin{dcases} 
\frac{1}{n} \sum_{\substack{d \mid n\\ p \nmid d}} \mu(d) \Big(F_q\Big(\frac{n}{d},\vec{0}\Big) - [pd \mid n]q^{n/(pd)}\Big)   \ &\text{if} \ \vec{t} = \vec{0},\\
\frac{1}{n} \sum_{\substack{d \mid n\\ p \nmid d}} \mu(d)F_q\Big(\frac{n}{d},\theta_{d^{-1}}(\vec{t})\Big) \ &\text{otherwise}.
\end{dcases}
\end{equation}
\end{theorem}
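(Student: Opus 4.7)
The proof plan is to follow the approach of Miers and Ruskey, exploiting the relation
\[
F_q(n,\vec{u}) = \sum_{d \mid n} \sum_{\vec{t}\in(\F_q)^l :\, \theta_d(\vec{t}) = \vec{u}} A_q\!\Big(\frac{n}{d},\vec{t}\Big)
\]
already noted in the paper, together with Möbius inversion. The first step is to analyse the map $\theta_d$ in the regime $l < p$. From the falling-factorial expression for $u_k$, the coefficient of $t_k$ is $d^{(1)} = d$, and all other monomials involve only $t_1,\ldots,t_{k-1}$; hence $\theta_d$ is triangular with diagonal entries $d$, which for $p \nmid d$ makes it a bijection on $(\F_q)^l$. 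Moreover, the field-theoretic origin of $\theta_d$ (passing from traces in $\F_{q^{n/d}}$ to traces in $\F_{q^n}$, via $T_k(a) = T_k(f^d)$) yields the composition law $\theta_e \circ \theta_d = \theta_{de}$, so that $\theta_d^{-1} = \theta_{d^{-1}}$ where $d^{-1}$ denotes any integer with $d d^{-1} \equiv 1 \pmod{p}$.

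For $p \mid d$ and $l < p$, every falling factorial $d^{(m)}$ with $1 \le m \le l$ vanishes in $\F_q$, so $\theta_d$ collapses $(\F_q)^l$ to $\vec{0}$. This dichotomy controls the Möbius inversion. The second step is the case $\vec{t} \ne \vec{0}$: the $p \mid d$ summands contribute nothing to $F_q(n,\vec{u})$ when $\vec{u} \ne \vec{0}$, so the relation reduces to
\[
F_q(n,\vec{u}) = \sum_{\substack{d \mid n\\ p \nmid d}} A_q\!\Big(\frac{n}{d},\theta_{d^{-1}}(\vec{u})\Big).
\]
Applying Möbius inversion restricted to the $p$-free divisors of $n$ (valid because the convolution $\sum_{d \mid f, p \nmid f} \mu(d) = [f=1]$ still holds on such divisors) and using $I_q(n,\vec{t}) = \frac{1}{n}A_q(n,\vec{t})$ produces the second clause of (\ref{eq:transform0}).

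The third step, which will be the main obstacle, is the $\vec{t} = \vec{0}$ case, where the degenerate $p \mid d$ contributions survive. Here the relation reads
\[
F_q(n,\vec{0}) = \sum_{\substack{d \mid n\\ p \nmid d}} A_q\!\Big(\frac{n}{d},\vec{0}\Big) + \sum_{\substack{d \mid n\\ p \mid d}} B_q\!\Big(\frac{n}{d}\Big),
\]
where $B_q(m) = \sum_{\vec{t}\in(\F_q)^l} A_q(m,\vec{t}) = \sum_{e \mid m}\mu(e)q^{m/e}$ counts all elements of $\F_{q^m}$ not lying in a proper subfield. The plan is to apply the same Möbius inversion on $p$-free divisors and then show, via a careful index swap and the identity $B_q(m) = \sum_{e \mid m}\mu(e)q^{m/e}$, that the aggregated $p \mid d$ tail collapses to the explicit correction $[pd \mid n] \, q^{n/(pd)}$. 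Concretely, after substituting and writing $f = d e$ with $p \nmid d$ and $p \mid e$, the inner sum $\sum_{d \mid f_0} \mu(d)$ (where $f_0$ is the $p$-free part of $f$) picks out only $f_0 = 1$; expanding $B_q$ through Möbius then matches termwise the sum $\sum_{d \mid n,\, p \nmid d,\, pd \mid n} \mu(d) q^{n/(pd)}$. This bookkeeping is the one nontrivial ingredient, and once verified the first clause of (\ref{eq:transform0}) follows from $I_q(n,\vec{0}) = A_q(n,\vec{0})/n$.

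Finally, a sanity check against small cases (e.g.\ $n = p$, $n = p^2$, $n = 2p$) confirms that the correction identity $\sum_{a \ge 1,\, p^a \mid n} B_q(n/p^a) = \sum_{d \mid n,\, p \nmid d,\, pd \mid n} \mu(d)q^{n/(pd)}$ holds, validating the claim without having to reproduce the full Miers–Ruskey combinatorial derivation.
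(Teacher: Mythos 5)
Your proposal is correct and follows the same route the paper points to: it defers to Miers and Ruskey's Theorem 6.1, and the argument you reconstruct — triangularity of $\theta_d$ with diagonal $d$ (hence bijectivity for $p\nmid d$ and collapse to $\vec{0}$ for $p\mid d$), the composition law $\theta_e\circ\theta_d=\theta_{de}$ giving $\theta_d^{-1}=\theta_{d^{-1}}$, M\"obius inversion restricted to $p$-free divisors, and the correction identity $\sum_{\beta\ge 1,\,p^\beta\mid n}B_q(n/p^\beta)=\sum_{d\mid n,\,p\nmid d,\,pd\mid n}\mu(d)q^{n/(pd)}$ (which telescopes on expanding $B_q(k)=\sum_{e\mid k}\mu(e)q^{k/e}$) — is exactly what makes that theorem work. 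Two small points to tidy: the phrase ``$\sum_{d\mid f,\,p\nmid f}\mu(d)=[f=1]$'' should read ``for $p$-free $f$, $\sum_{d\mid f}\mu(d)=[f=1]$'' (the hypothesis is on $f$, not on the summation index), and the composition law should be justified by noting that $\theta_d$ is the unique polynomial map encoding $T_k(f)\mapsto T_k(f^d)$ in the ring of symmetric functions, so $\theta_e\circ\theta_d$ and $\theta_{de}$ are both the map for $f\mapsto f^{de}$ and hence coincide.
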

First note that the inverse of $d$ is computed mod $p$. Also note that since our algorithm does not address the $n \equiv 0 \pmod{p}$ cases, 
the condition $p \nmid d$ in the summations is vacuous and the second term in each summand of the $\vec{t} = \vec{0}$ case is always zero. 
Therefore, for all $\vec{t}$ we have
\[
I_q(n,\vec{t}) = \frac{1}{n} \sum_{d \mid n} \mu(d)F_q\Big(\frac{n}{d},\theta_{d^{-1}}(\vec{t})\Big) = \frac{1}{n}\big(q^{n-l} + \mathcal{O}(q^{n/2})\Big).
\]
Finally, we note that for $q$ even only the first trace is prescribed, and this case was solved by Carlitz~\cite{Carlitz}.

\subsection{Example: $F_5(n,t_1,t_2,t_3,t_4)$}
As a proof-of-concept example, for each $\overline{n} \in \{1,2,3,4\}$ we computed the zeta functions of all $5^4 - 1$ 
curves~(\ref{eq:maincurve}) that are required by the indirect method in order to compute each $F_5(n,t_1,t_2,t_3,t_4)$ for all 
$n \equiv \overline{n} \pmod{5}$ with $n \ge 4$. The computations for each $\overline{n}$ took under a minute using Magma V22.2-3 
on a 2.0GHz AMD Opteron computer.
Note that Magma uses brute-force point counting over successive extension fields
in order to compute the zeta functions. For this small example in which the genera are $0,2,4$ or $6$ this is clearly not a concern.
For much larger $p$ the Lauder-Wan algorithm should be used. 

For the sake of space, in the following theorem we only give the formula for $F_5(n,0,0,0,0)$, which turns out to be the same for each 
$\overline{n} \in \{1,2,3,4\}$. As per the transform~(\ref{eq:Sminus1}) this count combines all $5^4-1$ zeta functions 
and thus has the greatest weight amongst all of the $F_5(n,t_1,t_2,t_3,t_4)$ formulae.
For a polynomial $\gamma(X) \in \Z[X]$ let $\rho_n(\gamma)$ denote the sum of the $n$-th 
powers of the (complex) roots of $\gamma(X)$. 

\begin{theorem}\label{thm:q5l4}
For $n \ge 4$ and $(n,5) = 1$ we have
\begin{eqnarray*}
F_5(n,0,0,0,0) &=& 5^{n-4} - \frac{1}{5^4}\big(
160\rho_n(\gamma_{2,1})
+16\rho_n(\gamma_{2,2})
+164\rho_n(\gamma_{4,1})
+16\rho_n(\gamma_{4,2})
+116\rho_n(\gamma_{4,3})
+25\rho_n(\gamma_{8,1})\\
&+&20\rho_n(\gamma_{8,2})
+16\rho_n(\gamma_{8,3})
+18\rho_n(\gamma_{8,4})
+20\rho_n(\gamma_{8,5})
+20\rho_n(\gamma_{8,6})
+20\rho_n(\gamma_{8,7})
+16\rho_n(\gamma_{8,8})\\
&+& 24\rho_n(\gamma_{8,9})
+16\rho_n(\gamma_{8,10})
+21\rho_n(\gamma_{8,11})
+17\rho_n(\gamma_{8,12})
+16\rho_n(\gamma_{8,13})
+16\rho_n(\gamma_{8,14})\\
&+& 12\rho_n(\gamma_{8,15})
+10\rho_n(\gamma_{8,16})
+8\rho_n(\gamma_{8,17})
+13\rho_n(\gamma_{8,18})
+20\rho_n(\gamma_{12,1})
+20\rho_n(\gamma_{12,2})\\
&+& 20\rho_n(\gamma_{12,3})
+16\rho_n(\gamma_{12,4})
+16\rho_n(\gamma_{12,5})
+16\rho_n(\gamma_{12,6})
+16\rho_n(\gamma_{12,7})
+16\rho_n(\gamma_{12,8})\\
&+& 16\rho_n(\gamma_{12,9})
+16\rho_n(\gamma_{12,10})
+16\rho_n(\gamma_{12,11})
+16\rho_n(\gamma_{12,12}) 
+12\rho_n(\gamma_{12,13}) 
+12\rho_n(\gamma_{12,14})\\
&+& 12\rho_n(\gamma_{12,15}) 
\big),
\end{eqnarray*}
where 
{\small
\begin{flalign*}
\gamma_{2,1} &= X^2 - 5, &\\
\gamma_{2,2} &= X^2 + 5,\\
\gamma_{4,1} &= X^4 - 5X^3 + 15X^2 - 25X + 25,\\
\gamma_{4,2} &= X^4 + 5X^2 + 25,\\
\gamma_{4,3} &= X^4 + 5X^3 + 15X^2 + 25X + 25,\\
\gamma_{8,1} &= X^8 - 10X^7 + 45X^6 - 130X^5 + 305X^4 - 650X^3 + 1125X^2 - 1250X + 625,\\
\gamma_{8,2} &= X^8 - 5X^7 + 10X^6 - 25X^5 + 75X^4 - 125X^3 + 250X^2 - 625X + 625,\\
\gamma_{8,3} &= X^8 - 5X^7 + 10X^6 + 5X^5 - 45X^4 + 25X^3 + 250X^2 - 625X + 625,\\
\gamma_{8,4} &= X^8 - 5X^7 + 15X^6 - 35X^5 + 80X^4 - 175X^3 + 375X^2 - 625X + 625,\\
\gamma_{8,5} &= X^8 - 5X^7 + 20X^6 - 65X^5 + 155X^4 - 325X^3 + 500X^2 - 625X + 625,\\
\gamma_{8,6} &= X^8 - 15X^6 + 105X^4 - 375X^2 + 625,\\
\gamma_{8,7} &= X^8 - 10X^6 + 55X^4 - 250X^2 + 625,\\
\gamma_{8,8} &= X^8 - 5X^6 + 25X^4 - 125X^2 + 625,\\
\gamma_{8,9} &= X^8 + 30X^4 + 625,\\
\gamma_{8,10} &= X^8 + 5X^6 - 20X^5 + 5X^4 - 100X^3 + 125X^2 + 625,\\
\end{flalign*}
}
\vspace{-10mm}
{\small
\begin{flalign*}
\gamma_{8,11} &= X^8 + 5X^6 - 10X^5 + 5X^4 - 50X^3 + 125X^2 + 625,\\
\gamma_{8,12} &= X^8 + 5X^6 + 10X^5 + 5X^4 + 50X^3 + 125X^2 + 625,\\
\gamma_{8,13} &= X^8 + 5X^6 + 20X^5 + 5X^4 + 100X^3 + 125X^2 + 625,&\\
\gamma_{8,14} &= X^8 + 5X^7 + 10X^6 - 5X^5 - 45X^4 - 25X^3 + 250X^2 + 625X + 625,\\
\gamma_{8,15} &= X^8 + 5X^7 + 10X^6 + 25X^5 + 75X^4 + 125X^3 + 250X^2 + 625X + 625,\\
\gamma_{8,16} &= X^8 + 5X^7 + 15X^6 + 35X^5 + 80X^4 + 175X^3 + 375X^2 + 625X + 625,\\
\gamma_{8,17} &= X^8 + 5X^7 + 20X^6 + 65X^5 + 155X^4 + 325X^3 + 500X^2 + 625X + 625,\\
\gamma_{8,18} &= X^8 + 10X^7 + 45X^6 + 130X^5 + 305X^4 + 650X^3 + 1125X^2 + 1250X + 625,\\
\gamma_{12,1} &= X^{12} - 5X^{11} + 5X^{10} + 5X^9 + 5X^8 + 75X^7 - 425X^6 + 375X^5 + 125X^4 + 625X^3 + 3125X^2 - 15625X + 15625,\\
\gamma_{12,2} &= X^{12} - 5X^{11} + 10X^{10} + 5X^9 - 20X^8 - 125X^7 + 575X^6 - 625X^5 - 500X^4 + 625X^3 + 6250X^2 - 15625X + 15625,\\
\gamma_{12,3} &= X^{12} - 5X^{11} + 15X^{10} - 45X^9 + 80X^8 - 125X^7 + 325X^6 - 625X^5 + 2000X^4 - 5625X^3 + 9375X^2 - 15625X + 15625,\\
\gamma_{12,4} &= X^{12} - 10X^{10} - 5X^9 + 80X^8 - 425X^6 + 2000X^4 - 625X^3 - 6250X^2 + 15625,\\
\gamma_{12,5} &= X^{12} - 10X^{10} + 10X^9 + 55X^8 - 25X^7 - 175X^6 - 125X^5 + 1375X^4 + 1250X^3 - 6250X^2 + 15625,\\
\gamma_{12,6} &= X^{12} - 5X^{10} - 15X^9 + 5X^8 + 50X^7 + 75X^6 + 250X^5 + 125X^4 - 1875X^3 - 3125X^2 + 15625,\\
\gamma_{12,7} &= X^{12} - 5X^{10} - 10X^9 - 20X^8 + 25X^7 + 325X^6 + 125X^5 - 500X^4 - 1250X^3 - 3125X^2 + 15625,\\
\gamma_{12,8} &= X^{12} - 15X^9 + 30X^8 - 50X^7 + 75X^6 - 250X^5 + 750X^4 - 1875X^3 + 15625,\\
\gamma_{12,9} &= X^{12} + 15X^9 - 20X^8 - 50X^7 + 75X^6 - 250X^5 - 500X^4 + 1875X^3 + 15625,\\
\gamma_{12,10} &= X^{12} + 5X^{10} - 5X^9 + 5X^8 + 50X^7 + 75X^6 + 250X^5 + 125X^4 - 625X^3 + 3125X^2 + 15625,\\
\gamma_{12,11} &= X^{12} + 10X^{10} + 20X^9 + 55X^8 + 175X^7 + 325X^6 + 875X^5 + 1375X^4 + 2500X^3 + 6250X^2 + 15625,\\
\gamma_{12,12} &= X^{12} + 15X^{10} - 10X^9 + 130X^8 - 75X^7 + 825X^6 - 375X^5 + 3250X^4 - 1250X^3 + 9375X^2 + 15625,\\
\gamma_{12,13} &= X^{12} + 5X^{11} + 10X^{10} + 25X^9 + 80X^8 + 225X^7 + 575X^6 + 1125X^5 + 2000X^4 + 3125X^3 + 6250X^2 + 15625X\\
 &+ 15625,\\
\gamma_{12,14} &= X^{12} + 5X^{11} + 15X^{10} + 25X^9 + 5X^8 - 125X^7 - 425X^6 - 625X^5 + 125X^4 + 3125X^3 + 9375X^2 + 15625X + 15625,\\
\gamma_{12,15} &= X^{12} + 5X^{11} + 20X^{10} + 75X^9 + 230X^8 + 600X^7 + 1450X^6 + 3000X^5 + 5750X^4 + 9375X^3 + 12500X^2 + 15625X\\
 &+ 15625.
\end{flalign*}
}
\end{theorem}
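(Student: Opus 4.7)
The plan is to directly apply the indirect (batching) method developed in \S\ref{sec:mainindirect} with parameters $q=5$, $l=4$, and collect the output into the stated form. For each residue class $\overline{n} \in \{1,2,3,4\}$ and each nonzero index $1 \le i \le 5^4-1 = 624$, I would construct the Artin-Schreier curve
\[
C_{\vec{i},\overline{n}}/\F_5:\ a_1^5 - a_1 = \overline{p}_{\vec{i},\overline{n}}(a),
\]
as in~(\ref{eq:maincurve}). Since $l=4<p=5$, the degree $d(\vec{i}) \in \{1,2,3,4\}$ is coprime to $5$, so the Lauder-Wan precondition is satisfied; since $r=1$ we can work with $C_{\vec{i},\overline{n}}$ directly and bypass Lemma~\ref{thm:extensionreduction} entirely. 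The genus of each curve is $(p-1)(d(\vec{i})-1)/2 \in \{0,2,4,6\}$, so the zeta functions are extremely cheap to compute.

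Next I would assemble $F_5(n,0,0,0,0)$ via the $\vec{t}=\vec{0}$ case of the transform~(\ref{eq:Sminus1}), which under the notation of the proof of Theorem~\ref{thm:maintheorem} for $q=p$ gives
\[
F_5(n,0,0,0,0) = \frac{1}{5^4}\Big( 5^n + \sum_{k=1}^{N} \omega_k^n \Big),
\]
where the $\omega_k$ range over all (multiset) reciprocal roots of the numerators of the zeta functions of the $624$ curves, with norm $\sqrt{5}$. Each Galois conjugacy class of $\omega_k$'s contributes a factor $\gamma_{d,i}(X) \in \Z[X]$ to an overall product, and the $n$-th power sum of its roots is $\rho_n(\gamma_{d,i})$; collecting repeated factors with their multiplicities across all $624$ curves yields the coefficients stated in the theorem. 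The factorisation $5^{n-4} - (\cdots)/5^4$ in the theorem simply isolates the ``main term'' contribution $5^n/5^4 = 5^{n-4}$ and collects all non-trivial $\omega_k^n$ contributions into the displayed sum with the sign flipped via $\upsilon_k = -1$ appearing implicitly in the normalisation.

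The final step is to verify that the resulting formula is independent of $\overline{n} \in \{1,2,3,4\}$, so that a single expression is valid for all $n$ coprime to $5$. Although the polynomial $\overline{p}_{\vec{i},\overline{n}}(a)$ depends on $\overline{n}$ through the constant term $-1/\overline{n}$, the substitution $a \mapsto a + c$ for a suitable $c \in \F_{5^n}$ (chosen to kill the constant term when $i_0 \neq 0$), together with the scaling action of $\F_5^\times$ on $a$, permutes the $624$ curves for one residue class to those for another up to isomorphisms that preserve the zeta function. One can also simply observe that the four runs produce identical multisets of minimal polynomials — as the author's Magma computation confirms directly — so the formula combines into a single expression.

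The main obstacle is purely computational: faithfully enumerating the $624$ curves, computing each zeta function, factoring each numerator over $\Z[X]$, and then aggregating identical factors with correct multiplicities to produce the tabulated $\gamma_{d,i}$'s and their coefficients. The paper handles this with Magma in under a minute per residue class, which is entirely plausible given that genera never exceed $6$; the conceptual content of the proof beyond the algorithm of \S\ref{sec:mainindirect} is essentially bookkeeping.
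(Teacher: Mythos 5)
Your proposal matches the paper's proof exactly: for $q=5$, $l=4$, and each $\overline{n} \in \{1,2,3,4\}$, compute the zeta functions of the $5^4-1$ Artin--Schreier curves $C_{\vec{i},\overline{n}}$ of~\S\ref{sec:mainindirect} (the paper uses Magma's brute-force point counting rather than Lauder--Wan, which is overkill for genera $\le 6$), aggregate via the $\vec{t}=\vec{0}$ row of $S_{q,m}^{-1}$, and collect the resulting Weil numbers by minimal polynomial; the rest is bookkeeping, as you say. One small slip in your justification of $\overline{n}$-independence (which the paper also just observes computationally): the relevant $\F_5^\times$-symmetry is a scaling of $a_1$ rather than of $a$ --- since $a_1 \mapsto \lambda a_1$ sends $a_1^5 - a_1 = -1/\overline{n} + g(a)$ to $a_1^5 - a_1 = -\lambda^{-1}/\overline{n} + \lambda^{-1}g(a)$ over $\F_5$, permuting the uniformly weighted multiset of curves between residue classes --- whereas a shift $a \mapsto a + c$ with $c \in \F_{5^n} \setminus \F_5$ is not defined over $\F_5$ and does not preserve the zeta function.
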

Each of the above polynomials is the characteristic polynomial of the Frobenius endomorphism of an abelian variety. 
Regarding the potential periodicity in $n$ of the formula in Theorem~\ref{thm:q5l4}, whether the abelian varieties are supersingular or not can be determined by the following theorem of Stichtenoth and Xing~\cite[Prop.~1]{Stichtenoth}.

\begin{theorem}\label{thm:SS}
Let $A$ be an abelian variety of dimension $g$ over $\F_q = \F_{p^r}$ and let $P(X) = X^{2g} + a_1X^{2g-1} + \cdots + a_g X^g + \cdots + q^g$
be the characteristic polynomial of the Frobenius endomorphism on $A$. Then $A$ is supersingular if and only if for all $1 \le k \le g$ one has
$p^{\lceil kn/2 \rceil} \mid a_k$.
\end{theorem}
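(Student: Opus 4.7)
The plan is to prove both directions via the $p$-adic valuations of the Frobenius eigenvalues $\omega_1,\ldots,\omega_{2g}$ (the complex roots of $P(X)$), connecting the coefficient divisibility to the slope of the Newton polygon. Throughout I fix an embedding $\overline{\Q}\hookrightarrow \overline{\Q}_p$ and normalise the $p$-adic valuation so that $v_p(p)=1$, whence $v_p(q)=r$.

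For the forward direction, recall that $A$ being supersingular is equivalent to $A$ being isogenous, over some finite extension of $\F_q$, to a power of a supersingular elliptic curve; equivalently, every Frobenius eigenvalue satisfies $\omega_i=\sqrt{q}\,\zeta_i$ for some root of unity $\zeta_i$. In particular $v_p(\omega_i)=r/2$ for all $i$. Since $(-1)^k a_k=e_k(\omega_1,\ldots,\omega_{2g})$ is the $k$-th elementary symmetric polynomial in the eigenvalues, each of its $\binom{2g}{k}$ monomials has $v_p$-value exactly $rk/2$, hence $v_p(a_k)\ge rk/2$. As $a_k\in\Z$, we conclude $p^{\lceil rk/2\rceil}\mid a_k$, giving the divisibility for all $k$ (in particular for $1\le k\le g$).

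For the converse, first I would bootstrap the hypothesis from $1\le k\le g$ to all $k$ by invoking the functional equation $a_{2g-k}=q^{g-k}a_k$ satisfied by the characteristic polynomial of Frobenius on an abelian variety (a consequence of the Weil pairing/Poincar\'e duality). Then $v_p(a_{2g-k})\ge r(g-k)+\lceil rk/2\rceil\ge \lceil r(2g-k)/2\rceil$, so the divisibility extends to $0\le k\le 2g$. Next I would analyse the Newton polygon of $P(X)$: the points $(2g-k,v_p(a_k))$ lie on or above the line segment from $(0,rg)$ (coming from $a_{2g}=\pm q^g$) to $(2g,0)$ (coming from $a_0=1$), and the two endpoints lie exactly on that line. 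Hence the Newton polygon is precisely this single segment of slope $-r/2$. By the Newton polygon theorem, every root $\omega_i$ of $P(X)$ satisfies $v_p(\omega_i)=r/2$.

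It remains to deduce that $A$ is supersingular from the uniform eigenvalue valuation $v_p(\omega_i)=r/2$. This is the main obstacle: one cannot appeal to elementary symmetric polynomial computations, and instead must invoke either Honda--Tate theory or the Dieudonn\'e--Manin classification of $F$-isocrystals, which identifies the Newton polygon of $A$ (in the crystalline sense) with the multiset $\{v_p(\omega_i)/r\}$. An isoclinic Newton polygon of slope $1/2$ is, by Manin's classification, exactly the condition characterising a supersingular abelian variety, completing the proof. Everything else in the argument is a direct calculation, but this final step is where the nontrivial input from the theory of abelian varieties over finite fields enters.
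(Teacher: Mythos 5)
The paper does not actually prove this theorem: it is quoted verbatim as Proposition~1 of Stichtenoth and Xing~\cite{Stichtenoth}, so there is no in-paper argument against which to compare your attempt. (Also note that the exponent $\lceil kn/2\rceil$ in the statement is a typo for $\lceil kr/2\rceil$; you read it correctly as $r$, the $p$-adic valuation of $q$.)

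Your proof is correct and is essentially the standard Newton-polygon argument (which is also the method used by Stichtenoth and Xing). The forward direction is the easy half: with a fixed embedding into $\overline{\Q}_p$ one has $v_p(\omega_i)=r/2$ for all $i$, hence $v_p(a_k)\ge rk/2$, and integrality of $a_k$ upgrades this to $\lceil rk/2\rceil$. The converse is where the substance lies, and you have correctly identified the two genuine inputs: (i)~the functional equation $a_{2g-k}=q^{g-k}a_k$, which bootstraps the hypothesis from $k\le g$ to all $k\le 2g$ (a short computation using $\lceil rk/2\rceil+\lfloor rk/2\rfloor = rk$ verifies this), so that the Newton polygon of $P$ is a single segment of slope $-r/2$ and thus $v_p(\omega_i)=r/2$ for all $i$; and (ii)~the Dieudonn\'e--Manin identification of the crystalline Newton polygon of $A$ with the multiset $\{v_p(\omega_i)/r\}$, together with the characterisation of supersingularity as the isoclinic slope-$1/2$ condition. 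Your remark that (ii) is the one step that cannot be bypassed by elementary symmetric-function bookkeeping is exactly right; it is the place where Honda--Tate/Manin theory enters irreducibly. As a small stylistic point, $a_{2g}=q^g$ exactly (no $\pm$ sign is needed, since the product of the Frobenius eigenvalues equals $\deg\pi=q^g$), but this does not affect the argument.
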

Applying Theorem~\ref{thm:SS} we see that $\gamma_{2,1},\gamma_{2,2},\gamma_{4,1},\gamma_{4,2},\gamma_{4,3},\gamma_{8,2},\gamma_{8,8}$ and $\gamma_{8,15}$ are the only supersingular cases.
It is possible in principle to apply Kronecker's theorem to the phases of the non-supersingular Weil numbers to determine whether or not the formula 
in Theorem~\ref{thm:q5l4} is periodic in $n$. Note that the product of the powers of these polynomials as per 
Theorem~\ref{thm:q5l4} produces (a redundant version of) the characteristic polynomial of Frobenius of the intersection curve of genus $860$
given by the direct method. Although it is not true for this polynomial that $a_{12} \equiv 0 \pmod{5^6}$, as would be required by Theorem~\ref{thm:SS} 
for supersingularity, it is possible (although seemingly unlikely) that the formula for the non-redundant method is periodic in $n$, which would
require that all of the non-supersingular contributions cancel.


\section{An Algorithm for the Prescribed Traces Problem for $q=2$}\label{sec:char2}

When $l \ge p$ the approach of~\S\ref{sec:generalalg} breaks due to the failure of Newton's identities in positive characteristic.
For example, for $l = p$ we have $T_1(a^p) = t_{1}^p$ which does not feature $t_p$ and thus the system of linear trace 
conditions~(\ref{eq:newmain}) is underdetermined.
In this section we present an algorithm that obviates this issue and solves the
prescribed traces problem for binary base fields for $l \le 7$ and $n$ odd, which is potentially extendable to larger $l$. We focus here on
the $q = 2$ case, noting that the general $q = 2^r$ case follows {\em mutatis mutandis}.

In contrast to the main algorithm we now permit any subset of the first $l$ traces to be prescribed: for subscripts $1 \le l_0 < \cdots < l_{m-1} \le l$,
$n \ge l_{m-1}$ and $t_{l_0},\ldots,t_{l_{m-1}} \in \F_2$ we denote by $F_2(n,t_{l_0},\ldots,t_{l_{m-1}})$ the number of elements $a \in \F_{2^n}$
for which $T_{l_0}(a) = t_{l_0}$, \ldots, $T_{l_{m-1}}(a) = t_{l_{m-1}}$. 

After presenting a specialised version of the transform from~\S\ref{sec:transform1p} in~\S\ref{sec:transform1char2}, 
we present the indirect method, in~\S\ref{subsec:indirectbinary}, as it is slightly simpler than the direct method, 
which we present in~\S\ref{subsec:directbinary}. The algebraic property upon which both methods rely is presented in~\S\ref{sec:Tlab}.
The values $T_l(1) = \binom{n}{l}$ feature repeatedly in the ensuing methods. We therefore begin with two preliminary results,
the first of which is an immediate corollary of a theorem of Zabek~\cite{zabek}.

\begin{lemma}\label{lem:period}
For $p$ a prime and $j \ge 1$ the period of the sequence $\big(\binom{0}{j},\binom{1}{j},\binom{2}{j},\ldots\big)$ mod $p$ is $p^{1 + \lfloor \log_p{j} \rfloor}$.  
\end{lemma}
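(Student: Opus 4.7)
The plan is to deduce the lemma directly from Lucas' theorem rather than invoking the full strength of Zabek's result. Setting $s = \lfloor \log_p j \rfloor$, write $j = j_0 + j_1 p + \cdots + j_s p^s$ in base $p$ with $j_s \neq 0$. Lucas' theorem says
\[
\binom{n}{j} \equiv \prod_{i \ge 0} \binom{n_i}{j_i} \pmod p,
\]
where $n = \sum_i n_i p^i$. Since $j_i = 0$ for $i > s$, the factors $\binom{n_i}{j_i}$ are all $1$ in that range, so $\binom{n}{j} \bmod p$ is determined by $(n_0, \ldots, n_s)$ alone. Adding $p^{s+1}$ to $n$ alters only the digits at positions $\ge s+1$, hence $p^{s+1}$ is a period, and consequently the minimal period is a power of $p$ dividing $p^{s+1}$.

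For the matching lower bound I would show that $p^s$ is itself not a period; any smaller $p^t$ being a period would force its multiple $p^s$ to be one too, so ruling out $p^s$ suffices. The natural test point is $n = j$, where $\binom{j}{j} = 1$, compared with $\binom{j + p^s}{j}$. If $j_s < p - 1$, the addition carries nothing: the $s$th digit becomes $j_s + 1$, and Lucas yields $\binom{j + p^s}{j} \equiv j_s + 1 \not\equiv 1 \pmod p$. If $j_s = p - 1$, a carry sets the $s$th digit of $j + p^s$ to $0$, so $\binom{0}{p-1} = 0$ appears as a Lucas factor and $\binom{j + p^s}{j} \equiv 0 \pmod p$. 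In either case the sequence is not invariant under the shift by $p^s$, completing the argument.

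The only minor subtlety is handling the carry when $j_s = p - 1$; otherwise everything is a one-line application of Lucas' congruence, which is exactly why the lemma can legitimately be called an immediate corollary.
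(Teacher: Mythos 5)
Your proof is correct, and it replaces the paper's citation with a self-contained argument: the paper does not prove the lemma at all but simply invokes a theorem of Zabek, which handles the general composite-modulus case. For prime $p$ that machinery is unnecessary, and your direct use of Lucas' congruence is exactly the right reduction: the upper bound $p^{s+1}$ falls out because $\binom{n}{j}\bmod p$ depends only on the low-order digits $(n_0,\dots,n_s)$, and the lower bound follows because the minimal period of any periodic sequence divides every period, so it must be a power of $p$, and a shift by $p^s$ visibly changes the value at $n=j$. Your carry analysis in the $j_s = p-1$ case is the only genuinely delicate step and you handle it correctly (the Lucas factor $\binom{0}{p-1}=0$ kills the product). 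What your route buys is transparency and elementariness; what the Zabek citation buys is generality (periods mod an arbitrary $m$, useful if the paper ever needed the non-prime case, which here it does not). Both are valid; yours is arguably the better fit for the statement as stated.
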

We shall use the following corollary repeatedly.

\begin{corollary}\label{cor:period}
For $p$ a prime and $j \ge 1$ the period in $n$ of the set of vectors $\big(\binom{n}{j},\binom{n}{j-1},\ldots,\binom{n}{1}\big)$ mod $p$ is $p^{1 + \lfloor \log_p{j} \rfloor}$. 
\end{corollary}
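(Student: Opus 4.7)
The plan is to reduce the period of the vector-valued sequence to the periods of its individual components via the elementary fact that the (minimal) period of a tuple of periodic sequences equals the least common multiple of the component periods. So I would first apply Lemma~\ref{lem:period} to each component: for each $k$ with $1 \le k \le j$, the sequence $\bigl(\binom{n}{k}\bigr)_{n \ge 0}$ mod $p$ has period exactly $p^{1 + \lfloor \log_p k \rfloor}$.

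Next I would compute the LCM of these $j$ periods. Since every period in sight is a pure power of $p$, the LCM is simply the largest of them. As $\lfloor \log_p k \rfloor$ is a non-decreasing function of $k$, the maximum over $1 \le k \le j$ is attained at $k = j$, giving $p^{1 + \lfloor \log_p j \rfloor}$. Hence any common period of all the component sequences is a multiple of $p^{1 + \lfloor \log_p j \rfloor}$, and conversely $p^{1 + \lfloor \log_p j \rfloor}$ is a common period (since each component period divides it, as each is a smaller or equal power of $p$).

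Finally I would argue the bound is tight in both directions. The value $p^{1 + \lfloor \log_p j \rfloor}$ is an upper bound on the period of the vector because it is a common period. It is also a lower bound: if the vector had a strictly smaller period $P$, then in particular the last component $\binom{n}{j}$ would satisfy $\binom{n+P}{j} \equiv \binom{n}{j} \pmod{p}$ for all $n$, contradicting the minimality asserted by Lemma~\ref{lem:period}. Therefore the period of the vector is exactly $p^{1 + \lfloor \log_p j \rfloor}$.

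There is really no serious obstacle here; the statement is a direct packaging of Lemma~\ref{lem:period} together with the observation that a LCM of prime-power periods is their maximum. The only thing to be slightly careful about is distinguishing ``a period'' from ``the minimal period'': the upper-bound direction uses the former, while the matching lower bound uses the minimality guaranteed by the lemma applied to the single entry $k = j$.
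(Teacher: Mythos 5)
Your proof is correct and follows the same approach as the paper: apply Lemma~\ref{lem:period} to each component and take the LCM of the resulting $p$-power periods, which is the largest of them, namely $p^{1 + \lfloor \log_p j \rfloor}$. You simply spell out a bit more explicitly why the LCM is the maximum and why the bound is tight, but the underlying argument is identical to the paper's.
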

\begin{proof}
By Lemma~\ref{lem:period} the period in $n$ of $\binom{n}{j}$ mod $p$ is $p^{1 + \lfloor \log_p{j} \rfloor}$. Therefore the period in $n$ 
of the stated set of vectors is the LCM of periods of each, which is also $p^{1 + \lfloor \log_p{j} \rfloor}$.
\end{proof}

\subsection{A transform of the prescribed traces problem for $q=2$}\label{sec:transform1char2}

We now transform the problem of counting field elements with prescribed traces to the problem of counting the number of zeros 
of linear combinations of the trace functions. Note that counting zeros rather than evaluations to one means that one does not 
necessarily need to introduce a $1/\overline{n}$ term when defining the relevant curves and as a consequence some results are obtainable 
for all $n$, rather than just $n$ odd. We first fix some notation, which is virtually the same as that defined in~\S\ref{sec:transform1p}.

Let $f_0,\ldots,f_{m-1}: \F_{2^n} \rightarrow \F_2$ be any functions and let $\vec{f} = (f_{m-1},\ldots,f_0)$. 
For $i,j \in \{0,\ldots,2^m-1\}$ let $\vec{i} = (i_{m-1},\ldots,i_0)$ and $\vec{j} = (j_{m-1},\ldots,j_0)$ 
denote the binary expansions of $i$ and $j$ respectively, and let $\vec{i} \cdot \vec{j}$ denote their inner product mod $2$.  
For any $i \in \{0,\ldots,2^m-1\}$, let $\vec{i}\cdot \vec{f}$ denote the function 
\[
\sum_{k=0}^{m-1} i_k f_k: \F_{2^n} \rightarrow \F_2,
\]
and let $V(\vec{i}\cdot \vec{f})$ denote the number of zeros in $\F_{2^n}$ of $\vec{i}\cdot \vec{f}$. We interpret $V(\vec{0} \cdot \vec{f})$ to be 
the number of zeros of the empty function, which we define to be $2^n$.
Furthermore, let $N(\vec{j}) = N(j_{m-1},\ldots,j_{0})$ denote the number of $a \in \F_{2^n}$ such that $f_k(a) = j_k$, for 
$k = 0,\ldots,m-1$. 

As before, our goal is to express any $N(\vec{j})$ in terms of the $V(\vec{i}\cdot \vec{f})$, but we begin by first solving the inverse problem, 
\ie expressing any $V(\vec{i}\cdot \vec{f})$ in terms of the $N(\vec{j})$.

\begin{lemma}
With the notation as above, for $0 \le i \le 2^m-1$ we have
\begin{equation}\label{eq:basicrelation}
V(\vec{i}\cdot \vec{f}) = \sum_{\vec{i} \cdot \vec{j} \, = \, 0} N(\vec{j}).
\end{equation}
\end{lemma}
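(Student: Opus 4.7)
The plan is to mirror the short proof of the analogous identity~(\ref{eq:basicrelationcharp}) from~\S\ref{sec:transform1p}, with the single change that $1$ on the right-hand side of the defining equation is replaced by $0$. The argument is essentially a partition of the fibres of $\vec{f}$, so there is no genuine obstacle; the only thing to check is that the boundary case $\vec{i} = \vec{0}$ is consistent with the stated convention $V(\vec{0}\cdot \vec{f}) = 2^n$.

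First I would dispose of the case $\vec{i} = \vec{0}$: the left-hand side is $2^n$ by definition, while on the right the condition $\vec{0}\cdot \vec{j} = 0$ holds for every $\vec{j} \in (\F_2)^m$, so the sum collapses to $\sum_{\vec{j}} N(\vec{j})$, which is $2^n$ since the sets $\{a \in \F_{2^n} \mid f_k(a) = j_k, \ 0 \le k \le m-1\}$ partition $\F_{2^n}$ as $\vec{j}$ ranges over $(\F_2)^m$.

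For $\vec{i} \ne \vec{0}$, by definition
\[
V(\vec{i}\cdot \vec{f}) = \#\{ a \in \F_{2^n} \mid \textstyle\sum_{k=0}^{m-1} i_k f_k(a) = 0\}.
\]
Partition this set according to the value of $\vec{f}(a)$: each $a \in \F_{2^n}$ with $\vec{f}(a) = \vec{j}$ contributes to $V(\vec{i}\cdot \vec{f})$ precisely when $\sum_{k=0}^{m-1} i_k j_k = 0$, i.e.\ when $\vec{i} \cdot \vec{j} = 0$. The number of such $a$ for a given $\vec{j}$ is $N(\vec{j})$, so summing over the admissible $\vec{j}$ yields the stated identity.

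There is essentially no hard part: the lemma is a bookkeeping statement about partitioning the preimage of $0$ under a linear form in the coordinates of $\vec{f}$. The only subtlety worth highlighting in the write-up is that the convention $V(\vec{0}\cdot \vec{f}) = 2^n$ was chosen precisely so that the formula~(\ref{eq:basicrelation}) extends to the zero index, which will be important when assembling the matrix analogue of~(\ref{eq:Sminus1}) in the sequel.
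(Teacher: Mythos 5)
Your proof is correct and follows essentially the same partition-of-fibres argument as the paper's; the only cosmetic difference is that you handle the $\vec{i} = \vec{0}$ case explicitly, whereas the paper lets it fall out of the same computation (the zero function vanishes on all of $\F_{2^n}$, matching the convention $V(\vec{0}\cdot\vec{f}) = 2^n$).
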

\begin{proof}
By definition, we have $V(\vec{i}\cdot \vec{f}) = \#\{ a \in \F_{2^n} \mid \vec{i}\cdot \vec{f} (a) = 0\} = 
\#\{ a \in \F_{2^n} \mid \sum_{k=0}^{m-1} i_k f_k(a) = 0\}$.
Since $N(\vec{j})$ counts precisely those $a \in \F_{2^n}$ such that $f_k(a) = j_k$, we must count over all those $\vec{j}$ for which
$\sum_{k=0}^{m-1} i_k f_k(a) = 0$, \ie those such that $\sum_{k=0}^{m-1} i_k j_k = 0$. \qed
\end{proof}
Writing Eq.~(\ref{eq:basicrelation}) in matrix form, for $i,j \in \{0,\ldots, 2^m-1\}$ we have
\[ 
\begin{bmatrix}
V(\vec{i}\cdot\vec{f})
\end{bmatrix}^T
=
S_m
\cdot
\begin{bmatrix}
N(\vec{j})
\end{bmatrix}^T,
\] 
where $(S_m)_{i,j} = 1 - \vec{i}\cdot\vec{j}$ is nothing but Sylvester's construction~\cite{sylvester} of Hadamard matrices~\cite{hadamard}, 
with the minus ones replaced with zeros. 
Now let $H_m$ be the $2^m \times 2^m$ matrix with entries $(H_m)_{i,j} = (-1)^{\vec{i} \cdot \vec{j}}$, \ie the Walsh-Hadamard
transform~\cite{walsh-orig}, scaled by $2^{m/2}$. Since the Walsh-Hadamard matrix is involutory we have $H_{m}^{-1} = \frac{1}{2^m} H_m$. 
Also let $A_m$ be the $2^m \times 2^m$ 
matrix with $(A_m)_{0,0} = 1$ and all other entries $0$, and let $B_m$ be the $2^m \times 2^m$ matrix with all entries $1$.
Lastly let $\text{Id}_m$ be the $2^m \times 2^m$ identity matrix.

\begin{lemma}
We have 
\begin{equation}\label{eq:Sinverse}
S_{m}^{-1} = \frac{1}{2^{m-1}}H_m - A_m.
\end{equation}
\end{lemma}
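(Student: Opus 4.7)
The plan is to recognise $S_m$ as a rescaled and shifted version of the Walsh--Hadamard matrix, and then verify the inverse formula by direct multiplication. Since $\tfrac{1}{2}(1 + (-1)^{\vec{i}\cdot\vec{j}})$ equals $1$ when $\vec{i}\cdot\vec{j} = 0$ and $0$ when $\vec{i}\cdot\vec{j} = 1$, one immediately obtains the decomposition
\begin{equation*}
S_m = \tfrac{1}{2}(B_m + H_m).
\end{equation*}

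With this in hand, I would compute $S_m \cdot \bigl(\tfrac{1}{2^{m-1}} H_m - A_m\bigr)$ term by term. The central contribution is $\tfrac{1}{2^m} H_m^2$, which equals $\mathrm{Id}_m$ by the involutory property $H_m^2 = 2^m\,\mathrm{Id}_m$ of the Walsh--Hadamard matrix already noted in the text. The remaining three pieces collapse onto a single auxiliary matrix $E_m$, defined to have first column $(1,\ldots,1)^T$ and all other columns zero: since the first column of $H_m$ consists entirely of $1$'s (as $\vec{i}\cdot\vec{0}=0$), both $B_m A_m$ and $H_m A_m$ equal $E_m$, while a short orthogonality argument shows $B_m H_m = 2^m E_m$ (the $(i,j)$-entry is $\sum_k (-1)^{\vec{k}\cdot\vec{j}}$, which equals $2^m$ when $j=0$ and $0$ otherwise, by character orthogonality on $(\Z/2\Z)^m$).

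Combining the four contributions gives
\begin{equation*}
S_m\bigl(\tfrac{1}{2^{m-1}} H_m - A_m\bigr) = \tfrac{1}{2^m}B_m H_m + \tfrac{1}{2^m} H_m^2 - \tfrac{1}{2} B_m A_m - \tfrac{1}{2} H_m A_m = E_m + \mathrm{Id}_m - E_m = \mathrm{Id}_m,
\end{equation*}
which proves the claimed formula for $S_m^{-1}$. I do not anticipate any real obstacle: once the decomposition $S_m = \tfrac{1}{2}(B_m + H_m)$ is observed, everything reduces to bookkeeping with standard orthogonality properties of the Walsh--Hadamard matrix. The only mildly delicate point is tracking the powers of $2$ carefully, so that the contribution from $\tfrac{1}{2^m} B_m H_m$ exactly cancels the two corrections coming from the $-A_m$ term.
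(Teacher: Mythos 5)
Your proof is correct and takes essentially the same approach as the paper: both rest on the decomposition $2S_m = H_m + B_m$ and the involutory property $H_m^2 = 2^m\,\mathrm{Id}_m$. The only cosmetic difference is that you verify $S_m\bigl(\tfrac{1}{2^{m-1}}H_m - A_m\bigr) = \mathrm{Id}_m$, organising the cancellations around the first column via an auxiliary matrix $E_m$ and character orthogonality, while the paper verifies $\bigl(\tfrac{1}{2^{m-1}}H_m - A_m\bigr)S_m = \mathrm{Id}_m$ and argues the analogous first-row cancellations informally.
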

\begin{proof}
Noting that $2S_m = H_m + B_m$, we have
\begin{eqnarray*}
\Big(\frac{1}{2^{m-1}}H_m - A_m \Big)S_m &=& \frac{1}{2^m} H_m \cdot 2S_m - A_mS_m\\
&=& \frac{1}{2^m} H_m( H_m + B_m) - A_m S_m\\
&=& \text{Id}_m + \frac{1}{2^m}  H_mB_m - A_m S_m.
\end{eqnarray*}
Since all but the first row of $H_m$ contains the same number of ones and minus ones, $\frac{1}{2^m} H_m B_m$ consists of the all-one vector in the 
first row and the all-zero vector for the others, as does $A_m S_m$. \qed
\end{proof}
Thus in order to compute any of the $2^m$ possible outputs $N(\vec{j})$ of any set of $m$ functions $\vec{f}$, it is sufficient to count the number 
of zeros of all the $2^m$ $\F_2$-linear combinations of the functions, and then apply $S_{m}^{-1}$. In particular, one may choose the 
$f_{0},\ldots,f_{m-1}$ to be any subset of the trace functions $T_1,\ldots,T_{n}$. 
Note that for $q = 2^r$ with $r > 1$ one must revert to using transform~(\ref{eq:Sminus1}).

\subsection{The indirect method}\label{subsec:indirectbinary}

Let the input traces whose values are prescribed be $\vec{f} = (T_{l_{m-1}},\ldots,T_{l_{0}})$ with $l_{m-1} > \cdots > l_0$.
Then by the transform of the previous subsection, for all $i \in \{1,\ldots,2^m-1\}$ one needs to compute
\begin{equation}\label{eq:Zeq}
V(\vec{i} \cdot \vec{f}) = \#\{ a \in \F_{2^n} \mid \sum_{k=0}^{m-1} i_k T_{l_{k}}(a) = 0 \}. 
\end{equation}
In general this problem appears to be non-trivial, since the degree of each $T_{l_k}(a)$ in $a$ and its Frobenius powers is $l_k$ and the 
approach of~\S\ref{sec:generalalg} can not be applied.
However, it can be obviated --  at least for $n$ odd -- by using the following degree-lowering idea.
Firstly, note that since the input to the trace functions has linear trace either $0$ or $1$, Eq.~(\ref{eq:Zeq}) can be rewritten as
\begin{equation*}
V( \vec{i} \cdot \vec{f} ) = \#\{ a \in \F_{2^n} \mid T_1(a) = 0, \sum_{k=0}^{m-1} i_k T_{l_{k}}(a) = 0 \} +
\#\{ a \in \F_{2^n} \mid T_1(a) = 1, \sum_{k=0}^{m-1} i_k T_{l_{k}}(a) = 0 \}. 
\end{equation*}
Secondly, note that for $n$ odd Lemma~\ref{lem:paramq} implies
\begin{equation}\label{eq:Zeq1}
V( \vec{i} \cdot \vec{f} ) = \frac{1}{2} \sum_{r_0 \in \F_2} \#\{ a_0 \in \F_{2^n} \mid \sum_{k=0}^{m-1} i_k T_{l_{k}}(a_{0}^2 + a_0 + r_0) = 0 \}.
\end{equation}
Thirdly, it happens that the functions $T_l(a_{0}^2 + a_{0})$ and $T_l(a_{0}^2 + a_{0} + 1)$ for $2 \le l \le 7$ are all expressible in 
characteristic two as polynomials of traces of lower degree whose arguments are polynomials in $a_0$, see~\S\ref{sec:Tlab}. 
Hence rather than having a single equation whose zeros one must count (Eq.~(\ref{eq:Zeq})), one now has two equations whose number of zeros 
one must add and divide by $2$ (Eq.~(\ref{eq:Zeq1})), both now of lower degree than before.

If after the above three steps there are terms that are not linear, \ie not of the form $T_1(\cdot)$ for some argument, then 
the idea is to pick an argument of a trace function featuring in a non-linear term and apply the above three steps again.
In particular, if the chosen argument is $g(a_0)$ then one introduces a new variable $a_1$ and as before writes 
$g(a_0) = a_{1}^2 + a_{1} + r_1$ with $r_1 \in \F_2$ to account for whether the linear trace of $g(a_0)$ is $0$ or $1$, 
and expands all those terms in Eq.~(\ref{eq:Zeq1}) which have this argument. This results in four equations whose number of zeros one must 
sum and divide by $4$, with the degrees of the terms which feature this argument having been lowered, as before.

By recursively applying this idea and introducing variables $a_0,\ldots,a_{s_{i}-1}$ as necessary, 
with corresponding linear trace variables $r_0,\ldots,r_{s_{i}-1}$, since the degrees of the non-linear terms
always decreases one eventually obtains a set of $2^{s_{i}}$ trace equations of the form $T_1(g_{\vec{r}}(a_0,\ldots,a_{s_{i}-1})) = 0$ 
indexed by $\vec{r} =(r_0,\ldots,r_{s_{i}-1}) \in (\F_2)^{s_{i}}$, the vector of trace values of the $s_{i}$ rewritten arguments.
Each of these can be eliminated by introducing a final variable $a_{s_{i}}$ and writing 
$a_{s_{i}}^2 + a_{s_{i}} = g_{\vec{r}}(a_0,\ldots,a_{s_{i}-1})$,
giving a system of $s_{i}$ equations in $s_{i}+1$ variables, with the initial variable $a$ having been completely eliminated 
in going from Eq.~(\ref{eq:Zeq}) to Eq.~(\ref{eq:Zeq1}). Observe that since $a_0$ is the only free variable in this system, the affine
algebraic set that it defines is one-dimensional and is thus a curve.

Depending on $i$, the final linear trace equation features a subset of the coefficients $\binom{n}{l_k}$ for $k = 0, \ldots, m-1$, 
which by Corollary~\ref{cor:period} have period a divisor of the maximum period $2^\theta  := 2^{1 + \lfloor \log_2{l_{m-1}}\rfloor}$.
Therefore let $\overline{n} \in \{1,3,5,\ldots,2^\theta-1\}$ represent the applicable residue classes of $n$ mod $2^\theta$
and substitute each $\binom{n}{l_k}$ featuring in the above intersection by $\binom{\overline{n}}{l_k}$.
We denote by $C_{i,\vec{r},\overline{n}}$ the affine curve defined by the above $s_{i}$ 
equations whose coefficients are functions of $\vec{r} = (r_0,\ldots,r_{s_{i}-1})$ and $\overline{n}$.
For each $\overline{n}$ we then take the average over all specialisations of $\vec{r}$ of the number of $\F_{2^n}$-rational points
on $C_{i,\vec{r},\overline{n}}$ -- remembering to also divide by $2$ to account for the presence of $a_{s_{i}}$ -- 
in order to determine $V(\vec{i}\cdot \vec{f})$, \ie for all $n \equiv \overline{n} \pmod{2^\theta}$ 
with $n \ge l_{m-1}$ we have
\begin{equation}\label{eq:indirectmethodbinary}
V(\vec{i}\cdot \vec{f}) = \frac{1}{2^{s_{i}+1}} \sum_{\vec{r} \in (\F_2)^{s_{i}}} \#C_{i,\vec{r},\overline{n}}(\F_{2^n}).
\end{equation}

\subsection{The direct method}\label{subsec:directbinary}
By definition, we have
\[
F_2(n,t_{l_0},\ldots,t_{l_{m-1}}) = \#\{ a \in \F_{2^n} \mid T_{l_{0}}(a) = t_{l_0},\ldots, T_{l_{m-1}}(a) = t_{l_{m-1}}\}.
\]
By using the same recursive procedure described in the previous subsection, for $n \ge l_{m-1}$ and odd suppose that for $0 \le k \le m-1$ one 
linearises $T_{l_k}(a)$, \ie reduces it to a $T_1$ expression, by introducing variables $a_{0},a_{k,1},\ldots,a_{k,s_k-1}$ with corresponding linear 
traces $r_{0},r_{k,1},\ldots,r_{k,s_k-1}$, resulting in $2^{s_k}$ trace equations of the form 
\begin{equation}\label{eq:traceequationbinary}
T_1(g_{\vec{r}_k}(a_0,a_{k,1},\ldots,a_{k,s_k-1})) =  t_{l_k}.
\end{equation}
Observe that amongst all of the $a_{k,i}$'s there may be definitional repetitions. 
Therefore, let $U$ be the union of all of the equations arising from the linearisation of 
each of the featured trace functions, once repetitions have been eliminated. Let $s \le \sum_{k=0}^{m-1} s_k$ be the number of variables in this union, 
which we relabel as $a_0,a_1,\ldots,a_{s-1}$, and let $\vec{r} = (r_0,\ldots,r_{s-1})$ be the corresponding vector of possible linear traces of the 
rewritten arguments. Note that since $a$ was completely eliminated by the introduction of $a_0$ there are only 
$s-1$ corresponding equations in addition to the $m$ trace conditions in~(\ref{eq:traceequationbinary}).
Each of these $m$ trace equations can be parameterised by introducing for $0 \le k \le m-1$ a variable $a_{s_k}$ 
and writing $a_{s_k}^2 + a_{s_k} + t_{l_k} = g_{\vec{r}_k}(a_{0},a_1\ldots,a_{s-1})$, giving a system of $m+s-1$ equations in $m+s$ variables.
As in the indirect method observe that $a_0$ is the only free variable in this system and so the affine
algebraic set that it defines is one-dimensional and is thus a curve.

Also as before, this system features the coefficients $\binom{n}{l_k}$, which by Corollary~\ref{cor:period} have period 
a divisor of $2^{\theta}$, so again let $\overline{n} \in \{1,3,5,\ldots,2^{\theta}-1\}$ represent the applicable residue classes of 
$n$ mod $2^{\theta}$, and substitute each $\binom{n}{l_k}$ by $\binom{\overline{n}}{l_k}$.
We denote by $C_{\vec{r},\overline{n}}$ the affine curve defined by the $m+s-1$ 
above equations whose coefficients are functions of the prescribed values, $\vec{r} = (r_0,\ldots,r_{s-1})$ and $\overline{n}$.
For each $\overline{n}$ we take the average over all specialisations of $\vec{r}$ of the number of $\F_{2^n}$-rational points
on $C_{\vec{r},\overline{n}}$ and divide by $2^{m}$ in order to determine $F_2(n,t_{l_0},\ldots,t_{l_{m-1}})$, \ie 
for all $n \equiv \overline{n} \pmod{2^{\theta}}$ with $n \ge l_{m-1}$ we have
\begin{equation}\label{eq:directmethodbinary}
F_2(n,t_{l_0},\ldots,t_{l_{m-1}}) = \frac{1}{2^{m+s}} \sum_{\vec{r} \in (\F_2)^s} \#C_{\vec{r},\overline{n}}(\F_{2^n}).
\end{equation}
Since in general there will be more variables and equations than for the indirect method, computing the zeta functions of the arising curves 
will likely be more costly for this method.

\subsection{Computing $T_{l}(\alpha - \beta)$}\label{sec:Tlab}
We now explain the how one can obtain expressions for $T_l(a_{0}^2 + a_0)$ and $T_l(a_{0}^2 + a_0 + 1)$ for $2 \le l \le 7$.
Expressions for $T_2(\alpha - \beta)$ and $T_3(\alpha - \beta)$ in characteristic two were given by Fitgerald and Yucas~\cite[Lemma 1.1]{fitzyucas} and
proven by expanding the bilinear forms $T_l(\alpha + \beta) + T_l(\alpha) + T_l(\beta)$ in terms of the Frobenius powers of $\alpha$ and $\beta$, 
and deducing the correct function of the lower degree traces. It is possible -- though laborious -- to continue in this manner (we did so for $l = 4$), 
so instead we present an easier method.

We begin by recalling Eq~(\ref{eq:NI}):
\begin{equation*}
k \,T_k(\alpha) = \sum_{j=1}^k (-1)^{j-1} T_{l-j}(\alpha)T_1(\alpha^j).
\end{equation*}
In order to use the argument $\alpha - \beta$ we need to work instead in the ring $\Z[\alpha_1,\ldots,\alpha_n,\beta_1,\ldots,\beta_n]$ and with
the ring of multisymmetric functions in two variables, with the symmetric group $S_n$ acting on $\alpha_1,\ldots,\alpha_n$ and $\beta_1,\ldots,\beta_n$ independently (see~\cite{dalbec} for a formal definition). Abusing notation slightly, in this ring Eq.~(\ref{eq:NI}) becomes:
\begin{equation}\label{eq:NIab}
l \,T_l(\alpha -\beta) = \sum_{k=1}^l (-1)^{k-1} T_{l-k}(\alpha - \beta)T_1( (\alpha - \beta)^k).
\end{equation}
If one works over $\Q$ rather than $\Z$ then Eq.~(\ref{eq:NIab}) leads to expressions for $T_{l}(\alpha - \beta)$ for any $l \ge 1$ as a sum of
products of $T_1$ terms with arguments being various powers of $\alpha - \beta$.
However, in positive characteristic this is not so useful. For instance, computing $T_2(\alpha- \beta)$ in
characteristic two in this way is not possible as the left hand side vanishes.
Nevertheless, working inductively for $2 \le l \le 7$ and applying Newton's identities evaluated at various products of powers of $\alpha$ 
and $\beta$ so that no trace occurs to any power larger than one, all of the coefficients become divisible by $l$. Upon dividing by $l$ one obtains an equation for $T_l(\alpha - \beta)$ over $\Z$,
which can then be substituted into Eq.~(\ref{eq:NIab}) in order to attempt to compute $T_{l+1}(\alpha - \beta)$.
For example, from Eq.~(\ref{eq:NIab}) we have
\begin{eqnarray*}
2T_2(\alpha - \beta) &=& T_1(\alpha - \beta)^2 - T_1( (\alpha - \beta)^2 ) \\
&=&  (T_1(\alpha) - T_1(\beta))^2 - T_1(\alpha^2) + 2T_1(\alpha\beta) - T_1(\beta^2) \\
&=& T_1(\alpha)^2 - 2T_1(\alpha)T_1(\beta) + T_1(\beta)^2 - T_1(\alpha^2) + 2T_1(\alpha\beta) - T_1(\beta^2) \\
&=& 2T_2(\alpha) + 2T_2(\beta) -  2T_1(\alpha)T_1(\beta) + 2T_1(\alpha\beta),
\end{eqnarray*}
where in the final line we have used Eq.~(\ref{eq:NI}) for $l=2$ for $\alpha$ and $\beta$ separately. We have therefore proven that
\[
T_2(\alpha - \beta) = T_2(\alpha) + T_2(\beta) - T_1(\alpha)T_1(\beta) + T_1(\alpha\beta).
\]
The parts of the following lemma can either be proven by induction on $n$ using the identity 
\[
T_{l}(\alpha_1,\ldots,\alpha_n) = T_{l-1}(\alpha_1,\ldots,\alpha_{n-1}) \, \alpha_n + T_{l}(\alpha_1,\ldots,\alpha_{n-1}),
\]
or via sequences of manipulations as described above\footnote[2]{See~\url{NewtonApproach_l_le_7.mw} for a derivation of these expressions.}.

\begin{lemma}\label{lem:T}
For all $n \ge l$ we have
\begin{enumerate}[label={(\arabic*)}]
\item $\displaystyle\begin{aligned}[t]
T_1(\alpha - \beta) = T_1(\alpha) - T_1(\beta),
\end{aligned}$
\item $\displaystyle\begin{aligned}[t]
T_2(\alpha - \beta) = T_2(\alpha) + T_2(\beta) - T_1(\alpha)T_1(\beta) + T_1(\alpha\beta),
\end{aligned}$
\item $\displaystyle\begin{aligned}[t]
T_3(\alpha - \beta) &=  T_3(\alpha)-T_3(\beta) + T_1(\alpha)T_2(\beta) - T_1(\beta)T_2(\alpha) +T_1(\alpha)T_1(\alpha\beta) -T_1(\beta)T_1(\alpha\beta)\\
& +  T_1(\alpha\beta^2) - T_1(\alpha^2\beta),
\end{aligned}$
\item $\displaystyle\begin{aligned}[t] 
T_4(\alpha - \beta) &= T_4(\alpha)+T_4(\beta) -T_1(\alpha)T_3(\beta)-T_1(\beta)T_3(\alpha) +T_2(\alpha)T_2(\beta) -T_1(\alpha)T_1(\beta)T_1(\alpha\beta)\\
&+T_1(\alpha\beta)T_2(\alpha)+T_1(\alpha\beta)T_2(\beta)-T_1(\alpha)T_1(\alpha^2\beta)+T_1(\alpha)T_1(\alpha\beta^2)+T_1(\beta)T_1(\alpha^2\beta)\\
& -T_1(\beta)T_1(\alpha\beta^2) +T_1(\alpha^3\beta)-T_1(\alpha^2\beta^2)+T_1(\alpha\beta^3)+T_2(\alpha\beta),
\end{aligned}$
\item $\displaystyle\begin{aligned}[t] 
T_5(\alpha - \beta) &= T_5(\alpha)-T_5(\beta) +T_1(\alpha)T_4(\beta) -T_1(\beta)T_4(\alpha) +T_2(\beta)T_3(\alpha)-T_2(\alpha)T_3(\beta) + T_1(\alpha)T_1(\beta)T_1(\alpha^2\beta)\\
& -T_1(\beta)T_1(\alpha\beta)T_2(\alpha)+T_1(\alpha)T_1(\alpha\beta)T_2(\beta)-T_1(\alpha)T_1(\beta)T_1(\alpha\beta^2) -T_1(\alpha^2 \beta)T_2(\alpha)\\
& -T_1(\alpha^2 \beta)T_2(\beta)+T_1(\alpha\beta^2)T_2(\alpha)+T_1(\alpha \beta^2)T_2(\beta)+T_1(\alpha\beta)T_3(\alpha)-T_1(\alpha\beta)T_3(\beta)\\
& -T_1(\alpha\beta)T_1(\alpha^2 \beta)+T_1(\alpha\beta)T_1(\alpha \beta^2)+T_1(\alpha)T_1(\alpha^3 \beta)-T_1(\alpha)T_1(\alpha^2 \beta^2)+T_1(\alpha)T_1(\alpha \beta^3)\\
& -T_1(\beta)T_1(\alpha^3 \beta)+T_1(\beta)T_1(\alpha^2 \beta^2) -T_1(\beta)T_1(\alpha \beta^3)+T_1(\alpha)T_2(\alpha\beta)-T_1(\beta)T_2(\alpha\beta)\\
& -T_1(\alpha^4 \beta)+2T_1(\alpha^3 \beta^2)-2T_1(\alpha^2 \beta^3)+T_1(\alpha \beta^4),\\
\end{aligned}$
\item $\displaystyle\begin{aligned}[t]
T_6(\alpha - \beta) &= T_6(\alpha)+T_6(\beta) -T_1(\alpha)T_5(\beta)-T_1(\beta)T_5(\alpha) +T_2(\alpha)T_4(\beta)+T_2(\beta)T_4(\alpha)-T_3(\beta)T_3(\alpha)\\
& +T_2(\alpha)T_2(\alpha\beta)+T_1(\alpha^5\beta)-2T_1(\alpha^4\beta^2)+2T_1(\alpha^3\beta^3)-2T_1(\alpha^2\beta^4)+T_1(\alpha\beta^5)\\
& +T_1(\alpha\beta)T_2(\alpha)T_2(\beta)+T_2(\alpha\beta)T_2(\beta)-T_1(\alpha)T_1(\alpha\beta)T_1(\alpha^2\beta)-T_1(\beta)T_1(\alpha\beta)T_1(\beta^2\alpha)\\
& -T_1(\alpha)T_1(\beta)T_2(\alpha\beta)+T_1(\beta)T_1(\alpha^2\beta)T_2(\alpha)+T_1(\alpha)T_1(\alpha\beta^2)T_2(\beta)-T_1(\alpha)T_1(\beta)T_1(\alpha\beta^3)\\
& +T_1(\alpha)T_1(\beta)T_1(\alpha^2\beta^2)+T_1(\alpha)T_1(\alpha\beta)T_1(\alpha\beta^2)-T_1(\beta)T_1(\alpha\beta^2)T_2(\alpha)-T_1(\alpha)T_1(\beta)T_1(\alpha^3\beta)\\
& -T_1(\beta)T_1(\alpha\beta)T_3(\alpha)-T_1(\alpha)T_1(\alpha^2\beta)T_2(\beta)-T_1(\alpha)T_1(\alpha\beta)T_3(\beta)+T_1(\beta)T_1(\alpha\beta)T_1(\alpha^2\beta)\\
& +4T_3(\alpha\beta)+T_2(\alpha\beta^2)-T_1(\alpha)T_1(\alpha^4\beta)+2T_1(\alpha)T_1(\alpha^3\beta^2)-2T_1(\alpha)T_1(\alpha^2\beta^3)+T_1(\alpha)T_1(\alpha\beta^4)\\
& +T_1(\beta)T_1(\alpha^4\beta)-2T_1(\beta)T_1(\alpha^3\beta^2)+2T_1(\beta)T_1(\alpha^2\beta^3)-T_1(\beta)T_1(\alpha\beta^4)+T_2(\alpha^2\beta)\\
& +T_1(\alpha\beta)T_4(\alpha)+T_1(\alpha\beta)T_4(\beta)+T_1(\alpha\beta)T_1(\alpha^3\beta)+T_1(\alpha\beta)T_1(\alpha\beta^3)-T_1(\alpha\beta)T_2(\alpha\beta)\\
& -T_1(\alpha^2\beta)T_3(\alpha)+T_1(\alpha^2\beta)T_3(\beta)-T_1(\alpha^2\beta)T_1(\alpha\beta^2)+T_1(\alpha\beta^2)T_3(\alpha)-T_1(\alpha\beta^2)T_3(\beta)\\
& +T_1(\alpha^3\beta)T_2(\alpha)+T_1(\alpha^3\beta)T_2(\beta)-T_1(\alpha^2\beta^2)T_2(\alpha)-T_1(\alpha^2\beta^2)T_2(\beta)+T_1(\alpha\beta^3)T_2(\alpha)\\
& +T_1(\alpha\beta^3)T_2(\beta),
\end{aligned}$
\item $\displaystyle\begin{aligned}[t]
T_7(\alpha - \beta) &= 
T_7(\alpha) -T_7(\beta) +T_1(\alpha)T_6(\beta) -T_6(\alpha)T_1(\beta) -T_2(\alpha)T_5(\beta) +T_2(\beta)T_5(\alpha)+T_3(\alpha)T_4(\beta)\\
&-T_3(\beta)T_4(\alpha) + T_1(\alpha\beta)T_5(\alpha) 
 -T_1(\alpha\beta)T_5(\beta)
+T_1(\alpha\beta^6) - T_1(\alpha^6\beta)
-T_1(\alpha\beta)T_1(\alpha^4\beta)\\
&+2T_1(\alpha\beta)T_1(\alpha^3\beta^2)
-2T_1(\alpha\beta)T_1(\alpha^2\beta^3)
+T_1(\alpha\beta)T_1(\alpha\beta^4)
-T_1(\alpha^2\beta)T_1(\alpha^3\beta)\\
&+T_1(\alpha^2\beta)T_1(\alpha^2\beta^2)
-T_1(\alpha^2\beta)T_1(\alpha\beta^3)
-T_2(\alpha\beta)T_3(\beta)
-T_1(\beta)T_1(\alpha\beta^2)T_3(\alpha)\\
&-T_1(\beta)T_1(\alpha^3\beta)T_2(\alpha)
+T_1(\beta)T_1(\alpha^2\beta^2)T_2(\alpha)
-T_1(\beta)T_2(\alpha\beta)T_2(\alpha)
-T_1(\beta)T_1(\alpha\beta^3)T_2(\alpha)\\
&+3T_1(\alpha^5\beta^2)
-5T_1(\alpha^4\beta^3)
+5T_1(\alpha^3\beta^4)
-3T_1(\alpha^2\beta^5)
+T_2(\alpha\beta)T_3(\alpha)
+T_1(\alpha)T_1(\alpha\beta)T_1(\alpha^3\beta)\\
&+2T_1(\alpha)T_1(\alpha\beta)T_1(\alpha^2\beta^2)
+T_1(\alpha)T_1(\alpha\beta)T_1(\alpha\beta^3)
+T_1(\alpha)T_1(\alpha\beta)T_4(\beta)\\
&-3T_1(\alpha)T_1(\alpha\beta)T_2(\alpha\beta)
+T_1(\alpha)T_1(\alpha^2\beta)T_3(\beta)
-T_1(\alpha)T_1(\alpha^2\beta)T_1(\alpha\beta^2)\\
&-T_1(\alpha)T_1(\alpha\beta^2)T_3(\beta)
+T_1(\alpha)T_1(\alpha^3\beta)T_2(\beta)
-T_1(\alpha)T_1(\alpha^2\beta^2)T_2(\beta)
+T_1(\alpha)T_1(\beta)T_1(\alpha^4\beta)\\
&-2T_1(\alpha)T_1(\beta)T_1(\alpha^3\beta^2)
+2T_1(\alpha)T_1(\beta)T_1(\alpha^2\beta^3)
-T_1(\alpha)T_1(\beta)T_1(\alpha\beta^4)\\
&+T_1(\alpha)T_2(\alpha\beta)T_2(\beta)
+T_1(\alpha)T_1(\alpha\beta^3)T_2(\beta)
-T_1(\beta)T_1(\alpha\beta)T_1(\alpha^3\beta)\\
&-2T_1(\beta)T_1(\alpha\beta)T_1(\alpha^2\beta^2)
-T_1(\beta)T_1(\alpha\beta)T_1(\alpha\beta^3)
-T_1(\beta)T_1(\alpha\beta)T_4(\alpha)\\
&+3T_1(\beta)T_1(\alpha\beta)T_2(\alpha\beta)
+T_1(\beta)T_1(\alpha^2\beta)T_3(\alpha)
+T_1(\beta)T_1(\alpha^2\beta)T_1(\alpha\beta^2)\\
&-T_1(\alpha^2\beta)T_2(\beta)T_2(\alpha)
+T_1(\alpha\beta^2)T_2(\beta)T_2(\alpha)
+T_1(\alpha\beta)T_2(\beta)T_3(\alpha)
+T_1(\alpha)T_2(\alpha\beta^2)\\
\end{aligned}$
\end{enumerate}
$\begin{aligned}
\hspace{22.5mm} &+T_1(\alpha)T_2(\alpha^2\beta)
+10T_1(\alpha)T_3(\alpha\beta)
-T_1(\beta)T_2(\alpha\beta^2)
-T_1(\beta)T_2(\alpha^2\beta)
-10T_1(\beta)T_3(\alpha\beta)\\
&+T_1(\alpha)T_1(\alpha^5\beta)
-2T_1(\alpha)T_1(\alpha^4\beta^2)
-2T_1(\alpha)T_1(\alpha^2\beta^4)
+T_1(\alpha)T_1(\alpha\beta^5)
-T_1(\beta)T_1(\alpha^5\beta)\\
&+2T_1(\beta)T_1(\alpha^4\beta^2)
+2T_1(\beta)T_1(\alpha^2\beta^4)
-T_1(\beta)T_1(\alpha\beta^5)
-T_1(\alpha^4\beta)T_2(\alpha)
-T_1(\alpha^4\beta)T_2(\beta)\\
&+2T_1(\alpha^3\beta^2)T_2(\alpha)
+2T_1(\alpha^3\beta^2)T_2(\beta)
-2T_1(\alpha^2\beta^3)T_2(\alpha)
-2T_1(\alpha^2\beta^3)T_2(\beta)\\
&+T_1(\alpha\beta^4)T_2(\alpha)
+T_1(\alpha\beta^4)T_2(\beta)
+T_1(\alpha^3\beta)T_3(\alpha)
-T_1(\alpha^3\beta)T_3(\beta)
-T_1(\alpha^2\beta^2)T_3(\alpha)\\
&+T_1(\alpha^2\beta^2)T_3(\beta)
+T_1(\alpha\beta^3)T_3(\alpha)
-T_1(\alpha\beta^3)T_3(\beta)
-T_1(\alpha^2\beta)T_4(\alpha)
-T_1(\alpha^2\beta)T_4(\beta)\\
&-T_1(\alpha^2\beta)T_2(\alpha\beta)
+T_1(\alpha\beta^2)T_1(\alpha^3\beta)
-T_1(\alpha\beta^2)T_1(\alpha^2\beta^2)
+T_1(\alpha\beta^2)T_1(\alpha\beta^3)\\
&+T_1(\alpha\beta^2)T_4(\alpha)
+T_1(\alpha\beta^2)T_4(\beta)
+T_1(\alpha\beta^2)T_2(\alpha\beta)
-T_1(\alpha\beta)T_1(\alpha^2\beta)T_2(\alpha)\\
&-T_1(\alpha\beta)T_1(\alpha^2\beta)T_2(\beta)
+T_1(\alpha\beta)T_1(\alpha\beta^2)T_2(\alpha)
+T_1(\alpha\beta)T_1(\alpha\beta^2)T_2(\beta)\\
&-T_1(\alpha\beta)T_2(\alpha)T_3(\beta)
+T_1(\alpha)T_1(\beta)T_1(\alpha\beta)T_1(\alpha^2\beta)
-T_1(\alpha)T_1(\beta)T_1(\alpha\beta)T_1(\alpha\beta^2).
\end{aligned}$
\end{lemma}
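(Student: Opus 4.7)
My plan is to prove the seven identities by induction on $l$, working in the ring of multisymmetric functions $\Z[\alpha_1,\ldots,\alpha_n,\beta_1,\ldots,\beta_n]^{S_n \times S_n}$, so that the resulting identities hold over $\Z$ and specialise to any characteristic. The base case $l = 1$ is the $\Z$-linearity $T_1(\alpha - \beta) = T_1(\alpha) - T_1(\beta)$, which is immediate. For each $l$ with $2 \le l \le 7$, I begin from Eq.~(\ref{eq:NIab}), namely
\[
l\, T_l(\alpha - \beta) = \sum_{k=1}^l (-1)^{k-1} T_{l-k}(\alpha - \beta)\, T_1((\alpha - \beta)^k),
\]
and process the right hand side until it takes the form $l \cdot (\text{integer polynomial in the allowed generators})$, at which point division by $l$ yields the claimed identity.

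The processing has three stages. First, I expand each power sum $T_1((\alpha - \beta)^k) = \sum_i (\alpha_i - \beta_i)^k$ by the binomial theorem, producing terms $(-1)^j\binom{k}{j} T_1(\alpha^{k-j}\beta^j)$. Second, I substitute the inductive hypotheses for $T_{l-k}(\alpha - \beta)$ with $k \ge 1$, so that every trace appearing on the right hand side is of one of the allowed shapes: a pure elementary symmetric polynomial $T_j(\alpha)$ or $T_j(\beta)$, an elementary symmetric polynomial in a product collection (e.g.\ $T_j(\alpha\beta)$, $T_j(\alpha^2\beta)$), or a mixed power sum $T_1(\alpha^a\beta^b)$. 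Third, I apply Newton's identities (\ref{eq:NI}) separately to $\alpha_1,\ldots,\alpha_n$, to $\beta_1,\ldots,\beta_n$, and to the product collections $(\alpha_i^a\beta_i^b)_i$, in order to eliminate any pure power sum $T_1(\alpha^j)$, $T_1(\beta^j)$ or $T_1((\alpha^a\beta^b)^j)$ with $j \ge 2$ in favour of elementary symmetric polynomials. This is exactly the ``no trace occurs to any power larger than one'' reduction mentioned in the excerpt, and its purpose is to present the right hand side in the unique $\Z$-basis of multisymmetric functions in which divisibility by $l$ can be read off.

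The main obstacle is the divisibility claim: after the three stages above, every coefficient of the resulting polynomial must be divisible by $l$. Over $\Q$ this is automatic because Eq.~(\ref{eq:NIab}) is an identity between symmetric functions, so $l\, T_l(\alpha - \beta)$ really is $l$ times a $\Q$-polynomial in the chosen generators; the nontrivial point is that the coefficients of that polynomial lie in $\Z$, which follows from the fact that all of the reductions used (the binomial expansion, substitution of inductive hypotheses, and Newton's identities applied in each direction) preserve the $\Z$-structure. The genuinely difficult part is therefore the bookkeeping: the number of monomials grows rapidly with $l$, and for $l = 6, 7$ a by-hand verification is essentially infeasible. I would carry out stages one through three symbolically using the Maple worksheet referenced in the paper's footnote, confirm term-by-term cancellation of the expected quantities, and read off the final identity.

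As an independent cross-check, especially for $l = 6, 7$, I would run the alternative induction on $n$ suggested in the excerpt via $T_l(\alpha_1,\ldots,\alpha_n) = T_{l-1}(\alpha_1,\ldots,\alpha_{n-1})\,\alpha_n + T_l(\alpha_1,\ldots,\alpha_{n-1})$. Applied in both variables, this reduces the claim for $n$ to the claim for $n-1$ plus an extra contribution coming from the pair $(\alpha_n,\beta_n)$, which after a binomial expansion is itself expressible via the lower-$l$ identities. This is more mechanical but no less laborious, and would only be used as a sanity check on the Newton-based derivation above.
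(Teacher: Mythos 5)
Your approach is essentially the same as the paper's: pass to the ring of multisymmetric functions, write $l\,T_l(\alpha-\beta)$ via Newton's identity~(\ref{eq:NIab}), substitute the inductive hypotheses for $T_{l-k}(\alpha-\beta)$, reduce all power sums to elementary symmetric polynomials of $\alpha$, $\beta$ and of the product collections, observe that all coefficients are multiples of $l$, and divide. Your ``three stages'' and your mention of the alternative induction on $n$ match the derivation and remarks in the paper (and the cited Maple worksheet) very closely.

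However, there is a genuine gap in your justification of the division step. You assert that the integrality of $T_l(\alpha-\beta)$'s expression ``follows from the fact that all of the reductions used $\ldots$ preserve the $\Z$-structure.'' That argument does not work. The reductions applied to the right hand side of Eq.~(\ref{eq:NIab}) certainly stay inside $\Z$-coefficients, but what they produce is a $\Z$-expression for $l\,T_l(\alpha-\beta)$ --- not for $T_l(\alpha-\beta)$. The real claim is that every coefficient of that expression is divisible by $l$, and that is \emph{not} a formal consequence of $\Z$-stability of the individual manipulations. If it were, the same reasoning would establish the lemma for all $l$; but the paper explicitly notes (just after the lemma) that the approach fails already at $l=8$, because after reduction a term $T_2(\alpha\beta)^2$ appears that cannot be removed while keeping the remaining coefficients divisible by $8$. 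The paper also cites Dalbec for the fact that the ring of multisymmetric functions in two sets of variables is \emph{not} generated over $\Z$ by the elementary multisymmetric functions used here (unless $n=2$), so the mere existence of an integral expression of the desired shape is not a consequence of general theory. Your claim that over $\Q$ the expression ``is $l$ times a $\Q$-polynomial in the chosen generators'' also presupposes membership in the $\Q$-subalgebra they generate, which itself needs justification. The divisibility by $l$ is therefore a nontrivial numerical fact, true for $2\le l\le 7$ when Newton's identities are applied judiciously (``so that no trace occurs to any power larger than one''), and it must be checked --- as you do in practice with the symbolic computation --- rather than asserted a priori. Your outline yields a correct proof exactly because the computations happen to go through for $l\le 7$, but the a priori divisibility argument you give should be removed.
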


Observe that for each $1 \le l \le 7$ all of the terms appearing in Lemma~\ref{lem:T} part (l) have total degree $l$, when 
counted in the natural way. Hence when one reduces mod $2$ and sets 
$\beta = \alpha^2$, the two terms $T_l(\alpha)$ and $T_l(\beta)$ cancel, leaving only $T_l$'s of degree $< l$, as claimed earlier.

Unfortunately, our approach of using Newton's identities evaluated for various $l$ at products of powers of $\alpha$ and $\beta$ so that no 
trace occurs to any power larger than one, fails for $l = 8$ due to the presence of the term $T_2(xy)^2$, which can not be eliminated while 
keeping the remaining terms' coefficients divisible by $8$. Whether or not there exist such expressions for $T_l(\alpha - \beta)$ over $\Z$ 
for $l \ge 8$, we leave as an open problem. Note that it is known that the ring of multisymmetric functions in two sets of variables 
$\alpha_1,\ldots,\alpha_n$ and $\beta_1,\ldots,\beta_n$ is not generated over $\Z$ by the elementary multisymmetric functions that we are using, unless 
$n=2$~\cite{dalbec}. However, since we are only interested in a particular family of multisymmetric functions -- namely $T_l(\alpha - \beta)$ -- and not all of them, it is possible that such expressions exist.


\section{Curves and Explicit Formulae for $q = 2$ and $l \le 7$}\label{sec:binaryzetas}

In this section we apply the indirect method and for some cases the direct method to determine relevant curves for $l \le 7$ and $n$ odd.
Using Magma we provide explicit formulae for $l \le 5$ and $n$ odd using the indirect method, and provide explicit formulae for a subset of these cases using the direct method, for all relevant $n$.

In practice, rather than obtain a curve as sketched in the previous section for each $i \in \{0,\ldots,2^l-1\}$, it is more efficient 
to compute a linearisation for each featured $T_{l_k}(a)$ and then combine them as appropriate according to whether $i_k$ is $0$ or $1$ for a 
given $i$, as we do in the examples that follow. 
Observe that for the indirect method, once $V(\vec{i}\cdot\vec{f})$ has been obtained for 
$\vec{f} = (T_l,\ldots,T_1)$ and $i \in \{0,\ldots,2^l-1\}$, these functions need not be recomputed for the subsequent $\vec{f} = (T_{l+1},\ldots,T_1)$.

\subsection{Computing $F_2(n,t_1,t_2,t_3)$}

The formulae for $l = 3$ were presented in~\cite[\S\S4\&5]{AGGMY} -- although obtained there in a slightly different 
manner --  but we include them here for demonstration purposes and completeness.

\subsubsection{Indirect method.} Setting $\vec{f} = (T_3,T_2,T_1)$, by the transform~(\ref{eq:Sinverse}) we have
\begin{equation}\label{eq:transform1_3}
{\small
\begin{bmatrix}
F_2(n,0,0,0)\\
F_2(n,1,0,0)\\
F_2(n,0,1,0)\\
F_2(n,1,1,0)\\
F_2(n,0,0,1)\\
F_2(n,1,0,1)\\
F_2(n,0,1,1)\\
F_2(n,1,1,1)\\
\end{bmatrix}
=
\begin{bmatrix}
N(\vec{0})\\
N(\vec{1})\\
N(\vec{2})\\
N(\vec{3})\\
N(\vec{4})\\
N(\vec{5})\\
N(\vec{6})\\
N(\vec{7})\\
\end{bmatrix}
=
\frac{1}{4}
\begin{bmatrix*}[r]
-3 & 1 & 1 & 1 & 1 & 1 & 1 & 1\\
1 & -1 & 1 & -1 & 1 & -1 & 1 & -1\\
1 & 1 & -1 & -1 & 1 & 1 & -1 & -1\\
1 & -1 & -1 & 1 & 1 & -1 & -1 & 1\\
1 & 1 & 1 & 1 & -1 & -1 & -1 & -1\\
1 & -1 & 1 & -1 & -1 & 1 & -1 & 1\\
1 & 1 & -1 & -1 & -1 & -1 & 1 & 1\\
1 & -1 & -1 & 1 & -1 & 1 & 1 & -1\\
\end{bmatrix*}
\begin{bmatrix}
V(\vec{0} \cdot \vec{f})\\
V(\vec{1} \cdot \vec{f})\\
V(\vec{2} \cdot \vec{f})\\
V(\vec{3} \cdot \vec{f})\\
V(\vec{4} \cdot \vec{f})\\
V(\vec{5} \cdot \vec{f})\\
V(\vec{6} \cdot \vec{f})\\
V(\vec{7} \cdot \vec{f})\\
\end{bmatrix}.}
\end{equation}
By definition we have $V(\vec{0} \cdot \vec{f}) = 2^n$, while $V(\vec{1} \cdot \vec{f}) = V(T_1) = \#\{a \in \F_{2^n} \mid T_1(a) = 0\} = 2^{n-1}$.
To determine $V( \vec{i} \cdot \vec{f})$ for $2 \le i \le 7$, we use Lemma~\ref{lem:T} parts (1) to (3). In particular, setting 
$\alpha = a_{0}^2$ and $\beta = a_{0}$ for $r_0 = 0$, and $\alpha = a_{0}^2 + a_0$ and $\beta = 1$ for $r_0 = 1$, and evaluating mod $2$ 
gives the following:
\begin{eqnarray}
\label{eq:T1linear} T_1(a_{0}^2 + a_0 + r_0) &=& T_1(r_0),\\
\label{eq:T2linear} T_2(a_{0}^2 + a_0 + r_0) &=& T_1\Big(a_{0}^3 + a_{0} + r_0\binom{n}{2}\Big),\\
\label{eq:T3linear} T_3(a_{0}^2 + a_0 + r_0) &=& T_1\Big(a_{0}^5 + a_{0} + r_0\Big(a_{0}^3 + a_0 + \binom{n}{3} \Big)\Big).
\end{eqnarray}
For $2 \le i \le 7$ let $\vec{i} = (i_2,i_1,i_0)$. The curves we are interested in for $a$ of trace $r_0$
are therefore:
\begin{eqnarray}
\label{3trace1} a_{1}^2 + a_{1} &=& i_2\Big(a_{0}^5 + a_{0} + r_0\Big(a_{0}^3 + a_0 + \binom{n}{3} \Big)\Big) 
+ i_1\Big(a_{0}^3 + a_{0} + r_0\binom{n}{2}\Big) + i_0 r_0 \binom{n}{1}.
\end{eqnarray}
These curves have genus $1$ if $i_2 = 0$ and genus $2$ if $i_2 = 1$, and are all supersingular. As pointed out in~\cite{AGGMY}, this is why the formulae are periodic in $n$.

By Corollary~\ref{cor:period} the vector $(\binom{n}{3},\binom{n}{2},\binom{n}{1})$ mod $2$ has period $4$ and is equal to $(0,0,1)$ if 
$n \equiv 1 \pmod{4}$, and $(1,1,1)$ if $n \equiv 3 \pmod{4}$. Hence there are two cases to consider when computing the zeta functions of the 
curves specified in Eq.~(\ref{3trace1}).
In order to express $F_2(n,t_1,t_2,t_3)$ compactly, we define the following polynomials:
\begin{eqnarray*}
\delta_{2,1} &=& X^2 + 2X + 2,\\
\delta_{4,1} &=& X^4 + 2X^3 + 2X^2 + 4X + 4,
\end{eqnarray*}
which are the characteristic polynomials of Frobenius of the following two supersingular curves, respectively:
\begin{eqnarray*}
C_{2,1}/\F_2: && y^2 + y = x^3 + x,\\
C_{4,1}/\F_2: && y^2 + y = x^5 + x^3. 
\end{eqnarray*}
Using Magma to compute the zeta functions of the curves~(\ref{3trace1}) and applying~(\ref{eq:transform1_3}) gives the following.

\begin{theorem}\label{thm:3traces}
For $n \ge 3$ we have
{\small
\begin{eqnarray*}
F_2(n,0,0,0) &=& 2^{n-3} - \frac{1}{8} \big( 2\rho_n(\delta_{2,1}) + \rho_n(\delta_{4,1}) \big) \ \ \text{if} \ n \equiv 1,3 \pmod{4}\\
F_2(n,1,0,0) &=& 2^{n-3} - \frac{1}{8} \cdot \begin{cases}
\begin{array}{lr}
2\rho_n(\delta_{2,1}) + \rho_n(\delta_{4,1}) & \ \text{if} \ n \equiv 1 \pmod{4}\\
-\rho_n(\delta_{4,1}) & \ \text{if} \ n \equiv 3 \pmod{4}\\
\end{array}
\end{cases}\\
F_2(n,0,1,0) &=&2^{n-3} - \frac{1}{8} \big( - \rho_n(\delta_{4,1}) \big) \ \ \text{if} \ n \equiv 1,3 \pmod{4}\\
F_2(n,1,1,0) &=&2^{n-3} - \frac{1}{8} \cdot \begin{cases}
\begin{array}{lr}
-2\rho_n(\delta_{2,1}) + \rho_n(\delta_{4,1}) & \ \text{if} \ n \equiv 1 \pmod{4}\\
-\rho_n(\delta_{4,1}) & \ \text{if} \ n \equiv 3 \pmod{4}\\
\end{array}
\end{cases}\\
F_2(n,0,0,1) &=&2^{n-3} - \frac{1}{8} \big( - \rho_n(\delta_{4,1}) \big) \ \ \text{if} \ n \equiv 1,3 \pmod{4}\\
F_2(n,1,0,1) &=&2^{n-3} - \frac{1}{8} \cdot \begin{cases}
\begin{array}{lr}
 - \rho_n(\delta_{4,1}) & \ \text{if} \ n \equiv 1 \pmod{4}\\
-2\rho_n(\delta_{2,1}) + \rho_n(\delta_{4,1}) & \ \text{if} \ n \equiv 3 \pmod{4}\\
\end{array}
\end{cases}\\
F_2(n,0,1,1) &=&2^{n-3} - \frac{1}{8} \big( -2\rho_n(\delta_{2,1}) + \rho_n(\delta_{4,1}) \big) \ \ \text{if} \ n \equiv 1,3 \pmod{4}\\
F_2(n,1,1,1) &=&2^{n-3} - \frac{1}{8} \cdot \begin{cases}
\begin{array}{lr}
 - \rho_n(\delta_{4,1}) & \ \text{if} \ n \equiv 1 \pmod{4}\\
2\rho_n(\delta_{2,1}) + \rho_n(\delta_{4,1}) & \ \text{if} \ n \equiv 3 \pmod{4}\\
\end{array}
\end{cases}\\
\end{eqnarray*}
}
\end{theorem}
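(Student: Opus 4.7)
The plan is to execute the indirect method of Section~\ref{subsec:indirectbinary} with $\vec{f} = (T_3, T_2, T_1)$, compute each of the eight $V(\vec{i} \cdot \vec{f})$ counts, and then invert via the transform~(\ref{eq:transform1_3}).

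First I would dispense with the two trivial cases: by definition $V(\vec{0}\cdot\vec{f}) = 2^n$, and since $T_1$ is $\F_2$-linear and surjective, $V(\vec{1}\cdot\vec{f}) = 2^{n-1}$. For $2 \le i \le 7$ the idea is to split on the value $r_0 = T_1(a) \in \F_2$ and parametrise $a = a_0^2 + a_0 + r_0$ using Lemma~\ref{lem:paramq}(2) (here we need $n$ odd, which is fine since only $n \equiv 1,3 \pmod 4$ appears). Substituting into the $T_k(a)$ for $k=2,3$ via Lemma~\ref{lem:T}(1)--(3) applied with $\alpha = a_0^2$, $\beta = a_0$ in the $r_0 = 0$ branch and with $\alpha = a_0^2 + a_0$, $\beta = 1$ in the $r_0 = 1$ branch, and reducing modulo $2$, yields the linearisations~(\ref{eq:T1linear})--(\ref{eq:T3linear}); this is a straightforward calculation exploiting that all the characteristic-two trace identities collapse the $T_k(\alpha) + T_k(\beta)$ pair to zero.

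Next, for each non-zero $\vec{i} \in (\F_2)^3$ and each $r_0 \in \F_2$, Lemma~\ref{lem:paramq}(1) identifies $V(\vec{i}\cdot\vec{f})$ with $\tfrac{1}{4}$ times the sum of $\#\F_{2^n}$-rational affine points on the Artin--Schreier curve~(\ref{3trace1}). Since the binomial coefficients $\binom{n}{1}, \binom{n}{2}, \binom{n}{3}$ modulo $2$ have joint period $4$ by Corollary~\ref{cor:period}, taking $\overline{n} \in \{1,3\}$ and replacing each $\binom{n}{k}$ with $\binom{\overline{n}}{k}$ gives an explicit (finite) list of curves over $\F_2$, each of genus $1$ when $i_2 = 0$ and genus $2$ when $i_2 = 1$.

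The computational heart of the proof is then running Magma on these curves to obtain their zeta functions for each $\overline{n}$. Here I would expect, and verify, that every numerator $P(C, T)$ factors as a product of the cyclotomic-type polynomials built from $\delta_{2,1}$ and $\delta_{4,1}$: the constant linear shifts $r_0\binom{\overline{n}}{k}$ either do not change the birational isomorphism class over $\F_2$ (when they can be absorbed by a translation $a_0 \mapsto a_0 + c$ or $a_1 \mapsto a_1 + c$) or they produce a quadratic twist whose numerator is still supersingular. Demonstrating supersingularity abstractly is easy via the Stichtenoth--Xing criterion (Theorem~\ref{thm:SS}) applied to the resulting degree-$2$ and degree-$4$ polynomials, which explains \emph{a posteriori} the periodicity in $n$.

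Finally, I would combine the eight $V(\vec{i}\cdot\vec{f})$ values via the inverse matrix displayed in~(\ref{eq:transform1_3}) to read off each $F_2(n,t_1,t_2,t_3)$, and verify that cancellation between the $r_0 = 0$ and $r_0 = 1$ branches collapses the answer into the compact form stated in the theorem. I expect the principal obstacle to be purely bookkeeping: tracking which of the eight curves, in which residue class $\overline{n} \bmod 4$, contributes $\delta_{2,1}$ versus $\delta_{4,1}$ to the final linear combination, and confirming that the eight totals reproduce the table above with exactly the signs claimed; the supersingularity and point-counting are, by contrast, essentially automatic.
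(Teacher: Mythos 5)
Your proposal follows exactly the paper's own derivation: indirect method with $\vec{f} = (T_3,T_2,T_1)$, linearisation via Lemma~\ref{lem:T}(1)--(3) and Lemma~\ref{lem:paramq}, the curves~(\ref{3trace1}) with period-$4$ residue split from Corollary~\ref{cor:period}, Magma point-counting, and inversion by~(\ref{eq:transform1_3}). The only addition is your \emph{a posteriori} justification for why the char polys reduce to $\delta_{2,1}$ and $\delta_{4,1}$, which is essentially right, though note that the $r_0=1$ branch can yield a genuine quadratic twist (\eg $y^2+y = x^3 + x + 1$) whose Frobenius polynomial is $\delta(-X)$ rather than $\delta(X)$; for $n$ odd this simply flips the sign of $\rho_n$, which is where the sign patterns in the theorem come from.
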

The roots of $\delta_{2,1}$ are $\sqrt{2}\omega_{8}^3,\sqrt{2}\omega_{8}^5$, with $\omega_8 =  e^{i \pi/4} = (1 + i)/\sqrt{2}$,
while the roots of $\delta_{4,1}$ are $\sqrt{2}\omega_{24}^{5},\sqrt{2}\omega_{24}^{11}$,
$\sqrt{2}\omega_{24}^{13},\sqrt{2}\omega_{24}^{19}$,
with $\omega_{24} = e^{i \pi /12}  = ((1+\sqrt{3}) + (-1+\sqrt{3})i)/2\sqrt{2}$.
Observe that $F_2(n,t_1,t_2)$ can be obtained similarly, or by adding $F_2(n,t_1,t_2,0)$ and $F_2(n,t_1,t_2,1)$ as given in 
Theorem~\ref{thm:3traces}. Likewise $F_2(n,t_1)$ can be obtained as $F_2(n,t_1,0,0) + F_2(n,t_1,0,1)+ F_2(n,t_1,1,0) + F_2(n,t_1,1,1)$,
and summing all the expressions gives $2^n$, as they must.

\subsubsection{Direct method.} For odd $n$, applying Equations~(\ref{eq:T1linear}) to~(\ref{eq:T3linear}) we have
\begin{eqnarray}
\nonumber F_2(n,t_1,t_2,t_3) &=& \#\{a \in \F_{2^n} \mid T_1(a) = t_1, \ T_2(a) = t_2, \ T_3(a) = t_3\}\\
\nonumber &=& \frac{1}{2} \, \#\{a_0 \in \F_{2^n} \mid T_2(a_{0}^2 + a_{0} + t_1) = t_2, \ T_3(a_{0}^2 + a_{0} + t_1) = t_3 \}\\
\nonumber &=& \frac{1}{2} \, \#\{a_0 \in \F_{2^n} \mid T_1\Big(a_{0}^3 + a_{0} + t_1\binom{n}{2}\Big) = t_2, \ T_1\Big(a_{0}^5 + a_{0} + t_1\Big(a_{0}^3 + 
a_0+\binom{n}{3}\Big)\Big) = t_3 \}\\
\nonumber &=& \frac{1}{8} \, \#\{(a_0,a_1,a_2) \in (\F_{2^n})^3 \mid a_{1}^2 + a_1 = a_{0}^3 + a_{0} + t_1\binom{n}{2} + t_2,\\ 
\nonumber && \ \ \ \ a_{2}^2 + a_2 = a_{0}^5 + a_{0} + t_1\Big(a_{0}^3 + a_0+\binom{n}{3}\Big) + t_3\}.
\end{eqnarray}
These supersingular curves are all absolutely irreducible and are of genus $5$, and for $n$ odd their zeta functions reproduce Theorem~\ref{thm:3traces}.
Note that this method is slightly different to that used in~\cite[\S5]{AGGMY}, since it counts the number of points on a curve defined by 
an intersection of two curves (with one variable in common), rather than the sum of the number of points on three curves.
Letting $\delta_{2,2} = X^2 + 2$ (corresponding to the curve $C_{2,2}/\F_2: y^2 + y = x^3 + 1$), 
further note that for all $n \ge 3$ we have
\begin{eqnarray}
\nonumber F_2(n,0,0,0) &=& \frac{1}{8} \, \#\{ (a_0,a_1,a_2) \in (\F_{2^n})^3 \mid a_{1}^2 + a_{1}=a_{0}^3 + a_{0}, \ a_{2}^2 + a_{2} = a_{0}^5 + a_{0}\}\\
\nonumber  &=& \frac{1}{8}\big(2^{n} - 2\rho_n(\delta_{2,1}) - \rho_n(\delta_{2,2}) - \rho_n(\delta_{4,1})\big),
\end{eqnarray}
since one does not need to parameterise any `linear trace $= 1$' conditions. Using the same basic observations from~\cite[\S\S4\&5]{AGGMY} one can
determine the formulae, valid for all $n \ge 3$, for all four $F_2(n,0,t_2,t_3)$.

\subsection{Computing $F_2(n,t_1,t_2,t_3,t_4)$}

We begin by applying the indirect method.

\subsubsection{Indirect method.} Setting $\vec{f} = (T_4,T_3,T_2,T_1)$,  by the transform~(\ref{eq:Sinverse}) we have
\begin{equation}\label{eq:transform1_4}
{\small
\begin{bmatrix}
F_2(n,0,0,0,0)\\
F_2(n,1,0,0,0)\\
F_2(n,0,1,0,0)\\
F_2(n,1,1,0,0)\\
F_2(n,0,0,1,0)\\
F_2(n,1,0,1,0)\\
F_2(n,0,1,1,0)\\
F_2(n,1,1,1,0)\\
F_2(n,0,0,0,1)\\
F_2(n,1,0,0,1)\\
F_2(n,0,1,0,1)\\
F_2(n,1,1,0,1)\\
F_2(n,0,0,1,1)\\
F_2(n,1,0,1,1)\\
F_2(n,0,1,1,1)\\
F_2(n,1,1,1,1)\\
\end{bmatrix}
=
\begin{bmatrix}
N(\vec{0})\\
N(\vec{1})\\
N(\vec{2})\\
N(\vec{3})\\
N(\vec{4})\\
N(\vec{5})\\
N(\vec{6})\\
N(\vec{7})\\
N(\vec{8})\\
N(\vec{9})\\
N(\vec{10})\\
N(\vec{11})\\
N(\vec{12})\\
N(\vec{13})\\
N(\vec{14})\\
N(\vec{15})\\
\end{bmatrix}
=
\frac{1}{8}
\begin{bmatrix*}[r]
-7 & 1 & 1 & 1 & 1 & 1 & 1 & 1 & 1 & 1 & 1 & 1 & 1 & 1 & 1 & 1\\
1 & -1 & 1 & -1 & 1 & -1 & 1 & -1 & 1 & -1 & 1 & -1 & 1 & -1 & 1 & -1\\
1 & 1 & -1 & -1 & 1 & 1 & -1 & -1 & 1 & 1 & -1 & -1 & 1 & 1 & -1 & -1\\
1 & -1 & -1 & 1 & 1 & -1 & -1 & 1 & 1 & -1 & -1 & 1 & 1 & -1 & -1 & 1\\
1 & 1 & 1 & 1 & -1 & -1 & -1 & -1 & 1 & 1 & 1 & 1 & -1 & -1 & -1 & -1\\
1 & -1 & 1 & -1 & -1 & 1 & -1 & 1 & 1 & -1 & 1 & -1 & -1 & 1 & -1 & 1\\
1 & 1 & -1 & -1 & -1 & -1 & 1 & 1 & 1 & 1 & -1 & -1 & -1 & -1 & 1 & 1\\
1 & -1 & -1 & 1 & -1 & 1 & 1 & -1 & 1 & -1 & -1 & 1 & -1 & 1 & 1 & -1\\
1 & 1 & 1 & 1 & 1 & 1 & 1 & 1 & -1 & -1 & -1 & -1 & -1 & -1 & -1 & -1\\
1 & -1 & 1 & -1 & 1 & -1 & 1 & -1 & -1 & 1 & -1 & 1 & -1 & 1 & -1 & 1\\
1 & 1 & -1 & -1 & 1 & 1 & -1 & -1 & -1 & -1 & 1 & 1 & -1 & -1 & 1 & 1\\
1 & -1 & -1 & 1 & 1 & -1 & -1 & 1 & -1 & 1 & 1 & -1 & -1 & 1 & 1 & -1\\
1 & 1 & 1 & 1 & -1 & -1 & -1 & -1 & -1 & -1 & -1 & -1 & 1 & 1 & 1 & 1\\
1 & -1 & 1 & -1 & -1 & 1 & -1 & 1 & -1 & 1 & -1 & 1 & 1 & -1 & 1 & -1\\
1 & 1 & -1 & -1 & -1 & -1 & 1 & 1 & -1 & -1 & 1 & 1 & 1 & 1 & -1 & -1\\
1 & -1 & -1 & 1 & -1 & 1 & 1 & -1 & -1 & 1 & 1 & -1 & 1 & -1 & -1 & 1\\
\end{bmatrix*}
\begin{bmatrix}
V(\vec{0} \cdot \vec{f})\\
V(\vec{1} \cdot \vec{f})\\
V(\vec{2} \cdot \vec{f})\\
V(\vec{3} \cdot \vec{f})\\
V(\vec{4} \cdot \vec{f})\\
V(\vec{5} \cdot \vec{f})\\
V(\vec{6} \cdot \vec{f})\\
V(\vec{7} \cdot \vec{f})\\
V(\vec{8} \cdot \vec{f})\\
V(\vec{9} \cdot \vec{f})\\
V(\vec{10} \cdot \vec{f})\\
V(\vec{11} \cdot \vec{f})\\
V(\vec{12} \cdot \vec{f})\\
V(\vec{13} \cdot \vec{f})\\
V(\vec{14} \cdot \vec{f})\\
V(\vec{15} \cdot \vec{f})\\
\end{bmatrix}.
}
\end{equation}
To determine $V( \vec{i} \cdot \vec{f})$ for $8 \le i \le 15$, we use Lemma~\ref{lem:T} part (4). 
In particular, setting $\alpha = a_{0}^2$ and $\beta = a_{0}$ for $r_0 = 0$, and $\alpha = a_{0}^2 + a_0$ and $\beta = 1$ for $r_0 = 1$, 
and evaluating mod $2$ gives the following:
\begin{eqnarray}
\nonumber T_4(a_{0}^2 + a_0 + r_0) &=& T_2(a_{0}^3) + T_2(a_{0}) + T_1(a_{0}^3)T_1(a_0)\\
 &+& T_1\Big(a_{0}^7 + a_{0}^5 + a_{0}^3 + r_0\Big(a_{0}^3 + a_0 + (a_{0}^3 + a_0)\binom{n}{2}  + \binom{n}{4}\Big)\Big).\label{eq:4coeffsparam1}
\end{eqnarray}
This can be linearised using the substitutions $a_0 = a_{1}^2 + a_1 + r_1$ and $a_{0}^3 = a_{2}^2 + a_2 + r_2$,
where $r_1,r_2 \in \F_2$ are the traces of $a_0$ and $a_{0}^3$ respectively. This results in\footnote[2]{See~\url{NewtonApproach_l_le_7.mw} to verify the linearised expressions for $4,5,6$ and $7$ coefficients.}
\begin{eqnarray*}
\nonumber T_4(a_{0}^2 + a_0 + r_0) &=& T_1\Big(a_{2}^3 + a_2 + a_{1}^3 + a_1 + a_{0}^7 + a_{0}^5 + r_0r_1 + r_0r_2 + r_1r_2 + r_2\\
&+&(r_0 + 1)(r_1 + r_2)\binom{n}{2} + r_0\binom{n}{4} \Big).
\end{eqnarray*}
For $8 \le i \le 15$ let $\vec{i} = (i_3,i_2,i_1,i_0)$. The curves we are interested in are given by the following intersections:
\begin{eqnarray}
\nonumber a_{3}^2 + a_{3} &=& i_3\Big(a_{2}^3 + a_2 + a_{1}^3 + a_1 + a_{0}^7 + a_{0}^5 + r_0r_1 + r_0r_2 + r_1r_2 + r_2 + (r_0 + 1)(r_1 + r_2)\binom{n}{2} + r_0\binom{n}{4}\Big)\\
\nonumber &+& i_2\Big(a_{0}^5 + a_{0} + r_0\Big(a_{0}^3 + a_0 + \binom{n}{3} \Big)\Big) + i_1\Big(a_{0}^3 + a_{0} + r_0\binom{n}{2}\Big) + i_0r_0,\\
\label{eq:4trace1} a_0 &=& a_{1}^2 + a_1 + r_1,\\
\nonumber a_{0}^3 &=& a_{2}^2 + a_2 + r_2.
\end{eqnarray}
For $i_3 = 1$ the genus of all of these absolutely irreducible curves is $14$. Corollary~\ref{cor:period} implies that mod $2$ one has
\begin{eqnarray*}
\Big(\binom{n}{4},\binom{n}{3},\binom{n}{2},\binom{n}{1}\Big) \equiv \begin{cases}
\begin{array}{lr}
(0,0,0,1) & \ \text{if} \ n \equiv 1 \pmod{8}\\
(0,1,1,1) & \ \text{if} \ n \equiv 3 \pmod{8}\\
(1,0,0,1) & \ \text{if} \ n \equiv 5 \pmod{8}\\
(1,1,1,1) & \ \text{if} \ n \equiv 7 \pmod{8}\\
\end{array},
\end{cases}
\end{eqnarray*}
and hence there are four cases to consider when computing the zeta functions of each of the curves~(\ref{eq:4trace1}).
In order to express $F_2(n,t_1,t_2,t_3,t_4)$ compactly, we further define the following polynomials:
\begin{eqnarray*}
\delta_{8,1} &=& X^8 + 4X^7 + 6X^6 + 4X^5 + 2X^4 + 8X^3 + 24X^2 + 32X + 16,\\
\delta_{8,2} &=& X^8 + 2X^6 + 4X^5 + 2X^4 + 8X^3 + 8X^2 + 16.
\end{eqnarray*}
Note that by Theorem~\ref{thm:SS}, neither $\delta_{8,1}$ or $\delta_{8,2}$ are the 
characteristic polynomials of the Frobenius endomorphism of supersingular abelian varieties. 
There are two other polynomials which occur as factors of the characteristic polynomial of Frobenius of the above curves, 
but they are even polynomials and hence can be ignored for $n$ odd.
Using Magma to compute the zeta functions of the curves~(\ref{eq:4trace1}) and applying~(\ref{eq:transform1_4}) gives the following result. 

\begin{theorem}\label{thm:4traces}
For $n \ge 4$ we have
{\small
\begin{eqnarray*}
F_2(n,0,0,0,0) &=& 2^{n-4} - \frac{1}{16} \big( 4\rho_n(\delta_{2,1}) + \rho_n(\delta_{4,1}) + \rho_n(\delta_{8,1}) + \rho_n(\delta_{8,2}) \big) \ \ \text{if} \ n \equiv 1,3,5,7 \pmod{8}\\
F_2(n,1,0,0,0) &=& 2^{n-4} - \frac{1}{16} \cdot \begin{cases}
\begin{array}{lr}
4\rho_n(\delta_{2,1}) +  \rho_n(\delta_{4,1}) + \rho_n(\delta_{8,1}) + \rho_n(\delta_{8,2}) & \ \text{if} \ n \equiv 1 \pmod{8}\\
2\rho_n(\delta_{2,1}) -  \rho_n(\delta_{4,1}) - \rho_n(\delta_{8,1}) + \rho_n(\delta_{8,2}) & \ \text{if} \ n \equiv 3 \pmod{8}\\
\rho_n(\delta_{4,1}) - \rho_n(\delta_{8,1}) - \rho_n(\delta_{8,2}) & \ \text{if} \ n \equiv 5 \pmod{8}\\
- 2\rho_n(\delta_{2,1}) - \rho_n(\delta_{4,1}) + \rho_n(\delta_{8,1}) - \rho_n(\delta_{8,2}) & \ \text{if} \ n \equiv 7 \pmod{8}\\
\end{array}
\end{cases}\\
F_2(n,0,1,0,0) &=& 2^{n-4} - \frac{1}{16} \cdot \begin{cases}
\begin{array}{lr}
- 2\rho_n(\delta_{2,1}) - \rho_n(\delta_{4,1}) + \rho_n(\delta_{8,1}) - \rho_n(\delta_{8,2})  & \ \text{if} \ n \equiv 1,5 \pmod{8}\\
2\rho_n(\delta_{2,1}) - \rho_n(\delta_{4,1}) - \rho_n(\delta_{8,1}) + \rho_n(\delta_{8,2})  & \ \text{if} \ n \equiv 3,7 \pmod{8}\\
\end{array}
\end{cases}\\
F_2(n,1,1,0,0) &=& 2^{n-4} - \frac{1}{16} \cdot \begin{cases}
\begin{array}{lr}
-4\rho_n(\delta_{2,1}) + \rho_n(\delta_{4,1})  + \rho_n(\delta_{8,1}) + \rho_n(\delta_{8,2})  & \ \text{if} \ n \equiv 1 \pmod{8}\\
-2\rho_n(\delta_{2,1})  - \rho_n(\delta_{4,1}) + \rho_n(\delta_{8,1}) - \rho_n(\delta_{8,2})  & \ \text{if} \ n \equiv 3 \pmod{8}\\
\rho_n(\delta_{4,1}) - \rho_n(\delta_{8,1}) - \rho_n(\delta_{8,2})  & \ \text{if} \ n \equiv 5 \pmod{8}\\
2\rho_n(\delta_{2,1})  - \rho_n(\delta_{4,1}) - \rho_n(\delta_{8,1}) + \rho_n(\delta_{8,2})  & \ \text{if} \ n \equiv 7 \pmod{8}\\
\end{array}
\end{cases}\\
F_2(n,0,0,1,0) &=& 2^{n-4} - \frac{1}{16} \big(-2\rho_n(\delta_{2,1}) - \rho_n(\delta_{4,1}) + \rho_n(\delta_{8,1}) - \rho_n(\delta_{8,2} )\big)  \ \ \text{if} \ n \equiv 1,3,5,7 \pmod{8}\\
F_2(n,1,0,1,0) &=& 2^{n-4} - \frac{1}{16} \cdot \begin{cases}
\begin{array}{lr}
-2\rho_n(\delta_{2,1}) - \rho_n(\delta_{4,1}) + \rho_n(\delta_{8,1}) - \rho_n(\delta_{8,2}) & \ \text{if} \ n \equiv 1 \pmod{8}\\
-4\rho_n(\delta_{2,1}) + \rho_n(\delta_{4,1}) + \rho_n(\delta_{8,1}) + \rho_n(\delta_{8,2}) & \ \text{if} \ n \equiv 3 \pmod{8}\\
2\rho_n(\delta_{2,1}) - \rho_n(\delta_{4,1}) - \rho_n(\delta_{8,1}) + \rho_n(\delta_{8,2}) & \ \text{if}  \ n \equiv 5 \pmod{8}\\
\rho_n(\delta_{4,1}) - \rho_n(\delta_{8,1}) - \rho_n(\delta_{8,2}) & \ \text{if} \  n \equiv 7 \pmod{8}\\
\end{array}
\end{cases}\\
F_2(n,0,1,1,0) &=& 2^{n-4} - \frac{1}{16} \cdot \begin{cases}
\begin{array}{lr}
\rho_n(\delta_{4,1}) - \rho_n(\delta_{8,1}) - \rho_n(\delta_{8,2}) &   \ \text{if} \ n \equiv 1,5 \pmod{8}\\
-4\rho_n(\delta_{2,1}) + \rho_n(\delta_{4,1}) + \rho_n(\delta_{8,1}) + \rho_n(\delta_{8,2}) &   \ \text{if} \ n \equiv 3,7 \pmod{8}\\
\end{array}
\end{cases}\\
F_2(n,1,1,1,0) &=& 2^{n-4} - \frac{1}{16} \cdot \begin{cases}
\begin{array}{lr}
2\rho_n(\delta_{2,1}) - \rho_n(\delta_{4,1}) - \rho_n(\delta_{8,1}) + \rho_n(\delta_{8,2}) & \ \text{if} \ n \equiv 1 \pmod{8}\\
4\rho_n(\delta_{2,1}) + \rho_n(\delta_{4,1}) + \rho_n(\delta_{8,1}) + \rho_n(\delta_{8,2}) & \ \text{if} \ n \equiv 3 \pmod{8}\\
- 2\rho_n(\delta_{2,1}) - \rho_n(\delta_{4,1}) + \rho_n(\delta_{8,1}) - \rho_n(\delta_{8,2}) & \ \text{if} \ n \equiv 5 \pmod{8}\\
\rho_n(\delta_{4,1}) - \rho_n(\delta_{8,1}) - \rho_n(\delta_{8,2}) & \ \text{if} \ n \equiv 7 \pmod{8}\\
\end{array}
\end{cases}\\
F_2(n,0,0,0,1) &=& 2^{n-4} - \frac{1}{16} \big( \rho_n(\delta_{4,1}) - \rho_n(\delta_{8,1}) - \rho_n(\delta_{8,2}) \big)  \ \ \text{if} \ n \equiv 1,3,5,7 \pmod{8}\\
F_2(n,1,0,0,1) &=& 2^{n-4} - \frac{1}{16} \cdot \begin{cases}
\begin{array}{lr}
\rho_n(\delta_{4,1}) - \rho_n(\delta_{8,1}) - \rho_n(\delta_{8,2}) & \ \text{if}  \ n \equiv 1 \pmod{8}\\
-2\rho_n(\delta_{2,1}) - \rho_n(\delta_{4,1}) + \rho_n(\delta_{8,1}) - \rho_n(\delta_{8,2}) & \ \text{if} \ n \equiv 3 \pmod{8}\\
4\rho_n(\delta_{2,1}) + \rho_n(\delta_{4,1}) + \rho_n(\delta_{8,1}) + \rho_n(\delta_{8,2})  & \ \text{if} \ n \equiv 5 \pmod{8}\\
2\rho_n(\delta_{2,1}) - \rho_n(\delta_{4,1}) - \rho_n(\delta_{8,1}) + \rho_n(\delta_{8,2}) & \ \text{if} \ n \equiv 7 \pmod{8}\\
\end{array}
\end{cases}\\
F_2(n,0,1,0,1) &=& 2^{n-4} - \frac{1}{16} \cdot \begin{cases}
\begin{array}{lr}
2\rho_n(\delta_{2,1}) - \rho_n(\delta_{4,1}) - \rho_n(\delta_{8,1}) + \rho_n(\delta_{8,2}) & \ \text{if} \  n \equiv 1,5 \pmod{8}\\
-2\rho_n(\delta_{2,1}) - \rho_n(\delta_{4,1}) + \rho_n(\delta_{8,1}) - \rho_n(\delta_{8,2}) & \ \text{if}  \ n \equiv 3,7 \pmod{8}\\
\end{array}
\end{cases}\\
F_2(n,1,1,0,1) &=& 2^{n-4} - \frac{1}{16} \cdot \begin{cases}
\begin{array}{lr}
\rho_n(\delta_{4,1}) - \rho_n(\delta_{8,1}) - \rho_n(\delta_{8,2}) & \ \text{if}  \ n \equiv 1 \pmod{8}\\
2\rho_n(\delta_{2,1}) - \rho_n(\delta_{4,1}) - \rho_n(\delta_{8,1}) + \rho_n(\delta_{8,2}) & \ \text{if} \ n \equiv 3 \pmod{8}\\
-4\rho_n(\delta_{2,1}) + \rho_n(\delta_{4,1}) + \rho_n(\delta_{8,1}) + \rho_n(\delta_{8,2})  & \ \text{if} \ n \equiv 5 \pmod{8}\\
-2\rho_n(\delta_{2,1}) - \rho_n(\delta_{4,1}) + \rho_n(\delta_{8,1}) - \rho_n(\delta_{8,2})  & \ \text{if} \ n \equiv 7 \pmod{8}\\
\end{array}
\end{cases}\\
F_2(n,0,0,1,1) &=& 2^{n-4} - \frac{1}{16} \big( 2\rho_n(\delta_{2,1}) - \rho_n(\delta_{4,1}) - \rho_n(\delta_{8,1}) + \rho_n(\delta_{8,2}) \big) \ \ \text{if} \ n \equiv 1,3,5,7 \pmod{8}\\
F_2(n,1,0,1,1) &=& 2^{n-4} - \frac{1}{16} \cdot \begin{cases}
\begin{array}{lr}
2\rho_n(\delta_{2,1}) - \rho_n(\delta_{4,1}) - \rho_n(\delta_{8,1}) + \rho_n(\delta_{8,2}) & \ \text{if}  \ n \equiv 1 \pmod{8}\\
\rho_n(\delta_{4,1}) - \rho_n(\delta_{8,1}) - \rho_n(\delta_{8,2}) & \ \text{if} \ n \equiv 3 \pmod{8}\\
- 2\rho_n(\delta_{2,1}) - \rho_n(\delta_{4,1}) + \rho_n(\delta_{8,1}) - \rho_n(\delta_{8,2})  & \ \text{if} \ n \equiv 5 \pmod{8}\\
- 4\rho_n(\delta_{2,1}) + \rho_n(\delta_{4,1}) + \rho_n(\delta_{8,1}) + \rho_n(\delta_{8,2})  & \ \text{if} \ n \equiv 7 \pmod{8}\\  
\end{array}
\end{cases}\\
F_2(n,0,1,1,1) &=& 2^{n-4} - \frac{1}{16} \cdot \begin{cases}
\begin{array}{lr}
-4\rho_n(\delta_{2,1}) + \rho_n(\delta_{4,1}) + \rho_n(\delta_{8,1}) + \rho_n(\delta_{8,2}) & \ \text{if} \  n \equiv 1,5 \pmod{8}\\
\rho_n(\delta_{4,1}) - \rho_n(\delta_{8,1}) - \rho_n(\delta_{8,2}) & \ \text{if}  \ n \equiv 3,7 \pmod{8}\\
\end{array}
\end{cases}\\
F_2(n,1,1,1,1) &=& 2^{n-4} - \frac{1}{16} \cdot \begin{cases}
\begin{array}{lr}
- 2\rho_n(\delta_{2,1}) - \rho_n(\delta_{4,1}) + \rho_n(\delta_{8,1}) - \rho_n(\delta_{8,2})  & \ \text{if}  \ n \equiv 1 \pmod{8}\\
\rho_n(\delta_{4,1}) - \rho_n(\delta_{8,1}) - \rho_n(\delta_{8,2}) & \ \text{if} \ n \equiv 3 \pmod{8}\\
2\rho_n(\delta_{2,1}) - \rho_n(\delta_{4,1}) - \rho_n(\delta_{8,1}) + \rho_n(\delta_{8,2})  & \ \text{if} \ n \equiv 5 \pmod{8}\\
4\rho_n(\delta_{2,1}) + \rho_n(\delta_{4,1}) + \rho_n(\delta_{8,1}) + \rho_n(\delta_{8,2})  & \ \text{if} \ n \equiv 7 \pmod{8}\\  
\end{array}
\end{cases}\\
\end{eqnarray*}
}
\end{theorem}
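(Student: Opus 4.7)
The plan is to apply the indirect method set up in~\S\ref{subsec:indirectbinary} verbatim with $\vec{f} = (T_4,T_3,T_2,T_1)$, reducing the computation of the sixteen quantities $F_2(n,t_1,t_2,t_3,t_4)$ to the computation of the sixteen quantities $V(\vec{i}\cdot\vec{f})$ via the transform~(\ref{eq:transform1_4}). The entries $V(\vec{0}\cdot\vec{f}) = 2^n$ and $V(\vec{1}\cdot\vec{f}) = 2^{n-1}$ are immediate, while for $2 \le i \le 7$ we recycle the curves~(\ref{3trace1}) already analysed in the $l=3$ case. So the novel content is in handling $8 \le i \le 15$, where $T_4$ genuinely contributes.

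For these $i$, the first step is to take the linearisation of $T_4(a_0^2 + a_0 + r_0)$ displayed in~(\ref{eq:4coeffsparam1}) and, following the recipe of~\S\ref{subsec:indirectbinary}, introduce two further variables $a_1, a_2$ satisfying $a_0 = a_1^2 + a_1 + r_1$ and $a_0^3 = a_2^2 + a_2 + r_2$, where $r_1,r_2 \in \F_2$ record the linear traces of the arguments $a_0$ and $a_0^3$. Combined with the parameterisations~(\ref{eq:T1linear})--(\ref{eq:T3linear}) for $T_1,T_2,T_3$, which depend only on $a_0$ and $r_0$, this produces for each $\vec{i} = (i_3,i_2,i_1,i_0)$ the system~(\ref{eq:4trace1}) cut out by three equations in four variables (and thus a curve). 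Then~(\ref{eq:indirectmethodbinary}) gives
\[
V(\vec{i}\cdot\vec{f}) = \frac{1}{2^{s_i+1}} \sum_{\vec{r}} \#C_{i,\vec{r},\overline{n}}(\F_{2^n}),
\]
with $s_i = 3$ when $i_3 = 1$ and $s_i = 1$ otherwise.

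Next, by Corollary~\ref{cor:period} with $p=2$ and $j=4$, the vector $(\binom{n}{4},\binom{n}{3},\binom{n}{2},\binom{n}{1})$ has period $8$ in $n$, and its values on the four odd residue classes $n \equiv 1,3,5,7 \pmod{8}$ are as tabulated just before the theorem statement. For each fixed $\overline{n}$ and each of the sixteen $\vec{i}$'s, we feed the resulting curve $C_{i,\vec{r},\overline{n}}$ into Magma and compute its zeta function; by Weil, the number of affine $\F_{2^n}$-rational points is $2^n$ minus a sum of $n$-th powers of the reciprocal roots of the numerator. For $i_3 = 1$ the curves are absolutely irreducible of genus $14$, well within the reach of direct point counting, and for $i_3 = 0$ they reduce to the genus $\le 2$ curves from the three-trace case. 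Inspecting the factorisations, one sees exactly four irreducible factors of the numerator with nonzero $n$-th power sums for odd $n$, namely $\delta_{2,1}, \delta_{4,1}, \delta_{8,1}, \delta_{8,2}$ as defined; all other factors are even polynomials in $X$, so their roots come in $\pm$ pairs and $\rho_n$ vanishes for odd $n$. This is the step that makes the statement clean: the non-supersingular factors $\delta_{8,1}, \delta_{8,2}$ are genuine, but the parity cancellations eliminate everything else.

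Finally, we compile: for each $\overline{n} \in \{1,3,5,7\}$ and each $1 \le i \le 15$, record the integer linear combination of $\rho_n(\delta_{2,1}),\rho_n(\delta_{4,1}),\rho_n(\delta_{8,1}),\rho_n(\delta_{8,2})$ representing $V(\vec{i}\cdot\vec{f}) - 2^{n-1}$, then left-multiply the resulting column vector (in the ordering used in~(\ref{eq:transform1_4})) by $S_4^{-1} = \tfrac{1}{8}H_4 - A_4$ and read off the sixteen formulae. The main obstacle I expect is purely bookkeeping: for $\overline{n} \in \{3,5,7\}$ the binomial coefficients flip several signs inside the curve equations, changing the isomorphism class (and hence the zeta function) of $C_{i,\vec{r},\overline{n}}$, so one must separately tabulate the contributions in each of the four residue classes and then carefully carry out the $16\times 16$ Walsh--Hadamard recombination to obtain the case-split formulae stated for each $F_2(n,t_1,t_2,t_3,t_4)$.
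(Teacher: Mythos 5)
Your proposal reproduces the paper's indirect-method argument step for step: the same transform~(\ref{eq:transform1_4}), the same linearisation of $T_4(a_0^2 + a_0 + r_0)$ from~(\ref{eq:4coeffsparam1}) via the substitutions $a_0 = a_1^2 + a_1 + r_1$ and $a_0^3 = a_2^2 + a_2 + r_2$ yielding the genus-$14$ curves~(\ref{eq:4trace1}), the same appeal to Corollary~\ref{cor:period} for the four odd residue classes mod $8$, the same observation that the only factors contributing for $n$ odd are $\delta_{2,1},\delta_{4,1},\delta_{8,1},\delta_{8,2}$ (the rest being even), and the same final Walsh--Hadamard recombination via $S_4^{-1}$. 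This is exactly the paper's proof, and it is correct.
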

One can check that the roots of $\delta_{8,1}$ are $\alpha_1,\alpha_2,\alpha_3,\alpha_4$ and their complex conjugates 
$\overline{\alpha_1},\overline{\alpha_2},\overline{\alpha_3},\overline{\alpha_4}$, where:
\begin{eqnarray*}
\alpha_1 &=& -\frac{1}{2}+\frac{\sqrt{2}}{4}\Big(1+\sqrt{7+4\sqrt{2}}\Big)-\frac{\sqrt{2}}{4}\Big(1+\sqrt{5-2\sqrt{2}}\Big)i,\\
\alpha_2 &=& -\frac{1}{2}+\frac{\sqrt{2}}{4}\Big(1-\sqrt{7+4\sqrt{2}}\Big)-\frac{\sqrt{2}}{4}\Big(1-\sqrt{5-2\sqrt{2}}\Big)i,\\
\alpha_3 &=& -\frac{1}{2}-\frac{\sqrt{2}}{4}\Big(1 + \frac{1}{\sqrt{17}}(3\sqrt{2}-1)\sqrt{5-2\sqrt{2}}\Big) + \frac{\sqrt{2}}{4}\Big(1  - \frac{1}{\sqrt{17}}(3\sqrt{2}-1)\sqrt{7 + 4\sqrt{2}}\Big)i,\\ 
\alpha_4 &=& -\frac{1}{2}-\frac{\sqrt{2}}{4}\Big(1 - \frac{1}{\sqrt{17}}(3\sqrt{2}-1)\sqrt{5-2\sqrt{2}}\Big) + \frac{\sqrt{2}}{4}\Big(1 + \frac{1}{\sqrt{17}}(3\sqrt{2}-1)\sqrt{7 + 4\sqrt{2}}\Big)i.\\
\end{eqnarray*}
One can also check that the roots of $\delta_{8,2}$ are $i\alpha_1,i\alpha_2,i\alpha_3,i\alpha_4$ and $\overline{i\alpha_1},\overline{i\alpha_2},\overline{i\alpha_3},\overline{i\alpha_4}$.
In Theorem~\ref{thm:4traces} the formulae for each $F_2(n,t_1,t_2,t_3,t_4)$ and each odd $n$ mod $8$ have non-supersingular terms of the form $\pm \rho_n(\delta_{8,1}) \pm \rho_n(\delta_{8,2})$ or $\pm \rho_n(\delta_{8,1}) \mp \rho_n(\delta_{8,2})$.
A simple application of Kronecker's theorem to the phases of these non-supersingular Weil numbers allows one to deduce that the formulae are not periodic in $n$. We leave stating what abelian varieties $\delta_{8,1}$ and $\delta_{8,2}$ are the characteristic polynomials of Frobenius for as an open problem, but note that they must be isogenous over $\F_{2^4}$.

\subsubsection*{An alternative parameterisation.}\label{sec:4coeffsparam2}

We now present an alternative parameterisation of Eq.~(\ref{eq:4coeffsparam1}) which requires one less variable to linearise. 
In particular, we have
\begin{eqnarray}
\nonumber T_4(a_{0}^2 + a_0 + r_0) &=& T_2(a_{0}^3) + T_2(a_{0}) + T_1(a_{0}^3)T_1(a_0)\\
\nonumber &+& T_1\Big(a_{0}^7 + a_{0}^5 + a_{0}^3 + r_0\Big(a_{0}^3 + a_0 + (a_{0}^3 + a_0)\binom{n}{2}  + \binom{n}{4}\Big)\Big)\\
\label{eq:T4linearalt} &=& T_2(a_{0}^3 + a_{0}) + T_1\Big(a_{0}^7 + a_{0}^5 + a_{0}^3 + a_{0} + r_0\Big(a_{0}^3 + a_0 + (a_{0}^3 + a_0)\binom{n}{2}+\binom{n}{4}\Big)\Big),
\end{eqnarray}
where the second equality follows from Lemma~\ref{lem:T} part (2), by setting $\alpha = a_{0}^3$ and $\beta = a_{0}$. 
This can be linearised using the substitution $a_{0}^3 + a_{0} = a_{1}^2 + a_{1} + r_1$, where $r_1$ is the trace of $a_{0}^3 + a_{0}$.
This results in 
\begin{eqnarray*}
\nonumber T_4(a_{0}^2 + a_0 + r_0) &=& T_1\Big( a_{1}^3 + a_{1} + a_{0}^7 + a_{0}^5 + a_{0}^3 + a_{0}\\
&+& r_0\Big(a_{0}^3 + a_0 + (a_{0}^3 + a_0)\binom{n}{2}  + \binom{n}{4}\Big) + r_1\binom{n}{2}\Big).
\end{eqnarray*}
For $8 \le i \le 15$ let $\vec{i} = (i_3,i_2,i_1,i_0)$. The curves we are interested in are given by the following intersections:
\begin{eqnarray}
\nonumber a_{2}^2 + a_{2} &=& i_3\Big( a_{1}^3 + a_{1} + a_{0}^7 + a_{0}^5 + a_{0}^3 + a_{0} + r_0\Big(a_{0}^3 + a_0 + (a_{0}^3 + a_0)\binom{n}{2}  + \binom{n}{4}\Big) + r_1\binom{n}{2}\Big)\\
\label{4coeffsaltparamintersection} &+& i_2\Big(a_{0}^5 + a_{0} + r_0\Big(a_{0}^3 + a_0 + \binom{n}{3} \Big)\Big) + i_1\Big(a_{0}^3 + a_{0} + r_0\binom{n}{2}\Big) + i_0r_0,\\
\nonumber a_{0}^3 + a_{0} &=& a_{1}^2 + a_1 + r_1.
\end{eqnarray}
For $i_3 = 1$ the genus of all of these absolutely irreducible curves is $7$ -- rather than $14$ as in the first parameterisation -- and 
therefore the characteristic polynomials of Frobenius have lower degrees than before. Indeed, there are fewer even polynomials appearing as 
factors and those that are not even occur to lower powers. Nevertheless, once~(\ref{eq:transform1_4}) is applied one again obtains the formulae 
given in Theorem~\ref{thm:4traces}.

\subsubsection{Direct method.}\label{sec:4coeffsdirect}
For odd $n$, applying Equations~(\ref{eq:T1linear}) to~(\ref{eq:T3linear}) and~(\ref{eq:T4linearalt}) we have
\begin{eqnarray}
\nonumber F_2(n,t_1,t_2,t_3,t_4) &=& \#\{a \in \F_{2^n} \mid T_1(a) = t_1, \ T_2(a) = t_2, \ T_3(a) = t_3, \ T_4(a) = t_4\}\\
\nonumber &=& \frac{1}{2} \, \#\{a_0 \in \F_{2^n} \mid T_2(a_{0}^2 + a_{0} + t_1) = t_2, \ T_3(a_{0}^2 + a_{0} + t_1) = t_3,  
\ T_4(a_{0}^2 + a_{0} + t_1) = t_4\}\\
\nonumber &=& \frac{1}{2} \, \#\{a_0 \in \F_{2^n} \mid T_1\Big(a_{0}^3 + a_{0} + t_1\binom{n}{2}\Big) = t_2, \ T_1\Big(a_{0}^5 + a_{0} + t_1\Big(a_{0}^3 + 
a_0+\binom{n}{3}\Big)\Big) = t_3,\\
\nonumber && \ \ \ \ T_2(a_{0}^3 + a_{0}) + T_1\Big(a_{0}^7 + a_{0}^5 + a_{0}^3 + a_{0} + t_1\Big(a_{0}^3 + a_0 + (a_{0}^3 + a_0)\binom{n}{2}+\binom{n}{4}\Big)\Big) = t_4\}\\
\nonumber &=& \frac{1}{16} \, \#\{(a_0,a_1,a_2,a_3) \in (\F_{2^n})^4 \mid a_{1}^2 + a_1 = a_{0}^3 + a_{0} + t_1\binom{n}{2} + t_2,\\ 
\nonumber && \ \ \ \ a_{2}^2 + a_2 = a_{0}^5 + a_{0} + t_1\Big(a_{0}^3 + a_0+\binom{n}{3}\Big) + t_3, \ a_{3}^2 + a_3 = a_{1}^3 + a_1 + (t_1 + t_2)\binom{n}{2} + \\
\nonumber && \ \ \ \  a_{0}^7 + a_{0}^5 + a_{0}^3 + a_{0} + t_1\Big(a_{0}^3 + a_0 
+ (a_{0}^3 + a_{0})\binom{n}{2} +\binom{n}{4}\Big) + t_4\},
\end{eqnarray}
where we have used Lemma~\ref{lem:T} part (2) and the parameterisation of $T_2(a) = t_2$ to compute $T_2(a_{0}^3 + a_{0}) = T_2(a_{1}^2 + a_{1} + t_1\binom{n}{2} + t_2) = T_2(a_{1}^2 + a_{1}) + T_2\Big(t_1\binom{n}{2} + t_2\Big) + T_1(a_{1}^2 + a_{1})T_1\Big(t_1\binom{n}{2} + t_2\Big) + T_1\Big(\Big(t_1\binom{n}{2} + t_2\Big)(a_{1}^2 + a_1)\Big) = T_1(a_{1}^3 + a_{1}) + (t_1 + t_2)\binom{n}{2}$, since the last two terms are zero.
These curves are all absolutely irreducible and of genus $17$, and for $n$ odd their zeta functions reproduce Theorem~\ref{thm:4traces}.

\subsubsection{General formulae.}\label{subsubsec:generalformula}
We have the following result.

\begin{theorem}\label{thm:4coeffsgeneral}
For $n \ge 4$ we have
\begin{eqnarray*}
F_2(n,0,0,0,0) &=& 2^{n-4} - \frac{1}{16}\big(4\rho_n(\delta_{2,1}) + 3\rho_n(\delta_{2,2}) + \rho_n(\delta_{4,1}) + \rho_n(\delta_{8,1}) + \rho_n(\delta_{8,2})\big),\\
F_2(n,0,0,0,1) &=& 2^{n-4} - \frac{1}{16}\big(-\rho_n(\delta_{2,2}) + \rho_n(\delta_{4,1}) - \rho_n(\delta_{8,1}) - \rho_n(\delta_{8,2})\big),\\
F_2(n,0,0,1,0) &=& 2^{n-4} - \frac{1}{16}\big(-2\rho_n(\delta_{2,1}) + \rho_n(\delta_{2,2}) - \rho_n(\delta_{4,1}) + \rho_n(\delta_{8,1})	- \rho_n(\delta_{8,2})\big),\\
F_2(n,0,0,1,1) &=& 2^{n-4} - \frac{1}{16}\big(2\rho_n(\delta_{2,1}) - 3\rho_n(\delta_{2,2}) - \rho_n(\delta_{4,1}) - \rho_n(\delta_{8,1}) + \rho_n(\delta_{8,2})\big).
\end{eqnarray*}
\end{theorem}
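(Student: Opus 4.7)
The plan is to apply the direct method of~\S\ref{subsec:directbinary}, specialised to $t_1 = t_2 = 0$. In that regime, every parameterisation of a trace condition has trace-zero right-hand side, so Lemma~\ref{lem:paramq}(1) applies for all $n$ without needing the $1/\overline{n}$ of part~(2). The four counts in the theorem are then assembled from the zeta functions of two curves together with the three- and two-coefficient formulae already in hand.

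First I would compute $F_2(n,0,0,0,0)$. Starting from the direct method formula of~\S\ref{sec:4coeffsdirect} and setting $t_1=t_2=t_3=t_4=0$, every $t_1\binom{n}{k}$ contribution vanishes and we obtain
$$F_2(n,0,0,0,0) = \tfrac{1}{16}\,\#C_1(\F_{2^n}),$$
where $C_1$ is the absolutely irreducible curve of genus $17$ cut out by
\begin{align*}
a_1^2 + a_1 &= a_0^3 + a_0,\\
a_2^2 + a_2 &= a_0^5 + a_0,\\
a_3^2 + a_3 &= a_1^3 + a_1 + a_0^7 + a_0^5 + a_0^3 + a_0,
\end{align*}
the last equation using $T_2(a_0^3+a_0) = T_1(a_1^3+a_1)$ (from Eq.~(\ref{eq:T2linear}) with $r_0=0$) together with the parameterisation $a_0^3+a_0 = a_1^2+a_1$. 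Computing the zeta function of $C_1$ in Magma, one finds that the characteristic polynomial of Frobenius factors as $\delta_{2,1}^4 \cdot \delta_{2,2}^3 \cdot \delta_{4,1} \cdot \delta_{8,1} \cdot \delta_{8,2}$ (degrees summing to $34 = 2\cdot 17$), which yields the claimed formula.

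Next I would obtain the remaining three counts by linear combinations of point counts that are already available for all $n$. Specifically, $F_2(n,0,0) = \tfrac{1}{4}(2^n - \rho_n(\delta_{2,1}))$ follows from the elliptic curve $y^2+y = x^3+x$, and the formula for $F_2(n,0,0,0)$ valid for all $n \ge 3$ is the one recorded in the Direct Method paragraph of~\S\ref{subsec:directbinary}. The one additional curve needed is $C_2$, obtained by deleting the $a_2$-equation (i.e.\ the $T_3=0$ constraint) from $C_1$; its point count satisfies
$$\tfrac{1}{8}\,\#C_2(\F_{2^n}) = F_2(n,0,0,0,0) + F_2(n,0,0,1,0).$$
The curve $C_2$ has genus $7$; a Magma computation gives $\#C_2(\F_{2^n}) = 2^n - \rho_n(\delta_{2,1}) - 2\rho_n(\delta_{2,2}) - \rho_n(\delta_{8,1})$. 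The remaining formulae then follow from
\begin{align*}
F_2(n,0,0,0,1) &= F_2(n,0,0,0) - F_2(n,0,0,0,0),\\
F_2(n,0,0,1,0) &= \tfrac{1}{8}\,\#C_2(\F_{2^n}) - F_2(n,0,0,0,0),\\
F_2(n,0,0,1,1) &= F_2(n,0,0) - F_2(n,0,0,0,0) - F_2(n,0,0,0,1) - F_2(n,0,0,1,0).
\end{align*}

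The only real obstacle is computing the zeta functions of $C_1$ and $C_2$ and identifying their Frobenius characteristic polynomials as the stated products of the $\delta$-polynomials; both curves have small enough genus that this is routine in Magma. The conceptual content of the argument is that restricting to $t_1=t_2=0$ removes every trace-one right-hand side from the parameterisations, so no residue class of $n$ need be split off and the resulting formulae are valid uniformly for all $n \ge 4$.
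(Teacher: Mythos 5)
Your argument is correct and reproduces the paper's own proof almost verbatim: both compute $F_2(n,0,0,0,0)$ from the same genus-$17$ intersection, both define the genus-$7$ curve by omitting the $T_3 = 0$ constraint (your $C_2$ is the paper's $F_2(n,0,0,*,0)$ curve), and both derive the remaining three counts from $F_2(n,0,0,0)$, $F_2(n,0,0)$, and the genus-$7$ point count via the identical linear combinations. Nothing to add.
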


\begin{proof}
Using the direct method, for all $n \ge 4$ we have
\begin{eqnarray}
\nonumber F_2(n,0,0,0,0) &=& \frac{1}{16} \, \#\{ (a_{0},a_{1},a_{2},a_{3}) \in (\F_{2^n})^4 \mid a_{1}^2 + a_{1} = a_{0}^3 + a_{0},\  a_{2}^2 + a_{2} =a_{0}^5 + a_{0},\\
\nonumber && \ \ \ \ a_{3}^2 + a_{3} = a_{1}^3 + a_{1} + a_{0}^7 + a_{0}^5 + a_{0}^3 + a_0\}\\
\nonumber &=& 2^{n-4} - \frac{1}{16}\big(4\rho_n(\delta_{2,1}) + 3\rho_n(\delta_{2,2}) + \rho_n(\delta_{4,1}) + \rho_n(\delta_{8,1}) + \rho_n(\delta_{8,2})\big),
\end{eqnarray}
since one does not need to parameterise any `linear trace $= 1$' conditions. Since $F_2(n,0,0,0) = F_2(n,0,0,0,0) + F_2(n,0,0,0,1)$, for all 
$n \ge 4$ we also have: 
\begin{eqnarray}
\nonumber F_2(n,0,0,0,1) &=& 2^{n-4} - \frac{1}{16}\big(-\rho_n(\delta_{2,2}) + \rho_n(\delta_{4,1}) - \rho_n(\delta_{8,1}) - \rho_n(\delta_{8,2})\big).
\end{eqnarray}
Omitting the $T_3(a)$ condition, we obtain:
\begin{eqnarray}
\nonumber F_2(n,0,0,*,0) &=& 
\frac{1}{8} \, \#\{ (a_{0},a_{1},a_{2}) \in (\F_{2^n})^3 \mid a_{1}^2 + a_{1} = a_{0}^3 + a_{0}, \ a_{2}^2 + a_{2} = a_{1}^3 + a_{1} + a_{0}^7 + a_{0}^5 + a_{0}^3 + a_0\}\\
&=& 2^{n-3} - \frac{1}{8}\big(\rho_n(\delta_{2,1}) + 2\rho_n(\delta_{2,2}) + \rho_n(\delta_{8,1})\big),\label{eq:kloosterman}
\end{eqnarray}
this intersection describing an absolutely irreducible curve of genus $7$. Since $F_2(n,0,0,*,0) = F_2(n,0,0,0,0) + F_2(n,0,0,1,0)$ we also have: 
\begin{eqnarray}
\nonumber F_2(n,0,0,1,0) &=& 2^{n-4} - \frac{1}{16}\big(-2\rho_n(\delta_{2,1}) + \rho_n(\delta_{2,2}) - \rho_n(\delta_{4,1}) + \rho_n(\delta_{8,1})	- \rho_n(\delta_{8,2})\big).
\end{eqnarray}
Moreover, since $F_2(n,0,0) = F_2(n,0,0,0,0) + F_2(n,0,0,0,1) + F_2(n,0,0,1,0) + F_2(n,0,0,1,1)$ we have
\begin{eqnarray}
\nonumber F_2(n,0,0,1,1) &=& 2^{n-4} - \frac{1}{16}\big(2\rho_n(\delta_{2,1}) - 3\rho_n(\delta_{2,2}) - \rho_n(\delta_{4,1}) - \rho_n(\delta_{8,1}) + \rho_n(\delta_{8,2})\big). 
\end{eqnarray}\qed

\end{proof}

Note that only the $F_2(n,0,0,0,0)$ formula comes from the characteristic polynomial of Frobenius of a curve, since the other three have terms with
the wrong sign. Note also that the total degree of the corresponding polynomials (numerator degree plus denominator degree) is $2 \cdot 17 = 34$ only
for $F_2(n,0,0,0,0)$. This does not contradict the fact that the direct method always produces curves of genus $17$, since the direct method in general
only represents $F_2(n,t_1,t_2,t_3,t_4)$ for $n$ odd, so there are cancellations, and furthermore all the featured `+' signs in the formulae for 
$F_2(n,0,0,t_3,t_4)$ may be replaced by `-' signs once the featured polynomials $\delta(X)$ are replaced with $\delta(-X)$.

If one similarly tries to omit the $T_2(a)$ condition then one can not automatically simplify the $T_2(a_{0}^3 + a_0)$ term which arises from the condition 
$T_4(a_{0}^2 + a_{0}) = 0$; one is forced to condition on whether the linear trace of $a_{0}^3 + a_0$ is $0$ or $1$, in which case one needs $n$ to be
odd in order to parameterise the latter condition. Therefore, it is apparently not possible to find formulae for all $n \ge 4$ for 
$F_2(n,0,1,t_3,t_4)$ with this approach.
Nevertheless, we expect that similar formulae hold for all $n \ge 4$ for each $F_2(n,t_1,t_2,t_3,t_4)$, with additional terms arising from the $n$-th 
powers of roots of a set of even polynomials. Furthermore, if for a given $F_2(n,t_1,t_2,t_3,t_4)$ the coefficients of the various $\rho_n(\delta_i)$ 
also depend on the residue of $n$ mod $8$, as they do for $n$ odd, then one can Fourier analyse the coefficients in order to express them in terms 
of the complex $8$-th roots of unity to obtain a single formula, as in~\cite[Prop.~3\&5]{AGGMY}.
So while the formulae themselves are not periodic in $n$, it may be that the coefficients of each $\rho_n(\delta_i)$ featured in each 
$F_2(n,t_1,t_2,t_3,t_4)$ are periodic in $n$.

\subsubsection{An alternative proof of general formulae.}\label{subsubsec:altgeneralformulae}
We now provide another proof of Theorem~\ref{thm:4coeffsgeneral} which relies on the alternative parameterisation 
of $T_4(a_{0}^2 + a_{0})$ and a generalisation of the transform~(\ref{eq:Sinverse}).
Observe that the functions $f_0,\ldots,f_{m-1}: \F_{2^n} \rightarrow \F_2$ in~\S\ref{sec:transform1char2} may be replaced by functions 
$f_0,\ldots,f_{m-1}: A \rightarrow \F_2$ where $A$ is any relevant domain -- and thus for instance any algebraic set -- and exactly the same 
argument holds, provided that $V(\vec{0} \cdot \vec{f})$ is redefined to be $|A|$.

The conditions $T_1(a) = T_2(a) = 0$ imply that we should set $A = \{(a_0,a_1) \in (\F_{2^n})^2 \mid a_{0}^3 + a_{0} = a_{1}^2 + a_{1}\}$.
Furthermore let $\vec{f} = (T_4(a_{0}^2 + a_{0}), T_3(a_{0}^2 + a_{0})) = (T_1(a_{1}^3 + a_1 + a_{0}^7 + a_{0}^5 + a_{0}^3 + a_{0}), T_1(a_{0}^5 + a_0))$.
Then by the stated generalisation we have
\begin{equation}\label{eq:alternativeproof}
{\small
\begin{bmatrix}
F_2(n,0,0,0,0)\\
F_2(n,0,0,1,0)\\
F_2(n,0,0,0,1)\\
F_2(n,0,0,1,1)\\
\end{bmatrix}
=
\begin{bmatrix}
N(\vec{0})\\
N(\vec{1})\\
N(\vec{2})\\
N(\vec{3})\\
\end{bmatrix}
=
\frac{1}{8}
\begin{bmatrix*}[r]
-1 & 1 & 1 & 1\\
1 & -1 & 1 & -1\\
1 & 1 & -1 & -1\\
1 & -1 & -1 & 1\\
\end{bmatrix*}
\begin{bmatrix}
V(\vec{0} \cdot \vec{f})\\
V(\vec{1} \cdot \vec{f})\\
V(\vec{2} \cdot \vec{f})\\
V(\vec{3} \cdot \vec{f})\\
\end{bmatrix},
}
\end{equation}
where we have a factor of $1/8$ rather than $1/2$ because of the two factors of $1/2$ arising from the introduction of the variables $a_0$ and $a_1$ defining $A$. Note that $V(\vec{0} \cdot \vec{f}) = |A| = 2^n - \rho_n(\delta_{2,1})$. For $1 \le i \le 3$ let $\vec{i} = (i_1,i_0)$. We thus have:
\begin{equation*}
V(\vec{i} \cdot \vec{f}) = \frac{1}{2}\#\{(a_0,a_1,a_2) \mid a_{0}^3 + a_{0} = a_{1}^2 + a_{1}, \ a_{2}^2 + a_{2} = i_1(a_{1}^3 + a_1 + 
a_{0}^7 + a_{0}^5 + a_{0}^3 + a_{0}) + i_0(a_{0}^5 + a_{0}) \}.
\end{equation*}
For $i = 1,2$ and $3$ these are absolutely irreducible curves of genus $5,7$ and $7$ respectively, and thus their zeta functions 
are easier to compute than for those curves arising from the direct method, which have genus $17$. Combining them as per Eq.~(\ref{eq:alternativeproof}) 
gives the formulae in~\S\ref{subsubsec:generalformula}; it is therefore a slightly more streamlined argument than the one given there.

Another way of viewing this approach is to take the intersections of~(\ref{4coeffsaltparamintersection}) and set $r_0 = r_1 = i_0 = i_1 = 0$ and observe
that the $i_2 = i_3 = 0$ condition gives $V(\vec{0} \cdot \vec{f}) = |A|$, without the factor $1/2$ present for the other three $V(\vec{i} \cdot \vec{f})$ 
because there is no $a_2$ variable in this case. The approach is therefore more useful in these cases than the direct method, which can not 
produce formulae for all $F_2(n,0,0,t_3,t_4)$ for all $n \ge 4$, as it only works for $n$ odd in general.


\subsubsection{Connection with binary Kloosterman sums.}
The binary Kloosterman sum $\mathcal{K}_{2^n}: \F_{2^n} \rightarrow \Z$ can be defined by 
\[
\mathcal{K}_{2^n}(a) = 1 + \sum_{x \in \F_{2^n}^{\times}} (-1)^{T_1(x^{-1} + ax)}.
\]
Kloosterman sums have applications in cryptography and coding theory, see for example~\cite{gong,moisiocode}. 
In particular, zeros of $\mathcal{K}_{2^{n}}$ lead to bent functions from $\F_{2^{2n}} \rightarrow \F_{2}$~\cite{dillon}.
The following elementary lemma connects Kloosterman sums to a family of elliptic curves.

\begin{lemma}[\cite{wolfmann}]\label{lis1}
Let $a \in \F_{2^n}^{\times}$ and define the elliptic curve $E_{2^n}(a)$ over
$\F_{2^n}$ by 
\[
E_{2^n}(a): y^2 + xy = x^3 + a.
\]
Then $\#E_{2^n}(a) = 2^n + \mathcal{K}_{2^n}(a)$.
\end{lemma}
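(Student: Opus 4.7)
The plan is to count affine points on $E_{2^n}(a)$ by sweeping over the $x$-coordinate and then to massage the resulting character sum into the shape of $\mathcal{K}_{2^n}(a)$, remembering to account for the point at infinity.

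First I would handle the $x = 0$ fibre separately: the equation collapses to $y^2 = a$, which has a unique solution because squaring is a bijection on $\F_{2^n}$, contributing one affine point. For each $x \in \F_{2^n}^{\times}$, dividing the defining equation by $x^2$ and setting $z = y/x$ turns it into the Artin--Schreier form $z^2 + z = x + a/x^2$. By Lemma~\ref{lem:paramq}(1) applied with $q = 2$, the number of $z \in \F_{2^n}$ satisfying this equals $1 + (-1)^{T_1(x + a/x^2)}$. Summing over $x \in \F_{2^n}^{\times}$, then adding the unique $x = 0$ solution and the point at infinity, I obtain
\[
\#E_{2^n}(a) \;=\; 2^n + 1 + \sum_{x \in \F_{2^n}^{\times}} (-1)^{T_1(x + a/x^2)}.
\]

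To match this to $\mathcal{K}_{2^n}(a) = 1 + \sum_{x \in \F_{2^n}^{\times}} (-1)^{T_1(x^{-1} + ax)}$, I would perform two successive substitutions in the remaining sum. First replace $x$ by $u^{-1}$, which is a bijection on $\F_{2^n}^{\times}$ and yields $\sum_{u} (-1)^{T_1(u^{-1} + au^2)}$. Then replace $u$ by $v^{1/2}$, using that Frobenius is a bijection on $\F_{2^n}$, which rewrites the argument of the trace as $v^{-1/2} + av$. Since the absolute trace is invariant under Frobenius, $T_1(v^{-1/2}) = T_1((v^{-1/2})^2) = T_1(v^{-1})$, so the sum becomes $\sum_{v \in \F_{2^n}^{\times}} (-1)^{T_1(v^{-1} + av)} = \mathcal{K}_{2^n}(a) - 1$, and combining gives $\#E_{2^n}(a) = 2^n + \mathcal{K}_{2^n}(a)$.

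The argument is essentially routine; the only real subtlety is choosing the right sequence of substitutions (an inversion followed by a square-root) and exploiting the Frobenius-invariance of the absolute trace to convert the naturally arising sum over $x + a/x^2$ into the standard Kloosterman sum over $x^{-1} + ax$. No further obstacle appears to be involved, and a sanity check with $n=1$, $a=1$ (giving $\#E = 4$ and $\mathcal{K}_2(1) = 2$) confirms the bookkeeping.
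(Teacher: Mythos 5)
Your argument is correct and complete. The paper itself does not prove Lemma~\ref{lis1}: it is stated with a citation to Wolfmann~\cite{wolfmann} and used as a black box, so there is no in-paper proof to compare against. Your derivation is the standard elementary one: split off the $x=0$ fibre (one solution since squaring is a bijection), pass to Artin--Schreier form via $z=y/x$, count the $z$-fibre over each $x\in\F_{2^n}^{\times}$ as $1+(-1)^{T_1(x+a/x^2)}$ using Lemma~\ref{lem:paramq}(1) with $q=2$, add the point at infinity, and then convert the resulting character sum into the Kloosterman shape by the inversion $x\mapsto u^{-1}$ followed by the inverse-Frobenius substitution $u\mapsto v^{1/2}$, invoking the Frobenius-invariance of the absolute trace so that $T_1(v^{-1/2})=T_1(v^{-1})$. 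The bookkeeping (one point at $x=0$, one at infinity, $2^n-1$ from the non-character part of the sum, and $\sum_{x\ne 0}(-1)^{T_1(x^{-1}+ax)}=\mathcal{K}_{2^n}(a)-1$) is all consistent, and the $n=1$, $a=1$ sanity check is accurate. Nothing to fix.
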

Computing Kloosterman sum zeros is generally regarded as being difficult, currently taking exponential time (in $n$) to find a single non-trivial 
($a \ne 0$) zero.
Besides the deterministic test due to Ahmadi and Granger~\cite{KloostermanAG}, which computes the cardinality of the Sylow $2$-subgroup of any 
$E_{2^n}(a)$ via point-halving, and thus by Lemma~\ref{lis1} the maximum power of $2$ dividing $\mathcal{K}_{2^n}(a)$,
research has focused on characterising Kloosterman sums modulo small integers~\cite{moisio2,lisonek,lisonek2,helleseth2,lisonek3,faruk1,faruk2,faruk3}. 
In order to analyse the expected running time of the algorithm of Ahmadi and Granger, it is necessary to know the distribution of Kloosterman sums 
which are divisible by successive powers of $2$. Table~\ref{dist2} presents this distribution for $n \le 13$, which was also presented 
in~\cite{KloostermanAG}. 

Let $T(n,k)$ denote the $(n,k)$-th entry of Table~\ref{dist2}, \ie the number of $a \in \F_{2^n}^{\times}$ for which $\#E_{2^n}(a)$ is divisible 
by $2^{k}$.
By using a result of Katz and Livn\'e~\cite{katz} it is possible to express $T(n,k)$ in terms of the class numbers of certain imaginary 
quadratic fields. However, it remains an open problem to give exact formulae for $k > 4$, with the formulae for the first four columns being as follows. 
Since the orders of all of the elliptic curves in Lemma~\ref{lis1} are divisible by $4$, one has $T(n,1) = T(n,2) = 2^n - 1$. 
One can show that $E_{2^n}(a)$ has a point of order $8$ if and only if $T_1(a) = 0$ (see e.g.~\cite{geer}), hence $T_{2^n}(3) = 2^{n-1}-1$. 
Finally, Lison\v{e}k and Moisio proved that $T(n,4) = (2^n - (-1 + i)^n - (-1 - i)^n)/4 - 1$, connecting it with the number of points on a supersingular
elliptic curve~\cite[Theorem 3.6]{lisonek3}.

\begin{table}\caption{$T(n,k) = \#\{a \in \F_{2^n}^{\times} \mid \#E_{2^n}(a) \equiv 0 \pmod{2^k} \}$}\label{dist2}
\begin{center}
\begin{tabularx}{0.75\textwidth}{c *{13}{|Y}}
$n \backslash k$ & $1$ & $2$    & $3$   & $4$   & $5$  & $6$  & $7$ & $8$ &
$9$ & $10$ & $11$ & $12$ & $13$\\
\hline
1  &  1       &   1     &        &       &       &      &      &     &  & & & &    \\
2  &  3       &   3    &        &       &       &      &      &     &   & & & &   \\
3  &  7       &   7    &     3  &       &       &      &      &     &  & & & &    \\
4  &  15      &   15   &     7  &   5   &       &      &      &     &  & & & &    \\
5  &  31      &   31   &    15  &   5   &   5   &      &      &     &  & & & &    \\
6  &  63      &   63   &    31  &  15   &  12   &  12  &      &     &   & & & &   \\
7  & 127      &  127   &    63  &  35   &  14   &  14  & 14   &     &   & & & &   \\
8  & 255      &  255   &   127  &  55   &  21   &  16  & 16   & 16  &  & & & &    \\
9  & 511      &  511   &   255  & 135   &  63   &  18  & 18   & 18  & 18 & & & &\\
10 & 1023     & 1023   &   511  & 255   & 125   &  65  & 60   & 60  & 60 & 60 & & &\\
11 & 2047     & 2047   &  1023  & 495   & 253   & 132  & 55   & 55  & 55 & 55 & 55 & & \\
12 & 4095     & 4095   &  2047  & 1055  & 495   & 252  & 84   & 72  & 72 & 72 & 72 & 72 &\\
13 & 8191     & 8191   &  4095  & 2015  & 1027  & 481  & 247  & 52  & 52 & 52
& 52 & 52 &  52\\
\end{tabularx}
\end{center}
\end{table}

The following theorem connects the distribution of binary Kloosterman sums mod $32$ to the distribution of the first four coefficients of 
the characteristic polynomial.

\begin{theorem}[\cite{faruk3}]\label{thm:Kloosterman256}
Let $a \in \F_{2^n}$ with $n \ge 4$ and let $e_1,\ldots,e_4$ be the coefficients of the characteristic polynomial of $a$, regarded as integers.
Then 
\[
\mathcal{K}_{2^n}(a) \equiv 28e_1 +8e_2 + 16(e_1 e_2 + e_1 e_3 + e_4) \pmod{32}.
\]
\end{theorem}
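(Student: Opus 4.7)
By Lemma~\ref{lis1}, $\mathcal{K}_{2^n}(a) = \#E_{2^n}(a) - 2^n$ for the curve $E_{2^n}(a): y^2 + xy = x^3 + a$, so the statement is equivalent to a congruence for $\#E_{2^n}(a) \pmod{32}$. The plan is to determine $\#E_{2^n}(a) \pmod{32}$ by analysing the 2-Sylow subgroup of $E_{2^n}(\F_{2^n})$ via iterated point halving, which translates each divisibility $2^k \mid \#E_{2^n}(a)$ into a system of trace conditions on $a$ that can be linearised as in~\S\ref{sec:Tlab} and then reduced modulo 32 in terms of the elementary symmetric functions $e_1 = T_1(a),\ldots,e_4 = T_4(a)$.

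Concretely, recall that a point $(x,y) \in E_{2^n}(a)$ lies in $2 E_{2^n}(\F_{2^n})$ iff $T_1(x + a/x^2) = 0$. Iterating this criterion produces, for each $k \in \{1,\ldots,5\}$, a nested sequence of trace equations whose solutions parameterise the subgroup $2^k E_{2^n}(\F_{2^n})$, and I would apply the linearisation of~\S\ref{sec:Tlab} to write the cardinalities $N_k := \#(2^k E_{2^n}(\F_{2^n}))$ as point counts on explicit Artin--Schreier configurations. Reducing these counts modulo 32 via Lemma~\ref{lem:T} should express each $N_k \bmod 32$ as a polynomial in $e_1,\ldots,e_4$, possibly plus higher-trace terms $T_j(a)$ with $j \geq 5$ that must eventually be shown to be irrelevant modulo 32. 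Given $N_0,\ldots,N_5$ one reconstructs $\#E_{2^n}(a) \bmod 32$ from the 2-Sylow structure of $E_{2^n}(\F_{2^n})$: either the 2-adic valuation of $\#E_{2^n}(a)$ is at most $5$, in which case the $N_k$'s directly determine the group order to the required precision, or else $32 \mid \#E_{2^n}(a)$; either way, the answer depends only on $N_0,\ldots,N_5 \bmod 32$. Combining and simplifying should yield $\#E_{2^n}(a) \equiv 2^n + 28 e_1 + 8 e_2 + 16(e_1 e_2 + e_1 e_3 + e_4) \pmod{32}$, and subtracting $2^n$ (which is $0 \bmod 32$ for $n \geq 5$, with $n = 4$ handled by direct verification) yields the claimed congruence.

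The main obstacle is showing that no higher-trace contributions with $j \geq 5$ survive modulo 32. Since Newton's identities fail in characteristic 2---the very obstruction motivating the $l \geq p$ difficulties of this paper---one cannot simply eliminate $T_j(a)$ for $j \geq 5$ in terms of $e_1,\ldots,e_4$. The substantive work is therefore to exploit the specific polynomial structure of the halving tower, leveraging the reduction identities of Lemma~\ref{lem:T} together with the Lucas-type periodicity of the binomial coefficients $\binom{n}{j}$ (Corollary~\ref{cor:period}), in order to exhibit the required pairwise cancellations among the higher-trace terms; it is this combinatorial simplification, rather than the algebraic setup, that constitutes the crux of the proof.
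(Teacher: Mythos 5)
The paper does not prove Theorem~\ref{thm:Kloosterman256}: it is cited from~\cite{faruk3} and used as a black box, so there is no internal proof to compare yours against.

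On its own terms, your sketch is a plan for a genuinely different route from the one in~\cite{faruk3}, which derives these congruences from Stickelberger's theorem on Gauss sums and explicit $2$-adic computations, with no appeal to curve point counting. A curve-side derivation via iterated point halving on $E_{2^n}(a)$ is in the spirit of the Ahmadi--Granger algorithm and the Lison\v{e}k--Moisio computation of $T(n,4)$ (both cited near the theorem), and the paper itself shows the corollary for $T(n,5)+1$ can be recovered via $F_2(n,0,0,*,0)$, so a complete proof along these lines is plausible.

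However, there are concrete problems. First, the halving criterion you quote, ``$(x,y)\in 2E_{2^n}(\F_{2^n})$ iff $T_1(x + a/x^2) = 0$,'' is wrong: for any point on $y^2 + xy = x^3 + a$ with $x\ne 0$, the curve equation gives $(y/x)^2 + (y/x) = x + a/x^2$, so $x + a/x^2$ is an Artin--Schreier image and $T_1(x + a/x^2) = 0$ identically; the condition is vacuous. The correct halvability condition for $P = (u,v)$, $u\ne 0$, is $T_1(u) = 0$, obtained by solving $\lambda^2 + \lambda = u$ in the doubling formulas. Your entire induction rests on the wrong base step. Second, and as you yourself flag, you have not shown that the traces $T_j(a)$ with $j\ge 5$ disappear modulo $32$, nor derived the coefficients $28, 8, 16$; that cancellation is essentially the whole content of the theorem, and deferring it leaves the argument as a plan rather than a proof. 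Third, reconstructing $\#E_{2^n}(a)\bmod 32$ from the subgroup cardinalities $N_0,\ldots,N_5$ tacitly uses that the $2$-Sylow subgroup of $E_{2^n}(a)$ is cyclic (which holds because the curve has a unique rational $2$-torsion point), and this should be stated rather than assumed. As written, this is an outline with a false starting lemma and the crux of the computation unaddressed.
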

Combining Theorems~\ref{thm:4traces} and~\ref{thm:Kloosterman256} therefore provides explicit formulae for the distribution of binary Kloosterman sums 
mod $32$, for $n$ odd. Furthermore, combining Theorem~\ref{thm:Kloosterman256} with Eq.~(\ref{eq:kloosterman}) provides an explicit formula for 
$\#\{a \in \F_{2^n} \mid \mathcal{K}_{2^n}(a) \equiv 0 \pmod{32}\} = T(n,5) + 1$; indeed, this connection was our original motivation for considering 
the first-four prescribed traces problem.
\begin{corollary}
For $n \ge 5$ we have
\begin{eqnarray}
\nonumber \#\{a \in \F_{2^n} \mid \mathcal{K}_{2^n}(a) \equiv 0 \pmod{32}\} &=& F_2(n,0,0,0,0) + F_2(n,0,0,1,0)\\ 
\nonumber &=&  2^{n-3} - \frac{1}{8}\big(\rho_n(\delta_{2,1}) + 2\rho_n(\delta_{2,2}) + \rho_n(\delta_{8,1})\big).
\end{eqnarray} 
\end{corollary}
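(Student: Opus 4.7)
The plan is to combine Theorem~\ref{thm:Kloosterman256} with the explicit curve-based count already established in Eq.~(\ref{eq:kloosterman}). The first step is to rewrite the congruence from Theorem~\ref{thm:Kloosterman256} using $e_k = T_k(a)$ (regarded as integers in $\{0,1\}$, since we are in characteristic two), giving
\[
\mathcal{K}_{2^n}(a) \equiv 28\,T_1(a) + 8\,T_2(a) + 16\bigl(T_1(a)T_2(a) + T_1(a)T_3(a) + T_4(a)\bigr) \pmod{32}.
\]
Thus the set on the left-hand side of the corollary is determined purely by the tuple $(T_1(a),T_2(a),T_3(a),T_4(a)) \in (\F_2)^4$, and we may reduce to a finite check over the $16$ possible value patterns.

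Next I would carry out the case analysis on $T_1(a) \in \{0,1\}$. When $T_1(a) = 1$ the expression reduces modulo $32$ to $28 + 24\,T_2(a) + 16\,T_3(a) + 16\,T_4(a)$, and since $28, 12, 20, 4$ are the only residues that arise as $T_3(a),T_4(a)$ range over $\{0,1\}^2$ (for each fixed $T_2(a)$), none of the eight outcomes vanish modulo $32$, so $T_1(a) = 1$ contributes nothing. When $T_1(a) = 0$ the expression collapses to $8\,T_2(a) + 16\,T_4(a)$, which vanishes modulo $32$ precisely when $T_2(a) = T_4(a) = 0$, with $T_3(a)$ unconstrained. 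This isolates the two tuples $(0,0,0,0)$ and $(0,0,1,0)$ as the only prescribed-trace patterns satisfying the congruence.

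The final step is cosmetic: the count therefore equals $F_2(n,0,0,0,0) + F_2(n,0,0,1,0) = F_2(n,0,0,*,0)$, and substituting the closed-form expression from Eq.~(\ref{eq:kloosterman}) immediately yields
\[
2^{n-3} - \tfrac{1}{8}\bigl(\rho_n(\delta_{2,1}) + 2\rho_n(\delta_{2,2}) + \rho_n(\delta_{8,1})\bigr),
\]
as required. There is no real obstacle here; the only point requiring mild care is the elementary modular arithmetic in the $T_1(a) = 1$ case, where one must verify that none of the four residues produced by varying $T_3(a),T_4(a)$ (for either choice of $T_2(a)$) is congruent to $0 \bmod 32$. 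All the analytic content has already been packaged into Theorem~\ref{thm:Kloosterman256} and into the zeta-function calculation underlying Eq.~(\ref{eq:kloosterman}).
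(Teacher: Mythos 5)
Your proof is correct and follows exactly the route the paper sketches (but leaves to the reader): identify $e_k$ with the $\{0,1\}$-lift of $T_k(a)$, reduce the congruence of Theorem~\ref{thm:Kloosterman256} modulo $32$ over the sixteen trace patterns to isolate $(0,0,*,0)$, and then substitute the closed form from Eq.~(\ref{eq:kloosterman}). The finite modular check in the $T_1(a)=1$ branch and the collapse to $8T_2(a)+16T_4(a)$ in the $T_1(a)=0$ branch are both right, so nothing needs adding.
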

We also have:

\begin{theorem}[\cite{faruk3}]\label{thm:Kloosterman64}
Let $a \in \F_{2^n}$ with $n \ge 6$ and let $e_1,\ldots,e_8$ be the coefficients of the characteristic polynomial of $a$, regarded as integers.
Then 
\begin{eqnarray*}
\mathcal{K}_{2^n}(a) &\equiv& 28e_1 + 40e_2 + 16(e_1 e_2 + e_1 e_3 + e_4)\\ 
&+& 32(e_1e_4 + e_1e_5 + e_1e_6 + e_1e_7 + e_2e_3 + e_2e_4 + e_2e_6 + 
e_3e_5 + e_1e_2e_3 + e_1e_2e_4 + e_8) \pmod{64}.
\end{eqnarray*}
\end{theorem}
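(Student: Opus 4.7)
The plan is to extend the mod~$32$ congruence of Theorem~\ref{thm:Kloosterman256} by one further $2$-adic level, following the framework of~\cite{faruk3}. By Lemma~\ref{lis1}, computing $\mathcal{K}_{2^n}(a) \pmod{64}$ is equivalent to computing $\#E_{2^n}(a) \pmod{64}$ for the elliptic curve $E_{2^n}(a): y^2 + xy = x^3 + a$. Since $\#E_{2^n}(a)$ is invariant under $a \mapsto a^2$, it is a symmetric function of the Frobenius conjugates $a, a^2, a^{2^2}, \ldots, a^{2^{n-1}}$ and hence, when suitably $2$-adically lifted, may be expressed as a polynomial in the elementary symmetric functions $e_1, \ldots, e_n$, which coincide with the coefficients of the characteristic polynomial of $a$.

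First, I would lift the Kloosterman sum to characteristic zero via Teichm\"uller representatives in $W(\F_{2^n})$ and Artin--Schreier--Witt theory: the character $\psi(y) = (-1)^{T_1(y)}$ extends to a compatible character on Witt vectors modulo $2^k$, producing an integral expression for $\mathcal{K}_{2^n}(a)$ amenable to expansion via the $2$-adic valuation of $\zeta_{2^k}-1$. This expansion yields a polynomial expression in the power sums $T_1(a^j)$ for small $j$, with rational coefficients whose $2$-adic valuations control the order of the expansion needed to reach modulus $64$.

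Second, I would apply Newton's identities (Eq.~(\ref{eq:NI})) recursively to convert the power sums into the elementary symmetric functions $e_1, \ldots, e_n$. Since each $e_i \in \{0,1\}$ (so $e_i^m = e_i$ when regarded as integers), every resulting monomial collapses to a squarefree product of the $e_i$'s. Truncating modulo $64$ then yields the stated congruence. Because the mod~$32$ reduction is already given by Theorem~\ref{thm:Kloosterman256}, the remaining work is to identify the ``$32$-part'' of the expansion: the squarefree monomials whose coefficient has $2$-adic valuation exactly~$5$ modulo~$64$, along with the upgrade from $8e_2$ to $40 e_2 = 8 e_2 + 32 e_2$.

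The main obstacle is the combinatorial bookkeeping of this one extra layer: many new monomials appear and each candidate contribution of $2$-adic valuation $5$ must be tracked individually. One must verify that only the eleven monomials listed in the statement survive, that the cutoff is correct so that no $e_i$ with $i \ge 9$ contributes modulo~$64$, and that the only surviving cubic terms are precisely $e_1 e_2 e_3$ and $e_1 e_2 e_4$. These case-by-case cancellations, using the interplay between the $2$-adic expansion of the lifted character and the syzygies among the $e_i$'s imposed by Newton's identities, are where the substantive computation lies.
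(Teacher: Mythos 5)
This theorem is imported verbatim from the reference \cite{faruk3}; the paper does not prove it, it simply cites it and uses it as a black box to connect the prescribed-traces counts to the distribution of binary Kloosterman sums modulo $64$. There is therefore no in-paper proof against which your reconstruction can be compared. Your sketch --- Teichm\"uller/Witt lifts of the character, $2$-adic expansion, conversion of power sums to elementary symmetric functions via Newton's identities, and exploitation of the idempotence $e_i^m = e_i$ --- is a plausible outline of the kind of argument that appears in the cited literature, but it is a speculative reconstruction of an external proof rather than something that can be checked against the present text.

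Two substantive remarks on the sketch itself. First, the observation that $\mathcal{K}_{2^n}(a)=\mathcal{K}_{2^n}(a^2)$ (or equivalently $\#E_{2^n}(a)=\#E_{2^n}(a^2)$), so that the Kloosterman sum is a symmetric function of the Frobenius conjugates of $a$, is correct and is indeed the structural reason any such congruence in terms of $e_1,\ldots,e_k$ can exist; you should make that reduction explicit rather than asserting it. Second, and more importantly, the step where you promise to carry out ``the combinatorial bookkeeping of this one extra layer'' and to ``verify that only the eleven monomials listed survive'' is precisely the entire content of the theorem. As written, your proposal defers all of the actual work --- identifying which syzygies among the $e_i$ cancel, why no $e_i$ with $i\ge 9$ contributes, why the only cubic survivors are $e_1e_2e_3$ and $e_1e_2e_4$, and why the coefficient of $e_2$ jumps from $8$ to $40$ --- to an unspecified ``case-by-case'' computation. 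A genuine proof would have to exhibit that computation, or at least an algorithmic scheme that provably terminates with the stated answer, rather than announce that it must exist. Since this statement is used in the paper only as a cited fact, the right move is simply to cite \cite{faruk3} rather than attempt a reproof.
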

Therefore, if one could solve the first-eight prescribed traces problem, then one could also determine a formula for the entries of the sixth column of 
Table~\ref{dist2}. 


\subsection{Computing $F_2(n,t_1,t_2,t_3,t_4,t_5)$}

We begin by applying the indirect method.

\subsubsection{Indirect method.} Setting $\vec{f} = (T_5,T_4,T_3,T_2,T_1)$, by the transform~(\ref{eq:Sinverse}) we have
\[
{\tiny
\begin{bmatrix}
F_2(n,0,0,0,0,0)\\
F_2(n,1,0,0,0,0)\\
F_2(n,0,1,0,0,0)\\
F_2(n,1,1,0,0,0)\\
F_2(n,0,0,1,0,0)\\
F_2(n,1,0,1,0,0)\\
F_2(n,0,1,1,0,0)\\
F_2(n,1,1,1,0,0)\\
F_2(n,0,0,0,1,0)\\
F_2(n,1,0,0,1,0)\\
F_2(n,0,1,0,1,0)\\
F_2(n,1,1,0,1,0)\\
F_2(n,0,0,1,1,0)\\
F_2(n,1,0,1,1,0)\\
F_2(n,0,1,1,1,0)\\
F_2(n,1,1,1,1,0)\\
F_2(n,0,0,0,0,1)\\
F_2(n,1,0,0,0,1)\\
F_2(n,0,1,0,0,1)\\
F_2(n,1,1,0,0,1)\\
F_2(n,0,0,1,0,1)\\
F_2(n,1,0,1,0,1)\\
F_2(n,0,1,1,0,1)\\
F_2(n,1,1,1,0,1)\\
F_2(n,0,0,0,1,1)\\
F_2(n,1,0,0,1,1)\\
F_2(n,0,1,0,1,1)\\
F_2(n,1,1,0,1,1)\\
F_2(n,0,0,1,1,1)\\
F_2(n,1,0,1,1,1)\\
F_2(n,0,1,1,1,1)\\
F_2(n,1,1,1,1,1)\\
\end{bmatrix}
=
\begin{bmatrix}
N(\vec{0})\\
N(\vec{1})\\
N(\vec{2})\\
N(\vec{3})\\
N(\vec{4})\\
N(\vec{5})\\
N(\vec{6})\\
N(\vec{7})\\
N(\vec{8})\\
N(\vec{9})\\
N(\vec{10})\\
N(\vec{11})\\
N(\vec{12})\\
N(\vec{13})\\
N(\vec{14})\\
N(\vec{15})\\
N(\vec{16})\\
N(\vec{17})\\
N(\vec{18})\\
N(\vec{19})\\
N(\vec{20})\\
N(\vec{21})\\
N(\vec{22})\\
N(\vec{23})\\
N(\vec{24})\\
N(\vec{25})\\
N(\vec{26})\\
N(\vec{27})\\
N(\vec{28})\\
N(\vec{29})\\
N(\vec{30})\\
N(\vec{31})\\
\end{bmatrix}
=
\frac{1}{16}
\begin{bmatrix*}[c]
-15& 1 & 1 & 1 & 1 & 1 & 1 & 1 & 1 & 1 & 1 & 1 & 1 & 1 & 1 & 1 & 1 & 1 & 1 & 1 & 1 & 1 & 1 & 1 & 1 & 1 & 1 & 1 & 1 & 1 & 1 & 1\\
1 & - & 1 & - & 1 & - & 1 & - & 1 & - & 1 & - & 1 & - & 1 & - & 1 & - & 1 & - & 1 & - & 1 & - & 1 & - & 1 & - & 1 & - & 1 & -\\
1 & 1 & - & - & 1 & 1 & - & - & 1 & 1 & - & - & 1 & 1 & - & - & 1 & 1 & - & - & 1 & 1 & - & - & 1 & 1 & - & - & 1 & 1 & - & -\\
1 & - & - & 1 & 1 & - & - & 1 & 1 & - & - & 1 & 1 & - & - & 1 & 1 & - & - & 1 & 1 & - & - & 1 & 1 & - & - & 1 & 1 & - & - & 1\\
1 & 1 & 1 & 1 & - & - & - & - & 1 & 1 & 1 & 1 & - & - & - & - & 1 & 1 & 1 & 1 & - & - & - & - & 1 & 1 & 1 & 1 & - & - & - & -\\
1 & - & 1 & - & - & 1 & - & 1 & 1 & - & 1 & - & - & 1 & - & 1 & 1 & - & 1 & - & - & 1 & - & 1 & 1 & - & 1 & - & - & 1 & - & 1\\
1 & 1 & - & - & - & - & 1 & 1 & 1 & 1 & - & - & - & - & 1 & 1 & 1 & 1 & - & - & - & - & 1 & 1 & 1 & 1 & - & - & - & - & 1 & 1\\
1 & - & - & 1 & - & 1 & 1 & - & 1 & - & - & 1 & - & 1 & 1 & - & 1 & - & - & 1 & - & 1 & 1 & - & 1 & - & - & 1 & - & 1 & 1 & -\\
1 & 1 & 1 & 1 & 1 & 1 & 1 & 1 & - & - & - & - & - & - & - & - & 1 & 1 & 1 & 1 & 1 & 1 & 1 & 1 & - & - & - & - & - & - & - & -\\
1 & - & 1 & - & 1 & - & 1 & - & - & 1 & - & 1 & - & 1 & - & 1 & 1 & - & 1 & - & 1 & - & 1 & - & - & 1 & - & 1 & - & 1 & - & 1\\
1 & 1 & - & - & 1 & 1 & - & - & - & - & 1 & 1 & - & - & 1 & 1 & 1 & 1 & - & - & 1 & 1 & - & - & - & - & 1 & 1 & - & - & 1 & 1\\
1 & - & - & 1 & 1 & - & - & 1 & - & 1 & 1 & - & - & 1 & 1 & - & 1 & - & - & 1 & 1 & - & - & 1 & - & 1 & 1 & - & - & 1 & 1 & -\\
1 & 1 & 1 & 1 & - & - & - & - & - & - & - & - & 1 & 1 & 1 & 1 & 1 & 1 & 1 & 1 & - & - & - & - & - & - & - & - & 1 & 1 & 1 & 1\\
1 & - & 1 & - & - & 1 & - & 1 & - & 1 & - & 1 & 1 & - & 1 & - & 1 & - & 1 & - & - & 1 & - & 1 & - & 1 & - & 1 & 1 & - & 1 & -\\
1 & 1 & - & - & - & - & 1 & 1 & - & - & 1 & 1 & 1 & 1 & - & - & 1 & 1 & - & - & - & - & 1 & 1 & - & - & 1 & 1 & 1 & 1 & - & -\\
1 & - & - & 1 & - & 1 & 1 & - & - & 1 & 1 & - & 1 & - & - & 1 & 1 & - & - & 1 & - & 1 & 1 & - & - & 1 & 1 & - & 1 & - & - & 1\\
1 & 1 & 1 & 1 & 1 & 1 & 1 & 1 & 1 & 1 & 1 & 1 & 1 & 1 & 1 & 1 & - & - & - & - & - & - & - & - & - & - & - & - & - & - & - & -\\
1 & - & 1 & - & 1 & - & 1 & - & 1 & - & 1 & - & 1 & - & 1 & - & - & 1 & - & 1 & - & 1 & - & 1 & - & 1 & - & 1 & - & 1 & - & 1\\
1 & 1 & - & - & 1 & 1 & - & - & 1 & 1 & - & - & 1 & 1 & - & - & - & - & 1 & 1 & - & - & 1 & 1 & - & - & 1 & 1 & - & - & 1 & 1\\
1 & - & - & 1 & 1 & - & - & 1 & 1 & - & - & 1 & 1 & - & - & 1 & - & 1 & 1 & - & - & 1 & 1 & - & - & 1 & 1 & - & - & 1 & 1 & -\\
1 & 1 & 1 & 1 & - & - & - & - & 1 & 1 & 1 & 1 & - & - & - & - & - & - & - & - & 1 & 1 & 1 & 1 & - & - & - & - & 1 & 1 & 1 & 1\\
1 & - & 1 & - & - & 1 & - & 1 & 1 & - & 1 & - & - & 1 & - & 1 & - & 1 & - & 1 & 1 & - & 1 & - & - & 1 & - & 1 & 1 & - & 1 & -\\
1 & 1 & - & - & - & - & 1 & 1 & 1 & 1 & - & - & - & - & 1 & 1 & - & - & 1 & 1 & 1 & 1 & - & - & - & - & 1 & 1 & 1 & 1 & - & -\\
1 & - & - & 1 & - & 1 & 1 & - & 1 & - & - & 1 & - & 1 & 1 & - & - & 1 & 1 & - & 1 & - & - & 1 & - & 1 & 1 & - & 1 & - & - & 1\\
1 & 1 & 1 & 1 & 1 & 1 & 1 & 1 & - & - & - & - & - & - & - & - & - & - & - & - & - & - & - & - & 1 & 1 & 1 & 1 & 1 & 1 & 1 & 1\\
1 & - & 1 & - & 1 & - & 1 & - & - & 1 & - & 1 & - & 1 & - & 1 & - & 1 & - & 1 & - & 1 & - & 1 & 1 & - & 1 & - & 1 & - & 1 & -\\
1 & 1 & - & - & 1 & 1 & - & - & - & - & 1 & 1 & - & - & 1 & 1 & - & - & 1 & 1 & - & - & 1 & 1 & 1 & 1 & - & - & 1 & 1 & - & -\\
1 & - & - & 1 & 1 & - & - & 1 & - & 1 & 1 & - & - & 1 & 1 & - & - & 1 & 1 & - & - & 1 & 1 & - & 1 & - & - & 1 & 1 & - & - & 1\\
1 & 1 & 1 & 1 & - & - & - & - & - & - & - & - & 1 & 1 & 1 & 1 & - & - & - & - & 1 & 1 & 1 & 1 & 1 & 1 & 1 & 1 & - & - & - & -\\
1 & - & 1 & - & - & 1 & - & 1 & - & 1 & - & 1 & 1 & - & 1 & - & - & 1 & - & 1 & 1 & - & 1 & - & 1 & - & 1 & - & - & 1 & - & 1\\
1 & 1 & - & - & - & - & 1 & 1 & - & - & 1 & 1 & 1 & 1 & - & - & - & - & 1 & 1 & 1 & 1 & - & - & 1 & 1 & - & - & - & - & 1 & 1\\
1 & - & - & 1 & - & 1 & 1 & - & - & 1 & 1 & - & 1 & - & - & 1 & - & 1 & 1 & - & 1 & - & - & 1 & 1 & - & - & 1 & - & 1 & 1 & -\\
\end{bmatrix*}
\begin{bmatrix}
V(\vec{0} \cdot \vec{f})\\
V(\vec{1} \cdot \vec{f})\\
V(\vec{2} \cdot \vec{f})\\
V(\vec{3} \cdot \vec{f})\\
V(\vec{4} \cdot \vec{f})\\
V(\vec{5} \cdot \vec{f})\\
V(\vec{6} \cdot \vec{f})\\
V(\vec{7} \cdot \vec{f})\\
V(\vec{8} \cdot \vec{f})\\
V(\vec{9} \cdot \vec{f})\\
V(\vec{10} \cdot \vec{f})\\
V(\vec{11} \cdot \vec{f})\\
V(\vec{12} \cdot \vec{f})\\
V(\vec{13} \cdot \vec{f})\\
V(\vec{14} \cdot \vec{f})\\
V(\vec{15} \cdot \vec{f})\\
V(\vec{16} \cdot \vec{f})\\
V(\vec{17} \cdot \vec{f})\\
V(\vec{18} \cdot \vec{f})\\
V(\vec{19} \cdot \vec{f})\\
V(\vec{20} \cdot \vec{f})\\
V(\vec{21} \cdot \vec{f})\\
V(\vec{22} \cdot \vec{f})\\
V(\vec{23} \cdot \vec{f})\\
V(\vec{24} \cdot \vec{f})\\
V(\vec{25} \cdot \vec{f})\\
V(\vec{26} \cdot \vec{f})\\
V(\vec{27} \cdot \vec{f})\\
V(\vec{28} \cdot \vec{f})\\
V(\vec{29} \cdot \vec{f})\\
V(\vec{30} \cdot \vec{f})\\
V(\vec{31} \cdot \vec{f})\\
\end{bmatrix},}
\]
where for reasons of space and in common with Hadamard matrix notation, we represent each $-1$ simply as a `$-$'.
To determine $V( \vec{i} \cdot \vec{f})$ for $16 \le i \le 31$, we use Lemma~\ref{lem:T} part (5). 
In particular, setting $\alpha = a_{0}^2$ and $\beta = a_{0}$ for $r_0 = 0$, and $\alpha = a_{0}^2 + a_0$ and $\beta = 1$ for $r_0 = 1$, 
and evaluating mod $2$ gives the following:
\begin{eqnarray}
\nonumber T_5(a_{0}^2 + a_0 + r_0) &=& T_1(a_{0}^5)T_1(a_{0}^3) + T_1(a_{0}^5)T_1(a_{0}) + T_1(a_{0}^3)T_1(a_{0}) + r_0(T_2(a_{0}^3) + T_2(a_{0}) + T_1(a_{0}^3)T_1(a_{0}))\\
&+& T_1\Big(a_{0}^9 + a_{0}^3 + a_{0} + r_0 \Big(a_{0}^7 + (a_{0}^5 + a_0)\binom{n}{2} + (a_{0}^3 + a_0)\binom{n}{3} + \binom{n}{5}\Big)\Big).\label{eq:5coeffsparam1}
\end{eqnarray}
This can be linearised using the same the substitutions that were used for the four coefficient case, namely, 
$a_0 = a_{1}^2 + a_1 + r_1$ and $a_{0}^3 = a_{2}^2 + a_2 + r_2$, where $r_1,r_2 \in \F_2$ are the traces of $a_0$ and $a_{0}^3$ respectively. 
This results in 
\begin{eqnarray*}
T_5(a_{0}^2 + a_0 + r_0) &=& T_1\Big(r_0(a_{2}^3 + a_{2} + a_{1}^3 + a_{1}) + a_{0}^9 + r_0a_{0}^7+ (r_1 + r_2)a_{0}^5 
+ r_1 + r_2 + r_1r_2 + r_0r_1r_2\\
&+& (r_0a_{0}^5 + r_0r_2)\binom{n}{2} + (r_0r_1 + r_0r_2)\binom{n}{3} + r_0\binom{n}{5}\Big).
\end{eqnarray*}
For $16 \le i \le 31$ let $\vec{i} = (i_4,i_3,i_2,i_1,i_0)$. The curves we are interested in are given by the following intersections:
\begin{eqnarray}
\nonumber a_{3}^2 + a_{3} &=& 
\nonumber i_4\Big(r_0(a_{2}^3 + a_{2} + a_{1}^3 + a_{1}) + a_{0}^9 + r_0a_{0}^7 + (r_1 + r_2)a_{0}^5 + r_1 + r_2 + r_1r_2 + r_0r_1r_2\\
\nonumber &+& (r_0a_{0}^5 + r_0r_2)\binom{n}{2} + (r_0r_1 + r_0r_2)\binom{n}{3} + r_0\binom{n}{5}\Big)\\
\nonumber &+& i_3\Big(a_{2}^3 + a_2 + a_{1}^3 + a_1 + a_{0}^7 + a_{0}^5 + (r_0 + 1)(r_1 + r_2)\binom{n}{2} + r_0r_1 + r_0r_2 + r_1r_2 + r_2 + r_0\binom{n}{4}\Big)\\
\nonumber &+& i_2\Big(a_{0}^5 + a_{0} + r_0\Big(a_{0}^3 + a_0 + \binom{n}{3} \Big)\Big) + i_1\Big(a_{0}^3 + a_{0} + r_0\binom{n}{2}\Big) + i_0r_0,\\
\label{eq:5trace1} a_0 &=& a_{1}^2 + a_1 + r_1,\\
\nonumber a_{0}^3 &=& a_{2}^2 + a_2 + r_2.
\end{eqnarray}
For each $16 \le i \le 31$ the genus of all of these absolutely irreducible curves is $18$.
Corollary~\ref{cor:period} implies that mod $2$ one has
\begin{eqnarray*}
\Big(\binom{n}{5},\binom{n}{4},\binom{n}{3},\binom{n}{2},\binom{n}{1}\Big) \equiv \begin{cases}
\begin{array}{lr}
(0,0,0,0,1) & \ \text{if} \ n \equiv 1 \pmod{8}\\
(0,0,1,1,1) & \ \text{if} \ n \equiv 3 \pmod{8}\\
(1,1,0,0,1) & \ \text{if} \ n \equiv 5 \pmod{8}\\
(1,1,1,1,1) & \ \text{if} \ n \equiv 7 \pmod{8}\\
\end{array},
\end{cases}
\end{eqnarray*}
and hence there are four cases to consider when computing the zeta functions of each of the curves~(\ref{eq:5trace1}).
In order to express $F_2(n,t_1,t_2,t_3,t_4,t_5)$ compactly, we define the following polynomial:
\begin{eqnarray*}
\delta_{8,3} &=& X^8 + 2X^7 + 2X^6 - 4X^4 + 8X^2 + 16X + 16,
\end{eqnarray*}
which is the characteristic polynomial of Frobenius of the supersingular curve 
\[
C_{8,3}/\F_2: y^2 + y = x^9 + x^5.
\]
As with the four coefficient case there are several other even polynomials which occur as factors of the characteristic polynomial of Frobenius
of the above curves, which can hence be ignored for $n$ odd.
 
We used Magma V22.2-3 to compute the zeta functions of the curves~(\ref{eq:5trace1}), which took just under $15$ minutes on a 
2.0GHz AMD Opteron computer and leads to the following theorem\footnote[2]{See~\url{F2(n,t1,t2,t3,t4,t5).m}.}.

\begin{theorem}\label{thm:5traces}
For $n \ge 5$ we have
{\small
\begin{eqnarray*}
F_2(n,0,0,0,0,0) &=& 2^{n-5} - \frac{1}{32} \big( 5\rho_n(\delta_{2,1}) + 5\rho_n(\delta_{4,1}) + 2\rho_n(\delta_{8,1}) + \rho_n(\delta_{8,3}) \big) \ \ \text{if} \ n \equiv 1,3,5,7 \pmod{8}\\
F_2(n,1,0,0,0,0) &=& 2^{n-5} - \frac{1}{32} \cdot \begin{cases}
\begin{array}{lr}
5\rho_n(\delta_{2,1}) +  5\rho_n(\delta_{4,1}) + 2\rho_n(\delta_{8,1}) + \rho_n(\delta_{8,3}) & \ \text{if} \ n \equiv 1 \pmod{8}\\
3\rho_n(\delta_{2,1}) -  3\rho_n(\delta_{4,1}) + 2\rho_n(\delta_{8,2}) - \rho_n(\delta_{8,3}) & \ \text{if} \ n \equiv 3 \pmod{8}\\
\rho_n(\delta_{2,1}) +  \rho_n(\delta_{4,1}) - 2\rho_n(\delta_{8,1}) + \rho_n(\delta_{8,3}) & \ \text{if} \ n \equiv 5 \pmod{8}\\
-\rho_n(\delta_{2,1}) +  \rho_n(\delta_{4,1}) - 2\rho_n(\delta_{8,2}) - \rho_n(\delta_{8,3}) & \ \text{if} \ n \equiv 7 \pmod{8}\\
\end{array}
\end{cases}\\
F_2(n,0,1,0,0,0) &=& 2^{n-5} - \frac{1}{32} \cdot \begin{cases}
\begin{array}{lr}
-\rho_n(\delta_{2,1}) + \rho_n(\delta_{4,1}) -2\rho_n(\delta_{8,2}) - \rho_n(\delta_{8,3})  & \ \text{if} \ n \equiv 1,5 \pmod{8}\\
3\rho_n(\delta_{2,1}) - 3\rho_n(\delta_{4,1}) + 2\rho_n(\delta_{8,2}) - \rho_n(\delta_{8,3})  & \ \text{if} \ n \equiv 3,7 \pmod{8}\\
\end{array}
\end{cases}\\
F_2(n,1,1,0,0,0) &=& 2^{n-5} - \frac{1}{32} \cdot \begin{cases}
\begin{array}{lr}
-3\rho_n(\delta_{2,1}) - 3\rho_n(\delta_{4,1})  + 2\rho_n(\delta_{8,1}) + \rho_n(\delta_{8,3})  & \ \text{if} \ n \equiv 1 \pmod{8}\\
-\rho_n(\delta_{2,1})  + \rho_n(\delta_{4,1}) - 2\rho_n(\delta_{8,2}) - \rho_n(\delta_{8,3})  & \ \text{if} \ n \equiv 3 \pmod{8}\\
\rho_n(\delta_{2,1})  + \rho_n(\delta_{4,1}) - 2\rho_n(\delta_{8,1}) + \rho_n(\delta_{8,3})  & \ \text{if} \ n \equiv 5 \pmod{8}\\
3\rho_n(\delta_{2,1})  - 3\rho_n(\delta_{4,1}) + 2\rho_n(\delta_{8,2}) - \rho_n(\delta_{8,3})  & \ \text{if} \ n \equiv 7 \pmod{8}\\
\end{array}
\end{cases}\\
F_2(n,0,0,1,0,0) &=& 2^{n-5} - \frac{1}{32} \big(-\rho_n(\delta_{2,1})+ \rho_n(\delta_{4,1}) -2\rho_n(\delta_{8,2}) -\rho_n(\delta_{8,3} )\big)  \ \ \text{if} \ n \equiv 1,3,5,7 \pmod{8}\\
F_2(n,1,0,1,0,0) &=& 2^{n-5} - \frac{1}{32} \cdot \begin{cases}
\begin{array}{lr}
-3\rho_n(\delta_{2,1}) - 3\rho_n(\delta_{4,1}) + 2 \rho_n(\delta_{8,1})+ \rho_n(\delta_{8,3}) & \ \text{if} \ n \equiv 1 \pmod{8}\\
-5\rho_n(\delta_{2,1}) +5\rho_n(\delta_{4,1}) + 2\rho_n(\delta_{8,2}) - \rho_n(\delta_{8,3}) & \ \text{if} \ n \equiv 3 \pmod{8}\\
\rho_n(\delta_{2,1}) + \rho_n(\delta_{4,1}) - 2\rho_n(\delta_{8,1}) + \rho_n(\delta_{8,3}) & \ \text{if}  \ n \equiv 5 \pmod{8}\\
-\rho_n(\delta_{2,1}) + \rho_n(\delta_{4,1}) - 2\rho_n(\delta_{8,2}) - \rho_n(\delta_{8,3}) & \ \text{if} \  n \equiv 7 \pmod{8}\\
\end{array}
\end{cases}\\
F_2(n,0,1,1,0,0) &=& 2^{n-5} - \frac{1}{32} \cdot \begin{cases}
\begin{array}{lr}
- \rho_n(\delta_{2,1}) + \rho_n(\delta_{4,1}) - 2\rho_n(\delta_{8,2}) - \rho_n(\delta_{8,3}) &   \ \text{if} \ n \equiv 1,5 \pmod{8}\\
-5\rho_n(\delta_{2,1}) + 5\rho_n(\delta_{4,1}) + 2\rho_n(\delta_{8,2}) - \rho_n(\delta_{8,3}) &   \ \text{if} \ n \equiv 3,7 \pmod{8}\\
\end{array}
\end{cases}\\
F_2(n,1,1,1,0,0) &=& 2^{n-5} - \frac{1}{32} \cdot \begin{cases}
\begin{array}{lr}
3\rho_n(\delta_{2,1}) - 3\rho_n(\delta_{4,1}) + 2\rho_n(\delta_{8,2}) - \rho_n(\delta_{8,3}) & \ \text{if} \ n \equiv 1 \pmod{8}\\
5\rho_n(\delta_{2,1}) + 5\rho_n(\delta_{4,1}) + 2\rho_n(\delta_{8,1}) + \rho_n(\delta_{8,3}) & \ \text{if} \ n \equiv 3 \pmod{8}\\
- \rho_n(\delta_{2,1}) + \rho_n(\delta_{4,1}) - 2\rho_n(\delta_{8,2}) - \rho_n(\delta_{8,3}) & \ \text{if} \ n \equiv 5 \pmod{8}\\
\rho_n(\delta_{2,1}) + \rho_n(\delta_{4,1}) - 2\rho_n(\delta_{8,1}) + \rho_n(\delta_{8,3}) & \ \text{if} \ n \equiv 7 \pmod{8}\\
\end{array}
\end{cases}\\
F_2(n,0,0,0,1,0) &=& 2^{n-5} - \frac{1}{32} \big( \rho_n(\delta_{2,1}) + \rho_n(\delta_{4,1}) - 2\rho_n(\delta_{8,1}) + \rho_n(\delta_{8,3}) \big)  \ \ \text{if} \ n \equiv 1,3,5,7 \pmod{8}\\
F_2(n,1,0,0,1,0) &=& 2^{n-5} - \frac{1}{32} \cdot \begin{cases}
\begin{array}{lr}
-\rho_n(\delta_{2,1}) + \rho_n(\delta_{4,1}) -2\rho_n(\delta_{8,2}) - \rho_n(\delta_{8,3}) & \ \text{if}  \ n \equiv 1 \pmod{8}\\
-3\rho_n(\delta_{2,1}) - 3\rho_n(\delta_{4,1}) + 2\rho_n(\delta_{8,1}) + \rho_n(\delta_{8,3}) & \ \text{if} \ n \equiv 3 \pmod{8}\\
3\rho_n(\delta_{2,1}) - 3\rho_n(\delta_{4,1}) + 2\rho_n(\delta_{8,2}) -\rho_n(\delta_{8,3})  & \ \text{if} \ n \equiv 5 \pmod{8}\\
\rho_n(\delta_{2,1}) + \rho_n(\delta_{4,1}) - 2\rho_n(\delta_{8,1}) + \rho_n(\delta_{8,3}) & \ \text{if} \ n \equiv 7 \pmod{8}\\
\end{array}
\end{cases}\\
F_2(n,0,1,0,1,0) &=& 2^{n-5} - \frac{1}{32} \cdot \begin{cases}
\begin{array}{lr}
3\rho_n(\delta_{2,1}) -3\rho_n(\delta_{4,1}) + 2\rho_n(\delta_{8,2}) -\rho_n(\delta_{8,3}) & \ \text{if} \  n \equiv 1,5 \pmod{8}\\
-\rho_n(\delta_{2,1}) + \rho_n(\delta_{4,1}) - 2\rho_n(\delta_{8,2}) -\rho_n(\delta_{8,3}) & \ \text{if}  \ n \equiv 3,7 \pmod{8}\\
\end{array}
\end{cases}\\
F_2(n,1,1,0,1,0) &=& 2^{n-5} - \frac{1}{32} \cdot \begin{cases}
\begin{array}{lr}
- \rho_n(\delta_{2,1}) + \rho_n(\delta_{4,1}) - 2\rho_n(\delta_{8,2}) - \rho_n(\delta_{8,3}) & \ \text{if}  \ n \equiv 1 \pmod{8}\\
\rho_n(\delta_{2,1}) + \rho_n(\delta_{4,1}) - 2\rho_n(\delta_{8,1}) + \rho_n(\delta_{8,3}) & \ \text{if} \ n \equiv 3 \pmod{8}\\
-5\rho_n(\delta_{2,1}) +5\rho_n(\delta_{4,1}) + 2\rho_n(\delta_{8,2}) - \rho_n(\delta_{8,3})  & \ \text{if} \ n \equiv 5 \pmod{8}\\
-3\rho_n(\delta_{2,1}) -3\rho_n(\delta_{4,1}) + 2\rho_n(\delta_{8,1}) + \rho_n(\delta_{8,3})  & \ \text{if} \ n \equiv 7 \pmod{8}\\
\end{array}
\end{cases}\\
F_2(n,0,0,1,1,0) &=& 2^{n-5} - \frac{1}{32} \big( 3\rho_n(\delta_{2,1}) - 3\rho_n(\delta_{4,1}) + 2\rho_n(\delta_{8,2}) - \rho_n(\delta_{8,3}) \big) \ \ \text{if} \ n \equiv 1,3,5,7 \pmod{8}\\
F_2(n,1,0,1,1,0) &=& 2^{n-5} - \frac{1}{32} \cdot \begin{cases}
\begin{array}{lr}
3\rho_n(\delta_{2,1}) - 3\rho_n(\delta_{4,1}) + 2\rho_n(\delta_{8,2}) - \rho_n(\delta_{8,3}) & \ \text{if}  \ n \equiv 1 \pmod{8}\\
\rho_n(\delta_{2,1}) + \rho_n(\delta_{4,1}) - 2\rho_n(\delta_{8,1}) + \rho_n(\delta_{8,3}) & \ \text{if} \ n \equiv 3 \pmod{8}\\
- \rho_n(\delta_{2,1}) + \rho_n(\delta_{4,1}) - 2\rho_n(\delta_{8,2}) - \rho_n(\delta_{8,3})  & \ \text{if} \ n \equiv 5 \pmod{8}\\
- 3\rho_n(\delta_{2,1}) - 3\rho_n(\delta_{4,1}) + 2\rho_n(\delta_{8,1}) + \rho_n(\delta_{8,3})  & \ \text{if} \ n \equiv 7 \pmod{8}\\  
\end{array}
\end{cases}\\
F_2(n,0,1,1,1,0) &=& 2^{n-5} - \frac{1}{32} \cdot \begin{cases}
\begin{array}{lr}
-5\rho_n(\delta_{2,1}) + 5\rho_n(\delta_{4,1}) + 2\rho_n(\delta_{8,2}) - \rho_n(\delta_{8,3}) & \ \text{if} \  n \equiv 1,5 \pmod{8}\\
- \rho_n(\delta_{2,1}) + \rho_n(\delta_{4,1}) - 2\rho_n(\delta_{8,2}) - \rho_n(\delta_{8,3}) & \ \text{if}  \ n \equiv 3,7 \pmod{8}\\
\end{array}
\end{cases}\\
F_2(n,1,1,1,1,0) &=& 2^{n-5} - \frac{1}{32} \cdot \begin{cases}
\begin{array}{lr}
- 3\rho_n(\delta_{2,1}) - 3\rho_n(\delta_{4,1}) + 2\rho_n(\delta_{8,1}) + \rho_n(\delta_{8,3})  & \ \text{if}  \ n \equiv 1 \pmod{8}\\
- \rho_n(\delta_{2,1}) + \rho_n(\delta_{4,1}) - 2\rho_n(\delta_{8,2}) - \rho_n(\delta_{8,3}) & \ \text{if} \ n \equiv 3 \pmod{8}\\
\rho_n(\delta_{2,1}) + \rho_n(\delta_{4,1}) - 2\rho_n(\delta_{8,1}) + \rho_n(\delta_{8,3})  & \ \text{if} \ n \equiv 5 \pmod{8}\\
3\rho_n(\delta_{2,1}) - 3\rho_n(\delta_{4,1}) + 2\rho_n(\delta_{8,2}) - \rho_n(\delta_{8,3})  & \ \text{if} \ n \equiv 7 \pmod{8}\\  
\end{array}
\end{cases}\\
\end{eqnarray*}
}
{\small
\begin{eqnarray*}
F_2(n,0,0,0,0,1) &=& 2^{n-5} - \frac{1}{32} \big( 3\rho_n(\delta_{2,1})- 3\rho_n(\delta_{4,1}) + 2\rho_n(\delta_{8,2}) - \rho_n(\delta_{8,3}) \big) \ \ \text{if} \ n \equiv 1,3,5,7 \pmod{8}\\
F_2(n,1,0,0,0,1) &=& 2^{n-5} - \frac{1}{32} \cdot \begin{cases}
\begin{array}{lr}
3\rho_n(\delta_{2,1}) - 3\rho_n(\delta_{4,1}) + 2\rho_n(\delta_{8,2}) - \rho_n(\delta_{8,3})  & \ \text{if}  \ n \equiv 1 \pmod{8}\\
\rho_n(\delta_{2,1}) + \rho_n(\delta_{4,1}) - 2\rho_n(\delta_{8,1}) + \rho_n(\delta_{8,3}) & \ \text{if} \ n \equiv 3 \pmod{8}\\
- \rho_n(\delta_{2,1}) + \rho_n(\delta_{4,1})- 2\rho_n(\delta_{8,2}) - \rho_n(\delta_{8,3})  & \ \text{if} \ n \equiv 5 \pmod{8}\\
- 3\rho_n(\delta_{2,1}) - 3\rho_n(\delta_{4,1}) + 2\rho_n(\delta_{8,1}) + \rho_n(\delta_{8,3}) & \ \text{if} \ n \equiv 7 \pmod{8}\\ 
\end{array}
\end{cases}\\
F_2(n,0,1,0,0,1) &=& 2^{n-5} - \frac{1}{32} \cdot \begin{cases}
\begin{array}{lr}
- 3\rho_n(\delta_{2,1}) - 3\rho_n(\delta_{4,1}) + 2\rho_n(\delta_{8,1}) + \rho_n(\delta_{8,3})  & \ \text{if}  \ n \equiv 1,5 \pmod{8}\\
\rho_n(\delta_{2,1}) + \rho_n(\delta_{4,1}) - 2\rho_n(\delta_{8,1}) + \rho_n(\delta_{8,3}) & \ \text{if} \ n \equiv 3,7 \pmod{8}\\
\end{array}
\end{cases}\\
F_2(n,1,1,0,0,1) &=& 2^{n-5} - \frac{1}{32} \cdot \begin{cases}
\begin{array}{lr}
-5\rho_n(\delta_{2,1}) + 5\rho_n(\delta_{4,1}) + 2\rho_n(\delta_{8,2}) - \rho_n(\delta_{8,3})   & \ \text{if}  \ n \equiv 1 \pmod{8}\\
-3\rho_n(\delta_{2,1}) - 3\rho_n(\delta_{4,1}) + 2\rho_n(\delta_{8,1}) + \rho_n(\delta_{8,3}) & \ \text{if} \ n \equiv 3 \pmod{8}\\
-\rho_n(\delta_{2,1}) + \rho_n(\delta_{4,1}) - 2\rho_n(\delta_{8,2}) - \rho_n(\delta_{8,3})  & \ \text{if} \ n \equiv 5 \pmod{8}\\
\rho_n(\delta_{2,1}) + \rho_n(\delta_{4,1}) - 2\rho_n(\delta_{8,1}) + \rho_n(\delta_{8,3}) & \ \text{if} \ n \equiv 7 \pmod{8}\\ 
\end{array}
\end{cases}\\
F_2(n,0,0,1,0,1) &=& 2^{n-5} - \frac{1}{32} \big( - 3\rho_n(\delta_{2,1})  - 3\rho_n(\delta_{4,1}) + 2\rho_n(\delta_{8,1}) + \rho_n(\delta_{8,3}) \big) \ \ \text{if} \ n \equiv 1,3,5,7 \pmod{8}\\
F_2(n,1,0,1,0,1) &=& 2^{n-5} - \frac{1}{32} \cdot \begin{cases}
\begin{array}{lr}
- \rho_n(\delta_{2,1}) + \rho_n(\delta_{4,1}) - 2\rho_n(\delta_{8,2}) - \rho_n(\delta_{8,3})  & \ \text{if}  \ n \equiv 1 \pmod{8}\\
- 3\rho_n(\delta_{2,1}) - 3\rho_n(\delta_{4,1}) + 2\rho_n(\delta_{8,1}) + \rho_n(\delta_{8,3}) & \ \text{if} \ n \equiv 3 \pmod{8}\\
3\rho_n(\delta_{2,1}) -3\rho_n(\delta_{4,1}) + 2\rho_n(\delta_{8,2}) - \rho_n(\delta_{8,3}) & \ \text{if} \ n \equiv 5 \pmod{8}\\
\rho_n(\delta_{2,1}) + \rho_n(\delta_{4,1}) - 2\rho_n(\delta_{8,1}) + \rho_n(\delta_{8,3}) & \ \text{if} \ n \equiv 7 \pmod{8}\\ 
\end{array}
\end{cases}\\
F_2(n,0,1,1,0,1) &=& 2^{n-5} - \frac{1}{32} \cdot \begin{cases}
\begin{array}{lr}
\rho_n(\delta_{2,1})+ \rho_n(\delta_{4,1}) - 2\rho_n(\delta_{8,1}) + \rho_n(\delta_{8,3}) & \ \text{if}  \ n \equiv 1,5 \pmod{8}\\
- 3\rho_n(\delta_{2,1}) - 3\rho_n(\delta_{4,1}) + 2\rho_n(\delta_{8,1}) + \rho_n(\delta_{8,3})& \ \text{if} \ n \equiv 3,7 \pmod{8}\\
\end{array}
\end{cases}\\
F_2(n,1,1,1,0,1) &=& 2^{n-5} - \frac{1}{32} \cdot \begin{cases}
\begin{array}{lr}
\rho_n(\delta_{2,1}) + \rho_n(\delta_{4,1}) - 2\rho_n(\delta_{8,1}) + \rho_n(\delta_{8,3}) & \ \text{if}  \ n \equiv 1 \pmod{8}\\
3\rho_n(\delta_{2,1}) - 3\rho_n(\delta_{4,1}) + 2\rho_n(\delta_{8,2}) - \rho_n(\delta_{8,3}) & \ \text{if} \ n \equiv 3 \pmod{8}\\
- 3\rho_n(\delta_{2,1}) - 3\rho_n(\delta_{4,1}) + 2\rho_n(\delta_{8,1}) + \rho_n(\delta_{8,3}) & \ \text{if} \ n \equiv 5 \pmod{8}\\
- \rho_n(\delta_{2,1}) + \rho_n(\delta_{4,1}) - 2\rho_n(\delta_{8,2}) - \rho_n(\delta_{8,3})  & \ \text{if} \ n \equiv 7 \pmod{8}\\ 
\end{array}
\end{cases}\\
F_2(n,0,0,0,1,1) &=& 2^{n-5} - \frac{1}{32} \big(-\rho_n(\delta_{2,1}) + \rho_n(\delta_{4,1}) - 2\rho_n(\delta_{8,2}) -\rho_n(\delta_{8,3}) \big) \ \ \text{if} \ n \equiv 1,3,5,7 \pmod{8}\\
F_2(n,1,0,0,1,1) &=& 2^{n-5} - \frac{1}{32} \cdot \begin{cases}
\begin{array}{lr}
\rho_n(\delta_{2,1}) + \rho_n(\delta_{4,1}) - 2\rho_n(\delta_{8,1}) + \rho_n(\delta_{8,3}) & \ \text{if}  \ n \equiv 1 \pmod{8}\\
- \rho_n(\delta_{2,1}) + \rho_n(\delta_{4,1}) - 2\rho_n(\delta_{8,2}) - \rho_n(\delta_{8,3}) & \ \text{if} \ n \equiv 3 \pmod{8}\\
5 \rho_n(\delta_{2,1}) + 5 \rho_n(\delta_{4,1}) + 2 \rho_n(\delta_{8,1}) + \rho_n(\delta_{8,3}) & \ \text{if} \ n \equiv 5 \pmod{8}\\
3 \rho_n(\delta_{2,1}) - 3\rho_n(\delta_{4,1}) + 2\rho_n(\delta_{8,2}) - \rho_n(\delta_{8,3}) & \ \text{if} \ n \equiv 7 \pmod{8}\\
\end{array}
\end{cases}\\
F_2(n,0,1,0,1,1) &=& 2^{n-5} - \frac{1}{32} \cdot \begin{cases}
\begin{array}{lr}
\rho_n(\delta_{2,1}) + \rho_n(\delta_{4,1}) - 2\rho_n(\delta_{8,1}) + \rho_n(\delta_{8,3})  & \ \text{if}  \ n \equiv 1,5 \pmod{8}\\
- 3\rho_n(\delta_{2,1}) - 3\rho_n(\delta_{4,1}) + 2\rho_n(\delta_{8,1}) + \rho_n(\delta_{8,3})  & \ \text{if}  \ n \equiv 3,7 \pmod{8}\\
\end{array}
\end{cases}\\
F_2(n,1,1,0,1,1) &=& 2^{n-5} - \frac{1}{32} \cdot \begin{cases}
\begin{array}{lr}
\rho_n(\delta_{2,1}) + \rho_n(\delta_{4,1}) - 2\rho_n(\delta_{8,1}) + \rho_n(\delta_{8,3}) & \ \text{if}  \ n \equiv 1 \pmod{8}\\
3 \rho_n(\delta_{2,1}) - 3\rho_n(\delta_{4,1}) + 2\rho_n(\delta_{8,2}) - \rho_n(\delta_{8,3}) & \ \text{if}  \ n \equiv 3 \pmod{8}\\
-3 \rho_n(\delta_{2,1}) - 3 \rho_n(\delta_{4,1}) + 2\rho_n(\delta_{8,1}) + \rho_n(\delta_{8,3})& \ \text{if}  \ n \equiv 5 \pmod{8}\\
- \rho_n(\delta_{2,1}) + \rho_n(\delta_{4,1}) - 2\rho_n(\delta_{8,2}) - \rho_n(\delta_{8,3}) & \ \text{if}  \ n \equiv 7 \pmod{8}\\
\end{array}
\end{cases}\\
F_2(n,0,0,1,1,1) &=& 2^{n-5} - \frac{1}{32} \big( \rho_n(\delta_{2,1}) + \rho_n(\delta_{4,1}) - 2\rho_n(\delta_{8,1}) + \rho_n(\delta_{8,3}) \big) \ \ \text{if} \ n \equiv 1,3,5,7 \pmod{8}\\
F(n,1,0,1,1,1) &=& 2^{n-5} - \frac{1}{32} \cdot \begin{cases}
\begin{array}{lr}
\rho_n(\delta_{2,1}) + \rho_n(\delta_{4,1}) - 2\rho_n(\delta_{8,1}) + \rho_n(\delta_{8,3}) & \ \text{if}  \ n \equiv 1 \pmod{8}\\
- \rho_n(\delta_{2,1}) + \rho_n(\delta_{4,1}) - 2 \rho_n(\delta_{8,2}) - \rho_n(\delta_{8,3}) & \ \text{if}  \ n \equiv 3 \pmod{8}\\
-3 \rho_n(\delta_{2,1}) -3 \rho_n(\delta_{4,1}) + 2 \rho_n(\delta_{8,1}) + \rho_n(\delta_{8,3}) & \ \text{if}  \ n \equiv 5 \pmod{8}\\
-5 \rho_n(\delta_{2,1}) + 5 \rho_n(\delta_{4,1}) + 2 \rho_n(\delta_{8,2}) - \rho_n(\delta_{8,3})& \ \text{if}  \ n \equiv 7 \pmod{8}\\
\end{array}
\end{cases}\\
F_2(n,0,1,1,1,1) &=& 2^{n-5} - \frac{1}{32} \cdot \begin{cases}
\begin{array}{lr}
-3 \rho_n(\delta_{2,1}) -3 \rho_n(\delta_{4,1})+ 2 \rho_n(\delta_{8,1}) + \rho_n(\delta_{8,3})  & \ \text{if}  \ n \equiv 1,5 \pmod{8}\\
\rho_n(\delta_{2,1}) + \rho_n(\delta_{4,1}) - 2\rho_n(\delta_{8,1}) + \rho_n(\delta_{8,3}) & \ \text{if}  \ n \equiv 3,7 \pmod{8}\\
\end{array}
\end{cases}\\
F_2(n,1,1,1,1,1) &=& 2^{n-5} - \frac{1}{32} \cdot \begin{cases}
\begin{array}{lr}
- \rho_n(\delta_{2,1})+ \rho_n(\delta_{4,1}) -2 \rho_n(\delta_{8,2}) - \rho_n(\delta_{8,3}) & \ \text{if}  \ n \equiv 1 \pmod{8}\\
\rho_n(\delta_{2,1}) + \rho_n(\delta_{4,1}) - 2\rho_n(\delta_{8,1}) + \rho_n(\delta_{8,3})& \ \text{if}  \ n \equiv 3 \pmod{8}\\
3 \rho_n(\delta_{2,1}) -3 \rho_n(\delta_{4,1}) + 2 \rho_n(\delta_{8,2}) - \rho_n(\delta_{8,3}) & \ \text{if}  \ n \equiv 5 \pmod{8}\\
5 \rho_n(\delta_{2,1})+ 5 \rho_n(\delta_{4,1})+ 2 \rho_n(\delta_{8,1}) + \rho_n(\delta_{8,3}) & \ \text{if}  \ n \equiv 7 \pmod{8}\\
\end{array}
\end{cases}\\
\end{eqnarray*}
}
\end{theorem}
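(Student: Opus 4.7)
The plan is to follow the indirect method from \S\ref{subsec:indirectbinary} with $\vec{f} = (T_5,T_4,T_3,T_2,T_1)$ and $m=5$, reducing the theorem to a finite computation of zeta functions of explicit Artin--Schreier curves, then applying the transform~(\ref{eq:Sinverse}). First I would set $V(\vec{0}\cdot\vec{f}) = 2^n$ and, for the $15$ values of $i$ with $i_4 = 0$, re-use the linearisations of $T_1, T_2, T_3, T_4$ (Eqs.~(\ref{eq:T1linear})--(\ref{eq:T3linear}) together with~(\ref{eq:T4linearalt})), so that none of those $V(\vec{i}\cdot\vec{f})$ need to be recomputed. The substantive new work concerns the $16$ values $16 \le i \le 31$, where $T_5$ enters.

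For those, I would specialise Lemma~\ref{lem:T}~(5) by setting $(\alpha,\beta) = (a_0^2, a_0)$ (for $r_0 = 0$) and $(\alpha,\beta) = (a_0^2 + a_0, 1)$ (for $r_0 = 1$), reducing mod $2$, and using Eq.~(\ref{eq:NI}) applied to $a_0$ to reduce all powers of $T_1$ and products of $T_1$'s with other $T_k$'s until only $T_1,T_2$ terms in various arguments remain. The $T_2(a_0^3), T_2(a_0)$ and $T_1(a_0^3)T_1(a_0)$ terms combine via Lemma~\ref{lem:T}~(2) (as in~\S\ref{sec:4coeffsparam2}) and I would then linearise using the same substitutions that worked for $l=4$, namely $a_0 = a_1^2 + a_1 + r_1$ and $a_0^3 = a_2^2 + a_2 + r_2$, yielding precisely Eq.~(\ref{eq:5coeffsparam1}) and its linearised form. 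Introducing a final variable $a_3$ to parameterise the resulting $T_1 = 0$ condition produces the system of intersections in Eq.~(\ref{eq:5trace1}), which defines an absolutely irreducible curve of genus $18$ for each of the $16 \cdot 2^3 = 128$ choices of $(\vec{i},\vec{r})$ with $i_4 = 1$.

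Next, by Corollary~\ref{cor:period} applied with $j = 5$, the vector $\big(\binom{n}{5},\ldots,\binom{n}{1}\big)\pmod{2}$ has period $2^{1+\lfloor \log_2 5 \rfloor} = 8$, so there are exactly four relevant residue classes $\overline{n} \in \{1,3,5,7\}$. For each $\overline{n}$ I would substitute the appropriate binomial values, run Magma to compute the characteristic polynomial of Frobenius of each of the resulting curves, average over $\vec{r} = (r_0,r_1,r_2)$ as in Eq.~(\ref{eq:indirectmethodbinary}) to obtain each $V(\vec{i}\cdot\vec{f})$, and finally apply the $32\times 32$ transform matrix to convert these into the $32$ values $F_2(n,t_1,\ldots,t_5)$. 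Since $n$ is odd, any even-polynomial factors of the numerators of the zeta functions contribute zero to $\rho_n$ and can be discarded; one checks that what remains, after collecting terms, is expressible in the polynomials $\delta_{2,1}, \delta_{4,1}, \delta_{8,1}, \delta_{8,2}, \delta_{8,3}$ declared earlier in the paper, with the integer coefficients stated in the theorem.

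The main obstacle will be the practical cost and bookkeeping of the zeta function computation: we have $128$ genus-$18$ curves in four residue classes each, and Magma's default point-counting over successive extensions is heavy in this range. One mitigation, mirroring the approach in~\S\ref{sec:4coeffsparam2}, is to choose the parameterisation carefully so as to minimise the number of auxiliary variables, keeping the genus at $18$ rather than something larger. A further concern is correctly identifying the non-supersingular factors among the numerators (via Theorem~\ref{thm:SS}) and verifying that they all collapse into $\delta_{8,3}$ together with the previously identified $\delta_{8,1}, \delta_{8,2}$; this is an algebraic consistency check on the Magma output rather than an additional proof step, but it is where arithmetic errors are most likely to creep in and will need to be cross-checked by sampling small $n$ and comparing against a direct enumeration of $F_2(n,t_1,\ldots,t_5)$.
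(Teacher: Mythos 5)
Your proposal is correct and follows the same route as the paper's own argument: the indirect method with $\vec{f}=(T_5,\ldots,T_1)$, reduction of the new $T_5$ case via Lemma~\ref{lem:T}~(5) with the usual $(\alpha,\beta)$ substitutions, linearisation with $a_0=a_1^2+a_1+r_1$ and $a_0^3=a_2^2+a_2+r_2$, and then period-$8$ case analysis plus Magma computation and the transform~(\ref{eq:Sinverse}). Two small slips in the write-up worth fixing but not affecting correctness: the primary parameterisation for $l=5$ (the one you actually use, giving genus $18$) does not pass through a Lemma~\ref{lem:T}~(2) combination step—that is the alternative parameterisation in~\S\ref{sec:5coeffsparam2}, which in fact raises the genus to $21$ rather than lowering it, so ``mirroring \S\ref{sec:4coeffsparam2}'' would not help here; and $\delta_{8,3}$ is new to this theorem, not declared earlier in the paper.
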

One can check that the roots $\delta_{8,3}$ are $\sqrt{2}\omega_{40}^3,\sqrt{2}\omega_{40}^{11},\sqrt{2}\omega_{40}^{13},\sqrt{2}\omega_{40}^{19},
\sqrt{2}\omega_{40}^{21},\sqrt{2}\omega_{40}^{27},\sqrt{2}\omega_{40}^{29},\sqrt{2}\omega_{40}^{37}$, where
\begin{eqnarray*}
\omega_{40} = \frac{1}{\sqrt{2}}\Big( \frac{1+\sqrt{5}}{4} - \frac{1-\sqrt{5}}{8}\sqrt{10 + 2\sqrt{5}}\Big)
+ \frac{1}{\sqrt{2}}\Big( \frac{1+\sqrt{5}}{4} + \frac{1-\sqrt{5}}{8}\sqrt{10 + 2\sqrt{5}}\Big)i
\end{eqnarray*}
is the primitive $40$-th root of unity with smallest (positive) argument.
Due to the presence of the roots of either $\delta_{8,1}$ or $\delta_{8,2}$ in each formula, they can not be periodic in $n$.

\subsubsection*{An alternative parameterisation.}\label{sec:5coeffsparam2}
We now present an alternative parameterisation of Eq.~(\ref{eq:5coeffsparam1}) which although requires the same number of
variables to linearise, is perhaps more natural given the form of $T_2(a_{0}^2 + a_0 + r_0)$ and $T_3(a_{0}^2 + a_0 + r_0)$, 
see~\S\ref{sec:5direct}. In particular, 
using Lemma~\ref{lem:T} part (2) three times with $(\alpha,\beta) = (a_{0}^5, a_{0}^3)$, $(a_{0}^5, a_{0})$ and $(a_{0}^3, a_{0})$,
we have
\begin{eqnarray}
\nonumber T_5(a_{0}^2 + a_0 + r_0) &=& T_1(a_{0}^5)T_1(a_{0}^3) + T_1(a_{0}^5)T_1(a_{0}) + T_1(a_{0}^3)T_1(a_{0}) + r_0(T_2(a_{0}^3) + T_2(a_{0}) + T_1(a_{0}^3)T_1(a_{0}))\\
\nonumber  &+& T_1\Big(a_{0}^9 + a_{0}^3 + a_{0} + r_0 \Big(a_{0}^7 + (a_{0}^5 + a_0)\binom{n}{2} + (a_{0}^3 + a_0)\binom{n}{3} + \binom{n}{5}\Big)\Big)\\
\nonumber &=& T_2(a_{0}^5 + a_{0}^3) + T_2(a_{0}^5 + a_{0}) + (r_0 + 1)T_2(a_{0}^3 + a_{0}) \\
\label{eq:T5linearalt} &+& T_1\Big(a_{0}^9 + a_{0} + r_0 \Big(a_{0}^7 + a_0 +(a_{0}^5 + a_0)\binom{n}{2} + (a_{0}^3 + a_0)\binom{n}{3} + \binom{n}{5}\Big)\Big).
\end{eqnarray}
This can be linearised using the substitutions $a_{0}^3 + a_{0} = a_{1}^2 + a_{1} + r_1$ and $a_{0}^5 + a_{0} = a_{2}^2 + a_{2} + r_2$, 
where $r_1$ and $r_2$ are the traces of $a_{0}^3 + a_{0}$ and $a_{0}^5 + a_{0}$ respectively, which when added together imply that
$a_{0}^5 + a_{0}^3 = (a_1 + a_2)^2 + (a_1 + a_2) + (r_1 + r_2)$.
This results in 
\begin{eqnarray*}
\nonumber T_5(a_{0}^2 + a_0 + r_0) &=& T_1\Big( a_{1}^2 a_2 + a_1 a_{2}^2 + a_{0}^9 + a_{0} \\
&+&  r_0\Big(a_{1}^3 + a_{1} + a_{0}^7 + a_0 +(a_{0}^5 + a_0 + r_1)\binom{n}{2} + (a_{0}^3 + a_0)\binom{n}{3} + \binom{n}{5}\Big)\Big).
\end{eqnarray*}
For $16 \le i \le 31$ let $\vec{i} = (i_4,i_3,i_2,i_1,i_0)$. The curves we are interested in are given by the following intersections:
\begin{eqnarray}
\nonumber a_{3}^2 + a_{3} &=& i_4\Big(a_{1}^2 a_2 + a_1 a_{2}^2 + a_{0}^9 + a_{0} + r_0\Big(a_{1}^3 + a_{1} + a_{0}^7 + a_0 +(a_{0}^5 + a_0 + r_1)\binom{n}{2} + (a_{0}^3 + a_0)\binom{n}{3} + \binom{n}{5}\Big)\Big)\\
\nonumber &+& i_3\Big( a_{1}^3 + a_{1} + a_{0}^7 + a_{0}^5 + a_{0}^3 + a_{0} + r_0\Big(a_{0}^3 + a_0 + (a_{0}^3 + a_0)\binom{n}{2}  + \binom{n}{4}\Big) + r_1\binom{n}{2}\Big)\\
\nonumber &+& i_2\Big(a_{0}^5 + a_{0} + r_0\Big(a_{0}^3 + a_0 + \binom{n}{3} \Big)\Big) + i_1\Big(a_{0}^3 + a_{0} + r_0\binom{n}{2}\Big) + i_0r_0,\\
\nonumber a_{0}^3 + a_{0} &=& a_{1}^2 + a_1 + r_1\\
\nonumber a_{0}^5 + a_{0} &=& a_{2}^2 + a_2 + r_2.
\end{eqnarray}
For $i_4 = 1$ the genus of all of these absolutely irreducible curves is $21$ -- rather than $18$ as in the first parameterisation.
Nevertheless, once the transform~(\ref{eq:Sinverse}) is applied one again obtains the formulae given in Theorem~\ref{thm:5traces}.

\subsubsection{Some examples from the direct method.}\label{sec:5direct}
For $n$ odd, using Equations~(\ref{eq:T1linear}) to~(\ref{eq:T3linear}) and the alternative parametrisations~(\ref{eq:T4linearalt}) 
and~(\ref{eq:T5linearalt}), one can write down curves arising from the direct method for $F_2(n,t_1,t_2,t_3,t_4,t_5)$, as in~\S\ref{sec:4coeffsdirect}. 
Here we give the two simplest examples, valid for all $n \ge 5$. 
\begin{eqnarray}
\nonumber F_2(n,0,0,0,0,0) &=& \#\{a \in \F_{2^n} \mid T_1(a) = 0, \ T_2(a) = 0, \ T_3(a) = 0, \ T_4(a) = 0, \ T_5(a) = 0\}\\
\nonumber &=& \frac{1}{2} \, \#\{ a_{0} \in \F_{2^n} \mid T_2(a_{0}^2 + a_{0}) = 0, \ T_3(a_{0}^2 + a_{0}) = 0, \ T_4(a_{0}^2 + a_{0}) = 0, \ 
T_5(a_{0}^2 + a_{0}) = 0\}\\
\nonumber &=& \frac{1}{2} \, \#\{ a_{0} \in \F_{2^n} \mid T_1(a_{0}^3 + a_{0}) = 0, \ T_1(a_{0}^5 + a_{0}) = 0,\\
\nonumber && \ \ \ \ T_2(a_{0}^3 + a_{0}) + T_1( a_{0}^7 + a_{0}^5 + a_{0}^3 + a_0) = 0,\\
\nonumber && \ \ \ \ T_1(a_{0}^5)T_1(a_{0}^3) + T_1(a_{0}^5)T_1(a_{0}) + T_1(a_{0}^3)T_1(a_{0}) + T_1(a_{0}^9 + a_{0}^3 + a_{0}) = 0\}\\
\nonumber &=& \frac{1}{2} \, \#\{ a_{0} \in \F_{2^n} \mid T_1(a_{0}^3 + a_{0}) = 0, \ T_1(a_{0}^5 + a_{0}) = 0,\\
\nonumber && \ \ \ \  T_2(a_{0}^3 + a_{0}) + T_1( a_{0}^7 + a_{0}^5 + a_{0}^3 + a_0) = 0,\\
\nonumber && \ \ \ \  T_2(a_{0}^5 + a_{0}^3) +  T_2(a_{0}^5 + a_{0}) + T_2(a_{0}^3 + a_{0}) + T_1(a_{0}^9 + a_0) = 0\}\\
\nonumber &=& \frac{1}{8} \, \#\{ (a_{0},a_{1},a_{2}) \in (\F_{2^n})^3 \mid a_{1}^2 + a_{1} = a_{0}^3 + a_{0}, \ a_{2}^2 + a_{2} = a_{0}^5+a_{0},\\
\nonumber && \ \ \ \  T_1( a_{1}^3 + a_{1} + a_{0}^7 + a_{0}^5 + a_{0}^3 + a_0) = 0,\\
\nonumber && \ \ \ \  T_2((a_{1} + a_{2})^2 + (a_{1} + a_{2})) +  T_1(a_{2}^3 + a_{2}) + T_1(a_{1}^3 + a_{1}) + T_1(a_{0}^9 + a_0) = 0\}\\
\nonumber &=& \frac{1}{32} \, \#\{ (a_{0},a_{1},a_{2},a_{3},a_{4}) \in (\F_{2^n})^5 \mid a_{1}^2 + a_{1} = a_{0}^3 + a_{0}, \ a_{2}^2 + a_{2} = 
a_{0}^5+a_{0},\\
\label{genus49} && \ \ \ \  a_{3}^2 + a_{3} =  a_{1}^3 + a_{1} + a_{0}^7 + a_{0}^5 + a_{0}^3 + a_0, \ a_{4}^2 + a_{4} = a_{1}^2a_{2} + a_{1}a_{2}^2 + a_{0}^9 
+ a_0\}.
\end{eqnarray}
The genus of the absolutely irreducible curve this intersection describes is $49$, and thus one can not easily brute-force compute 
the characteristic polynomial of Frobenius by computing the number of points over $\F_{2^n}$ for $n \le 49$, as for 
the four coefficient direct method cases. The state-of-the-art $p$-adic point counting
algorithms of Tuitman~\cite{tuitman1,tuitman2} are unfortunately not easily adaptable to such intersections, while the prime $2$ is problematic.
However, by ignoring the fourth trace, letting $\delta_{2,3}(X) = X^2 - 2X +2$ and $\delta_{4,2} = X^4 + 2X^2 + 4$, we have
\begin{eqnarray}
\nonumber F_2(n,0,0,0,*,0) &=& \#\{a \in \F_{2^n} \mid T_1(a) = 0, \ T_2(a) = 0, \ T_3(a) = 0, \ T_5(a) = 0\}\\
\nonumber &=& \frac{1}{2} \, \#\{ a_{0} \in \F_{2^n} \mid T_2(a_{0}^2 + a_{0}) = 0, \ T_3(a_{0}^2 + a_{0}) = 0, \ T_5(a_{0}^2 + a_{0}) = 0\}\\
\nonumber &=& \frac{1}{2} \, \#\{ a_{0} \in \F_{2^n} \mid T_1(a_{0}^3 + a_{0}) = 0, \ T_1(a_{0}^5 + a_{0}) = 0,\\
\nonumber && \ \ \ \  T_2(a_{0}^5 + a_{0}^3) +  T_2(a_{0}^5 + a_{0}) + T_2(a_{0}^3 + a_{0}) + T_1(a_{0}^9 + a_0) = 0\}\\
\nonumber &=& \frac{1}{16} \, \#\{ (a_{0},a_{1},a_{2},a_{3}) \in (\F_{2^n})^4 \mid a_{1}^2 + a_{1} = a_{0}^3 + a_{0}, \ a_{2}^2 + a_{2} = a_{0}^5+a_{0},\\
\nonumber && \ \ \ \ a_{3}^2 + a_{3} = a_{1}^2a_{2} + a_{1}a_{2}^2 + a_{0}^9 + a_0\}\\
\nonumber &=& 2^{n-4} - \frac{1}{16}\big(5\rho_n(\delta_{2,1}) + 2\rho_n(\delta_{2,2}) + 2\rho_n(\delta_{2,3}) + 3\rho_n(\delta_{4,1}) + \rho_n(\delta_{4,2}) + \rho_n(\delta_{8,3})\big),
\end{eqnarray}
where in the final equality we have used Magma to compute the characteristic polynomial of Frobenius of the genus $21$ curve 
described by the intersection. Since neither $\delta_{8,1}$ nor $\delta_{8,2}$ feature in this formula, we deduce that
the set of formulae for $F_2(n,0,0,0,*,0)$ are periodic in $n$ with period $\text{LCM}(8,8,4,24,6,40) = 120$, in similarity with the two and 
three coefficient cases for which the periods are $8$ and $24$ in $n$, respectively.

\subsubsection{Further general formulae.}\label{subsubsec:5coeffgeneral}
While one can not easily compute the zeta function for $F_2(n,0,0,0,0,0)$ using the direct method, one can use the approach 
of~\S\ref{subsubsec:altgeneralformulae} and the alternative parameterisations~(\ref{eq:T4linearalt}) and~(\ref{eq:T5linearalt}) to compute
$F_2(n,0,0,0,t_4,t_5)$ for all $n \ge 5$, as follows.
 
The conditions $T_1(a) = T_2(a) = T_3(a) = 0$ imply that we should set $A = \{(a_0,a_1,a_2) \in (\F_{2^n})^3 \mid a_{0}^3 + a_{0} = a_{1}^2 + a_{1}
\ a_{0}^5 + a_{0} = a_{2}^2 + a_{2}\}$.
Furthermore let $\vec{f} = (T_5(a_{0}^2 + a_{0}), T_4(a_{0}^2 + a_{0})) = (T_1(a_{1}^2 a_2 + a_{1}a_{2}^2 + a_{0}^9 + a_0), T_1(a_{1}^3 + a_1 + a_{0}^7 + a_{0}^5 + a_{0}^3 + a_{0}))$.
Then by the generalisation of the transform~(\ref{eq:Sinverse}) we have
\begin{equation}\label{eq:general5}
{\small
\begin{bmatrix}
F_2(n,0,0,0,0,0)\\
F_2(n,0,0,0,1,0)\\
F_2(n,0,0,0,0,1)\\
F_2(n,0,0,0,1,1)\\
\end{bmatrix}
=
\begin{bmatrix}
N(\vec{0})\\
N(\vec{1})\\
N(\vec{2})\\
N(\vec{3})\\
\end{bmatrix}
=
\frac{1}{16}
\begin{bmatrix*}[r]
-1 & 1 & 1 & 1\\
1 & -1 & 1 & -1\\
1 & 1 & -1 & -1\\
1 & -1 & -1 & 1\\
\end{bmatrix*}
\begin{bmatrix}
V(\vec{0} \cdot \vec{f})\\
V(\vec{1} \cdot \vec{f})\\
V(\vec{2} \cdot \vec{f})\\
V(\vec{3} \cdot \vec{f})\\
\end{bmatrix},}
\end{equation}
where we have a factor of $1/16$ rather than $1/2$ because of the three factors of $1/2$ arising from the introduction of the variables $a_0, a_1$ 
and $a_2$ defining $A$. Note that $V(\vec{0} \cdot \vec{f}) = |A| = 2^n - 2\rho_n(\delta_{2,1}) - \rho_n(\delta_{2,2}) - \rho_n(\delta_{4,1})$.
For $1 \le i \le 3$ let $\vec{i} = (i_1,i_0)$. We thus have:
\begin{eqnarray*}
V(\vec{i} \cdot \vec{f}) &=& \frac{1}{2}\#\{(a_0,a_1,a_2,a_3) \mid a_{0}^3 + a_{0} = a_{1}^2 + a_{1}, \ a_{0}^5 + a_{0} = a_{2}^2 + a_{2}\\ 
&& \ \ \ \ a_{3}^2 + a_{3} = i_1(a_{1}^2 a_2 + a_{1}a_{2}^2 + a_{0}^9 + a_0) + i_0(a_{1}^3 + a_1 + a_{0}^7 + a_{0}^5 + a_{0}^3 + a_{0}) \}.
\end{eqnarray*}
For $i = 1,2$ and $3$ these are absolutely irreducible curves of genus $5,17,21$ and $21$ respectively, and thus their zeta functions 
are far easier to compute than for those curves arising from the direct method. Let $\delta_{8,4}(X) = X^8 + 2X^6 - 4X^5 + 2X^4 - 8X^3 + 8X^2 + 16$.
Combining the $V(\vec{i} \cdot \vec{f})$ as per Eq.~(\ref{eq:general5}) gives the following theorem.

\begin{theorem}
For $n \ge 5$ we have
\begin{eqnarray*}
F_2(n,0,0,0,0,0) &=& 2^{n-5} - \frac{1}{32}\big(7\rho_n(\delta_{2,1}) + 4\rho_n(\delta_{2,2}) + 2\rho_n(\delta_{2,3})+ 5\rho_n(\delta_{4,1}) + 3\rho_n(\delta_{4,2}) + 2\rho_n(\delta_{8,1})\\
&& \ \ \ \ \ \ \ + \rho_n(\delta_{8,2}) + \rho_n(\delta_{8,3}) + \rho_n(\delta_{8,4})\big),\\ 
F_2(n,0,0,0,1,0) &=& 2^{n-5} - \frac{1}{32}\big(3\rho_n(\delta_{2,1}) + 2\rho_n(\delta_{2,3}) + \rho_n(\delta_{4,1}) -\rho_n(\delta_{4,2})
- 2\rho_n(\delta_{8,1}) - \rho_n(\delta_{8,2})\\
&& \ \ \ \ \ \ \  + \rho_n(\delta_{8,3})  - \rho_n(\delta_{8,4}) \big),\\ 
F_2(n,0,0,0,0,1) &=& 2^{n-5} - \frac{1}{32}\big(\rho_n(\delta_{2,1}) + 2\rho_n(\delta_{2,2}) - 2\rho_n(\delta_{2,3}) -3\rho_n(\delta_{4,1}) - 3\rho_n(\delta_{4,2})
+ \rho_n(\delta_{8,2})\\
&& \ \ \ \ \ \ \  - \rho_n(\delta_{8,3})  - \rho_n(\delta_{8,4}) \big),\\ 
F_2(n,0,0,0,1,1) &=& 2^{n-5} - \frac{1}{32}\big(-3\rho_n(\delta_{2,1}) - 2\rho_n(\delta_{2,2}) - 2\rho_n(\delta_{2,3}) + \rho_n(\delta_{4,1}) + \rho_n(\delta_{4,2})
 - \rho_n(\delta_{8,2})\\
&& \ \ \ \ \ \ \  - \rho_n(\delta_{8,3}) + \rho_n(\delta_{8,4}) \big).
\end{eqnarray*}
\end{theorem}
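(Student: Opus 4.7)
The plan is to follow the template laid out in~\S\ref{subsubsec:altgeneralformulae} for the four-coefficient case, using the generalisation of transform~(\ref{eq:Sinverse}) in which the counting-of-zeros domain is not all of $\F_{2^n}$ but an algebraic set $A$ that already encodes the conditions we wish to hold unconditionally. Concretely, I would first parameterise $a = a_0^2+a_0$ (Lemma~\ref{lem:paramq}(1)) so that $T_1(a) = 0$ is automatic, then use Equations~(\ref{eq:T2linear}) and~(\ref{eq:T3linear}) with $r_0=0$ to rewrite $T_2(a)=0$ and $T_3(a)=0$ as $T_1(a_0^3+a_0)=0$ and $T_1(a_0^5+a_0)=0$, and finally linearise these via $a_0^3+a_0 = a_1^2+a_1$ and $a_0^5+a_0 = a_2^2+a_2$. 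The three parameterisations contribute a combined factor of $1/8$ so that, in conjunction with the $1/2$ factor in the generalised transform, one obtains $V(\vec{0}\cdot\vec{f}) = |A|$. By direct point counting, $|A| = 2^n - 2\rho_n(\delta_{2,1}) - \rho_n(\delta_{2,2}) - \rho_n(\delta_{4,1})$, since the projection $A \to \F_{2^n}$ factors through the two genus-$1$ curves $y^2+y=x^3+x$ and $y^2+y=x^5+x$ together with their fibre product.

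Next, using the alternative parameterisations~(\ref{eq:T4linearalt}) of $T_4(a_0^2+a_0)$ and~(\ref{eq:T5linearalt}) of $T_5(a_0^2+a_0)$ (both specialised to $r_0=0$ and with the substitutions $a_0^3+a_0 = a_1^2+a_1$, $a_0^5+a_0 = a_2^2+a_2$, $a_0^5+a_0^3 = (a_1+a_2)^2+(a_1+a_2)$ already in force on $A$), the functions to be evaluated reduce to
\[
f_0 = T_1\bigl(a_1^3+a_1 + a_0^7+a_0^5+a_0^3+a_0\bigr), \quad f_1 = T_1\bigl(a_1^2 a_2 + a_1 a_2^2 + a_0^9 + a_0\bigr).
\]
For $\vec{i} = (i_1,i_0) \in (\F_2)^2 \setminus \{\vec{0}\}$, Lemma~\ref{lem:paramq}(1) then gives
\[
V(\vec{i}\cdot\vec{f}) = \tfrac{1}{2}\,\#C_{\vec{i}}(\F_{2^n}),
\]
where $C_{\vec{i}}$ is the affine curve cut out on $A$ by the additional Artin--Schreier equation $a_3^2+a_3 = i_1(a_1^2 a_2 + a_1 a_2^2 + a_0^9 + a_0) + i_0(a_1^3+a_1 + a_0^7+a_0^5+a_0^3+a_0)$. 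These three curves are absolutely irreducible of respective genera $5$, $17$ and $21$ (after checking irreducibility), small enough that Magma computes their zeta functions by point-counting over small extensions and factorises the numerators over $\Z[X]$. Feeding $V(\vec{0}\cdot\vec{f}),\ldots,V(\vec{3}\cdot\vec{f})$ into the $4\times 4$ matrix in~(\ref{eq:general5}) then yields the four stated formulae after collecting the contributions.

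The main obstacle is ensuring correctness of the combinatorial bookkeeping for \emph{all} $n \ge 5$, not just $n$ odd. Because we do not restrict to odd $n$, the characteristic polynomials of Frobenius of the three curves $C_{\vec{i}}$ can contain factors that are even polynomials in $X$; for odd $n$, $\rho_n$ of any even polynomial vanishes and so such factors were ignored in Theorems~\ref{thm:4traces} and~\ref{thm:5traces}, but here they could in principle contribute. The verification that must be carried out is therefore twofold: first, to read off the factorisations and identify precisely the nine irreducible polynomials $\delta_{2,1},\delta_{2,2},\delta_{2,3},\delta_{4,1},\delta_{4,2},\delta_{8,1},\delta_{8,2},\delta_{8,3}$ and the new polynomial $\delta_{8,4} = X^8 + 2X^6 - 4X^5 + 2X^4 - 8X^3 + 8X^2 + 16$ appearing in the numerators (none of which, by Theorem~\ref{thm:SS}, are supersingular beyond the $\delta_{2,i},\delta_{4,1}$ and appropriate $\delta_{8,*}$); and second, to check that when the inverse transform in~(\ref{eq:general5}) is applied, every contribution from an even-polynomial factor cancels across the four outputs $F_2(n,0,0,0,t_4,t_5)$, leaving only the odd polynomial contributions listed in the statement. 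This cancellation is in fact forced: since $F_2(n,0,0,0,t_4,t_5)$ sums over the four outputs give $F_2(n,0,0,0) = F_2(n,0,0)/2$ and the $F_2(n,t_1,t_2,t_3)$ formulae of Theorem~\ref{thm:3traces} contain no even polynomials, any even contribution must cancel in the sum; together with symmetry considerations across the $2\times 2$ block structure of~(\ref{eq:general5}) this forces termwise cancellation, which one then verifies directly from the Magma output to obtain the stated closed forms.
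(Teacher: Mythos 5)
Your first two paragraphs correctly recapitulate the paper's own route through \S\ref{subsubsec:5coeffgeneral}: take $A = \{(a_0,a_1,a_2) \in (\F_{2^n})^3 \mid a_0^3+a_0 = a_1^2+a_1,\ a_0^5+a_0 = a_2^2+a_2\}$, let $\vec{f}$ be the pair of linearised expressions for $T_4(a_0^2+a_0)$ and $T_5(a_0^2+a_0)$, apply the generalised transform~(\ref{eq:general5}), and feed in the zeta functions of $A$ and of the three Artin--Schreier covers $C_{\vec{i}}$. Modulo minor imprecisions (the curve $y^2+y = x^5+x$ has genus $2$, not $1$, and the Jacobian decomposition of the fibre product that yields $|A|$ involves three quotient curves with characteristic polynomials $\delta_{2,1}$, $\delta_{2,1}\delta_{2,2}$ and $\delta_{4,1}$ respectively, not two), this is the argument in the paper.

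Your third paragraph, however, is wrong in a way that would change the answer if acted on. You argue that every even-polynomial factor of the $L$-polynomials of the $C_{\vec{i}}$ must cancel once transform~(\ref{eq:general5}) is applied, ``leaving only the odd polynomial contributions listed in the statement.'' But the stated formulae themselves contain the even polynomials $\delta_{2,2} = X^2+2$ and $\delta_{4,2} = X^4+2X^2+4$: in $F_2(n,0,0,0,0,0)$ they appear with coefficients $4$ and $3$. No cancellation is expected, because the formulae are asserted for \emph{all} $n\ge 5$, and $\rho_n$ of an even polynomial does not vanish for even $n$. The remark about discarding even polynomials in the paper applies only to Theorems~\ref{thm:4traces} and~\ref{thm:5traces}, which are stated for $n$ odd. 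Your supporting consistency check also rests on the false identity $F_2(n,0,0,0) = F_2(n,0,0)/2$; the actual all-$n$ expression is $F_2(n,0,0,0) = \frac{1}{8}\big(2^n - 2\rho_n(\delta_{2,1}) - \rho_n(\delta_{2,2}) - \rho_n(\delta_{4,1})\big)$, which already contains the even polynomial $\delta_{2,2}$, while Theorem~\ref{thm:3traces} (which has no even terms) covers only odd $n$. The correct and simpler argument is the paper's: compute the complete factorisations of the numerators, keep the even factors, and apply the transform; nothing cancels and nothing should.
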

Note that the formula for $F_2(n,0,0,0,0,0)$ arises from the curve~(\ref{genus49}) of genus $49$. This approach therefore provides a much more efficient way to compute the zeta function than the direct method does.
Further note that the terms involving $\delta_{8,1}$, $\delta_{8,2}$ and $\delta_{8,4}$ all cancel in 
$F_2(n,0,0,0,0,0) + F_2(n,0,0,0,1,0) = F_2(0,0,0,*,0)$, as well as in 
$F_2(n,0,0,0,0,1) + F_2(n,0,0,0,1,1) = F_2(0,0,0,*,1)$, which has period $120$ in $n$ as well.

\subsection{Computing $F_2(n,t_1,t_2,t_3,t_4,t_5,t_6)$}

In this subsection and the next we shall only detail the indirect method, since the curves produced have genera which are already very large,
making the brute force computation of the characteristic polynomials of Frobenius prohibitive, while the direct method produce curves of even 
larger genus. 

Let $\vec{f} = (T_6,T_5,T_4,T_3,T_2,T_1)$. To determine $V( \vec{i} \cdot \vec{f})$ for $32 \le i \le 63$, we use Lemma~\ref{lem:T} part (6).
In particular, setting $\alpha = a_{0}^2$ and $\beta = a_{0}$ for $r_0 = 0$, and $\alpha = a_{0}^2 + a_0$ and $\beta = 1$ for $r_0 = 1$, 
and evaluating mod $2$ gives the following:
\begin{eqnarray*}
T_6(a_{0}^2 + a_{0} + r_0) &=& T_3(a_0) + T_2(a_{0}^3)T_1(a_{0}^3) + T_2(a_{0}^3)T_1(a_0) + T_2(a_0)T_1(a_{0}^3) + T_2(a_{0}^5) + T_2(a_0)\\
&+& T_1(a_{0}^7)T_1(a_{0}^3) + T_1(a_{0}^7)T_1(a_{0}) + T_1(a_{0}^5)T_1(a_{0}^3) + T_1(a_{0}^3)T_1(a_{0})\\
&+& r_0\big(T_2(a_{0}^3) + T_2(a_0) + T_1(a_{0}^3)T_1(a_0) + T_1(a_{0}^7)\big)\binom{n}{2}\\
&+& T_1\Big(a_{0}^{11} + a_{0}^7 + r_0(a_{0}^3 + a_0) + r_0(a_{0}^5 + a_0)\binom{n}{3} + r_0(a_{0}^3 + a_0)\binom{n}{4} + r_0\binom{n}{6}\Big).
\end{eqnarray*}
This can be reduced to a $T_1$ expression using the substitutions $a_0 = a_{1}^2 + a_1 + r_1$, $a_{0}^3 = a_{2}^2 + a_2 + r_2$ and
$a_{0}^5 = a_{3}^2 + a_3 + r_3$, where $r_1,r_2,r_3 \in \F_2$ are the traces of $a_0,a_{0}^3$ and $a_{0}^5$ respectively. 
This results in 
\begin{eqnarray*}
T_6(a_{0}^2 + a_0 + r_0) &=& T_1\Big( a_{3}^3 + a_3 + (r_1 + r_2)(a_{2}^3 + a_2 + a_{1}^3 + a_1 + a_{0}^7) + a_{1}^5 + a_{1}^3 + a_{0}^{11} + a_{0}^7
+ r_0r_1 + r_0r_2 \\
&+& r_1r_2 + r_2r_3 + \big(r_0(a_{2}^3 + a_2 + a_{1}^3 + a_1 + a_{0}^7 + r_1 + r_2 + r_1r_2 +1) +r_1 + r_2 + r_3\big)\binom{n}{2}\\
&+& (r_0r_1 + r_0r_3 + r_1)\binom{n}{3} + r_0(r_1 + r_2)\binom{n}{4} + r_0\binom{n}{6}\Big).
\end{eqnarray*}
For $32 \le i \le 63$ let $\vec{i} = (i_5,i_4,i_3,i_2,i_1,i_0)$. The curves we are interested in are given by the following intersections:
\begin{eqnarray}
\nonumber a_{4}^2 + a_{4} &=& i_5\Big( a_{3}^3 + a_3 + (r_1 + r_2)(a_{2}^3 + a_2 + a_{1}^3 + a_1 + a_{0}^7) + a_{1}^5 + a_{1}^3 + a_{0}^{11} + a_{0}^7
+ r_0r_1 + r_0r_2 \\
\nonumber &+& r_1r_2 + r_2r_3 + \big(r_0(a_{2}^3 + a_2 + a_{1}^3 + a_1 + a_{0}^7 + r_1 + r_2 + r_1r_2 +1) +r_1 + r_2 + r_3\big)\binom{n}{2}\\
\nonumber &+& (r_0r_1 + r_0r_3 + r_1)\binom{n}{3} + r_0(r_1 + r_2)\binom{n}{4} + r_0\binom{n}{6}\Big)\\
\nonumber &+& i_4\Big(r_0(a_{2}^3 + a_{2} + a_{1}^3 + a_{1}) + a_{0}^9 + r_0a_{0}^7 + (r_1 + r_2)a_{0}^5 + r_1 + r_2 + r_1r_2 + r_0r_1r_2\\
\nonumber &+& (r_0a_{0}^5 + r_0r_2)\binom{n}{2} + (r_0r_1 + r_0r_2)\binom{n}{3} + r_0\binom{n}{5}\Big)\\
\nonumber &+& i_3\Big(a_{2}^3 + a_2 + a_{1}^3 + a_1 + a_{0}^7 + a_{0}^5 + (r_0 + 1)(r_1 + r_2)\binom{n}{2} + r_0r_1 + r_0r_2 + r_1r_2 + r_2 + r_0\binom{n}{4}\Big)\\
\nonumber &+& i_2\Big(a_{0}^5 + a_{0} + r_0\Big(a_{0}^3 + a_0 + \binom{n}{3} \Big)\Big) + i_1\Big(a_{0}^3 + a_{0} + r_0\binom{n}{2}\Big) + i_0r_0,\\
\nonumber a_0 &=& a_{1}^2 + a_1 + r_1,\\
\label{eq:6coefficients} a_{0}^3 &=& a_{2}^2 + a_2 + r_2.\\
\nonumber a_{0}^5 &=& a_{3}^2 + a_3 + r_3.
\end{eqnarray}
Corollary~\ref{cor:period} implies that mod $2$ one has
\begin{eqnarray*}
\Big(\binom{n}{6},\binom{n}{5},\binom{n}{4},\binom{n}{3},\binom{n}{2},\binom{n}{1}\Big) \equiv \begin{cases}
\begin{array}{lr}
(0,0,0,0,0,1) & \ \text{if} \ n \equiv 1 \pmod{8}\\
(0,0,0,1,1,1) & \ \text{if} \ n \equiv 3 \pmod{8}\\
(0,1,1,0,0,1) & \ \text{if} \ n \equiv 5 \pmod{8}\\
(1,1,1,1,1,1) & \ \text{if} \ n \equiv 7 \pmod{8}\\
\end{array},
\end{cases}
\end{eqnarray*}
and hence there are four cases to consider when computing the zeta functions of each of the curves~(\ref{eq:6coefficients}).

For $i_5 = 1$ the genus of all of the above curves is $50$\footnote[2]{See~\url{6CoefficientsGenus.m}.} and
therefore the brute-force computation of their zeta functions is non-trivial.
A curve-specific analysis may yield the zeta functions more efficiently, 
but since our algorithm is arguably more interesting than the explicit formulae, we leave this as an open problem.

\subsection{Computing $F_2(n,t_1,t_2,t_3,t_4,t_5,t_6,t_7)$}\label{subsec:F7}
Let $\vec{f} = (T_7,T_6,T_5,T_4,T_3,T_2,T_1)$. To determine $V( \vec{i} \cdot \vec{f})$ for $64 \le i \le 127$, we use Lemma~\ref{lem:T} part (7).
In particular, setting $\alpha = a_{0}^2$ and $\beta = a_{0}$ for $r_0 = 0$, and $\alpha = a_{0}^2 + a_0$ and $\beta = 1$ for $r_0 = 1$, 
and evaluating mod $2$ gives the following:
\begin{eqnarray*}
T_7(a_{0}^2 + a_{0} + r_0) &=& r_0T_3(a_{0}) + T_1(a_{0}^5)T_1(a_{0}^3)T_1(a_{0}) + r_0(T_2(a_{0}^3)T_1(a_{0}^3) + T_2(a_{0}^3)T_1(a_{0}) + T_2(a_{0})T_1(a_{0}^3))\\
&+& T_2(a_{0}^3)T_1(a_{0}^5) + T_2(a_{0}^3)T_1(a_{0}) + T_2(a_{0})T_1(a_{0}^5) + T_2(a_{0})T_1(a_{0}) + r_0(T_2(a_{0})\\
&+& T_2(a_{0}^3))\binom{n}{3} + r_0T_2(a_{0})+ r_0T_2(a_{0}^5) + T_1(a_{0}^9)T_1(a_{0}^3) + T_1(a_{0}^9)T_1(a_{0}) + T_1(a_{0}^7)T_1(a_{0}^5)\\
&+& r_0T_1(a_{0}^7)T_1(a_{0}^3) + (r_0 + 1)T_1(a_{0}^7)T_1(a_{0}) + \Big(r_0 + 1 + r_0\binom{n}{2}\Big)T_1(a_{0}^5)T_1(a_{0}^3)\\
&+& \Big(1 + r_0\binom{n}{2}\Big)T_1(a_{0}^5)T_1(a_{0}) + \Big(r_0 + 1 + r_0\binom{n}{2} + r_0\binom{n}{3}\Big)T_1(a_{0}^3)T_1(a_{0})\\
&+& T_1\Big(a_{0}^{13} + (r_0+1)a_{0}^{11} + a_{0}^9 + r_0(a_{0}^7 + a_{0}^5 + a_{0}^3) + a_{0} + r_0(a_{0}^9 + a_{0}^3 + a_{0})\binom{n}{2}\\
&+& r_0a_{0}^7\binom{n}{3} + r_0(a_{0}^5 + a_{0})\binom{n}{4} + r_0(a_{0}^3 + a_0)\binom{n}{5} + r_0\binom{n}{7}\Big).
\end{eqnarray*}
As in the six coefficient case, this can be reduced to a $T_1$ expression using the substitutions $a_0 = a_{1}^2 + a_1 + r_1$, 
$a_{0}^3 = a_{2}^2 + a_2 + r_2$ and $a_{0}^5 = a_{3}^2 + a_3 + r_3$, where $r_1,r_2,r_3 \in \F_2$ are the traces of $a_0,a_{0}^3$ 
and $a_{0}^5$ respectively. This results in 
\begin{eqnarray*}
T_7(a_{0}^2 + a_0 + r_0) &=& T_1\Big(r_0(a_{3}^3 + a_{3}) + (r_0r_1 + r_0r_2 + r_1 + r_3 + r_0\binom{n}{3})(a_{2}^3 + a_2) + r_0(a_{1}^5 + a_{1}^3)\\
&+& (r_0r_1 + r_0r_2 + r_1 + r_3 + r_0\binom{n}{3})(a_{1}^3 + a_1) + a_{0}^{13} + (r_0 + 1)a_{0}^{11}\\
&+& \Big(r_1 + r_2 + 1 + r_0\binom{n}{2}\Big)a_{0}^9 + \Big(r_0 + r_1 + r_3 + r_0r_1 + r_0r_2 + r_0\binom{n}{3}  \Big)a_{0}^7\\
&+& r_1 + r_0r_2 + r_0r_3 + r_1r_2 + r_1r_3 + r_2r_3 + r_0r_1r_2 + r_0r_2r_3 + r_1r_2r_3\\
&+& \Big(r_1 + r_0r_3 + r_1r_2 + r_1r_3 + r_2r_3 + r_0r_1r_2 + r_0r_1r_3 + r_0r_2r_3\Big)\binom{n}{2}\\
&+& \Big( r_0r_1 + r_0r_1r_2 \Big)\binom{n}{3} + \Big(r_0r_1 + r_0r_2 \Big)\binom{n}{2}\binom{n}{3} + \Big( r_0r_1 + r_0r_3\Big)\binom{n}{4}\\
&+& \Big( r_0r_1 + r_0r_2 \Big)\binom{n}{5} + r_0\binom{n}{7}\Big).
\end{eqnarray*}
For $64 \le i \le 127$ let $\vec{i} = (i_6,i_5,i_4,i_3,i_2,i_1,i_0)$. 
Applying Transform 3, the curves we are interested in are given by the following intersections:
\begin{eqnarray*}
a_{4}^2 + a_{4} &=& 
i_6\Big(r_0(a_{3}^3 + a_{3}) + (r_0r_1 + r_0r_2 + r_1 + r_3 + r_0\binom{n}{3})(a_{2}^3 + a_2) + r_0(a_{1}^5 + a_{1}^3)\\
&+& (r_0r_1 + r_0r_2 + r_1 + r_3 + r_0\binom{n}{3})(a_{1}^3 + a_1) + a_{0}^{13} + (r_0 + 1)a_{0}^{11}\\
&+& \Big(r_1 + r_2 + 1 + r_0\binom{n}{2}\Big)a_{0}^9 + \Big(r_0 + r_1 + r_3 + r_0r_1 + r_0r_2 + r_0\binom{n}{3}  \Big)a_{0}^7\\
&+& r_1 + r_0r_2 + r_0r_3 + r_1r_2 + r_1r_3 + r_2r_3 + r_0r_1r_2 + r_0r_2r_3 + r_1r_2r_3\\
&+& \Big(r_1 + r_0r_3 + r_1r_2 + r_1r_3 + r_2r_3 + r_0r_1r_2 + r_0r_1r_3 + r_0r_2r_3\Big)\binom{n}{2}\\
&+& \Big( r_0r_1 + r_0r_1r_2 \Big)\binom{n}{3} + \Big(r_0r_1 + r_0r_2 \Big)\binom{n}{2}\binom{n}{3} + \Big( r_0r_1 + r_0r_3\Big)\binom{n}{4}\\
\end{eqnarray*}
\begin{eqnarray}
\nonumber &+& \Big( r_0r_1 + r_0r_2 \Big)\binom{n}{5} + r_0\binom{n}{7}\Big)\\
\nonumber &+& i_5\Big( a_{3}^3 + a_3 + (r_1 + r_2)(a_{2}^3 + a_2 + a_{1}^3 + a_1 + a_{0}^7) + a_{1}^5 + a_{1}^3 + a_{0}^{11} + a_{0}^7
+ r_0r_1 + r_0r_2 \\
\nonumber &+& r_1r_2 + r_2r_3 + \big(r_0(a_{2}^3 + a_2 + a_{1}^3 + a_1 + a_{0}^7 + r_1 + r_2 + r_1r_2 +1) +r_1 + r_2 + r_3\big)\binom{n}{2}\\
\nonumber &+& (r_0r_1 + r_0r_3 + r_1)\binom{n}{3} + r_0(r_1 + r_2)\binom{n}{4} + r_0\binom{n}{6}\Big)\\
\nonumber &+& i_4\Big(r_0(a_{2}^3 + a_{2} + a_{1}^3 + a_{1}) + a_{0}^9 + r_0a_{0}^7 + (r_1 + r_2)a_{0}^5 + r_1 + r_2 + r_1r_2 + r_0r_1r_2\\
\nonumber &+& (r_0a_{0}^5 + r_0r_2)\binom{n}{2} + (r_0r_1 + r_0r_2)\binom{n}{3} + r_0\binom{n}{5}\Big)\\
\nonumber &+& i_3\Big(a_{2}^3 + a_2 + a_{1}^3 + a_1 + a_{0}^7 + a_{0}^5 + (r_0 + 1)(r_1 + r_2)\binom{n}{2} + r_0r_1 + r_0r_2 + r_1r_2 + r_2 + r_0\binom{n}{4}\Big)\\
\nonumber &+& i_2\Big(a_{0}^5 + a_{0} + r_0\Big(a_{0}^3 + a_0 + \binom{n}{3} \Big)\Big) + i_1\Big(a_{0}^3 + a_{0} + r_0\binom{n}{2}\Big) + i_0r_0,\\
\nonumber a_0 &=& a_{1}^2 + a_1 + r_1,\\
\label{eq:7coefficients} a_{0}^3 &=& a_{2}^2 + a_2 + r_2.\\
\nonumber a_{0}^5 &=& a_{3}^2 + a_3 + r_3.
\end{eqnarray}
Corollary~\ref{cor:period} implies that mod $2$ one has
\begin{eqnarray*}
\Big(\binom{n}{7},\binom{n}{6},\binom{n}{5},\binom{n}{4},\binom{n}{3},\binom{n}{2},\binom{n}{1}\Big) \equiv \begin{cases}
\begin{array}{lr}
(0,0,0,0,0,0,1) & \ \text{if} \ n \equiv 1 \pmod{8}\\
(0,0,0,0,1,1,1) & \ \text{if} \ n \equiv 3 \pmod{8}\\
(0,0,1,1,0,0,1) & \ \text{if} \ n \equiv 5 \pmod{8}\\
(1,1,1,1,1,1,1) & \ \text{if} \ n \equiv 7 \pmod{8}\\
\end{array},
\end{cases}
\end{eqnarray*}
and hence there are four cases to consider when computing the zeta functions of each of the curves~(\ref{eq:7coefficients}).

For $i_6 = 1$, the genus of each of the above curves is $58$\footnote[2]{See~\url{7CoefficientsGenus.m}.}.
Therefore, we again leave it as an open problem to determine their zeta functions.



\subsection{Reducing the prescribed coefficients problem to the prescribed traces problem}\label{sec:transformPCPPTP}

In this section we give formulae for all $I_2(n,t_1,t_2,t_3,t_4)$ in terms of the $F_2(n,t_1,t_2,t_3,t_4)$.
In the work~\cite{Koma}, Koma and Panario gave explicit formulae for the transform between $I_2(n,t_1,\ldots,t_l)$ and $F_2(n,t_1,\ldots,t_l)$ 
for $l = 4$ and $l = 5$; the extension to the $l = 6$ and $l = 7$ cases was also explained and a sketch of how to compute the 
transform for arbitrary $l \ge 8$ was provided.
However, there is a small error in the application of the multinomial theorem which unfortunately invalidates several lemmas, propositions and 
some of the formulae. Since in this work we have provided formulae for $F_2(n,t_1,t_2,t_3,t_4)$ we now provide the correct formulae for
$I_2(n,t_1,t_2,t_3,t_4)$.
The following lemma is simply a subcase of Lemma~\ref{lemma:multinomial}.

\begin{lemma}\label{lemma:multinomial2}
For each integer $d \ge 1$ and $f(x) \in \F_2[x]$,
\begin{enumerate}
\item $T_1(f^d) = d T_1(f)$
\item $T_2(f^d) = \binom{d}{2} T_1(f) + d T_2(f)$
\item $T_3(f^d) = \binom{d}{3} T_1(f) + d T_3(f)$
\item $T_4(f^d) = \binom{d}{4} T_1(f) + \binom{d}{2} T_2(f) + dT_4(f) + (d-2)\binom{d}{2} T_1(f)T_2(f)$
\end{enumerate}
\end{lemma}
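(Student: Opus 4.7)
The plan is to invoke Lemma~\ref{lemma:multinomial} (the multinomial theorem) for $k = 1, 2, 3, 4$ in turn, enumerate the compositions $(\nu_1,\ldots,\nu_k)$ with $\nu_1 + 2\nu_2 + \cdots + k\nu_k = k$, compute the associated multinomial coefficient $\binom{d}{\nu_1,\ldots,\nu_k,d-\sum \nu_i}$ for each, and then reduce modulo $2$ using two very simple facts: (i) any $T_j(f) \in \F_2$ so $T_j(f)^m = T_j(f)$ for every $m \ge 1$, and (ii) $d(d-1) \equiv 0 \pmod 2$ since it is the product of two consecutive integers.

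First I would handle $k=1$ and $k=2$. For $k=1$ the unique composition is $(1)$, giving $T_1(f^d) = d\,T_1(f)$. For $k=2$ the two compositions are $(2,0)$ and $(0,1)$, yielding $T_2(f^d) = \binom{d}{2} T_1(f)^2 + d\, T_2(f) = \binom{d}{2} T_1(f) + d\, T_2(f)$ after applying fact (i).

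Next I would handle $k=3$: the compositions are $(3,0,0),(1,1,0),(0,0,1)$, giving coefficients $\binom{d}{3}, d(d-1), d$ and monomials $T_1(f)^3, T_1(f)T_2(f), T_3(f)$. Reducing via facts (i) and (ii), the middle term drops out and one is left with $\binom{d}{3} T_1(f) + d\, T_3(f)$. For $k=4$ the compositions are $(4,0,0,0),(2,1,0,0),(0,2,0,0),(1,0,1,0),(0,0,0,1)$ with multinomial coefficients $\binom{d}{4},\,\binom{d}{2}(d-2),\,\binom{d}{2},\,d(d-1),\,d$ respectively; applying (i) collapses $T_1(f)^4, T_1(f)^2 T_2(f), T_2(f)^2$ to $T_1(f), T_1(f)T_2(f), T_2(f)$, and applying (ii) kills the $T_1(f)T_3(f)$ term, yielding exactly the claimed formula.

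There is essentially no obstacle here — the proof is a direct bookkeeping exercise from Lemma~\ref{lemma:multinomial}. The only thing worth flagging is that the $T_1(f)T_3(f)$ contribution in (4), which would morally belong to the formula, is invisible because its coefficient $d(d-1)$ vanishes mod $2$; this is the kind of cancellation whose oversight appears to underlie the error in~\cite{Koma} that motivates restating these transforms here.
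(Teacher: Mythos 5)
Your proof is correct and follows exactly the route the paper intends: the paper states the lemma ``is simply a subcase of Lemma~\ref{lemma:multinomial}'' without giving details, and your explicit enumeration of the compositions, multinomial coefficients, and reductions modulo $2$ via $T_j(f)^m = T_j(f)$ and $2 \mid d(d-1)$ is precisely that calculation. One small correction to your closing remark: the error in~\cite{Koma} is not an overlooked cancellation like the vanishing $T_1(f)T_3(f)$ term, but rather the omission of the surviving cross-term $(d-2)\binom{d}{2}T_1(f)T_2(f)$ --- \cite{Koma} posits a formula $T_l(f^d) = \sum_{k \mid l} \binom{d}{k} T_{l/k}(f)$ with no products of trace functions at all, so the mistake is in the opposite direction from the one you hypothesise.
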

It is the final term of Lemma~\ref{lemma:multinomial2} part (4) that is missing from~\cite[Prop. 2.1]{Koma}, which has coefficient $1$ for $d \equiv 3 \pmod{4}$.

Recall from Corollary~\ref{cor:period} that the featured binomial coefficients have maximal period $8$ and hence we need to consider the residue
classes of $d$ mod $8$. 
For brevity we write $d \equiv a \, (8)$ to denote $d \equiv a \pmod{8}$ and similarly write  $d \equiv a \, (4)$ to denote $d \equiv a \pmod{4}$.
As in~\cite{cattell} let ${\bf Irr}(n)$ denote the set of all irreducible polynomials of degree $n$ over $\F_2$, 
and let $a \cdot {\bf Irr}(n)$ denote the multiset consisting of $a$ copies of ${\bf Irr}(n)$. 
Following~\cite{cattell} and~\cite{yucasmullen}, applying Lemma~\ref{lemma:multinomial2} gives the following result.

\begin{lemma}\label{lemma:4coefftransform0}
For $n \ge 4$ we have
\begin{eqnarray}
\nonumber F_2(n,t_1,t_2,t_3,t_4) &=& \Big\vert \bigcup_{\beta \in \F_{2^n}, T_1(\beta) = t_1, T_2(\beta) = t_2, T_3(\beta) = t_3, T_4(\beta) = t_4} Min(\beta) \Big\vert\\
\nonumber &=& \Big\vert \bigcup_{d \mid n} \frac{n}{d} \big\{ f \in {\bf Irr}(\frac{n}{d}):  dT_1(f) = t_1, \binom{d}{2}T_1(f) + d T_2(f) = t_2, 
\binom{d}{3}T_1(f) + d T_3(f) = t_3\\
\nonumber &&  \binom{d}{4} T_1(f) + \binom{d}{2} T_2(f) + dT_4(f) + (d-2)\binom{d}{2} T_1(f)T_2(f) = t_4 \big\} \Big\vert \\                                 
\nonumber                    &=& \Big\vert \bigcup_{d \mid n, \ d \equiv 0 \, (8)} \frac{n}{d} \big\{ f \in {\bf Irr}(\frac{n}{d}):  
                                 0 = t_1, 0 = t_2, 0 = t_3, 0 = t_4 \big\} \Big\vert \\
\nonumber        	            &+& \Big\vert \bigcup_{d \mid n, \ d \equiv 1 \, (8)} \frac{n}{d} \big\{ f \in {\bf Irr}(\frac{n}{d}):  
                                 T_1(f) = t_1, T_2(f) = t_2, T_3(f) = t_3, T_4(f) = t_4 \big\} \Big\vert \\ 
\nonumber        	            &+& \Big\vert \bigcup_{d \mid n, \ d \equiv 2 \, (8)} \frac{n}{d} \big\{ f \in {\bf Irr}(\frac{n}{d}):  
                                 0 = t_1, T_1(f) = t_2, 0 = t_3, T_2(f) = t_4 \big\} \Big\vert \\
\nonumber        	            &+& \Big\vert \bigcup_{d \mid n, \ d \equiv 3 \, (8)} \frac{n}{d} \big\{ f \in {\bf Irr}(\frac{n}{d}):  
                                 T_1(f) = t_1, T_1(f) + T_2(f) = t_2, T_1(f) + T_3(f) = t_3,\\
\nonumber &&                    T_2(f) + T_4(f) + T_1(f)T_2(f) = t_4 \big\} \Big\vert \\
\nonumber        	            &+& \Big\vert \bigcup_{d \mid n, \ d \equiv 4 \, (8)} \frac{n}{d} \big\{ f \in {\bf Irr}(\frac{n}{d}):  
                                 0 = t_1, 0 = t_2, 0 = t_3, T_1(f) = t_4 \big\} \Big\vert \\ 
\nonumber        	            &+& \Big\vert \bigcup_{d \mid n, \ d \equiv 5 \, (8)} \frac{n}{d} \big\{ f \in {\bf Irr}(\frac{n}{d}):  
                                 T_1(f) = t_1, T_2(f) = t_2, T_3(f) = t_3, T_1(f) + T_4(f) = t_4 \big\} \Big\vert \\ 
\nonumber        	            &+& \Big\vert \bigcup_{d \mid n, \ d \equiv 6 \, (8)} \frac{n}{d} \big\{ f \in {\bf Irr}(\frac{n}{d}):  
                                 0 = t_1, T_1(f) = t_2, 0 = t_3, T_1(f) + T_2(f) = t_4 \big\} \Big\vert \\                                  
\nonumber        	            &+& \Big\vert \bigcup_{d \mid n, \ d \equiv 7 \, (8)} \frac{n}{d} \big\{ f \in {\bf Irr}(\frac{n}{d}):  
                                 T_1(f) = t_1, T_1(f) + T_2(f) = t_2, T_1(f) + T_3(f) = t_3,\\
\nonumber &&                    T_1(f) + T_2(f) + T_4(f) + T_1(f)T_2(f) = t_4 \big\} \Big\vert.                 
\end{eqnarray}
\end{lemma}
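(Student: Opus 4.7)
\medskip
\noindent\textbf{Proof plan for Lemma~\ref{lemma:4coefftransform0}.}
The plan is to partition $\F_{2^n}$ by the degree of the minimal polynomial of each element, rewrite the traces of an element as traces of a power of its minimal polynomial, and then use Lemma~\ref{lemma:multinomial2} to translate the conditions $T_k(\beta)=t_k$ into conditions on the $T_j(f)$, splitting into eight cases according to the residue of $d$ modulo $8$.

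First I would observe that every $\beta\in\F_{2^n}$ has a unique minimal polynomial $f=\mathrm{Min}(\beta)\in\F_2[x]$ whose degree divides $n$. Writing $\deg f = n/d$ for some $d\mid n$, the characteristic polynomial of $\beta$ with respect to $\F_{2^n}/\F_2$ is $f^d$, and the Galois conjugates of $\beta$ (i.e.\ the $n/d$ roots of $f$ in $\F_{2^n}$) all produce the same minimal polynomial and the same trace values $T_1,\ldots,T_4$. Consequently, as $\beta$ ranges over $\F_{2^n}$, each irreducible $f$ of degree $n/d$ is produced exactly $n/d$ times, which accounts for the factor $n/d$ and the multiset notation $\frac{n}{d}\{f\in\mathbf{Irr}(n/d):\cdots\}$ in the statement. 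This gives the first equality.

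Next, since $T_k(\beta)$ equals the coefficient of $x^{n-k}$ in $f^d$ (in characteristic two, the signs in the definition of the $T_k$'s are irrelevant), the conditions $T_1(\beta)=t_1$, \ldots, $T_4(\beta)=t_4$ become $T_k(f^d)=t_k$ for $1\le k\le 4$. Applying Lemma~\ref{lemma:multinomial2} converts these into the four equations
\[
dT_1(f)=t_1,\quad \tbinom{d}{2}T_1(f)+dT_2(f)=t_2,\quad \tbinom{d}{3}T_1(f)+dT_3(f)=t_3,
\]
and $\tbinom{d}{4}T_1(f)+\tbinom{d}{2}T_2(f)+dT_4(f)+(d-2)\tbinom{d}{2}T_1(f)T_2(f)=t_4$, yielding the second equality.

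For the third equality I would case-split on $d\bmod 8$. By Corollary~\ref{cor:period}, the vector $\bigl(\binom{d}{4},\binom{d}{3},\binom{d}{2},d\bigr)\bmod 2$ has period $8$ in $d$, so each residue $d\equiv a\pmod 8$ determines fixed values for the coefficients appearing in the four trace equations above; the factor $(d-2)\binom{d}{2}\bmod 2$ is likewise periodic and needs to be computed once per residue class. For each $a\in\{0,1,\ldots,7\}$ I would substitute these binomial values into the four equations and simplify, for instance obtaining the trivial conditions $0=t_k$ when $d\equiv 0\pmod 8$, the identity conditions $T_k(f)=t_k$ when $d\equiv 1\pmod 8$, and so on down the list. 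The only place where care is required is the $t_4$ equation, since it contains the nonlinear term $(d-2)\binom{d}{2}T_1(f)T_2(f)$; checking $(d-2)\binom{d}{2}\bmod 2$ residue by residue shows that this term survives precisely when $d\equiv 3,7\pmod 8$, which is consistent with the statement and is exactly the point where~\cite{Koma} went astray. Combining the eight cases gives the claimed partition of the union. The main obstacle is purely bookkeeping: ensuring that the case-by-case reductions of the four simultaneous conditions are carried out consistently, especially the quadratic term $T_1(f)T_2(f)$, and that the multiplicities $n/d$ attached to each ${\bf Irr}(n/d)$ are preserved throughout.
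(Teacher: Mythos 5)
Your proposal is correct and follows essentially the same approach the paper takes: partition $\F_{2^n}$ by minimal-polynomial degree (accounting for the $n/d$ multiplicity), rewrite $T_k(\beta)$ as $T_k(f^d)$, apply Lemma~\ref{lemma:multinomial2}, and split into residue classes of $d\bmod 8$ as dictated by Corollary~\ref{cor:period}. Your residue-by-residue check of the coefficients, including the key observation that $(d-2)\binom{d}{2}\equiv 1\pmod 2$ exactly when $d\equiv 3,7\pmod 8$, matches the paper's intended (but abbreviated) argument.
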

Evaluating Lemma~\ref{lemma:4coefftransform0} at each set of traces and applying the same arguments as given in~\cite{Koma} 
and~\cite[Chap. 5]{Komathesis} {\em mutatis mutandis} leads to Theorem~\ref{thm:transform0binary4}. The impact of the extra term of 
Lemma~\ref{lemma:multinomial2} part (4) affects only parts $(2)$, $(4)$, $(6)$ and $(8)$ -- switching the $d \equiv 3$ and $d \equiv 7$ terms --
but we include all of them for the sake of completeness. 

\begin{theorem}\label{thm:transform0binary4}
For $n \ge 4$ we have
\begin{enumerate}[label={(\arabic*)}]
\item $\displaystyle\begin{aligned}[t]
n I_2(n,1,1,1,0) &=& \sum_{\substack{d \mid n \\ d \equiv 1 \, (8)}} \mu(d) F_2(n/d,1,1,1,0) + \sum_{\substack{d \mid n \\ d \equiv 3 \, (8)}} \mu(d) F_2(n/d,1,0,0,0)\\
&+&  \sum_{\substack{d \mid n \\ d \equiv 5 \, (8)}} \mu(d) F_2(n/d,1,1,1,1) + \sum_{\substack{d \mid n \\ d \equiv 7 \, (8)}} \mu(d) F_2(n/d,1,0,0,1),
\end{aligned}$
\item $\displaystyle\begin{aligned}[t]
n I_2(n,1,0,0,0) &=& \sum_{\substack{d \mid n \\ d \equiv 1 \, (8)}} \mu(d) F_2(n/d,1,0,0,0) + \sum_{\substack{d \mid n \\ d \equiv 3 \, (8)}} \mu(d) F_2(n/d,1,1,1,0)\\
&+&  \sum_{\substack{d \mid n \\ d \equiv 5 \, (8)}} \mu(d) F_2(n/d,1,0,0,1) + \sum_{\substack{d \mid n \\ d \equiv 7 \, (8)}} \mu(d) F_2(n/d,1,1,1,1),
\end{aligned}$
\item $\displaystyle\begin{aligned}[t]
n I_2(n,1,1,1,1) &=& \sum_{\substack{d \mid n \\ d \equiv 1 \, (8)}} \mu(d) F_2(n/d,1,1,1,1) + \sum_{\substack{d \mid n \\ d \equiv 3 \, (8)}} \mu(d) F_2(n/d,1,0,0,1)\\
&+&  \sum_{\substack{d \mid n \\ d \equiv 5 \, (8)}} \mu(d) F_2(n/d,1,1,1,0) + \sum_{\substack{d \mid n \\ d \equiv 7 \, (8)}} \mu(d) F_2(n/d,1,0,0,0),\\
\end{aligned}$
\item $\displaystyle\begin{aligned}[t]
n I_2(n,1,0,0,1) &=& \sum_{\substack{d \mid n \\ d \equiv 1 \, (8)}} \mu(d) F_2(n/d,1,0,0,1) + \sum_{\substack{d \mid n \\ d \equiv 3 \, (8)}} \mu(d) F_2(n/d,1,1,1,1)\\
&+&  \sum_{\substack{d \mid n \\ d \equiv 5 \, (8)}} \mu(d) F_2(n/d,1,0,0,0) + \sum_{\substack{d \mid n \\ d \equiv 7 \, (8)}} \mu(d) F_2(n/d,1,1,1,0),\\
\end{aligned}$
\item $\displaystyle\begin{aligned}[t]
n I_2(n,1,1,0,0) &=& \sum_{\substack{d \mid n \\ d \equiv 1 \, (8)}} \mu(d) F_2(n/d,1,1,0,0) + \sum_{\substack{d \mid n \\ d \equiv 3 \, (8)}} \mu(d) F_2(n/d,1,0,1,0)\\
&+&  \sum_{\substack{d \mid n \\ d \equiv 5 \, (8)}} \mu(d) F_2(n/d,1,1,0,1) + \sum_{\substack{d \mid n \\ d \equiv 7 \, (8)}} \mu(d) F_2(n/d,1,0,1,1),\\
\end{aligned}$
\item $\displaystyle\begin{aligned}[t]
n I_2(n,1,0,1,0) &=& \sum_{\substack{d \mid n \\ d \equiv 1 \, (8)}} \mu(d) F_2(n/d,1,0,1,0) + \sum_{\substack{d \mid n \\ d \equiv 3 \, (8)}} \mu(d) F_2(n/d,1,1,0,0)\\
&+&  \sum_{\substack{d \mid n \\ d \equiv 5 \, (8)}} \mu(d) F_2(n/d,1,0,1,1) + \sum_{\substack{d \mid n \\ d \equiv 7 \, (8)}} \mu(d) F_2(n/d,1,1,0,1),\\
\end{aligned}$
\item $\displaystyle\begin{aligned}[t]
n I_2(n,1,1,0,1) &=& \sum_{\substack{d \mid n \\ d \equiv 1 \, (8)}} \mu(d) F_2(n/d,1,1,0,1) + \sum_{\substack{d \mid n \\ d \equiv 3 \, (8)}} \mu(d) F_2(n/d,1,0,1,1)\\
&+&  \sum_{\substack{d \mid n \\ d \equiv 5 \, (8)}} \mu(d) F_2(n/d,1,1,0,0) + \sum_{\substack{d \mid n \\ d \equiv 7 \, (8)}} \mu(d) F_2(n/d,1,0,1,0),\\
\end{aligned}$
\item $\displaystyle\begin{aligned}[t]
n I_2(n,1,0,1,1) &=& \sum_{\substack{d \mid n \\ d \equiv 1 \, (8)}} \mu(d) F_2(n/d,1,0,1,1) + \sum_{\substack{d \mid n \\ d \equiv 3 \, (8)}} \mu(d) F_2(n/d,1,1,0,1)\\
&+&  \sum_{\substack{d \mid n \\ d \equiv 5 \, (8)}} \mu(d) F_2(n/d,1,0,1,0) + \sum_{\substack{d \mid n \\ d \equiv 7 \, (8)}} \mu(d) F_2(n/d,1,1,0,0),\\
\end{aligned}$
\item $\displaystyle\begin{aligned}[t]
n I_2(n,0,0,1,0) &=& \sum_{\substack{d \mid n \\ d \ \text{odd}}} \mu(d) F_2(n/d,0,0,1,0),\\
\end{aligned}$
\item $\displaystyle\begin{aligned}[t]
n I_2(n,0,0,1,1) &=& \sum_{\substack{d \mid n \\ d \ \text{odd}}} \mu(d) F_2(n/d,0,0,1,1),\\
\end{aligned}$
\item $\displaystyle\begin{aligned}[t]
n I_2(n,0,1,1,1) &=& \sum_{\substack{d \mid n \\ d \equiv 1 \, (4)}} \mu(d) F_2(n/d,0,1,1,1) + \sum_{\substack{d \mid n \\ d \equiv 3 \, (4)}} \mu(d) F_2(n/d,0,1,1,0),\\
\end{aligned}$
\item $\displaystyle\begin{aligned}[t]
n I_2(n,0,1,1,0) &=& \sum_{\substack{d \mid n \\ d \equiv 1 \, (4)}} \mu(d) F_2(n/d,0,1,1,0) + \sum_{\substack{d \mid n \\ d \equiv 3 \, (4)}} \mu(d) F_2(n/d,0,1,1,1),\\
\end{aligned}$
\item $\displaystyle\begin{aligned}[t]
n I_2(n,0,0,0,0) &=& \sum_{\substack{d \mid n \\ d \ \text{odd}}} \mu(d) F_2(n/d,0,0,0,0) - \sum_{\substack{d \mid n, \ \frac{n}{d} \ \text{even} \\ d \ \text{odd}}} \mu(d) F_2(n/2d,0,0),\\
\end{aligned}$
\item $\displaystyle\begin{aligned}[t]
n I_2(n,0,0,0,1) &=& \sum_{\substack{d \mid n \\ d \ \text{odd}}} \mu(d) F_2(n/d,0,0,0,1) - \sum_{\substack{d \mid n, \ \frac{n}{d} \ \text{even} \\ d \ \text{odd}}} \mu(d) F_2(n/2d,0,1),\\
\end{aligned}$
\item $\displaystyle\begin{aligned}[t]
n I_2(n,0,1,0,0) &=& \sum_{\substack{d \mid n \\ d \equiv 1 \, (4)}} \mu(d) F_2(n/d,0,1,0,0) - \sum_{\substack{d \mid n, \ \frac{n}{d} \ \text{even} \\ d \equiv 1 \, (4)}} \mu(d) F_2(n/2d,1,0)\\
&+& \sum_{\substack{d \mid n \\ d \equiv 3 \, (4)}} \mu(d) F_2(n/d,0,1,0,1) - \sum_{\substack{d \mid n, \ \frac{n}{d} \ \text{even} \\ d \equiv 3 \, (4)}} \mu(d) F_2(n/2d,1,1),\\
\end{aligned}$
\item $\displaystyle\begin{aligned}[t]
n I_2(n,0,1,0,1) &=& \sum_{\substack{d \mid n \\ d \equiv 1 \, (4)}} \mu(d) F_2(n/d,0,1,0,1) - \sum_{\substack{d \mid n, \ \frac{n}{d} \ \text{even} \\ d \equiv 1 \, (4)}} \mu(d) F_2(n/2d,1,1)\\
&+& \sum_{\substack{d \mid n \\ d \equiv 3 \, (4)}} \mu(d) F_2(n/d,0,1,0,0) - \sum_{\substack{d \mid n, \ \frac{n}{d} \ \text{even} \\ d \equiv 3 \, (4)}} \mu(d) F_2(n/2d,1,0).\\
\end{aligned}$
\end{enumerate}
\end{theorem}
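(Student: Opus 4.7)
The plan is to derive the sixteen identities from Lemma~\ref{lemma:4coefftransform0} by a case analysis on the residue of the divisor $d$ modulo $8$, followed by Möbius inversion. First I would verify that Lemma~\ref{lemma:4coefftransform0} indeed gives $F_2(n,t_1,t_2,t_3,t_4)$ as a union over divisors $d\mid n$: every $\beta\in\F_{2^n}$ has a unique minimal polynomial $f\in{\bf Irr}(n/d)$ for some $d\mid n$, the minimal polynomial contributes exactly $n/d$ Galois conjugates, and $T_k(\beta)$ equals the coefficient of $x^{n-k}$ in $f^d$, so Lemma~\ref{lemma:multinomial2} rewrites the prescribed-trace conditions on $\beta$ as linear (resp.\ quadratic in the $t_4$ equation) conditions on $(T_1(f),\ldots,T_4(f))$ whose coefficients depend only on $d\bmod 8$, by Corollary~\ref{cor:period}.

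Next I would, for each target vector $\vec t=(t_1,t_2,t_3,t_4)$ and each admissible residue class $a\in\{0,1,\ldots,7\}$, solve the (invertible, over $\F_2$, for $d$ odd) system given in Lemma~\ref{lemma:4coefftransform0} for $(T_1(f),T_2(f),T_3(f),T_4(f))$, producing the transformed vector $\theta_{d^{-1}}(\vec t)$ that appears in each summand of the theorem. The most delicate case is $d\equiv 3\pmod 8$, where the system reads $T_1(f)=t_1$, $T_1(f)+T_2(f)=t_2$, $T_1(f)+T_3(f)=t_3$, and $T_2(f)+T_4(f)+T_1(f)T_2(f)=t_4$; here the nonlinear term $T_1(f)T_2(f)$, arising from the missing summand in Lemma~\ref{lemma:multinomial2}(4), is precisely what forces the swap of the $d\equiv 3\pmod 8$ and $d\equiv 7\pmod 8$ contributions in identities $(1)$--$(8)$ relative to~\cite{Koma,Komathesis}.

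With the trace-transform tabulated, the Möbius inversion is standard. Writing $A_2(m,\vec s) = m\cdot\#\{f\in{\bf Irr}(m):T_1(f)=s_1,\ldots,T_4(f)=s_4\}=m\,I_2(m,\vec s)$ for the multiplicity-counted contribution of each irreducible of degree $m$, Lemma~\ref{lemma:4coefftransform0} expresses $F_2(n,\vec t)$ as $\sum_{d\mid n}A_2(n/d,\theta_{d^{-1}}(\vec t))$ restricted to the residue classes of $d$ that are compatible with the prescribed $t_1,t_3$. Replacing the variable $n/d$ by $m$ and Möbius-inverting divisor-class-by-divisor-class (the residues mod $8$ split into arithmetic progressions of divisors, and Möbius inversion preserves the class structure because $\gcd$ of two integers in the same odd residue class mod $8$ is again odd) yields $nI_2(n,\vec t)=\sum_{d\mid n}\mu(d)F_2(n/d,\theta_{d^{-1}}(\vec t))$, which after re-collecting by residue class is exactly the formula displayed.

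The main obstacle I foresee is the bookkeeping for the even residue classes $d\equiv 0,2,4,6\pmod 8$, which only contribute when $t_1=t_3=0$ and produce strictly weaker conditions on $f$ (depending only on $T_1(f)$ and $T_2(f)$). These collapse to terms of the form $F_2(n/2d,\cdot,\cdot)$ after the substitution $d\mapsto 2d$, mirroring the degree-halving phenomenon already present in the two- and three-coefficient cases of~\cite{cattell,yucasmullen,fitzyucas}; they account for the subtracted sums in identities $(13)$--$(16)$. Once this accounting is done, all sixteen identities follow in a uniform way, and the remaining work is purely clerical verification that each row of the stated theorem matches the corresponding table entry produced by the case analysis.
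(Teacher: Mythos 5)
Your proposal follows the same route as the paper: evaluate Lemma~\ref{lemma:4coefftransform0} at each residue class of $d$, solve the resulting $\F_2$-linear system for $(T_1(f),\ldots,T_4(f))$ to identify the shifted target tuple, and then M\"obius-invert, with separate bookkeeping for the even residue classes that leak down to $F_2(n/2d,\cdot,\cdot)$ terms in parts (13)--(16). This is exactly the paper's ``apply the arguments of Koma/Komathesis mutatis mutandis to the corrected lemma'' argument, so the strategy is correct and not a genuinely different approach.

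One inaccuracy in your commentary: you state that the extra term $(d-2)\binom{d}{2}T_1(f)T_2(f)$ forces a swap of the $d\equiv 3$ and $d\equiv 7 \pmod 8$ contributions in identities $(1)$--$(8)$, but the swap relative to the literature occurs only in $(2),(4),(6),(8)$, as the paper notes. The reason is that in all of $(1)$--$(8)$ one has $T_1(f)=t_1=1$ and, for $d\equiv 3,7 \pmod 8$, $T_2(f)=t_1+t_2=1+t_2$; so the correction $T_1(f)T_2(f)=1+t_2$ vanishes precisely when $t_2=1$ (identities $(1),(3),(5),(7)$), and only changes the transformed tuple when $t_2=0$ (identities $(2),(4),(6),(8)$). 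Carrying out the case analysis carefully would of course produce the stated formulae regardless, so this is a misattribution rather than a gap. Also, the parenthetical you give for why M\"obius inversion respects the residue classes (``gcd of two integers in the same odd class mod 8 is odd'') is not quite the right justification; what is actually needed is that $d\mapsto\theta_d$ is multiplicative, i.e.\ $\theta_{d_1}\circ\theta_{d_2}=\theta_{d_1 d_2}$ with subscripts read in $(\Z/8\Z)^\times$, so that the telescoping $\sum_{d\mid m}\mu(d)=[m=1]$ cleanly collapses the double sum. That is the property worth making explicit when you carry out the inversion.
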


\subsubsection{Formulae for $l \ge 5$.}
The formulae for five coefficients follow from an argument analogous to that given in Lemma~\ref{lemma:4coefftransform0}, 
using the identity
\begin{equation}\label{eq:5coeffsT}
T_5(f^d) = \binom{d}{5}T_1(f) + dT_5(f) + (d-2)\binom{d}{2}\big( T_1(f)T_2(f) + T_1(f)T_3(f)\big).
\end{equation}
We omit the details since they are not part of the primary contribution of this work and follow easily. 
Note that Eq.~(\ref{eq:5coeffsT}) is at odds with Eq.~(3) of~\cite{Koma} which claims that for any $l \ge 1$ and $d \ge 1$ the following holds mod $2$: 
\begin{equation*}\label{eq:badmultinomial}
T_l(f^d) = \sum_{k \mid l} \binom{d}{k} T_{l/k}(f),
\end{equation*}
which contradicts Lemma~\ref{lemma:multinomial}. If combined with Lemma~\ref{lemma:multinomial}, the approach sketched in~\cite[\S4.2]{Koma} for 
arbitrary $l$, which uses the generalised M\"obius inversion of Miers and Ruskey~\cite[Theorem 3.2]{miersruskey2}, can no doubt be developed into a 
general algorithm for computing the required transforms. However, we leave describing such an algorithm, for $q=2$ and for arbitrary $q$ and $l$, 
as an open problem.


\section{Curves and Explicit Formulae for $q = l = 3$}\label{sec:ternaryzetas}

In this section we determine curves and explicit formulae for all $F_3(n,t_1,t_2,t_3)$ for $n \ge 3$ but coprime to $3$ and for $F_3(n,0,0,0)$ for all 
$n \ge 3$. We do this using the indirect and direct methods, and one other method, in order to highlight some redundancy 
in the formulae arising from the indirect method, \ie linear relations between powers of the roots of the featured characteristic polynomials of Frobenius.
Since the Galois groups of all the featured polynomials are soluble, eliminating such redundancy 
is feasible, by identifying cancellations between their roots for various residues of $n$ mod $9$.
On the other hand the direct method, although producing curves with harder-to-compute zeta functions, seems to have no such redundancy. 
This implies that there is a trade-off between the ease of computation and the compactness of the formulae. 

The only previous result on such counts over $\F_3$ are due to Sharma~\ea~\cite{Sharma}, who gave approximations to $F_3(n,t_1,*,t_3)$ using
analogous simplifications to those used in~\cite{Koma,Komathesis}. Note that~\cite{Sharma} contains the transforms to $I_3(n,t_1,*,t_3)$ and
hence one can use the results of this section to compute these counts.

The reason we do not compute formulae for four coefficients is that the terms $T_4(\alpha)$ and $T_4(\beta)$ in Lemma~\ref{lem:T} part (4) 
do not cancel mod $3$ and the method of introducing new variables in order to linearise $T_4(a_{0}^3 - a_{0} + r_0)$ fails as a result.

\noindent In order to express $F_3(n,t_1,t_2,t_3)$ compactly, we define the following eight polynomials:
\begin{eqnarray*}
\epsilon_{2,1} &=& X^2 - 3X + 3,\\
\epsilon_{2,2} &=& X^2 + 3X + 3,\\
\epsilon_{2,3} &=& X^2 + 3,\\
\epsilon_6 &=& X^6 + 3X^5 + 9X^4 + 15X^3 + 27X^2 + 27X + 27,\\
\epsilon_{12,1} &=& X^{12} - 3X^{11} + 3X^9 + 9X^8 - 45X^6 + 81X^4 + 81X^3 - 729X + 729,\\
\epsilon_{12,2} &=& X^{12} - 3X^{11} + 12X^9 - 18X^8 - 27X^7 + 117X^6 - 81X^5 - 162X^4 + 324X^3 - 729X + 729,\\
\epsilon_{12,3} &=& X^{12} - 3X^{11} + 9X^{10} - 15X^9 + 36X^8 - 54X^7 + 117X^6 - 162X^5 + 324X^4 - 405X^3 + 729X^2\\
&&- 729X + 729,\\
\epsilon_{12,4} &=& X^{12} + 6X^{11} + 18X^{10} + 39X^9 + 63X^8 + 81X^7 + 117X^6 + 243X^5 + 567X^4 + 1053X^3 + 1458X^2\\
&& + 1458X + 729.
\end{eqnarray*}
Observe that by Theorem~\ref{thm:SS}, $\epsilon_{6}, \epsilon_{12,1}, \epsilon_{12,2}, \epsilon_{12,3}$ and $\epsilon_{12,4}$ are not the 
characteristic polynomials of the Frobenius endomorphism of supersingular abelian varieties.
As in~\S\ref{sec:binaryzetas} we use Lemma~\ref{lem:T} parts (1) to (3). Also as 
in~\S\ref{sec:binaryzetas}, it is more efficient to compute linearised forms for $T_{2}(a)$ and $T_3(a)$ and to combine them as appropriate for 
each $i \in \{1,\ldots,3^3-1\}$. 

\subsection{Indirect method}

Setting $\vec{f} = (T_3,T_2,T_1)$, by the transform~(\ref{eq:Sminus1}) we have
\begin{equation}\label{eq:transform1char3}
{\tiny
\begin{bmatrix}
F_3(n,0,0,0)\\
F_3(n,1,0,0)\\
F_3(n,2,0,0)\\
F_3(n,0,1,0)\\
F_3(n,1,1,0)\\
F_3(n,2,1,0)\\
F_3(n,0,2,0)\\
F_3(n,1,2,0)\\
F_3(n,2,2,0)\\
F_3(n,0,0,1)\\
F_3(n,1,0,1)\\
F_3(n,2,0,1)\\
F_3(n,0,1,1)\\
F_3(n,1,1,1)\\
F_3(n,2,1,1)\\
F_3(n,0,2,1)\\
F_3(n,1,2,1)\\
F_3(n,2,2,1)\\
F_3(n,0,0,2)\\
F_3(n,1,0,2)\\
F_3(n,2,0,2)\\
F_3(n,0,1,2)\\
F_3(n,1,1,2)\\
F_3(n,2,1,2)\\
F_3(n,0,2,2)\\
F_3(n,1,2,2)\\
F_3(n,2,2,2)\\
\end{bmatrix}
=
\begin{bmatrix}
N(\vec{0})\\
N(\vec{1})\\
N(\vec{2})\\
N(\vec{3})\\
N(\vec{4})\\
N(\vec{5})\\
N(\vec{6})\\
N(\vec{7})\\
N(\vec{8})\\
N(\vec{9})\\
N(\vec{10})\\
N(\vec{11})\\
N(\vec{12})\\
N(\vec{13})\\
N(\vec{14})\\
N(\vec{15})\\
N(\vec{16})\\
N(\vec{17})\\
N(\vec{18})\\
N(\vec{19})\\
N(\vec{20})\\
N(\vec{21})\\
N(\vec{22})\\
N(\vec{23})\\
N(\vec{24})\\
N(\vec{25})\\
N(\vec{26})\\
\end{bmatrix}
=
\frac{1}{9}
\begin{bmatrix*}[r]
9 & - & - & - & - & - & - & - & - & - & - & - & - & - & - & - & - & - & - & - & - & - & - & - & - & - & -\\ 
0 & 1 & 0 & - & 1 & 0 & - & 1 & 0 & - & 1 & 0 & - & 1 & 0 & - & 1 & 0 & - & 1 & 0 & - & 1 & 0 & - & 1 & 0\\
0 & 0 & 1 & - & 0 & 1 & - & 0 & 1 & - & 0 & 1 & - & 0 & 1 & - & 0 & 1 & - & 0 & 1 & - & 0 & 1 & - & 0 & 1\\
0 & - & - & 1 & 1 & 1 & 0 & 0 & 0 & - & - & - & 1 & 1 & 1 & 0 & 0 & 0 & - & - & - & 1 & 1 & 1 & 0 & 0 & 0\\
0 & 1 & 0 & 1 & 0 & - & 0 & - & 1 & - & 1 & 0 & 1 & 0 & - & 0 & - & 1 & - & 1 & 0 & 1 & 0 & - & 0 & - & 1\\
0 & 0 & 1 & 1 & - & 0 & 0 & 1 & - & - & 0 & 1 & 1 & - & 0 & 0 & 1 & - & - & 0 & 1 & 1 & - & 0 & 0 & 1 & -\\
0 & - & - & 0 & 0 & 0 & 1 & 1 & 1 & - & - & - & 0 & 0 & 0 & 1 & 1 & 1 & - & - & - & 0 & 0 & 0 & 1 & 1 & 1\\
0 & 1 & 0 & 0 & - & 1 & 1 & 0 & - & - & 1 & 0 & 0 & - & 1 & 1 & 0 & - & - & 1 & 0 & 0 & - & 1 & 1 & 0 & -\\
0 & 0 & 1 & 0 & 1 & - & 1 & - & 0 & - & 0 & 1 & 0 & 1 & - & 1 & - & 0 & - & 0 & 1 & 0 & 1 & - & 1 & - & 0\\
0 & - & - & - & - & - & - & - & - & 1 & 1 & 1 & 1 & 1 & 1 & 1 & 1 & 1 & 0 & 0 & 0 & 0 & 0 & 0 & 0 & 0 & 0\\
0 & 1 & 0 & - & 1 & 0 & - & 1 & 0 & 1 & 0 & - & 1 & 0 & - & 1 & 0 & - & 0 & - & 1 & 0 & - & 1 & 0 & - & 1\\ 
0 & 0 & 1 & - & 0 & 1 & - & 0 & 1 & 1 & - & 0 & 1 & - & 0 & 1 & - & 0 & 0 & 1 & - & 0 & 1 & - & 0 & 1 & -\\
0 & - & - & 1 & 1 & 1 & 0 & 0 & 0 & 1 & 1 & 1 & 0 & 0 & 0 & - & - & - & 0 & 0 & 0 & - & - & - & 1 & 1 & 1\\ 
0 & 1 & 0 & 1 & 0 & - & 0 & - & 1 & 1 & 0 & - & 0 & - & 1 & - & 1 & 0 & 0 & - & 1 & - & 1 & 0 & 1 & 0 & -\\ 
0 & 0 & 1 & 1 & - & 0 & 0 & 1 & - & 1 & - & 0 & 0 & 1 & - & - & 0 & 1 & 0 & 1 & - & - & 0 & 1 & 1 & - & 0\\ 
0 & - & - & 0 & 0 & 0 & 1 & 1 & 1 & 1 & 1 & 1 & - & - & - & 0 & 0 & 0 & 0 & 0 & 0 & 1 & 1 & 1 & - & - & -\\
0 & 1 & 0 & 0 & - & 1 & 1 & 0 & - & 1 & 0 & - & - & 1 & 0 & 0 & - & 1 & 0 & - & 1 & 1 & 0 & - & - & 1 & 0\\
0 & 0 & 1 & 0 & 1 & - & 1 & - & 0 & 1 & - & 0 & - & 0 & 1 & 0 & 1 & - & 0 & 1 & - & 1 & - & 0 & - & 0 & 1\\
0 & - & - & - & - & - & - & - & - & 0 & 0 & 0 & 0 & 0 & 0 & 0 & 0 & 0 & 1 & 1 & 1 & 1 & 1 & 1 & 1 & 1 & 1\\
0 & 1 & 0 & - & 1 & 0 & - & 1 & 0 & 0 & - & 1 & 0 & - & 1 & 0 & - & 1 & 1 & 0 & - & 1 & 0 & - & 1 & 0 & -\\
0 & 0 & 1 & - & 0 & 1 & - & 0 & 1 & 0 & 1 & - & 0 & 1 & - & 0 & 1 & - & 1 & - & 0 & 1 & - & 0 & 1 & - & 0\\ 
0 & - & - & 1 & 1 & 1 & 0 & 0 & 0 & 0 & 0 & 0 & - & - & - & 1 & 1 & 1 & 1 & 1 & 1 & 0 & 0 & 0 & - & - & -\\
0 & 1 & 0 & 1 & 0 & - & 0 & - & 1 & 0 & - & 1 & - & 1 & 0 & 1 & 0 & - & 1 & 0 & - & 0 & - & 1 & - & 1 & 0\\
0 & 0 & 1 & 1 & - & 0 & 0 & 1 & - & 0 & 1 & - & - & 0 & 1 & 1 & - & 0 & 1 & - & 0 & 0 & 1 & - & - & 0 & 1\\ 
0 & - & - & 0 & 0 & 0 & 1 & 1 & 1 & 0 & 0 & 0 & 1 & 1 & 1 & - & - & - & 1 & 1 & 1 & - & - & - & 0 & 0 & 0\\ 
0 & 1 & 0 & 0 & - & 1 & 1 & 0 & - & 0 & - & 1 & 1 & 0 & - & - & 1 & 0 & 1 & 0 & - & - & 1 & 0 & 0 & - & 1\\
0 & 0 & 1 & 0 & 1 & - & 1 & - & 0 & 0 & 1 & - & 1 & - & 0 & - & 0 & 1 & 1 & - & 0 & - & 0 & 1 & 0 & 1 & -\\
\end{bmatrix*}
\begin{bmatrix}
V_1(\vec{0} \cdot \vec{f})\\
V_1(\vec{1} \cdot \vec{f})\\
V_1(\vec{2} \cdot \vec{f})\\
V_1(\vec{3} \cdot \vec{f})\\
V_1(\vec{4} \cdot \vec{f})\\
V_1(\vec{5} \cdot \vec{f})\\
V_1(\vec{6} \cdot \vec{f})\\
V_1(\vec{7} \cdot \vec{f})\\
V_1(\vec{8} \cdot \vec{f})\\
V_1(\vec{9} \cdot \vec{f})\\
V_1(\vec{10} \cdot \vec{f})\\
V_1(\vec{11} \cdot \vec{f})\\
V_1(\vec{12} \cdot \vec{f})\\
V_1(\vec{13} \cdot \vec{f})\\
V_1(\vec{14} \cdot \vec{f})\\
V_1(\vec{15} \cdot \vec{f})\\
V_1(\vec{16} \cdot \vec{f})\\
V_1(\vec{17} \cdot \vec{f})\\
V_1(\vec{18} \cdot \vec{f})\\
V_1(\vec{19} \cdot \vec{f})\\
V_1(\vec{20} \cdot \vec{f})\\
V_1(\vec{21} \cdot \vec{f})\\
V_1(\vec{22} \cdot \vec{f})\\
V_1(\vec{23} \cdot \vec{f})\\
V_1(\vec{24} \cdot \vec{f})\\
V_1(\vec{25} \cdot \vec{f})\\
V_1(\vec{26} \cdot \vec{f})\\
\end{bmatrix}.}
\end{equation}
By definition we have $V_1(\vec{0} \cdot \vec{f}) = 3^n$, while $V_1(\vec{1} \cdot \vec{f}) = V_1(T_1) = \#\{a \in \F_{3^n} \mid T_1(a) = 1\} = 3^{n-1}$, 
as does $V_1(\vec{2} \cdot \vec{f}) = V_1(2T_1) = \#\{a \in \F_{3^n} \mid 2T_1(a) = 1\}$. 
Note that for $r_0 \in \F_3$ one has $T_l(r_0) = \binom{n}{l}r_0$. To determine $V_1( \vec{i} \cdot \vec{f})$ for $3 \le i \le 26$, setting 
$\alpha = a_{0}^3 - a_{0}$ and $\beta = -r_0$, and using Lemma~\ref{lem:T} parts (1) to (3) mod $3$ gives the 
following\footnote[2]{See~\url{Ternary3Coefficients.mw}.}:
\begin{eqnarray}
\label{eq:ternary1} T_1(a_{0}^3 - a_0 + r_0) &=& T_1(r_0),\\
\label{eq:ternary2} T_2(a_{0}^3 - a_0 + r_0) &=& T_1\Big(a_{0}^4 - a_{0}^2 - r_0\binom{n}{2}/n\Big),\\
\label{eq:ternary3} T_3(a_{0}^3 - a_0 + r_0) &=& T_1\Big(a_{0}^7 - a_{0}^5 + r_0(n+1)(a_{0}^4 - a_{0}^2) + r_0\binom{n}{3}/n\Big).
\end{eqnarray}
For $3 \le i \le 26$ let $\vec{i} = (i_2,i_1,i_0)$. The curves we are interested in for $a$ of trace $r_0$ are
\begin{eqnarray}\label{eq:curvesternary} 
a_{1}^3 - a_{1} + 1/n &=& i_2\Big(a_{0}^7 - a_{0}^5 - r_0(n+1)(a_{0}^4 - a_{0}^2) 
- r_0\binom{n}{3}/n\Big) + i_1\Big(a_{0}^4 - a_{0}^2 + r_0\binom{n}{2}/n\Big) + i_0 r_0.
\end{eqnarray}
These curves have genus $3$ if $i_2 = 0$ and genus $6$ if $i_2 \neq 0$. Corollary~\ref{cor:period} implies that mod $3$ one has
\begin{eqnarray*}
\Big(\binom{n}{3},\binom{n}{2},\binom{n}{1}\Big) \equiv \begin{cases}
\begin{array}{lr}
(0,0,1) & \ \text{if} \ n \equiv 1 \pmod{9}\\
(0,1,-1) & \ \text{if} \ n \equiv 2 \pmod{9}\\
(1,0,1) & \ \text{if} \ n \equiv 4 \pmod{9}\\
(1,1,-1) & \ \text{if} \ n \equiv 5 \pmod{9}\\
(-1,0,1) & \ \text{if} \ n \equiv 7 \pmod{9}\\
(-1,1,-1) & \ \text{if} \ n \equiv 8 \pmod{9}\\
\end{array},
\end{cases}
\end{eqnarray*}
and hence there are six cases to consider when computing the zeta functions of each of the relevant curves.
Using Magma to compute the zeta functions of the curves~(\ref{eq:curvesternary}) and applying~(\ref{eq:transform1char3}) gives the following theorem\footnote[3]{See~\url{F3(n,t1,t2,t3).m} 
for generation and counting code, and~\url{F3(n,t1,t2,t3)verify.mw} for verification of $F_3(n,0,0,0)$, which can easily be adapted for the other cases.},
where $\vec{v} = (\rho_n(\epsilon_{2,1}),\rho_n(\epsilon_{2,2}),\rho_n(\epsilon_{2,3}),\rho_n(\epsilon_{6}),\rho_n(\epsilon_{12,1}),\rho_n(\epsilon_{12,2}),\rho_n(\epsilon_{12,3}),\rho_n(\epsilon_{12,4}))$.

\begin{theorem}\label{thm:char3_3traces}
For $n \ge 3$ we have
{\small
\begin{flalign*}
F_3(n,0,0,0) &= 3^{n-3} - \frac{1}{81}
(-21, -15, -18, -8, -14, -14, -14, -8)\cdot \vec{v} 
\ \ \text{if} \ n \equiv 1,2,4,5,7,8 \pmod{9}\\
F_3(n,1,0,0) &= 3^{n-3} - \frac{1}{81} \cdot \begin{cases}
\begin{array}{lr}
(-3, 3, 0, 4, -2, -2, -2, 4)\cdot\vec{v} & \ \ \text{if} \ n \equiv 1 \pmod{9}\\
( 3, 0, -3, 4, -2, 4, -2, -2)\cdot\vec{v} & \ \ \text{if} \ n \equiv 2 \pmod{9}\\
(-3, 3, 0, -2, 1, 1, 1, -2)\cdot\vec{v} & \ \ \text{if} \ n \equiv 4,7 \pmod{9}\\
( 3, 0, -3, -2, 1, -2, 1, 1)\cdot\vec{v} & \ \ \text{if} \ n \equiv 5,8 \pmod{9}\\
\end{array}
\end{cases}\\
F_3(n,2,0,0) &= 3^{n-3} - \frac{1}{81} \cdot \begin{cases}
\begin{array}{lr}
( -3, 3, 0, 4, -2, -2, -2, 4)\cdot\vec{v} & \ \text{if} \ n \equiv 1 \pmod{9}\\
(0, -3, 3, 4, -2, -2, 4, -2)\cdot\vec{v} & \ \text{if} \ n \equiv 2 \pmod{9}\\
( -3, 3, 0, -2, 1, 1, 1, -2 )\cdot\vec{v} & \ \text{if} \ n \equiv 4,7 \pmod{9}\\
( 0, -3, 3, -2, 1, 1, -2, 1)\cdot\vec{v} & \ \ \text{if} \ n \equiv 5,8 \pmod{9}\\
\end{array}
\end{cases}\\
F_3(n,0,1,0) &= 3^{n-3} - \frac{1}{81} \cdot \begin{cases}
\begin{array}{lr}
( 12, 9, 6, 4, -2, 4, -2, -2)\cdot\vec{v} & \ \text{if} \ n \equiv 1,4,7 \pmod{9}\\
( 9, 6, 12, -8, -14, 4, 10, 4)\cdot\vec{v} & \ \text{if} \ n \equiv 2,5,8 \pmod{9}\\
\end{array}
\end{cases}\\
F_3(n,1,1,0) &= 3^{n-3} - \frac{1}{81} \cdot \begin{cases}
\begin{array}{lr}
(3, 0, -3, -2, 1, -2, 1, 1)\cdot\vec{v} & \ \text{if} \ n \equiv 1,7 \pmod{9}\\
( -3, 3, 0, 4, -2, -2, -2, 4 )\cdot\vec{v} & \ \text{if} \ n \equiv 2 \pmod{9}\\
(3, 0, -3, 4, -2, 4, -2, -2)\cdot\vec{v} & \ \text{if} \ n \equiv 4 \pmod{9}\\
( -3, 3, 0, -2, 1, 1, 1, -2)\cdot\vec{v} & \ \text{if} \ n \equiv 5,8 \pmod{9}\\
\end{array}
\end{cases}\\
F_3(n,2,1,0) &= 3^{n-3} - \frac{1}{81} \cdot \begin{cases}
\begin{array}{lr}
( 3, 0, -3, -2, 1, -2, 1, 1 )\cdot\vec{v} & \ \text{if} \ n \equiv 1,5,7,8 \pmod{9}\\
( 3, 0, -3, 4, -2, 4, -2, -2)\cdot\vec{v} & \ \text{if} \ n \equiv 2,4 \pmod{9}\\
\end{array}
\end{cases}\\
F_3(n,0,2,0) &= 3^{n-3} - \frac{1}{81} \cdot \begin{cases}
\begin{array}{lr}
( 9, 6, 12, 4, -2, -2, 4, -2)\cdot\vec{v} & \ \text{if} \ n \equiv 1,4,7 \pmod{9}\\
( 12, 9, 6, -8, -14, 10, 4, 4)\cdot\vec{v} & \ \text{if} \ n \equiv 2,5,8 \pmod{9}\\
\end{array}
\end{cases}\\
F_3(n,1,2,0) &= 3^{n-3} - \frac{1}{81} \cdot \begin{cases}
\begin{array}{lr}
( 0, -3, 3, -2, 1, 1, -2, 1 )\cdot\vec{v} & \ \text{if} \ n \equiv 1,4,5,8 \pmod{9}\\
( 0, -3, 3, 4, -2, -2, 4, -2)\cdot\vec{v} & \ \text{if} \ n \equiv 2,7 \pmod{9}\\
\end{array}
\end{cases}\\
F_3(n,2,2,0) &= 3^{n-3} - \frac{1}{81} \cdot \begin{cases}
\begin{array}{lr}
( 0, -3, 3, -2, 1, 1, -2, 1)\cdot\vec{v} & \ \text{if} \ n \equiv 1,4 \pmod{9}\\
( -3, 3, 0, 4, -2, -2, -2, 4)\cdot\vec{v} & \ \text{if} \ n \equiv 2 \pmod{9}\\
( -3, 3, 0, -2, 1, 1, 1, -2)\cdot\vec{v} & \ \text{if} \ n \equiv 5,8 \pmod{9}\\
( 0, -3, 3, 4, -2, -2, 4, -2 )\cdot\vec{v} & \ \text{if} \ n \equiv 7 \pmod{9}\\
\end{array}
\end{cases}\\
F_3(n,0,0,1) &= 3^{n-3} - \frac{1}{81}
( -21, -15, -18, 4, 7, 7, 7, 4)\cdot\vec{v} 
\ \ \text{if} \ n \equiv 1,2,4,5,7,8 \pmod{9}\\
\end{flalign*}
}
{\small
\begin{flalign*}
F_3(n,1,0,1) &= 3^{n-3} - \frac{1}{81} \cdot \begin{cases}
\begin{array}{lr}
(-3, 3, 0, -2, 1, 1, 1, -2 )\cdot\vec{v} & \ \text{if} \ n \equiv 1,7 \pmod{9}\\
(3, 0, -3, -2, 1, -2, 1, 1)\cdot\vec{v} & \ \text{if} \ n \equiv 2,5 \pmod{9}\\
( -3, 3, 0, 4, -2, -2, -2, 4)\cdot\vec{v} & \ \text{if} \ n \equiv 4 \pmod{9}\\
( 3, 0, -3, 4, -2, 4, -2, -2 )\cdot\vec{v} & \ \text{if} \ n \equiv 8 \pmod{9}\\
\end{array}
\end{cases}\\
F_3(n,2,0,1) &= 3^{n-3} - \frac{1}{81} \cdot \begin{cases}
\begin{array}{lr}
( -3, 3, 0, -2, 1, 1, 1, -2)\cdot\vec{v} & \ \text{if} \ n \equiv 1,4 \pmod{9}\\
( 0, -3, 3, -2, 1, 1, -2, 1 )\cdot\vec{v} & \ \text{if} \ n \equiv 2,8 \pmod{9}\\
( 0, -3, 3, 4, -2, -2, 4, -2 )\cdot\vec{v} & \ \text{if} \ n \equiv 5 \pmod{9}\\
(-3, 3, 0, 4, -2, -2, -2, 4)\cdot\vec{v} & \ \text{if} \ n \equiv 7 \pmod{9}\\
\end{array}
\end{cases}\\
F_3(n,0,1,1) &= 3^{n-3} - \frac{1}{81} \cdot \begin{cases}
\begin{array}{lr}
( 12, 9, 6, -2, 1, -2, 1, 1 )\cdot\vec{v} & \ \text{if} \ n \equiv 1,4,7 \pmod{9}\\
(9, 6, 12, 4, 7, -2, -5, -2)\cdot\vec{v} & \ \text{if} \ n \equiv 2,5,8 \pmod{9}\\
\end{array}
\end{cases}\\
F_3(n,1,1,1) &= 3^{n-3} - \frac{1}{81} \cdot \begin{cases}
\begin{array}{lr}
( 3, 0, -3, -2, 1, -2, 1, 1)\cdot\vec{v} & \ \text{if} \ n \equiv 1,4 \pmod{9}\\
(-3, 3, 0, -2, 1, 1, 1, -2 )\cdot\vec{v} & \ \text{if} \ n \equiv 2,5 \pmod{9}\\
(3, 0, -3, 4, -2, 4, -2, -2)\cdot\vec{v} & \ \text{if} \ n \equiv 7 \pmod{9}\\
(-3, 3, 0, 4, -2, -2, -2, 4)\cdot\vec{v} & \ \text{if} \ n \equiv 8 \pmod{9}\\
\end{array}
\end{cases}\\
F_3(n,2,1,1) &= 3^{n-3} - \frac{1}{81} \cdot \begin{cases}
\begin{array}{lr}
(3, 0, -3, 4, -2, 4, -2, -2)\cdot\vec{v} & \ \text{if} \ n \equiv 1,5 \pmod{9}\\
( 3, 0, -3, -2, 1, -2, 1, 1)\cdot\vec{v} & \ \text{if} \ n \equiv 2,4,7,8 \pmod{9}\\
\end{array}
\end{cases}\\
F_3(n,0,2,1) &= 3^{n-3} - \frac{1}{81} \cdot \begin{cases}
\begin{array}{lr}
( 9, 6, 12, -2, 1, 1, -2, 1)\cdot\vec{v} & \ \text{if} \ n \equiv 1,4,7 \pmod{9}\\
(12, 9, 6, 4, 7, -5, -2, -2)\cdot\vec{v} & \ \text{if} \ n \equiv 2,5,8 \pmod{9}\\
\end{array}
\end{cases}\\
F_3(n,1,2,1) &= 3^{n-3} - \frac{1}{81} \cdot \begin{cases}
\begin{array}{lr}
(0, -3, 3, 4, -2, -2, 4, -2)\cdot\vec{v} & \ \text{if} \ n \equiv 1,8 \pmod{9}\\
( 0, -3, 3, -2, 1, 1, -2, 1)\cdot\vec{v} & \ \text{if} \ n \equiv 2,4,5,7 \pmod{9}\\
\end{array}
\end{cases}\\
F_3(n,2,2,1) &= 3^{n-3} - \frac{1}{81} \cdot \begin{cases}
\begin{array}{lr}
( 0, -3, 3, -2, 1, 1, -2, 1 )\cdot\vec{v} & \ \text{if} \ n \equiv 1,7 \pmod{9}\\
( -3, 3, 0, -2, 1, 1, 1, -2 )\cdot\vec{v} & \ \text{if} \ n \equiv 2,8 \pmod{9}\\
( 0, -3, 3, 4, -2, -2, 4, -2 )\cdot\vec{v} & \ \text{if} \ n \equiv 4 \pmod{9}\\
( -3, 3, 0, 4, -2, -2, -2, 4 )\cdot\vec{v} & \ \text{if} \ n \equiv 5 \pmod{9}\\
\end{array}
\end{cases}\\
F_3(n,0,0,2) &= 3^{n-3} - \frac{1}{81}
( -21, -15, -18, 4, 7, 7, 7, 4 )\cdot \vec{v} 
\ \ \text{if} \ n \equiv 1,2,4,5,7,8 \pmod{9}\\
F_3(n,1,0,2) &= 3^{n-3} - \frac{1}{81} \cdot \begin{cases}
\begin{array}{lr}
( -3, 3, 0, -2, 1, 1, 1, -2)\cdot\vec{v} & \ \text{if} \ n \equiv 1,4 \pmod{9}\\
( 3, 0, -3, -2, 1, -2, 1, 1 )\cdot\vec{v} & \ \text{if} \ n \equiv 2,8 \pmod{9}\\
( 3, 0, -3, 4, -2, 4, -2, -2 )\cdot\vec{v} & \ \text{if} \ n \equiv 5 \pmod{9}\\
( -3, 3, 0, 4, -2, -2, -2, 4 )\cdot\vec{v} & \ \text{if} \ n \equiv 7 \pmod{9}\\
\end{array}
\end{cases}\\
F_3(n,2,0,2) &= 3^{n-3} - \frac{1}{81} \cdot \begin{cases}
\begin{array}{lr}
( -3, 3, 0, -2, 1, 1, 1, -2 )\cdot\vec{v} & \ \text{if} \ n \equiv 1,7 \pmod{9}\\
( 0, -3, 3, -2, 1, 1, -2, 1)\cdot\vec{v} & \ \text{if} \ n \equiv 2,5 \pmod{9}\\
( -3, 3, 0, 4, -2, -2, -2, 4)\cdot\vec{v} & \ \text{if} \ n \equiv 4 \pmod{9}\\
( 0, -3, 3, 4, -2, -2, 4, -2 )\cdot\vec{v} & \ \text{if} \ n \equiv 8 \pmod{9}\\
\end{array}
\end{cases}\\
F_3(n,0,1,2) &= 3^{n-3} - \frac{1}{81} \cdot \begin{cases}
\begin{array}{lr}
( 12, 9, 6, -2, 1, -2, 1, 1 )\cdot\vec{v} & \ \text{if} \ n \equiv 1,4,7 \pmod{9}\\
( 9, 6, 12, 4, 7, -2, -5, -2)\cdot\vec{v} & \ \text{if} \ n \equiv 2,5,8 \pmod{9}\\
\end{array}
\end{cases}\\
F_3(n,1,1,2) &= 3^{n-3} - \frac{1}{81} \cdot \begin{cases}
\begin{array}{lr}
( 3, 0, -3, 4, -2, 4, -2, -2)\cdot\vec{v} & \ \text{if} \ n \equiv 1 \pmod{9}\\
(-3, 3, 0, -2, 1, 1, 1, -2)\cdot\vec{v} & \ \text{if} \ n \equiv 2,8 \pmod{9}\\
(3, 0, -3, -2, 1, -2, 1, 1)\cdot\vec{v} & \ \text{if} \ n \equiv 4,7 \pmod{9}\\
( -3, 3, 0, 4, -2, -2, -2, 4 )\cdot\vec{v} & \ \text{if} \ n \equiv 5 \pmod{9}\\
\end{array}
\end{cases}\\
F_3(n,2,1,2) &= 3^{n-3} - \frac{1}{81} \cdot \begin{cases}
\begin{array}{lr}
( 3, 0, -3, -2, 1, -2, 1, 1 )\cdot\vec{v} & \ \text{if} \ n \equiv 1,2,4,5 \pmod{9}\\
( 3, 0, -3, 4, -2, 4, -2, -2 )\cdot\vec{v} & \ \text{if} \ n \equiv 7,8 \pmod{9}\\
\end{array}
\end{cases}\\
F_3(n,0,2,2) &= 3^{n-3} - \frac{1}{81} \cdot \begin{cases}
\begin{array}{lr}
( 9, 6, 12, -2, 1, 1, -2, 1 )\cdot\vec{v} & \ \text{if} \ n \equiv 1,4,7 \pmod{9}\\
(12, 9, 6, 4, 7, -5, -2, -2)\cdot\vec{v} & \ \text{if} \ n \equiv 2,5,8 \pmod{9}\\
\end{array}
\end{cases}\\
\end{flalign*}
}
{\small
\begin{flalign*}
F_3(n,1,2,2) &= 3^{n-3} - \frac{1}{81} \cdot \begin{cases}
\begin{array}{lr}
( 0, -3, 3, -2, 1, 1, -2, 1 )\cdot\vec{v} & \ \text{if} \ n \equiv 1,2,7,8 \pmod{9}\\
( 0, -3, 3, 4, -2, -2, 4, -2 )\cdot\vec{v} & \ \text{if} \ n \equiv 4,5 \pmod{9}\\
\end{array}
\end{cases}\\
F_3(n,2,2,2) &= 3^{n-3} - \frac{1}{81} \cdot \begin{cases}
\begin{array}{lr}
( 0, -3, 3, 4, -2, -2, 4, -2 )\cdot\vec{v} & \ \text{if} \ n \equiv 1 \pmod{9}\\
( -3, 3, 0, -2, 1, 1, 1, -2)\cdot\vec{v} & \ \text{if} \ n \equiv 2,5 \pmod{9}\\
( 0, -3, 3, -2, 1, 1, -2, 1 )\cdot\vec{v} & \ \text{if} \ n \equiv 4,7 \pmod{9}\\
( -3, 3, 0, 4, -2, -2, -2, 4 )\cdot\vec{v} & \ \text{if} \ n \equiv 8 \pmod{9}\\
\end{array}
\end{cases}\\
\end{flalign*}
}
\end{theorem}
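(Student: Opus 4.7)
The plan is to instantiate the indirect method of \S\ref{sec:mainindirect} at $q=3$, $l=3$, and then compute the resulting zeta functions in each relevant residue class of $n$ modulo $9$. Concretely, with $\vec f = (T_3,T_2,T_1)$ the transform~(\ref{eq:transform1char3}) expresses each $F_3(n,t_1,t_2,t_3)$ as a $\Q$-linear combination of the $27$ quantities $V_1(\vec i \cdot \vec f)$, so it suffices to compute these for $0 \le i \le 26$. The first three are trivial: $V_1(\vec 0\cdot \vec f)=3^n$ and $V_1(\vec 1\cdot\vec f)=V_1(\vec 2\cdot\vec f)=3^{n-1}$ by a direct count of the kernel/preimage of $T_1$.

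For $3\le i\le 26$, I would apply Lemma~\ref{lem:paramq}(2) to write each such count as $\tfrac{1}{3}$ times the number of $\F_{3^n}$-points on the Artin-Schreier curve~(\ref{eq:curvesternary}), after substituting $a=a_0^3-a_0+r_0/n$ and using the linearisations~(\ref{eq:ternary1})--(\ref{eq:ternary3}) of $T_2(a_0^3-a_0+r_0)$ and $T_3(a_0^3-a_0+r_0)$ derived from Lemma~\ref{lem:T} parts (1)--(3). The binomial coefficients $\binom{n}{2},\binom{n}{3}$ appearing as coefficients in these curves have period $9$ in $n$ by Corollary~\ref{cor:period}, and for $\overline n \in\{1,2,4,5,7,8\}$ take one of six explicit value-pairs listed before the theorem statement. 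So for each of the six residue classes I would obtain $24$ honestly defined Artin-Schreier curves over $\F_3$, of genus $3$ (when $i_2=0$) or $6$ (when $i_2\neq 0$).

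Next, I would compute the zeta function of each of these $6\cdot 24 = 144$ curves, in practice by feeding them into Magma (as is done in the cited script \url{F3(n,t1,t2,t3).m}); the Lauder--Wan algorithm of Theorem~\ref{thm:LW} provides an alternative in principle but is unnecessary at this small genus. The main bookkeeping step, and in my view the main obstacle, is to identify a small, uniform set of characteristic polynomials of Frobenius from which every zeta function is built. I would proceed by first factoring all $144$ numerator polynomials over $\Z$, collecting the distinct irreducible factors, and verifying (using Theorem~\ref{thm:SS}) which are supersingular; the empirical outcome is that exactly the eight polynomials $\epsilon_{2,1},\epsilon_{2,2},\epsilon_{2,3},\epsilon_6,\epsilon_{12,1},\dots,\epsilon_{12,4}$ suffice, with $\epsilon_{2,1},\epsilon_{2,2},\epsilon_{2,3}$ supersingular and the rest not. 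This is where redundancy in the indirect method is most visible: many factor combinations repeat, and the total of $6880$ roots predicted by Theorem~\ref{thm:maintheorem} collapses onto a much smaller list.

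Finally, for each residue class $\overline n\in\{1,2,4,5,7,8\}$ mod~$9$ I would assemble the $V_1(\vec i\cdot\vec f)$ as $3^n$ minus the appropriate $n$-th power trace-sums of roots of these eight polynomials, multiply by the inverse Sylvester-type matrix $S_{3,3}^{-1}$ given in~(\ref{eq:transform1char3}), and read off each $F_3(n,t_1,t_2,t_3)$ as a vector $\vec v \cdot (\rho_n(\epsilon_{2,1}),\ldots,\rho_n(\epsilon_{12,4}))$ scaled by $1/81$. The resulting tables of coefficient vectors are exactly the ones displayed in the theorem; a sanity check that $\sum_{t_1,t_2,t_3} F_3(n,t_1,t_2,t_3)=3^n$ and that the $3$-coefficient sums $\sum_{t_3} F_3(n,t_1,t_2,t_3)$ match the two-coefficient case of Kuz'min/Carlitz provides a useful independent verification, and the Maple file \url{F3(n,t1,t2,t3)verify.mw} is used to confirm the $\vec t=\vec 0$ case explicitly.
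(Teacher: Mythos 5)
Your plan follows the paper's own proof very closely: transform via the $27\times 27$ matrix~$S_{3,3}^{-1}$ in~(\ref{eq:transform1char3}), linearise $T_2$ and $T_3$ via Lemma~\ref{lem:T} parts (1)--(3) to obtain~(\ref{eq:ternary1})--(\ref{eq:ternary3}), split on the six residue classes of $n$ modulo~$9$, compute the zeta functions of the curves~(\ref{eq:curvesternary}) in Magma, and assemble. That is exactly the paper's proof of Theorem~\ref{thm:char3_3traces}.

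Two small corrections are needed, however. First, you cite the indirect method of \S\ref{sec:mainindirect} as the framework being instantiated, but that method requires $l<p$ and uses $\vec f=(T_1(a^l),\dots,T_1(a))$; for $q=l=3$ Newton's identities fail (the $l=p$ identity $p\,T_p=\dots$ collapses mod $p$), so one cannot pass from the $T_j$'s to power sums. What you actually need -- and what your subsequent details correctly use -- is the binary-style strategy of \S\ref{sec:char2} carried over to $q=3$, working with $\vec f=(T_3,T_2,T_1)$ directly and linearising via Lemma~\ref{lem:T}. Second, precisely because $T_2,T_3$ are nonlinear one must first partition on $r_0=T_1(a)\in\F_3$, parameterise $a=a_0^3-a_0+r_0/n$ by Lemma~\ref{lem:paramq}(2) for each $r_0$, and only then introduce $a_1$; this gives $V_1(\vec i\cdot\vec f)=\tfrac{1}{9}\sum_{r_0\in\F_3}\#C_{i,r_0,\overline n}(\F_{3^n})$, not $\tfrac{1}{3}\#C$ for a single curve, so the zeta-function computation involves $24\times 3\times 6$ curve instances rather than your stated $24\times 6=144$. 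Neither issue changes the conclusion -- the cited Magma script handles the $r_0$-sum -- but as written the step from $V_1(\vec i\cdot\vec f)$ to the curve count is not correct.
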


\subsubsection{Less redundant formulae.}\label{sec:lessredundant}
An alternative and more representationally efficient way to evaluate these formulae is to write:
\begin{eqnarray}
\nonumber F_3(n,t_1,t_2,t_3) &=&  \frac{1}{3}\#\{a_0 \in \F_{3^n} \mid T_1\big(a_{0}^4 - a_{0}^2 
- t_1\binom{n}{2}/n^2\big) = t_2, \\
\label{eq:altF3t1t2t3} && \ \ T_1\big( a_{0}^7 - a_{0}^5 + t_1(n+1)(a_{0}^4 - a_{0}^2)/n 
+ t_1\binom{n}{3}/n^2\big) = t_3\},
\end{eqnarray}
and then for each $t_1$ apply the indirect method to compute all nine $F_3(n,t_1,t_2,t_3)$ simultaneously.
In particular, we instead let $\vec{f} = (a_{0}^7 - a_{0}^5 + t_1(n+1)(a_{0}^4 - a_{0}^2)/n 
+ t_1\binom{n}{3}/n^2, a_{0}^4 - a_{0}^2 - t_1\binom{n}{2}/n^2)$.
Since there are now fewer summands, the total degree of the expressions is $84$ in the worst case: for $n \ge 3$ and coprime to $3$ we have
\[
F_3(n,0,0,0) = 3^{n-3} + \frac{1}{27}\big(3\rho_n(\epsilon_{2,1}) + \rho_n(\epsilon_{2,2}) + 2\rho_n(\epsilon_{2,3}) + 2\rho_n(\epsilon_{12,1}) + 2\rho_n(\epsilon_{12,2})+ 2\rho_n(\epsilon_{12,3}) \big).
\]
Comparing this with Theorem~\ref{thm:char3_3traces} implies the identity, valid for all $n \equiv 1,2,4,5,7,8 \pmod{9}$:
\begin{equation*}\label{rootidentity}
3\big(\rho_n(\epsilon_{2,1}) + \rho_n(\epsilon_{2,2}) + \rho_n(\epsilon_{2,3})\big) + 2\big(\rho_n(\epsilon_{6}) + \rho_n(\epsilon_{12,1}) + \rho_n(\epsilon_{12,2}) + \rho_n(\epsilon_{12,3}) +\rho_n(\epsilon_{12,4})\big) = 0.
\end{equation*}
This is therefore an example of a linear relation between the powers of the roots 
of the featured characteristic polynomials of Frobenius.
Furthermore, since the roots of $\epsilon_{2,1},\epsilon_{2,2},\epsilon_{2,3}$ are $12$-th roots of unity, one can check that 
\begin{equation*}
\rho_n(\epsilon_{2,1}) + \rho_n(\epsilon_{2,2}) + \rho_n(\epsilon_{2,3}) = \begin{cases} 
\ 6 \cdot 3^{n/2} \ \text{if} \  n \equiv 0 \pmod{12}\\
-6 \cdot 3^{n/2} \ \text{if} \  n \equiv 6 \pmod{12}\\
0 \ \text{otherwise}.
\end{cases}
\end{equation*}
We therefore deduce that for all $n \equiv 1,2,4,5,7,8 \pmod{9}$, we have 
\[
\rho_n(\epsilon_{6}) + \rho_n(\epsilon_{12,1}) + \rho_n(\epsilon_{12,2}) + \rho_n(\epsilon_{12,3}) +\rho_n(\epsilon_{12,4}) = 0,
\]
which does not seem obvious from the polynomials themselves.

\subsection{Direct method}\label{subsec:directF3}

For $n \ge 3$ and coprime to $3$ applying Equations~(\ref{eq:ternary1}) to~(\ref{eq:ternary3}) we have
\begin{eqnarray}
\nonumber F_3(n,t_1,t_2,t_3) &=& \#\{a \in \F_{3^n} \mid T_1(a) = t_1, \ T_2(a) = t_2, \ T_3(a) = t_3 \}\\
\nonumber &=& \frac{1}{3}\#\{a_0 \in \F_{3^n} \mid T_2(a_{0}^3 - a_{0} + t_1/\overline{n}) = t_2, \ T_3(a_{0}^3 - a_{0} + t_1/n) = t_3\}\\
\nonumber &=& \frac{1}{3^3}\#\{(a_0,a_1,a_2) \in (\F_{3^n})^3 \mid a_{1}^3 - a_{1} + t_2/n = a_{0}^4 - a_{0}^2 
- t_1\binom{n}{2}/n^2,\\
\label{eq:directF3t1t2t3} && a_{2}^3 - a_{2} + t_3/n = a_{0}^7 - a_{0}^5 + t_1(n+1)(a_{0}^4 - a_{0}^2)/n +
 t_1\binom{n}{3}/n^2\}.
\end{eqnarray}
These curves are all absolutely irreducible and of genus $21$, which is much less than half of the total degree of each expression in Theorem~\ref{thm:char3_3traces} and precisely half the degree of the worst case of the alternative indirect method of~\S\ref{sec:lessredundant}.
Based on some preliminary experiments, Magma can compute the zeta functions of the curves~(\ref{eq:directF3t1t2t3}) in a 
matter of days. 
However, in order to compute these functions more efficiently, since there are only two linear conditions one can apply~\cite[Lemma 6]{AGGMY}. 
In particular, we have
\[
F_3(n,t_1,t_2,t_3) = \frac{1}{9}\big( V( (0,1)\cdot \vec{f}) + \sum_{\alpha \in \F_3} V( (1,\alpha) \cdot \vec{f}) - 3^n),
\]
where $\vec{f} = (a_{0}^7 - a_{0}^5 + t_1(n+1)(a_{0}^4 - a_{0}^2)/n 
+ t_1\binom{n}{3}/n^2, a_{0}^4 - a_{0}^2 - t_1\binom{n}{2}/n^2)$.
Since this uses the zero count $V(\vec{i} \cdot \vec{f})$ the resulting formula for $F_3(n,0,0,0)$ is valid for all $n \ge 3$.
This leads to the following refinement of Theorem~\ref{thm:char3_3traces}.

\begin{theorem}\label{thm:char3_3traces_new}
For $n \ge 3$ we have
{\small
\begin{flalign*}
F_3(n,0,0,0) &= 3^{n-3} - \frac{1}{27}
(0, 2, 1, 2, 0, 0, 0, 2)\cdot \vec{v} 
\ \ \text{for all} \ n \ge 3\\
F_3(n,1,0,0) &= 3^{n-3} - \frac{1}{27} \cdot \begin{cases}
\begin{array}{lr}
( 0, 2, 1, 2, 0, 0, 0, 2)\cdot\vec{v} & \ \ \text{if} \ n \equiv 1 \pmod{9}\\
( 2, 1, 0, 2, 0, 2, 0, 0)\cdot\vec{v} & \ \ \text{if} \ n \equiv 2 \pmod{9}\\
( 0, 2, 1, 0, 1, 1, 1, 0)\cdot\vec{v} & \ \ \text{if} \ n \equiv 4,7 \pmod{9}\\
( 2, 1, 0, 0, 1, 0, 1, 1)\cdot\vec{v} & \ \ \text{if} \ n \equiv 5,8 \pmod{9}\\
\end{array}
\end{cases}\\
F_3(n,2,0,0) &= 3^{n-3} - \frac{1}{27} \cdot \begin{cases}
\begin{array}{lr}
(  0, 2, 1, 2, 0, 0, 0, 2)\cdot\vec{v} & \ \text{if} \ n \equiv 1 \pmod{9}\\
(  1, 0, 2, 2, 0, 0, 2, 0)\cdot\vec{v} & \ \text{if} \ n \equiv 2 \pmod{9}\\
( 0, 2, 1, 0, 1, 1, 1, 0 )\cdot\vec{v} & \ \text{if} \ n \equiv 4,7 \pmod{9}\\
(  1, 0, 2, 0, 1, 1, 0, 1)\cdot\vec{v} & \ \ \text{if} \ n \equiv 5,8 \pmod{9}\\
\end{array}
\end{cases}\\
F_3(n,0,1,0) &= 3^{n-3} - \frac{1}{27} \cdot \begin{cases}
\begin{array}{lr}
(  2, 1, 0, 2, 0, 2, 0, 0 )\cdot\vec{v} & \ \text{if} \ n \equiv 1,4,7 \pmod{9}\\
(  1, 0, 2, 2, 0, 0, 2, 0 )\cdot\vec{v} & \ \text{if} \ n \equiv 2,5,8 \pmod{9}\\
\end{array}
\end{cases}\\
F_3(n,1,1,0) &= 3^{n-3} - \frac{1}{27} \cdot \begin{cases}
\begin{array}{lr}
( 2, 1, 0, 0, 1, 0, 1, 1 )\cdot\vec{v} & \ \text{if} \ n \equiv 1,7 \pmod{9}\\
( 0, 2, 1, 2, 0, 0, 0, 2)\cdot\vec{v} & \ \text{if} \ n \equiv 2 \pmod{9}\\
( 2, 1, 0, 2, 0, 2, 0, 0 )\cdot\vec{v} & \ \text{if} \ n \equiv 4 \pmod{9}\\
( 0, 2, 1, 0, 1, 1, 1, 0)\cdot\vec{v} & \ \text{if} \ n \equiv 5,8 \pmod{9}\\
\end{array}
\end{cases}\\
F_3(n,2,1,0) &= 3^{n-3} - \frac{1}{27} \cdot \begin{cases}
\begin{array}{lr}
( 2, 1, 0, 0, 1, 0, 1, 1)\cdot\vec{v} & \ \text{if} \ n \equiv 1,5,7,8 \pmod{9}\\
( 2, 1, 0, 2, 0, 2, 0, 0 )\cdot\vec{v} & \ \text{if} \ n \equiv 2,4 \pmod{9}\\
\end{array}
\end{cases}\\
F_3(n,0,2,0) &= 3^{n-3} - \frac{1}{27} \cdot \begin{cases}
\begin{array}{lr}
(  1, 0, 2, 2, 0, 0, 2, 0 )\cdot\vec{v} & \ \text{if} \ n \equiv 1,4,7 \pmod{9}\\
(  2, 1, 0, 2, 0, 2, 0, 0)\cdot\vec{v} & \ \text{if} \ n \equiv 2,5,8 \pmod{9}\\
\end{array}
\end{cases}\\
F_3(n,1,2,0) &= 3^{n-3} - \frac{1}{27} \cdot \begin{cases}
\begin{array}{lr}
( 1, 0, 2, 0, 1, 1, 0, 1   )\cdot\vec{v} & \ \text{if} \ n \equiv 1,4,5,8 \pmod{9}\\
(  1, 0, 2, 2, 0, 0, 2, 0  )\cdot\vec{v} & \ \text{if} \ n \equiv 2,7 \pmod{9}\\
\end{array}
\end{cases}\\
F_3(n,2,2,0) &= 3^{n-3} - \frac{1}{27} \cdot \begin{cases}
\begin{array}{lr}
(1, 0, 2, 0, 1, 1, 0, 1 )\cdot\vec{v} & \ \text{if} \ n \equiv 1,4 \pmod{9}\\
( 0, 2, 1, 2, 0, 0, 0, 2  )\cdot\vec{v} & \ \text{if} \ n \equiv 2 \pmod{9}\\
(  0, 2, 1, 0, 1, 1, 1, 0)\cdot\vec{v} & \ \text{if} \ n \equiv 5,8 \pmod{9}\\
( 1, 0, 2, 2, 0, 0, 2, 0 )\cdot\vec{v} & \ \text{if} \ n \equiv 7 \pmod{9}\\
\end{array}
\end{cases}\\
F_3(n,0,0,1) &= 3^{n-3} - \frac{1}{27}
( 0, 2, 1, 0, 1, 1, 1, 0)\cdot\vec{v} 
\ \ \text{if} \ n \equiv 1,2,4,5,7,8 \pmod{9}\\
F_3(n,1,0,1) &= 3^{n-3} - \frac{1}{27} \cdot \begin{cases}
\begin{array}{lr}
( 0, 2, 1, 0, 1, 1, 1, 0 )\cdot\vec{v} & \ \text{if} \ n \equiv 1,7 \pmod{9}\\
(  2, 1, 0, 0, 1, 0, 1, 1)\cdot\vec{v} & \ \text{if} \ n \equiv 2,5 \pmod{9}\\
(  0, 2, 1, 2, 0, 0, 0, 2)\cdot\vec{v} & \ \text{if} \ n \equiv 4 \pmod{9}\\
( 2, 1, 0, 2, 0, 2, 0, 0  )\cdot\vec{v} & \ \text{if} \ n \equiv 8 \pmod{9}\\
\end{array}
\end{cases}\\
F_3(n,2,0,1) &= 3^{n-3} - \frac{1}{27} \cdot \begin{cases}
\begin{array}{lr}
( 0, 2, 1, 0, 1, 1, 1, 0  )\cdot\vec{v} & \ \text{if} \ n \equiv 1,4 \pmod{9}\\
(  1, 0, 2, 0, 1, 1, 0, 1 )\cdot\vec{v} & \ \text{if} \ n \equiv 2,8 \pmod{9}\\
(  1, 0, 2, 2, 0, 0, 2, 0  )\cdot\vec{v} & \ \text{if} \ n \equiv 5 \pmod{9}\\
(  0, 2, 1, 2, 0, 0, 0, 2)\cdot\vec{v} & \ \text{if} \ n \equiv 7 \pmod{9}\\
\end{array}
\end{cases}\\
\end{flalign*}
}
{\small
\begin{flalign*}
F_3(n,0,1,1) &= 3^{n-3} - \frac{1}{27} \cdot \begin{cases}
\begin{array}{lr}
(  2, 1, 0, 0, 1, 0, 1, 1 )\cdot\vec{v} & \ \text{if} \ n \equiv 1,4,7 \pmod{9}\\
(  1, 0, 2, 0, 1, 1, 0, 1 )\cdot\vec{v} & \ \text{if} \ n \equiv 2,5,8 \pmod{9}\\
\end{array}
\end{cases}\\
F_3(n,1,1,1) &= 3^{n-3} - \frac{1}{27} \cdot \begin{cases}
\begin{array}{lr}
( 2, 1, 0, 0, 1, 0, 1, 1  )\cdot\vec{v} & \ \text{if} \ n \equiv 1,4 \pmod{9}\\
( 0, 2, 1, 0, 1, 1, 1, 0 )\cdot\vec{v} & \ \text{if} \ n \equiv 2,5 \pmod{9}\\
(  2, 1, 0, 2, 0, 2, 0, 0)\cdot\vec{v} & \ \text{if} \ n \equiv 7 \pmod{9}\\
( 0, 2, 1, 2, 0, 0, 0, 2 )\cdot\vec{v} & \ \text{if} \ n \equiv 8 \pmod{9}\\
\end{array}
\end{cases}\\
F_3(n,2,1,1) &= 3^{n-3} - \frac{1}{27} \cdot \begin{cases}
\begin{array}{lr}
(  2, 1, 0, 2, 0, 2, 0, 0)\cdot\vec{v} & \ \text{if} \ n \equiv 1,5 \pmod{9}\\
( 2, 1, 0, 0, 1, 0, 1, 1 )\cdot\vec{v} & \ \text{if} \ n \equiv 2,4,7,8 \pmod{9}\\
\end{array}
\end{cases}\\
F_3(n,0,2,1) &= 3^{n-3} - \frac{1}{27} \cdot \begin{cases}
\begin{array}{lr}
( 1, 0, 2, 0, 1, 1, 0, 1 )\cdot\vec{v} & \ \text{if} \ n \equiv 1,4,7 \pmod{9}\\
( 2, 1, 0, 0, 1, 0, 1, 1 )\cdot\vec{v} & \ \text{if} \ n \equiv 2,5,8 \pmod{9}\\
\end{array}
\end{cases}\\
F_3(n,1,2,1) &= 3^{n-3} - \frac{1}{27} \cdot \begin{cases}
\begin{array}{lr}
(  1, 0, 2, 2, 0, 0, 2, 0 )\cdot\vec{v} & \ \text{if} \ n \equiv 1,8 \pmod{9}\\
(  1, 0, 2, 0, 1, 1, 0, 1 )\cdot\vec{v} & \ \text{if} \ n \equiv 2,4,5,7 \pmod{9}\\
\end{array}
\end{cases}\\
F_3(n,2,2,1) &= 3^{n-3} - \frac{1}{27} \cdot \begin{cases}
\begin{array}{lr}
(  1, 0, 2, 0, 1, 1, 0, 1 )\cdot\vec{v} & \ \text{if} \ n \equiv 1,7 \pmod{9}\\
( 0, 2, 1, 0, 1, 1, 1, 0  )\cdot\vec{v} & \ \text{if} \ n \equiv 2,8 \pmod{9}\\
(  1, 0, 2, 2, 0, 0, 2, 0)\cdot\vec{v} & \ \text{if} \ n \equiv 4 \pmod{9}\\
(  0, 2, 1, 2, 0, 0, 0, 2 )\cdot\vec{v} & \ \text{if} \ n \equiv 5 \pmod{9}\\
\end{array}
\end{cases}\\
F_3(n,0,0,2) &= 3^{n-3} - \frac{1}{27}
( 0, 2, 1, 0, 1, 1, 1, 0 )\cdot \vec{v} 
\ \ \text{if} \ n \equiv 1,2,4,5,7,8 \pmod{9}\\
F_3(n,1,0,2) &= 3^{n-3} - \frac{1}{27} \cdot \begin{cases}
\begin{array}{lr}
(  0, 2, 1, 0, 1, 1, 1, 0 )\cdot\vec{v} & \ \text{if} \ n \equiv 1,4 \pmod{9}\\
(  2, 1, 0, 0, 1, 0, 1, 1)\cdot\vec{v} & \ \text{if} \ n \equiv 2,8 \pmod{9}\\
( 2, 1, 0, 2, 0, 2, 0, 0  )\cdot\vec{v} & \ \text{if} \ n \equiv 5 \pmod{9}\\
( 0, 2, 1, 2, 0, 0, 0, 2)\cdot\vec{v} & \ \text{if} \ n \equiv 7 \pmod{9}\\
\end{array}
\end{cases}\\
F_3(n,2,0,2) &= 3^{n-3} - \frac{1}{27} \cdot \begin{cases}
\begin{array}{lr}
(  0, 2, 1, 0, 1, 1, 1, 0  )\cdot\vec{v} & \ \text{if} \ n \equiv 1,7 \pmod{9}\\
(1, 0, 2, 0, 1, 1, 0, 1  )\cdot\vec{v} & \ \text{if} \ n \equiv 2,5 \pmod{9}\\
( 0, 2, 1, 2, 0, 0, 0, 2  )\cdot\vec{v} & \ \text{if} \ n \equiv 4 \pmod{9}\\
(  1, 0, 2, 2, 0, 0, 2, 0)\cdot\vec{v} & \ \text{if} \ n \equiv 8 \pmod{9}\\
\end{array}
\end{cases}\\
F_3(n,0,1,2) &= 3^{n-3} - \frac{1}{27} \cdot \begin{cases}
\begin{array}{lr}
( 2, 1, 0, 0, 1, 0, 1, 1 )\cdot\vec{v} & \ \text{if} \ n \equiv 1,4,7 \pmod{9}\\
(  1, 0, 2, 0, 1, 1, 0, 1)\cdot\vec{v} & \ \text{if} \ n \equiv 2,5,8 \pmod{9}\\
\end{array}
\end{cases}\\
F_3(n,1,1,2) &= 3^{n-3} - \frac{1}{27} \cdot \begin{cases}
\begin{array}{lr}
(  2, 1, 0, 2, 0, 2, 0, 0 )\cdot\vec{v} & \ \text{if} \ n \equiv 1 \pmod{9}\\
( 0, 2, 1, 0, 1, 1, 1, 0 )\cdot\vec{v} & \ \text{if} \ n \equiv 2,8 \pmod{9}\\
(  2, 1, 0, 0, 1, 0, 1, 1)\cdot\vec{v} & \ \text{if} \ n \equiv 4,7 \pmod{9}\\
([ 0, 2, 1, 2, 0, 0, 0, 2 )\cdot\vec{v} & \ \text{if} \ n \equiv 5 \pmod{9}\\
\end{array}
\end{cases}\\
F_3(n,2,1,2) &= 3^{n-3} - \frac{1}{27} \cdot \begin{cases}
\begin{array}{lr}
(   2, 1, 0, 0, 1, 0, 1, 1 )\cdot\vec{v} & \ \text{if} \ n \equiv 1,2,4,5 \pmod{9}\\
( 2, 1, 0, 2, 0, 2, 0, 0  )\cdot\vec{v} & \ \text{if} \ n \equiv 7,8 \pmod{9}\\
\end{array}
\end{cases}\\
F_3(n,0,2,2) &= 3^{n-3} - \frac{1}{27} \cdot \begin{cases}
\begin{array}{lr}
(  1, 0, 2, 0, 1, 1, 0, 1  )\cdot\vec{v} & \ \text{if} \ n \equiv 1,4,7 \pmod{9}\\
(  2, 1, 0, 0, 1, 0, 1, 1 )\cdot\vec{v} & \ \text{if} \ n \equiv 2,5,8 \pmod{9}\\
\end{array}
\end{cases}\\
F_3(n,1,2,2) &= 3^{n-3} - \frac{1}{27} \cdot \begin{cases}
\begin{array}{lr}
( 1, 0, 2, 0, 1, 1, 0, 1  )\cdot\vec{v} & \ \text{if} \ n \equiv 1,2,7,8 \pmod{9}\\
( 1, 0, 2, 2, 0, 0, 2, 0 )\cdot\vec{v} & \ \text{if} \ n \equiv 4,5 \pmod{9}\\
\end{array}
\end{cases}\\
F_3(n,2,2,2) &= 3^{n-3} - \frac{1}{27} \cdot \begin{cases}
\begin{array}{lr}
(   1, 0, 2, 2, 0, 0, 2, 0)\cdot\vec{v} & \ \text{if} \ n \equiv 1 \pmod{9}\\
(  0, 2, 1, 0, 1, 1, 1, 0)\cdot\vec{v} & \ \text{if} \ n \equiv 2,5 \pmod{9}\\
(  1, 0, 2, 0, 1, 1, 0, 1 )\cdot\vec{v} & \ \text{if} \ n \equiv 4,7 \pmod{9}\\
(  0, 2, 1, 2, 0, 0, 0, 2  )\cdot\vec{v} & \ \text{if} \ n \equiv 8 \pmod{9}\\
\end{array}
\end{cases}\\
\end{flalign*}
}
\end{theorem}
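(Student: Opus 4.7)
The plan is to specialise the direct method of Section~\ref{subsec:directF3} and then reduce the resulting intersection-count to a much cheaper alternating sum of zero-counts of single Artin-Schreier curves, to which an analogue of the character-sum identity \cite[Lemma~6]{AGGMY} applies in odd characteristic. First I would start from the parameterisation~(\ref{eq:directF3t1t2t3}), which expresses $F_3(n,t_1,t_2,t_3)$ (for $n$ coprime to $3$) as $1/3^3$ times the number of $\F_{3^n}$-rational points of a genus-$21$ intersection curve whose zeta function is inaccessible to brute force. With $\vec{f}$ as defined just before Theorem~\ref{thm:char3_3traces_new}, namely the pair of polynomials appearing on the right-hand sides of~(\ref{eq:directF3t1t2t3}), the identity $F_3(n,t_1,t_2,t_3)=\tfrac{1}{9}\bigl(V((0,1)\cdot\vec{f})+\sum_{\alpha\in\F_3}V((1,\alpha)\cdot\vec{f})-3^n\bigr)$ reduces the problem to computing the zero-counts of four $\F_q$-linear combinations of the entries of $\vec{f}$. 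Each such $V(\vec{i}\cdot\vec{f})$ is itself $1/3$ times the number of $\F_{3^n}$-rational affine points of an Artin-Schreier curve $a_1^3-a_1=\vec{i}\cdot\vec{f}(a_0)$, which has genus $3$ when $i_2=0$ and genus $6$ when $i_2\neq 0$, so its zeta function can be computed in practice.

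Next I would handle the $n$-dependence. Because $\vec{f}$ contains the coefficients $\binom{n}{2}/n^2$, $(n+1)/n$ and $\binom{n}{3}/n^2$ when $t_1\neq 0$, one splits into residue classes modulo $9$; by Corollary~\ref{cor:period} the six classes $n\equiv 1,2,4,5,7,8\pmod{9}$ suffice, and within each class these rational expressions specialise to fixed elements of $\F_3$. For each of the $24$ resulting curves I would compute the characteristic polynomial of Frobenius with Magma and verify that it factorises as a product of the eight explicitly given polynomials $\epsilon_{2,1},\epsilon_{2,2},\epsilon_{2,3},\epsilon_{6},\epsilon_{12,1},\epsilon_{12,2},\epsilon_{12,3},\epsilon_{12,4}$ (possibly with multiplicity); substituting $\rho_n(\cdot)$ for each factor and assembling the four $V$-values via the reduction identity then yields the tabulated coefficient vectors.

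For the distinguished case $F_3(n,0,0,0)$ the right-hand side of~(\ref{eq:directF3t1t2t3}) simplifies to $a_0^4-a_0^2$ and $a_0^7-a_0^5$, whose coefficients are independent of $n$; consequently no denominator $1/n$ is introduced and the resulting Artin-Schreier curves are defined uniformly over $\F_3$, producing a formula valid for every $n\ge 3$ rather than only for $n$ coprime to $3$. The identity $F_3(n,0,0,0)=\tfrac{1}{9}\bigl(V((0,1)\cdot\vec{f})+\sum_{\alpha\in\F_3}V((1,\alpha)\cdot\vec{f})-3^n\bigr)$ with this uniform $\vec{f}$ immediately gives the single-line expression.

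The genuinely laborious part is neither conceptual nor algorithmic but bookkeeping: organising the $27\times 6=162$ contributions into the $27$ cases of the theorem and verifying, residue class by residue class, that the sign patterns arising from $V((1,\alpha)\cdot\vec{f})$ and $V((0,1)\cdot\vec{f})$ combine to give precisely the coefficient vectors listed. I would automate this in Magma using the code of the companion repository, cross-checking against Theorem~\ref{thm:char3_3traces} by comparing numerical values of $F_3(n,t_1,t_2,t_3)$ for several $n$ in each class, which also confirms the periodicity-driven linear dependence identified at the end of~\S\ref{sec:lessredundant} and explains why the coefficient vectors here are strictly sparser than those in Theorem~\ref{thm:char3_3traces}.
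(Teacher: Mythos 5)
Your proposal follows precisely the route the paper takes in \S\ref{subsec:directF3}: start from the direct-method parameterisation~(\ref{eq:directF3t1t2t3}), bypass the intractable genus-$21$ intersections by applying~\cite[Lemma 6]{AGGMY} to express $F_3(n,t_1,t_2,t_3)$ as $\tfrac{1}{9}\bigl(V((0,1)\cdot\vec{f})+\sum_{\alpha\in\F_3}V((1,\alpha)\cdot\vec{f})-3^n\bigr)$, compute the zeta functions of the resulting genus-$3$ and genus-$6$ Artin-Schreier curves with Magma for each residue class of $n$ mod $9$, and observe that the zero counts involved make the $(0,0,0)$ formula hold for all $n\ge 3$. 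This is the same argument, down to the genus counts and the periodicity reasoning via Corollary~\ref{cor:period}; the only blemish is a trivial notational slip where you write $i_2$ for the leading index of the two-component vector $\vec{i}$ (it should be $i_1$ here), but this does not affect anything.
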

Observe that the total degrees of the featured polynomials in Theorem~\ref{thm:char3_3traces_new} is always $42$, which means these are the 
characteristic polynomials of Frobenius arising from the direct method and should have optimal representational efficiency.

If one compares for instance the formulae for $F_3(n,0,1,2)$ in Theorems~\ref{thm:char3_3traces} and~\ref{thm:char3_3traces_new}
one can further deduce that for all $n \equiv 2,5,8 \pmod{9}$, we have 
$\rho_n(\epsilon_{6}) + \rho_n(\epsilon_{12,1}) = 0$ and $\rho_n(\epsilon_{12,2}) + \rho_n(\epsilon_{12,3}) +\rho_n(\epsilon_{12,4}) = 0$, which again are seemingly not
obvious from the polynomials alone. The three observed identities and possibly any others arising by such a comparison would probably 
allow one to transform Theorem~\ref{thm:char3_3traces} into Theorem~\ref{thm:char3_3traces_new} without carrying out any zeta 
function computations.
Conversely, although our goal was to compute formulae for $F_3(n,t_1,t_2,t_3)$, by using various approaches to do so one can also deduce identities 
between the roots of the featured characteristic polynomials of Frobenius for residues of $n \bmod{9}$ without computing any of the roots 
explicitly.


\section{Final Remarks and Open Problems}\label{sec:summary}

We have presented the first algorithmic approaches to solving the prescribed traces problem and therefore the prescribed coefficients problem.
While our main algorithm for $l < p$ is extremely simple to state and very efficient, the $l \ge p$ case in full generality remains open.
There are many properties and consequences of our approaches that have yet to be determined or explored. 
We now present some open problems, in what may be considered to be an approximately increasing order of interest.

\vspace{3mm}

\noindent {\bf Problem 1:} Provide an algorithm which for any $q = p^r$ and $l$ outputs the transforms between $I_q(n,t_1,\ldots,t_l)$ and $F_q(n,t_1,\ldots,t_l)$, and deduce its complexity.

\vspace{3mm}

\noindent {\bf Problem 2:}  Compute the characteristic polynomials of Frobenius of the curves arising for $F_2(n,t_1,\ldots,t_6)$ and 
$F_2(n,t_1,\ldots,t_7)$, for $n$ odd. Although it is feasible to compute these by brute force point counting, it would be preferable to employ a more 
elegant approach, perhaps by computing the quotient curves arising from various curve automorphisms and then applying a theorem of Kani and 
Rosen~\cite{kanirosen} to infer the decomposition of their Jacobians.

\vspace{3mm}

\noindent {\bf Problem 3:}  Compute formulae for $F_2(n,t_1,\ldots,t_l)$ with $4 \le l \le 7$ for all $n \ge l$.

\vspace{3mm}

\noindent {\bf Problem 4:}  More generally, for the main algorithm provide a method for solving the $n \equiv 0 \pmod{p}$ cases, for any $q \ge 2$.
Note that if solved then the formulae for $F_q(n,t_1,\ldots,t_l)$ for each of the $p$ residue classes of $n$ can be unified via Fourier analysis using the complex $p$-th roots of unity, as in~\cite{AGGMY}.
Similarly, if $l \ge p$ then as per Corollary~\ref{cor:period} the complex $p^{1+ \lfloor \log_{p} l \rfloor}$-th roots of unity can be employed.
However, it may be preferable to retain the residue class distinctions for simplicity (cf.~\cite[Prop. 4 and 5]{AGGMY}).

\vspace{3mm}

\noindent {\bf Problem 5:} Determine if it is possible to express $T_l(\alpha - \beta)$ in terms of lower degree traces, over $\Z$, for any or all
 $l > 7$, so that one may extend the $q = 2$ approach.

\vspace{3mm}

\noindent {\bf Problem 6:} For the indirect method in the main algorithm when $q > p$, remove the reliance on Lemma~\ref{thm:extensionreduction} 
so that the $\frac{q-1}{p-1}$ factor in the sum in~(\ref{eq:maintheorem}) in Theorem~\ref{thm:maintheorem} may be removed.

\vspace{3mm}

\noindent {\bf Problem 7:} When $l < p$ provide an algorithm to compute the zeta function of the curve~(\ref{eq:main_direct}) arising
from the direct method, or indeed of the fibre product~(\ref{eq:newmain}), which is more efficient for a single $F_q(n,t_1,\ldots,t_l)$ than 
executing the indirect method. This is also desirable since the direct method seems to produce the most compact formulae.

\vspace{3mm}

\noindent {\bf Problem 8:} As is evident from the examples we have given for $q = 2$, there are often several ways to associate an affine curve 
$C_{\vec{r},\overline{n}}$ to a given counting problem $F_q(n,t_{l_0},\ldots,t_{l_{m-1}})$ with differing numbers of auxiliary variables 
$(a_0,\ldots,a_{s-1})$ with corresponding linear traces $\vec{r} = (r_0,\ldots,r_{s-1})$. Not only does this affect the complexity of determining 
the relevant
characteristic values, but each such curve may lead to different (but necessarily numerically identical) formulae. One can associate a generating, 
or zeta function to each prescribed traces problem, by letting 
$N_n = q^m \cdot F_q(n,t_{l_0},\ldots,t_{l_{m-1}}) + 1$ and as usual defining
\[
Z( F_q(n,t_{l_0},\ldots,t_{l_{m-1}}); t) = \exp{\left(  \sum_{n=1}^{\infty}  \frac{N_n}{n} t^n \right)}.
\]
If $C_{\vec{r},\overline{n}}$ is absolutely irreducible for each $\vec{r}$ and assuming Problem 4 has been solved then for all $\overline{n} \in \{0,\ldots,p-1\}$ one has:
\begin{equation}\label{eq:zetafunction}
Z( F_q(n,t_{l_0},\ldots,t_{l_{m-1}}); t) = \frac{L_{\overline{n}}(t)^{1/q^s}}{(1-t)(1-qt)}, \ \text{for all} \ \ n \equiv \overline{n}\pmod{p} \ 
\ \text{with} \ \ n \ge l_{m-1},
\end{equation}
where $L_{\overline{n}}(t) = c_0 + c_1t + \cdots + c_{2g}t^{2g} \in \Z[t]$ with $c_0 = 1$ is the product of the characteristic polynomials of Frobenius
of the $q^s$ specialisations of $C_{\vec{r},\overline{n}}$. 
Since $L_{\overline{n}}(t)$ and $s$ in Eq.~(\ref{eq:zetafunction}) are non-canonical, this is less than aesthetically pleasing in comparison 
to the rational zeta functions of smooth projective curves. Is there a canonical way to associate such a curve, which also avoids all redundancy?
Perhaps one can associate a motivic zeta function to each $F_q(n,t_{l_0},\ldots,t_{l_{m-1}})$ \`a la~\cite{duSautoy1,duSautoy2}, which resolve 
a similar non-canonicality issue when associating varieties to counting problems in the theory of $p$-groups and nilpotent groups?

\vspace{3mm}

\noindent {\bf Problem 9:} Regarding the general $l \ge p$ case, another natural question is whether or not it is possible to obviate the 
failure of Newton's identities by working $p$-adically and in Galois rings, \`a la Fan and Han's refinement~\cite{fanhan} of Han's work 
on Cohen's problem~\cite{han}? See also the exposition of Cohen~\cite{cohenexpo}. 
If so, this could lead to an algorithm for solving the prescribed traces problem for any number of coefficients in any positions.
Assuming this can be done, for the sake of generality we make the following conjecture.
\begin{conjecture}\label{conj:generalconj}
For every $q = p^r$, $1 \le l_0 < \cdots < l_{m-1}$, $(t_{l_0},\ldots,t_{l_{m-1}}) \in (\F_q)^m$ and 
$\overline{n} \in \{0,\ldots,p^{1 + \lfloor \log_p{l_{m-1}} \rfloor}-1\}$, 
there exists $\omega_1,\ldots,\omega_N \in \overline{\Z}$, all of norm $\sqrt{q}$, $\upsilon_1,\ldots,\upsilon_N \in \Z$ and an integer $s \ge 0$ such that for 
all $n \equiv \overline{n} \pmod{p^{1 + \lfloor \log_p{l_{m-1}} \rfloor}}$ with $n \ge l_{m-1}$ one has
\begin{equation*}\label{eq:mainconj}
F_q(n,t_{l_0},\ldots,t_{l_{m-1}}) = \frac{1}{q^m}\Big( q^n + \frac{1}{q^s}\sum_{i=1}^N \upsilon_{i} \alpha_{i}^n \Big) = q^{n-m} + O(q^{n/2}).
\end{equation*}
\end{conjecture}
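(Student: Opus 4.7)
The plan is to combine the transform of Section~\ref{sec:transform1p} with the indirect parameterisation of Section~\ref{sec:mainindirect} to reduce each $F_q(n,t_1,\ldots,t_l)$ to a linear combination of $\F_{q^n}$-point counts on Artin-Schreier curves, then invoke the Weil bound and the Lauder--Wan algorithm for both the structure and complexity claims. Concretely, I would first apply the change of variables dictated by Newton's identities (valid since $l<p$) to replace the conditions $T_1(a)=t_1,\ldots,T_l(a)=t_l$ by the equivalent conditions $T_1(a^k)=t_k'$ for $k=1,\ldots,l$, where the $t_k'$ are polynomial (in fact, with our normalisation, affine linear) functions of the $t_k$. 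Then applying the transform~(\ref{eq:Sminus1}) to $\vec f=(T_1(a^l),\ldots,T_1(a))$ expresses each $F_q(n,t_1,\ldots,t_l)$ as a $\{0,\pm 1\}$-combination (up to the $q^n$ term and the overall factor $1/q^l$) of the $q^l-1$ quantities $V_1(\vec i\cdot\vec f)$.

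Next I would convert each $V_1(\vec i\cdot\vec f)$ into a point count on an Artin-Schreier curve. Since $n\equiv\overline n\pmod p$ is coprime to $p$, Lemma~\ref{lem:paramq} parameterises the condition $\sum_k i_k T_1(a^k)=1$ as $a_1^q-a_1=\overline p_{\vec i,\overline n}(a)$ for an explicit polynomial $\overline p_{\vec i,\overline n}\in\F_q[a]$ of degree $d(\vec i)\le l<p$, giving the affine curve $C_{\vec i,\overline n}$ of~(\ref{eq:maincurve}). For $r=1$, the Weil conjectures give $\#C_{\vec i,\overline n}(\F_{q^n})=q^n-\sum_k\omega_{i,k}^n$ with each $|\omega_{i,k}|=\sqrt q$, and the numerator of the zeta function has degree $(p-1)(d(\vec i)-1)=2g$; summing these degrees over all non-zero $\vec i$ yields exactly $N=p^l(pl-p-l)+p$, recovering the stated $N$ in the $r=1$ case. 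Substituting back through the transform separates the $q^n$ main term from the error sum $\frac{1}{q^l}\sum_k\upsilon_k\omega_k^n$ with $\upsilon_k\in\{0,\pm 1\}$ determined by the row of $S_{q,m}^{-1}$ corresponding to $\vec t$.

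For $r>1$, the Lauder--Wan algorithm applies only to curves of the form $a_1^p-a_1=f(a)$, so I would invoke Lemma~\ref{thm:extensionreduction} to write $\#C_{\vec i,\overline n}(\F_{q^n})=\frac{p-q}{p-1}q^n+\frac{1}{p-1}\sum_{\alpha\in\F_q^\times}\#C_{\alpha,\vec i,\overline n}(\F_{q^n})$ where each $C_{\alpha,\vec i,\overline n}$ has numerator of degree $(p-1)(d(\vec i)-1)$. Summing over $\alpha$ and then over all non-zero $\vec i$ gives precisely $N=(q^l(ql-q-l)+q)(p-1)$ roots of norm $\sqrt q$, and the factor $\frac{q-1}{p-1}$ appears naturally from the coefficient of the Weil sum after combining $q-1$ contributions. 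The complexity claim then follows by applying Theorem~\ref{thm:LW} to each $C_{\alpha,\vec i,\overline n}$ (or $C_{\vec i,\overline n}$ when $r=1$) and summing the cost $\tilde{\mathcal O}(d(\vec i)^5p^4r^3)$ weighted by the number of $\vec i$'s of each degree, which is a geometric-style sum bounded by $q^l\cdot\tilde{\mathcal O}(l^5p^4)$ for $r=1$ and $q^{l+1}\cdot\tilde{\mathcal O}(l^5p^4r^3)$ for $r>1$.

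The one non-obvious step is handling the $\vec t=\vec 0$ and $\vec t\ne\vec 0$ cases uniformly to arrive at a single formula with $\upsilon_k\in\{0,\pm 1\}$, since the first row of $S_{q,m}^{-1}$ differs structurally from the remaining rows; the main obstacle here is accounting carefully for how the $p^{l-1}$ indices $\vec i$ with $\vec i\cdot\vec t=1$ and the $p^{l-1}-1$ non-zero indices with $\vec i\cdot\vec t=0$ combine so that the $q^n$ terms contribute coherently to the main term while the Weil sums aggregate into the claimed error. I would verify this bookkeeping case by case, using that every Frobenius eigenvalue has norm $\sqrt q$ to deduce the asymptotic $q^{n-l}+\mathcal O(q^{n/2})$, and the fact that the roots $\omega_k$ are algebraic integers (hence in $\overline\Z$) as characteristic roots of integer polynomials coming from $L$-polynomials of curves over $\F_q$.
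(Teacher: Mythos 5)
There is a genuine gap: you have proved the wrong statement. The statement labelled \texttt{conj:generalconj} is a \emph{conjecture}, posed as an open problem in Problem~9 of~\S\ref{sec:summary}; the paper explicitly does \emph{not} prove it, and introduces it with the words ``Assuming this can be done, for the sake of generality we make the following conjecture.'' What you have written is essentially a re-derivation of the proof of Theorem~\ref{thm:maintheorem}, which is a strictly weaker statement. The conjecture differs from Theorem~\ref{thm:maintheorem} in every place where the paper's machinery breaks down: it allows arbitrary subscripts $l_0 < \cdots < l_{m-1}$ (not just consecutive $1,\ldots,l$); it allows $l_{m-1} \ge p$, where Newton's identities fail in $\F_q$ because the coefficient $k$ in $k\,T_k(\alpha)=\sum_j(-1)^{j-1}T_{k-j}(\alpha)T_1(\alpha^j)$ vanishes mod $p$ for $k=p$ and the system is underdetermined; it allows $\overline n\equiv 0\pmod p$, where Lemma~\ref{lem:paramq}(2) cannot be applied since $1/\overline n$ is undefined; and it has $\upsilon_i\in\Z$ with an auxiliary factor $1/q^s$, which does not arise in the Theorem~\ref{thm:maintheorem} argument (there $\upsilon_k\in\{0,\pm1\}$ and there is no $q^s$ term).

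Your proposal explicitly invokes ``(valid since $l<p$)'' when converting the $T_k(a)=t_k$ conditions to $T_1(a^k)=t_k'$ conditions, and invokes Lemma~\ref{lem:paramq} and the coprimality of $n$ to $p$ when parameterising with Artin--Schreier curves. Both restrictions are exactly those the conjecture is designed to remove, so the argument cannot establish it. You have also included a complexity claim in the final step, which is part of Theorem~\ref{thm:maintheorem} but is not part of the conjectured statement at all. To make real progress on the conjecture one would need, as the paper itself suggests, a substitute for Newton's identities valid in positive characteristic for all $l$ (perhaps via a $p$-adic or Galois-ring lifting argument) and a way to handle the residue class $\overline n\equiv 0\pmod p$; neither appears in your proposal.
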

Note that the sum is missing the factor $\frac{q-1}{p-1}$ from Theorem~\ref{thm:maintheorem}, which will be avoidable should Problem 6 be solved.
Also note that it may always be possible to set $s = 0$.

\vspace{3mm}

\noindent {\bf Problem 10:}
Should sufficiently general versions of Problems $1$, $4$ and $9$ be resolved positively and bounds 
on the genera of the resulting curves be sufficiently small, then can one prove interesting existence results -- in the best case 
with up to $\lfloor (1/2 - \epsilon)n \rfloor$ coefficients prescribed for any $\epsilon > 0$ -- when $n$ is sufficiently large for the main 
term to dominate the error term in Conjecture~\ref{eq:mainconj}?

\section*{Acknowledgements}
This work was supported by the Swiss National Science Foundation via grant number 200021-156420, while the author was based in the Laboratory for Cryptologic Algorithms, \'Ecole polytechnique f\'ed\'erale de Lausanne, Switzerland. Some of the ideas were developed 
while the author was visiting Steven Galbraith at The University of Auckland in April 2016; I would like to thank him for his excellent 
hospitality and for some enlightening discussions on intersections. I would also like to thank: Wouter Castryck and Jan Tuitman for explaining to me 
the state-of-the-art in $p$-adic point counting techniques for curves; Daniel Panario for his useful comments on an early draft of this work; 
Alan Lauder for answering my questions regarding his and Wan's algorithm; Claus Diem and Benjamin Wesolowski for discussions; and Omran Ahmadi 
for providing the translation of Lemma~\ref{thm:extensionreduction}. Finally, I would very much like to thank Thorsten Kleinjung for numerous 
very helpful discussions and suggestions, and for acting as a sounding board throughout much of this work.

\bibliographystyle{plain}
\bibliography{4andmoreNew}

\end{document}